\theoremstyle{plain}
\newtheorem{thrm}{Theorem}[section]
\newtheorem{lmm}[thrm]{Lemma}
\newtheorem{prpstn}[thrm]{Proposition}
\newtheorem*{rmk}{Remark}
\newtheorem*{cnjctr}{Conjecture}
\numberwithin{sblmm}{thrm} 
\numberwithin{equation}{section}
\DeclareMathOperator*{\Res}{Res}
\DeclareMathOperator*{\vol}{vol}
\DeclareMathOperator{\lcm}{lcm}
\newcommand\rev[1]{\overset{{}_{\shortleftarrow}}{#1}}
\newcommand{\Mod}[1]{\ (\mathrm{mod}\ #1)}
\renewcommand{\mod}[1]{\mathrm{mod}\ #1}
\newcommand{\Zt}{\Zz[\sqrt[n]{\theta}]}
\newcommand{\Qt}{\Qq(\sqrt[n]{\theta})}
\newcommand{\Ti}{\sqrt[n]{\theta^{i-1}}}
\newcommand{\Tj}{\sqrt[n]{\theta^{j-1}}}
\newcommand{\Af}{\mathfrak{A}}
\newcommand{\Ac}{\mathcal{A}}
\newcommand{\Zz}{\mathbb{Z}}
\newcommand{\Qq}{\mathbb{Q}}
\newcommand{\Rr}{\mathbb{R}}
\newcommand{\Rc}{\mathcal{R}}
\newcommand{\ab}{\mathbf{a}}
\newcommand{\bb}{\mathbf{b}}
\newcommand{\eb}{\mathbf{e}}
\newcommand{\xb}{\mathbf{x}}
\newcommand{\db}{\mathbf{d}}
\newcommand{\zb}{\mathbf{z}}
\newcommand{\ub}{\mathbf{u}}
\newcommand{\vb}{\mathbf{v}}
\newcommand{\mb}{\mathbf{m}}
\newcommand{\df}{\mathfrak{d}}
\newcommand{\1}{\mathbf{1}}
\newcommand{\pf}{\mathfrak{p}}
\newcommand{\tb}{\mathbf{t}}
\newcommand{\Sf}{\mathfrak{S}}
\newcommand{\Sft}{\tilde{\Sf}}
\newcommand{\Oc}{\mathcal{O}}
\newcommand{\af}{\mathfrak{a}}
\newcommand{\bfr}{\mathfrak{b}}
\newcommand{\cf}{\mathfrak{c}}
\newcommand{\zf}{\mathfrak{z}}
\newcommand{\qf}{\mathfrak{q}}
\newcommand{\Cc}{\mathcal{C}}
\newcommand{\Bc}{\mathcal{B}}
\begin{document}
\title{Primes represented by incomplete norm forms}
\author{James Maynard}
\address{Magdalen College, Oxford, England, OX1 4AU}
\email{james.alexander.maynard@gmail.com}
\begin{abstract}
Let $K=\Qq(\omega)$ with $\omega$ the root of a degree $n$ monic irreducible polynomial $f\in\Zz[X]$. We show the degree $n$ polynomial $N(\sum_{i=1}^{n-k}x_i\omega^{i-1})$ in $n-k$ variables takes the expected asymptotic number of prime values if $n\ge 4k$. In the special case $K=\Qt$, we show $N(\sum_{i=1}^{n-k}x_i\Ti)$ takes infinitely many prime values provided $n\ge 22k/7$.

Our proof relies on using suitable `Type I' and `Type II' estimates in Harman's sieve, which are established in a similar overall manner to the previous work of Friedlander and Iwaniec on prime values of $X^2+Y^4$ and of Heath-Brown on $X^3+2Y^3$. Our proof ultimately relies on employing explicit elementary estimates from the geometry of numbers and algebraic geometry to control the number of highly skewed lattices appearing in our final estimates.
\end{abstract}
\maketitle
\section{Introduction}\label{sec:Introduction}
It is believed any integer polynomial satisfying some simple necessary conditions should represent infinitely many primes. Specifically, we have the following quantitative strengthening of Bunyakovsky's conjecture, which is the Bateman-Horn conjecture \cite{BatemanHorn} in the special case of one polynomial.
\begin{cnjctr}
Let $f\in\Zz[X]$ be an irreducible polynomial of degree $d$ with positive leading coefficient and no fixed prime divisor. Then we have
\[\#\{1\le a\le x: f(a)\text{ prime}\}=\Sf_f\frac{x}{d\log{x}}+o_f\Bigl(\frac{x}{\log{x}}\Bigr),\]
where
\begin{align*}
\Sf_f=\prod_{p}\Bigl(1-\frac{\nu_f(p)}{p}\Bigr)\Bigl(1-\frac{1}{p}\Bigr)^{-1},\quad
\nu_f(p)=\#\{1\le a\le p:\,f(a)\equiv0\Mod{p}\}.
\end{align*}
\end{cnjctr}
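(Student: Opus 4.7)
The plan is to run Harman's sieve on the one-variable sequence $\mathcal{A}=\{f(a):1\le a\le x\}$, in the same spirit as the paper's treatment of norm forms but with a single variable in place of $n-k$. Type I information is immediately available in very strong form: by elementary counting in residue classes,
\[
\#\{a\le x:q\mid f(a)\}=\nu_f(q)\frac{x}{q}+O(\nu_f(q))
\]
holds uniformly in $q\le x^{1-\varepsilon}$, so the level of distribution for the linear sieve is essentially the full size of $\mathcal{A}$. This already suffices, via the Selberg upper bound sieve, for the correct order of magnitude $\#\{a\le x:f(a)\text{ prime}\}\ll_f x/\log x$, and would power the Type I side of any Vaughan or Heath--Brown identity decomposition of $\Lambda(f(a))$.

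The crux is the Type II (bilinear) input. Harman's decomposition reduces matters to bounding sums of the shape
\[
\sum_{m\asymp M}\sum_{n\asymp N}\alpha_m\beta_n\,\mathbf{1}\bigl[mn=f(a)\text{ for some }a\le x\bigr],\qquad MN\asymp f(x)\asymp x^d,
\]
with both $M$ and $N$ in some intermediate range. For $d=1$ the constraint $mn=\alpha a+\beta$ is the congruence $mn\equiv\beta\Mod{\alpha}$, and Linnik's dispersion method, equivalently Bombieri--Vinogradov, delivers the required cancellation, recovering Dirichlet's theorem. For $d\ge 2$, however, the constraint pins $(m,n)$ to the very sparse set $\{(m,n):mn\in f(\mathbb{Z})\}$, of density $\asymp x^{1-d}$ in the $(m,n)$-box. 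Dualising by Cauchy--Schwarz converts the Type II sum into a count of integer points on the affine surface $n_1 f(a_2)=n_2 f(a_1)$ in a skewed box, and no cancellation beyond what is already captured by equidistribution of $f$ in residue classes is known for this variety.

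The decisive obstacle is therefore the Type II estimate for $d\ge 2$, and it is one of principle rather than of technique. What is needed is either genuine cancellation in multiplicative character sums $\sum_{a\le x}\chi(f(a))$ for $\chi$ of modulus $q\gg x^{1/d+\delta}$, well past the range where the Weil bound is nontrivial, or equivalently a Bombieri--Vinogradov-type theorem for the thin sequence $\{f(a)\}$ with level of distribution exceeding $x^{1/d}$. Neither is available for any $d\ge 2$: Burgess-type refinements over Weil are of the wrong strength, and no known automorphic input on the Galois closure of $f$ supplies the requisite spectral cancellation. This is precisely the bottleneck the present paper circumvents by enlarging the sifting variety to the norm form $N(\sum x_i\omega^{i-1})$ in $n-k$ variables, which restores density and opens the door to a workable Type II bound. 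My plan therefore sets up the Harman framework and reduces the conjecture to the single bilinear input displayed above; supplying that bilinear bound for $d\ge 2$ is the one remaining, and open, step, which is why the conjecture remains out of reach for every polynomial of degree at least two.
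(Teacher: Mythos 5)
You have not proved the statement, and you say so yourself; the essential point to record is that the statement is a \emph{conjecture}, not a theorem of the paper. The paper offers no proof of it and states explicitly that no case beyond linear $f$ is known and that the problem ``seems to be well beyond current techniques''; the entire paper is an approximation to it via multivariable incomplete norm forms. So there is no argument of the paper to compare yours against, and any purported proof would have to be wrong. Your write-up is best read as a (largely correct) diagnosis of \emph{why} the conjecture is open: the Type I information for $\{f(a):a\le x\}$ is indeed trivially available to level $x^{1-\varepsilon}$ and gives the upper bound of the right order, and the fatal missing ingredient is a bilinear (Type II) estimate, for which the sequence $\{f(a)\}$ with $d\ge 2$ offers no known source of cancellation. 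The gap in your ``proof'' is exactly the step you flag as open, so the proposal is honest but establishes nothing beyond the sieve upper bound.

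Two refinements to your diagnosis. First, the quantitative barrier is slightly misplaced: to produce primes by sieve/Type I information alone one would need the sequence $\{f(a)\}_{a\le x}$, whose elements have size $x^d$, to be equidistributed to moduli near $x^{d/2}$, which for $d\ge2$ \emph{exceeds the number of elements} $x$ of the sequence; this is a sparsity obstruction more severe than any failure of Burgess or Weil in the range $q\asymp x^{1/d+\delta}$ you cite (where Weil-type bounds are in fact still nontrivial). Second, the reason the paper's Type II estimate works for norm forms and cannot be transplanted back to one variable is structural, not technical: the factorization $mn=f(a)$ carries arithmetic meaning only when $f$ is (an incomplete) norm form, because then $m$ and $n$ can be lifted to ideal factors in $\mathcal{O}_K$ and the bilinear sum dispersed over lattice points, as in Sections \ref{sec:Lattice}--\ref{sec:L2}. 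For a general irreducible $f$ of degree $d\ge2$ in one variable there is no analogous multiplicative structure on the fibers $\{(m,n):mn=f(a)\}$, so the reduction you set up terminates at a sum for which no decomposition, spectral input, or algebraic identity is currently known to apply.
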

It follows from a classical result of Kronecker (or the later Frobenius or Chebotarev density theorems) that the infinite product $\Sf_f$ converges to a positive constant.

Unfortunately no case of the above conjecture is known other than when $f$ is linear, and the problem seems to be well beyond current techniques. A non-linear polynomial $f$ represents $O(x^{1/2})$ integers less than $x$, and there are essentially no examples of sets containing $O(x^{1/2})$ integers less than $x$ which contain infinitely many primes (beyond artificial examples)\footnote{The seemingly simpler problem of showing the existence of a prime in the short interval $[x,x+x^{1/2}]$, for example, is not known even under the assumption of the Riemann hypothesis.}. Thus the sparsity of the set of values of $f$ presents a major obstacle.

As an approximation to the conjecture one can look at polynomials $f\in\Zz[X_1,\dots,X_n]$ in multiple variables, so the resulting sets are less sparse. If the number of variables is sufficiently large (relative to other measures of the complexity of $f$) then in principle the Hardy-Littlewood circle method can be used to show that every \textit{integer} satisfying necessary local conditions is represented by $f$. It follows from the seminal work of Birch \cite{Birch}, for example, that any homogeneous non-singular $f\in\Zz[X_1,\dots,X_n]$ of degree $d$ with no fixed prime divisor  represents infinitely many prime values provided $n>(d-1)2^d$. 

When the number of variables is not larger than the degree only a few polynomials are known to represent infinitely many primes, and these tend to have extra algebraic structure. Iwaniec \cite{IwaniecQuad} has shown that any suitable binary quadratic polynomial represents infinitely many primes. If $K/\Qq$ is a number field with a $\Zz$-basis $\{\beta_1,\dots,\beta_n\}$ of $\Oc_K$, then the norm form $N_{K/\Qq}(X_1\beta_1+\dots+X_n\beta_n)\in\Zz[X_1,\dots,X_n]$ is a degree $n$ polynomial in $n$ variables which represents infinitely many primes, since every degree 1 principal prime ideal of $K$ gives rise to a prime value of $N_{K/\Qq}$. 

The groundbreaking work of Friedlander--Iwaniec \cite{FriedlanderIwaniec} shows that the polynomial $X_1^2+X_2^4$ takes the expected number of prime values. Along with the work of Heath-Brown \cite{HB} on $X_1^3+2X_2^3$ (and its generalizations due to Heath-Brown and Moroz \cite{HBMorozI}, \cite{HBMorozII} and the recent work of Heath-Brown--Li \cite{HBLi} on $X^2+p^4$), these are the only known examples of a set of polynomial values containing $O(x^{c})$ elements less than $x$ (for some constant  $c<1$) which contain infinitely many prime values. A key feature in the proofs are the fact these polynomials are closely related to norm forms; $N_{\Qq(i)/\Qq}(X_1+X_2^2i)=X_1^2+X_2^4$ and $N_{\Qq(\sqrt[3]{2})/\Qq}(X_1+X_2\sqrt[3]{2})=X_1^3+2X_2^3$. This allows structure of the prime factorization in the number field to be combined with bilinear techniques to count primes in these cases. 

The paper of Heath-Brown \cite{HB} suggested that one might hope to utilize similar techniques when considering higher degree norm forms with appropriate variables set equal to zero. We address this problem in this paper, thereby giving further examples of thin polynomials which represent infinitely many primes.
\begin{thrm}\label{thrm:MainTheorem}
Let $n$, $k$ be positive integers. Let $f\in\Zz[X]$ be a monic irreducible polynomial of degree $n$ with root $\omega\in\mathbb{C}$. Let $K=\Qq(\omega)$ be the corresponding number field of degree $n$, and let $N_{K}\in\Zz[X_1,\dots,X_{n-k}]$ be the `incomplete norm form'
\[N_{K}(\ab)=N_{K}(a_1,\dots,a_{n-k})=N_{K/\Qq}\Bigl(\sum_{i=1}^{n-k}a_i\omega^{i-1}\Bigr).\]
If $n\ge 4k$ then as $X\rightarrow\infty$ we have
\[\#\{\ab\in[1,X]^{n-k}: N_{K}(\ab)\text{ prime}\}=\Bigl(\Sf+o(1)\Bigr)\frac{X^{n-k}}{n\log{X}}\]
where 
\begin{align*}
\Sf&=\prod_{p}\Bigl(1-\frac{\nu(p)}{p^{n-k}}\Bigr)\Bigl(1-\frac{1}{p}\Bigr)^{-1},\\
\nu(p)&=\#\{1\le a_1,\dots,a_{n-k}\le p:\,N_{K}(\ab)\equiv0\Mod{p}\}.
\end{align*}
All implied constants depend only on $\omega$ and are effectively computable.
\end{thrm}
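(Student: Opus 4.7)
The plan is to apply (a variant of) Harman's sieve to the sequence indexed by $\mathbf{a} \in [1,X]^{n-k}$, weighted by the characteristic function of $N_K(\mathbf{a})$ being prime, thereby reducing the theorem to establishing Type I and Type II estimates in the style of Friedlander--Iwaniec and Heath-Brown.

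First I would reinterpret the sum in the ring of integers. Writing $\alpha(\mathbf{a}) = \sum_{i=1}^{n-k} a_i \omega^{i-1}$, each $\mathbf{a}$ corresponds to an element of the rank $n-k$ sublattice $\Lambda \subset \mathcal{O}_K$ spanned by $1, \omega, \ldots, \omega^{n-k-1}$, with $N_K(\mathbf{a}) = N_{K/\mathbb{Q}}(\alpha(\mathbf{a}))$. Since $N(\alpha)$ is a rational prime precisely when $(\alpha)$ is a degree $1$ prime ideal, the problem reduces to counting degree $1$ prime ideals having a generator in $\Lambda \cap B$ for a suitable box $B \subset K \otimes \mathbb{R}$.

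For the Type I estimate one must bound $|\{\alpha \in \Lambda \cap B : d \mid N(\alpha)\}|$ for $d$ up to a fixed power of $X$. Decomposing by the ideal $\mathfrak{d} \mid (\alpha)$ of norm $d$, this reduces to lattice-point counts in $\Lambda \cap \mathfrak{d}$ inside $B$, which can be handled by standard geometry of numbers since $\Lambda \cap \mathfrak{d}$ has a controllable determinant and successive minima.

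The Type II estimate is the heart of the argument. Here I would exploit multiplicativity of the norm: when $N(\alpha) = mn$ with $m,n$ of appropriate size, generically $(\alpha) = \mathfrak{b}\mathfrak{c}$ with $N(\mathfrak{b}) = m$, $N(\mathfrak{c}) = n$, and, after fixing ideal classes and choosing generators, one writes $\alpha = u\beta\gamma$ with $\beta,\gamma \in \mathcal{O}_K$ and $u$ a unit. The Type II sum then takes the bilinear shape
\[
\sum_{\beta,\gamma} a_\beta b_\gamma \, \mathbf{1}\bigl(u\beta\gamma \in \Lambda \cap B\bigr),
\]
and a Cauchy--Schwarz step reduces matters to bounding, on average over pairs $(\gamma_1,\gamma_2)$, the quantity $|M_{\gamma_1} \cap M_{\gamma_2} \cap B'|$, where $M_\gamma := \{\beta \in \mathcal{O}_K : \beta\gamma \in \Lambda\}$ is a sublattice of $\mathcal{O}_K$ of rank $n-k$, and $B'$ is a box of the expected size.

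I expect the main obstacle to be the control of those pairs $(\gamma_1,\gamma_2)$ for which $M_{\gamma_1} \cap M_{\gamma_2}$ is highly skewed, that is, contains many more short vectors than its determinant would predict. On the generic pair the volume heuristic gives the correct bound, but skew pairs can swamp the estimate and must be shown to be rare. I would combine Minkowski-type bounds on successive minima with the algebraic observation that such skewness forces a low-height polynomial relation on $(\gamma_1,\gamma_2)$, so that the exceptional set is parametrised by an affine subvariety of $\mathcal{O}_K \times \mathcal{O}_K$ of bounded degree, whose integral points are counted using elementary height estimates. Balancing the $k$ codimension conditions on $\beta$ against the saving required from Harman's sieve should yield the hypothesis $n \geq 4k$, after the standard passage through class-group averaging and routine post-processing to convert the sieve output into the precise asymptotic with singular series $\mathfrak{S}$.
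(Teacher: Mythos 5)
Your sketch captures the correct skeleton of the argument (Harman's sieve, Type I via geometry of numbers, Type II via a dispersion step, rarity of skewed lattices), but two of its central steps do not go through as stated.

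First, applying Cauchy--Schwarz directly to $\sum_{\beta,\gamma}a_\beta b_\gamma\,\mathbf{1}(u\beta\gamma\in\Lambda\cap B)$ produces an upper bound of the same order as the main term; it cannot by itself yield an asymptotic. One must first subtract a model for the $\gamma$-coefficients and apply Cauchy--Schwarz to the residual. The paper introduces a sieve-weight approximation $\tilde{\mathbf{1}}_{\mathcal{R}_2}$, evaluates $\sum a_\beta\tilde{\mathbf{1}}_{\mathcal{R}_2}$ via the Type I bound, and then bounds $\sum_{\mathbf{a}}|\sum_{\mathbf{b}}(\mathbf{1}_{\mathcal{R}_2}-\tilde{\mathbf{1}}_{\mathcal{R}_2})|^2$. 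Showing this residual $L^2$ quantity is small requires, after localizing $\mathbf{b}$ to hypercubes and residue classes, a Siegel--Walfisz theorem for Hecke Gr\"ossencharacters; and the explicit treatment of a possible exceptional zero (the quadratic character $\chi^*$ threading through $\mathcal{A}$, $\mathcal{B}$, and $\tilde{\mathbf{1}}_{\mathcal{R}_2}$) is what makes the implied constants effective, as the theorem claims. None of this analytic machinery appears in your outline.

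Second, ``elementary height estimates on an affine subvariety of bounded degree'' substantially understates what is needed for the skewed-pair count, and would not yield the exponent $n\geq 4k$. The lattice $\Lambda_{\mathbf{b}_1,\mathbf{b}_2}$ can degenerate for two distinct reasons: $\|\wedge(\mathbf{b}_1,\mathbf{b}_2)\|$ small (Archimedean) or $D_{\mathbf{b}_1,\mathbf{b}_2}$ large (all $2k\times2k$ minors share a large common divisor, non-Archimedean). The Archimedean case is controlled by an induction on the box size exploiting the duality $\mathbf{v}\in\Lambda_{\mathbf{b}_1,\mathbf{b}_2}\Leftrightarrow\mathbf{b}_1,\mathbf{b}_2\in\Lambda_{\mathbf{v}}$ together with a nontrivial structural lemma (the difference-equation/generalized-Vandermonde argument of Lemma~\ref{lmm:LinearSubspaceBound}) bounding the dimension of the locus $\wedge(\cdot,\mathbf{b})=\mathbf{0}$; the non-Archimedean case requires a sharp count over $\mathbb{F}_p$ of $\mathbf{b}$ with $\wedge(\mathbf{b})\equiv\mathbf{0}$, proved from the explicit shift structure of the multiplication matrix, not from generic point-counting on varieties. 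It is precisely the dimension count in these lemmas (which worsens from $k$ to $2k-1$ when passing from $\mathbb{Q}(\sqrt[n]{\theta})$ to general $\mathbb{Q}(\omega)$) that forces $n\geq 4k$, so a black-box appeal to bounded-degree subvarieties would not locate this threshold.
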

\begin{thrm}\label{thrm:LowerBound}
Let $n$, $k$ be positive integers. Let $f(X)=X^n-\theta\in\Zz[X]$ be irreducible, $K=\Qt$ and $N_{K}(\ab)=N_{K/\Qq}(\sum_{i=1}^{n-k}a_i\Ti)$, as in Theorem \ref{thrm:MainTheorem} in the case $f(X)=X^n-\theta$.

If $n\ge 22k/7$ and $X$ is sufficiently large then 
\[\#\{\ab\in[1,X]^{n-k}: N_K(\ab)\text{ prime}\}\gg \Sf\frac{X^{n-k}}{\log{X}}.\]
All implied constants depend only on $\theta$ and are effectively computable, and $\Sf$ is the constant defined in Theorem \ref{thrm:MainTheorem}.
\end{thrm}
A sieve upper bound shows $\#\{\ab\in[1,X]^{n-k}: N_K(\ab)\text{ prime}\}\ll \Sf X^{n-k}/\log{X}$, and so the lower bound in Theorem \ref{thrm:LowerBound} is of the correct order of magnitude. We note $22/7=3.14\ldots<4$.

Theorems \ref{thrm:MainTheorem} and \ref{thrm:LowerBound} give examples of sets of polynomial values containing roughly $x^{1-k/n}$ elements less than $x$ which contain many primes. We obtain an asymptotic for the number of primes in the sets of Theorem \ref{thrm:MainTheorem} which contain $\gg x^{3/4}$ values less than $x$, and a lower bound of the correct order of magnitude for the sets of Theorem \ref{thrm:LowerBound} which contain $\gg x^{15/22}$ elements. By way of comparison, the Friedlander--Iwaniec polynomial $X_1^2+X_2^4$ takes roughly $x^{3/4}$ values less than $x$, which is at the limit of the range for asymptotic estimates in Theorem \ref{thrm:MainTheorem}, whilst Heath-Brown's polynomial $X_1^3+2X_2^3$ takes roughly $x^{2/3}$ values less than $x$, which is thinner than the sets considered in Theorem \ref{thrm:MainTheorem} or Theorem \ref{thrm:LowerBound}.

By virtue of being homogeneous, the algebraic structure of the polynomials considered in Theorems \ref{thrm:MainTheorem} and \ref{thrm:LowerBound} are simpler in some key aspects to the Friedlander--Iwaniec polynomial $X_1^2+X_2^4$; much of the paper \cite{FriedlanderIwaniec} is spent employing sophisticated techniques to handle sums twisted by a quadratic character caused by the non-homogeneity. In our situation the key multiplicative machinery is instead just a Siegel-Walfisz type estimate for Hecke $L$-functions. (The fact that $n>3k$ means that characters of large conductor do not play a role, and so we don't even require a large sieve type estimate as in \cite{HB}.) On the other hand, the fact that we consider polynomials in an arbitrary number of variables and with multiple coordinates of the norm form set to 0 introduces different complications of a geometric nature. It is handling such issues which is the key innovation of this paper. In particular, if just one coefficient were set to equal zero then the result would follow from an adaption of the paper of Heath-Brown. Thus the fact that we are able take a moderately large positive proportion of the coefficients to be equal to zero should be viewed as the key feature of Theorem \ref{thrm:MainTheorem}.

Unlike the previous estimates, the implied constants in Theorems \ref{thrm:MainTheorem} and \ref{thrm:LowerBound} are effectively computable. This is a by-product of the fact we explicitly treat the contribution of a possible exceptional quadratic character in order to be able to utilize a Siegel-Walfisz type estimate in a slightly wider range of uniformity of conductor. This extra range of uniformity enables us to restrict ourselves to simpler algebraic estimates.

In view of the results of Friedlander--Iwaniec and Heath-Brown, the restrictions of $n\ge 4k$ and $n\ge 22k/7$ in Theorem \ref{thrm:MainTheorem} and Theorem \ref{thrm:LowerBound} might seem unnatural at first sight, but it turns out that these are natural barriers to any simple argument used to establish `Type I' and `Type II' estimates. If one simply bounds the naturally occurring error terms by their absolute values without showing genuine cancellations, then one can only hope to obtain `Type I' and `Type II' estimates in certain restricted ranges depending on the density of the sequence. Heath-Brown \cite{HB} obtains an asymptotic in a sparser sequence precisely because he is able to treat the error terms arising in a non-trivial manner. We discuss this further in Section \ref{sec:L2}.

With more care one could give a quantitative bound to the $o(1)$ error term appearing in Theorem \ref{thrm:MainTheorem}.
\section{Outline of the proof}\label{sec:Outline}
In the interest of clarity, we prove Theorem \ref{thrm:MainTheorem} and Theorem \ref{thrm:LowerBound} together in the case of $K=\Qt$ in Sections \ref{sec:Initial}-\ref{sec:L2}, and then in Section \ref{sec:Generalized} we sketch the few modifications to the argument required to obtain Theorem \ref{thrm:MainTheorem} in the general case of $K=\Qq(\omega)$.

We now give a broad outline of the key steps in the proof; what we say here should be thought of as a heuristic motivation and not interpreted precisely.

Given a small quantity $\eta_1>0$ and large quantities $X_i$ of size about $X$, we let
\[\mathscr{A}=\{\ab\in\Zz^{n-k}:\,a_i\in [X_i,X_i+\eta_1 X_i]\}.\]
We establish a suitable estimate for the number of times $N_K(\ab)$ is prime for $\ab\in\mathscr{A}$ for each of these smaller sets individually. For each $\ab\in\mathscr{A}$, there is a principal ideal $(\sum_{i=1}^{n-k}a_i\Ti)$ with the same norm. Provided all elements of $\mathscr{A}$ have norm of size $\gg X^n$ (as will be the case for typical choices of the $X_i$) and provided $\eta_1$ is sufficiently small, this ideal is unique (since units are a discrete group in $K$). Thus we wish to count the number of degree 1 prime ideals in $\Af=\{(\sum_{i=1}^{n-k}a_i\Ti):\ab\in\mathscr{A}\}$, and so can use the unique factorization of ideals in $K$.

In Section \ref{sec:Sieve} we apply a combinatorial decomposition to $\Af$ based on Buchstab's identity and Harman's sieve \cite{HarmanBook}. In the case when $n>4k$ this takes the simple form
\begin{align*}
\#\{\text{prime ideals in }\Af\}=\#\{\text{$\af\in\Af$ with no prime factor of norm}<X^{n-3k-4\epsilon}\}\\-\sum_{X^{n-3k-4\epsilon}<N(\pf)<X^{n/2+2\epsilon}}\#\{\text{$\af\in\Af$ with $\pf$ the factor of smallest norm  }\}.
\end{align*}

The point of this decomposition is that we will be able to appropriately estimate terms when every ideal counted has a factor whose norm is in the interval $[X^{k+\epsilon},X^{n-2k-\epsilon}]$. This is clearly the case with the second term on the right hand side above. The first term can be repeatedly decomposed by further Buchstab iterations so that all terms count ideals with a prime factor of norm in the interval $[R,RX^{n-3k-4\epsilon}]$ (for any suitable choice of $R$), or simply count the number of ideals in $\mathscr{A}$ which are a multiple of some divisor $\df$.

Thus it suffices to obtain suitable asymptotic estimates (at least on average) for the number of ideals in $\Af$ which are a multiple of some ideal $\df$, or the number of ideals in $\Af$ with a particular type of prime factorization whenever this prime factorization ensures the existence of a conveniently sized factor. These estimates are the so-called `Type I' (linear) and `Type II' (bilinear) estimates which provide the key arithmetic content.

Our Type I estimate of Section \ref{sec:TypeI} states that
\[\sum_{N(\df)\in[D,2D]}\left|\#\{\af\in\Af:\,\df|\af\}-\frac{\rho(\df)\#\Af}{N(\df)}\right|\ll X^{n-k-1}D^{1/(n-k)+\epsilon}+D,\]
where $\rho$ is the function defined by
\[
\rho(\df)=\frac{\#\{\xb\in[1,N(\df)]^{n-k}:\,\df| (\sum_{i=1}^{n-k}x_i\Ti)\}}{N(\df)^{n-k-1}}.
\]
This allows us to accurately count the number of ideals in $\Af$ which are a multiple of an ideal of norm $O(X^{n-k-\epsilon})$ on average. Since $\#\Af\approx X^{n-k}$, we see that this range is essentially best possible.

If $\df=(\sum_{i=1}^n d_i\Ti)$ is a principal ideal in $\Zt$ and if $\Zt=\Oc_K$, then we see that the number of ideals $\af=\mathfrak{ed}$ in $\Af$ which are a multiple of $\df$ is given by
\begin{align*}
&\#\{\eb\in\Zz^{n}:\sum_{i=1}^{n-k}e_i\Ti\times\sum_{i=1}^n d_i\Ti\in\mathscr{A}\}\\
&=\{\eb\in\Zz^{n}:\db^{(i)}\cdot\eb\in [X_i,X_i+\eta_1 X_i]\text{ for }i\le n-k,\db^{(i)}\cdot \eb=0\text{ for }i>n-k\}
\end{align*}
where $\db^{(i)}$ is the $i^{th}$ row in the multiplication-by-$\sum_{i=1}^n d_i\Ti$ matrix with respect to the basis $\{\Ti\}_{1\le i\le n}$. But this is counting vectors in the lattice defined by $\db^{(i)}\cdot \eb=0\text{ for }i>n-k$ in the bounded region defined by $\db^{(i)}\cdot\eb\in [X_i,X_i+\eta_1 X_i]\text{ for }i\le n-k$.  By estimates from the geometry of numbers, the number of such points is approximately the volume of the bounded region divided by the lattice discriminant, provided the lattice and the bounded region are not too skewed. Our Type I estimate then follows from showing that the number of skewed lattices is rare. (Small technical modifications are made to deal with $\df$ in other ideal classes and if $\Zt\ne\Oc_K$.)

The argument then relies on establishing a suitable Type II estimate, which is the main part of the paper. Given integers $\ell'\le \ell$ and a polytope $\Rc\subseteq\Rr^\ell$ such that any $\eb\in\Rc$ has $e_i\ge \epsilon^2$ for all $i$ and $k+\epsilon\le \sum_{i=1}^{\ell'}e_i\le n-2k-\epsilon$, our Type II estimate obtains an asymptotic for the sum
\[\sum_{\af\in\Af}\1_{\Rc}(\af)\]
where
\[\1_{\Rc}(\af)=\begin{cases}
1,\qquad &\af=\pf_1\dots \pf_\ell, \,N(\pf_i)=X^{e_i},\,(e_1,\dots,e_\ell)\in\Rc,\\
0,&\text{otherwise}.
\end{cases}\]
 This sum counts ideals in $\Af$ with a given number of prime factors each of a given size, and the condition that $k+\epsilon\le \sum_{i=1}^{\ell'}e_i\le n-2k-\epsilon$ implies that $\af$ has a `conveniently sized' ideal factor. By performing a decomposition to $\Rc$ we may assume that $\Rc=\Rc_1\times\Rc_2$ for two polytopes $\Rc_1,\Rc_2$ with $\Rc_2$ corresponding to the conveniently sized factor. We are left to estimate the bilinear sum
\[\sum_{\af\bfr\in\Af}\1_{\Rc_1}(\af)\1_{\Rc_2}(\bfr).\]
We estimate this sum using a combination of $L^1$ and $L^2$ bounds. We introduce an approximation $\tilde{\1}_{\Rc_2}(\bfr)$ to $\1_{\Rc_2}(\bfr)$, which is a sieve weight designed to have the same distributional properties as $\1_{\Rc_2}(\bfr)$. The sums $\sum_{\af\bfr\in\Af}\1_{\Rc_1}(\af)\tilde{\1}_{\Rc_2}(\bfr)$ can then be estimated using our Type I estimates, and give the expected asymptotic.

To show the error in this approximation is small we use Linnik's dispersion method to exploit the bilinear structure. By Cauchy-Schwarz and using $\1_{\Rc_1}(\af)\le 1$, we are left to bound
\[\sum_{\af}\Bigl|\sum_{\bfr:\af\bfr\in\Af}\Bigl(\1_{\Rc_2}(\bfr)-\tilde{\1}_{\Rc_2}(\bfr)\Bigr)\Bigr|^2.\]
Writing $g_{\bfr}=\1_{\Rc_2}(\bfr)-\tilde{\1}_{\Rc_2}(\bfr)$, expanding the square and swapping the order of summation we are left to estimate
\[\sum_{\bfr_1,\bfr_2}g_{\bfr_1}g_{\bfr_2}\sum_{\substack{\af\\\bfr_1\af,\bfr_2\af\in\Af}}1.\]
If $\bfr_1,\bfr_2$ are both principal and $\af=(\sum_{i=1}^{n}a_i\Ti)$ for some $\ab=(a_1,\dots,a_n)\in\Zz^n$ then each condition $\af\bfr_1\in\Af$ and $\af\bfr_2\in\Af$ imposes $k$ linear constraints on $\ab$ (with coefficients of these linear constraints depending on the coefficients of the $\bfr_i$). For generic $\bfr_1,\bfr_2$ these constraints are linearly independent, and so $\ab$ will be constrained to lie in a bounded region in a rank $n-2k$ lattice. Using the geometry of numbers again, the number of such $\ab$ is roughly the volume of the region divided by the lattice discriminant, provided neither are too skewed. An iterative argument shows that the number of skewed lattices here is acceptably small.

To finish the estimate we have to show suitable cancellation in the sum
\[\sum_{\bfr_1,\bfr_2}\frac{g_{\bfr_1}g_{\bfr_2}\vol(\mathscr{R}_{\bfr_1,\bfr_2})}{\det(\Lambda_{\bfr_1,\bfr_2})},\]
where $\mathscr{R}_{\bfr_1,\bfr_2}$ and $\Lambda_{\bfr_1,\bfr_2}$ are the bounded region and lattice which $\ab$ was constrained to. The volume of the bounded region is continuous, and so plays a minor role. More significant complications occur in showing that those $\bfr_1,\bfr_2$ for which $\det(\Lambda_{\bfr_1,\bfr_2})$ is small make a negligible contribution. 

The determinant can be small if a certain vector of polynomials in the coefficients of $\bfr_1,\bfr_2$ is small in either the Euclidean metric or a $p$-adic metric. To show this is only rarely the case we obtain a (sharp) bound on the dimension of the corresponding variety given by these polynomials. We obtain this by elementary algebraic means by exploiting the simple explicit description of multiplication of elements in the order $\Zt$.

Having shown that only those $\bfr_1,\bfr_2$ for which $\Lambda_{\bfr_1,\bfr_2}$ has determinant almost as large as possible make a contribution, we can localize the coefficients of $\bfr_1,\bfr_2$ to a small region in the Euclidean metric and $p$-adic metrics for small $p$. Once localized in this way, the denominator no longer plays an important role. The remaining sum then factors, so we are ultimately left to show cancellation in
\[\sum_{\bfr}'g_{\bfr}\]
where $\sum'$ indicates the coefficients are localized to a small box and an arithmetic progression. Recalling that $g_{\bfr}=\1_{\Rc_1}(\bfr)-\tilde{\1}_{\Rc_2}(\bfr)$, we can show such an estimate using a Siegel-Walfisz type bound for Hecke characters. In avoiding some algebraic considerations, we require uniformity in the conductor to be slightly larger than a fixed power of a logarithm in the norm of the ideals considered, and this requires us to take explicit account of possible fluctuations caused by a Siegel zero throughout the argument.
\section{Notation}\label{sec:Notation}
We view $n$, $k$, $\theta$ and $K=\Qt$ (or $K=\Qq(\omega)$ in Section \ref{sec:Generalized}) as fixed throughout the paper. All constants implied by $O(\cdot)$, $o(\cdot)$, $\ll$ and $\gg$ may depend on $\theta$ (and hence may depend on $n$ and $k$, since $3k+1\le n$ and $n$ is the degree of $\theta$). All asymptotic notation should be interpreted in the limit as $X\rightarrow \infty$.

Throughout the paper we let $\epsilon$ be a small but fixed (i.e. independent of $X$) positive constant which is always assumed to be sufficiently small in terms of $n$ and $k$. Our implied constants will not depend on $\epsilon$ unless explicitly stated, but we will assume $\epsilon\ge 1/\log\log{X}$ to avoid too many dependencies in our error terms. We let $\Delta_K$ be the discriminant of the field $K$, $\phi_K(\af)=N(\af)\prod_{\pf|\af}(1-N(\pf)^{-1})$, and $\gamma_K$ the residue of $\zeta_K(s)$ at $s=1$.

By abuse of notation we write $N=N_{K/\Qq}$ for the norm form on ideals of $K$, and for algebraic integers of $K$. We let $N_K(\xb)$ be the polynomial in $n-k$ variables $x_1,\dots,x_{n-k}$ which coincides with $N_{K/\Qq}(\sum_{i=1}^{n-k} x_i\Ti)$ on integers.

We use lower Gothic script (e.g. $\af$, $\bfr$, $\dots$) to denote integral ideals of $K$, and $\pf$ to denote a prime ideal of $K$. Algebraic integers in $\Oc_K$ will typically be written in Greek lower case (e.g. $\alpha,\beta,\dots$) and $(\alpha)$ is used to denote the principal ideal generated by $\alpha$. Vectors will be denoted by roman bold lower case (e.g. $\ab,\bb,\dots$) and we have endeavored to use consistent notation across vectors, integers and ideals referring to related objects so that $\bfr=(\beta)$ for the principal ideal generated by $\beta=\sum_{i=1}^n b_i\Ti$ for some vector $\bb$. We let $\|\bb\|=\sqrt{\sum_i b_i^2}$ denote the usual Euclidean norm. 

\section{Basic Estimates}
We recall some results from the geometry of numbers and Minkowski's theory of successive minima. We recall that a \textit{lattice} in $\mathbb{R}^k$ is a discrete subgroup of the additive group $\mathbb{R}^k$.
\begin{lmm}[Minkowski-reduced basis]\label{lmm:Basis}
Let $\Lambda\subseteq\mathbb{R}^k$ be a lattice. Then there is a set $\{\mathbf{v}_1,\dots,\mathbf{v}_r\}$ of linearly independent vectors in $\mathbb{R}^k$ such that
\begin{enumerate}
\item $\{\mathbf{v}_1,\dots,\mathbf{v}_r\}$ is a basis:
\[
\Lambda=\mathbf{v}_1\mathbb{Z}+\dots+\mathbf{v}_r\mathbb{Z},
\]
\item The $\mathbf{v}_i$ are quasi-orthogonal: For any $x_1,\dots,x_r\in\mathbb{R}$ we have
\[
\|x_1\mathbf{v}_1+\dots+x_r\mathbf{v}_r\|\asymp \sum_{i=1}^r\|x_i\mathbf{v}_i\|,
\]
\item The sizes of the $\mathbf{v}_i$ are controlled by successive minima: If $\lambda_1\le \lambda_2\dots\le \lambda_r$ are the successive minima of $\Lambda$, then $\|\mathbf{v}_i\|\asymp \lambda_i$ for all $i$. In particular,
\[
\|\mathbf{v}_1\|\cdots\|\mathbf{v}_r\|\asymp \det(\Lambda),
\]
\end{enumerate}
The implied constants above depend only on the ambient dimension $k$. Here $\det(\Lambda)$ is the $r$-dimensional volume of the fundamental parallelepiped, given by
\[
\Bigl\{\sum_{i=1}^r x_i\mathbf{v}_i:\,x_1,\dots,x_r\in[0,1]\Bigr\},
\]
and the $j^{th}$ successive minimum is the smallest quantity $\lambda_j$ such that $\Lambda$ contains $j$ linearly independent vectors of norm at most $\lambda_j$.
\end{lmm}
\begin{proof}
this follows from \cite[Page 110]{Davenport2} or \cite[Chapter 1]{Cassels}. Explicitly, let $\mathbf{a}_1,\dots \mathbf{a}_{r}$ be chosen in turn such that $\mathbf{a}_j$ is a shortest vector of $\Lambda$ which is linearly independent from $\mathbf{a}_1,\dots,\mathbf{a}_{j-1}$. Then $\|\mathbf{a}_i\|=\lambda_i$ and the $\mathbf{a}_i$ are linearly independent by definition. By \cite[Page 13, Corollary 2]{Cassels} there is then an integral basis $\mathbf{v}_1,\dots,\mathbf{v}_r$ of $\Lambda$ with $\mathbf{v}_j=\sum_{i=1}^{j-1}\mu_{i,j}\mathbf{v}_i+\mu_{j,j}\mathbf{a}_j$ for some constants $|\mu_{i,j}|\le 1$. In particular, $\|\mathbf{v}_i\|\ll \lambda_i$ by the triangle inequality. Since $\{\mathbf{v}_i\}$ is a basis, $\mathbf{v}_j$ is linearly independent of $\{\mathbf{a}_1,\dots,\mathbf{a}_{j-1}\}$. Thus $\|\mathbf{v}_j\|\ge \lambda_j$ by minimality of $\|\mathbf{a}_j\|$, and so we have $\|\mathbf{v}_j\|\asymp\lambda_j$. By Minkowski's second Theorem (see, for example \cite[Page 205, Theorem 1]{Cassels}) we have that $\det(\Lambda)\asymp \lambda_1\cdots \lambda_r$, so $\det(\Lambda)\asymp \|\mathbf{v}_1\|\cdots \|\mathbf{v}_r\|$. Trivially we have that $\|\sum_{i=1}^r x_i\mathbf{v}_i\|\ll \sum_{i=1}^r\|x_i\mathbf{v}_i\| \ll \|x_j\mathbf{v}_j\|$ for some $1\le j\le r$. Let $\mathbf{v}_j=\mathbf{v}_j'+\mathbf{v}_j''$ where $\mathbf{v}_j''\in \mathbb{R}^k$ is linearly dependent on the other $\mathbf{v}_i$, and where $\mathbf{v}_j'\in\mathbb{R}^k$ is orthogonal to the other $\mathbf{v}_i$. We then have that
\[
\lambda_1\cdots \lambda_r\asymp\det(\Lambda)=\det(\mathbf{v}_1|\cdots |\mathbf{v}_r)=\det(\mathbf{v}_1|\cdots|\mathbf{v}_j'|\cdots|\mathbf{v}_r)\ll \|\mathbf{v}_j'\| \prod_{i\ne j}\lambda_i.
\]
Thus $\|\mathbf{v}_j'\|\gg \lambda_j\gg \|\mathbf{v}_j\|$. But since $\mathbf{v}_j'$ is orthogonal to the other $\mathbf{v}_i$, we have $\|\sum_{i=1}^r x_i\mathbf{v}_i\|\ge \|x_j\mathbf{v}_j'\|\gg \|x_j\mathbf{v}_j\|$, as required. Together this gives the result.
\end{proof}
We see the properties of the Minkowski-reduced basis above indicate that each generating vector $\mathbf{v}_i$ has a positive proportion of its length in a direction orthogonal to all the other basis vectors.

\begin{lmm}[Well-sized generators]\label{lmm:UnitSize}
Let $\af$ be a principal ideal. Then there is a generator $\alpha$ of $\af$ such that 
\[
|\alpha^\sigma|\ll N(\af)^{1/n}
\]
for all embeddings $\sigma:k\hookrightarrow\mathbb{C}$. In particular, $\alpha=(\theta n)^{-n}\sum_{i=1}^n a_i \Ti$ for some integers $a_i\ll N(\af)^{1/n}$.
\end{lmm}
\begin{proof}
Let $\alpha\in \mathcal{O}_K$ and let $K$ have $r_1$ real embeddings and $r_2$ complex ones. The Minkowski embedding sends $\alpha$ to the vector $(\log|\alpha^\sigma|)_\sigma\in\mathbb{R}^{r_1+r_2}$ indexed by embeddings $\sigma$ of $K$, where one only considers one of the two complex conjugate embeddings. The set of units of $\mathcal{O}_K$ is sent to a rank $r_1+r_2-1$ lattice of determinant $O(1)$ in the trace 0 hyperplane. $\alpha\in\mathcal{O}_K$ is sent to a point $\xb$ with trace $\log{N(\alpha)}$. Note that $\xb-\log{N(\alpha)}/n$ is a point of trace 0. By Minkowski's convex body theorem (or Lemma \ref{lmm:Davenport}), there is a point $\eb$ in the lattice such that
\[
|\eb+\xb-\frac{\log{N(\alpha)}}{n}|\le C,
\]
for some suitably large constant $C$. We then see that $\eb$ is the image of a unit $\epsilon$, and $\alpha\epsilon$ satisfies the required properties.
\end{proof}

\begin{lmm}[Prime Ideal Theorem]\label{lmm:PrimeIdeal}
There is a constant $c>0$ such that
\[
\sum_{N(\af)\le X}\Lambda(\af)=X+O(X\exp(-c\sqrt{\log{X}})).
\]
\end{lmm}

\begin{lmm}[Zero free region apart from Siegel Zeros]\label{lmm:ZeroFree}
There is at most one modulus $\df^*$ with $N(\df^*)\le \exp(\sqrt{\log{X}})$ and at most one primitive Hecke character $\chi_{\df^*}\Mod{\df^*}$ such that the Hecke $L$-function $L(s,\chi_{\df^*})$ has a zero in the region
\[\Bigl\{s=\sigma+it:\sigma\ge 1-\frac{c}{\sqrt{\log{X(2+|t|)}}}\Bigr\}.\]
Here $c>0$ is a fixed small constant. This character, if it exists, must be a real quadratic character and the corresponding $L$-function has a unique real simple zero $\beta_{\df^*}$ in the above region. The modulus $\df^*$ in this case must satisfy $N(\df^*)>(\log{X})^\epsilon$ and $\df^*$ must be square-free apart from a factor of norm $O(1)$. 
\end{lmm}

\begin{lmm}[Prime Ideal Theorem with Hecke characters]\label{lmm:TwistedPrimeIdeal}
Let $\chi\ne \chi_{\df^*}$ be a non-trivial primitive Hecke character with $\chi=\chi_1\chi_2$ where $\chi_1$ is the torsion part of $\chi$ and $\chi_2$ is torsion-free. Letting $\lambda_1,\dots,\lambda_{n-1}$ be a basis of the torsion-free characters, we have that $\chi_2=\prod_{i=1}^{n-1}\lambda_i^{m_i}$ for some integers $m_i$. Then we have 
\[
\sum_{N(\af)\le X}\Lambda(\af)\chi(\af)\ll X\exp(-c\sqrt{\log{X}})
\]
uniformly over all such primitive $\chi=\chi_1\chi_2\ne \chi_{\df^*}$ of conductor $\le \exp(\sqrt{\log{X}})$ and with $m_i\ll \exp(\sqrt{\log{X}})$ for all $1\le i\le n-1$. In the case $\chi=\chi_{\df^*}$ we have
\[
\sum_{N(\af)\le X}\Lambda(\af)\chi_{\df^*}(\af)=\frac{-X^{\beta_{\df^*}}}{\beta_{\df^*}}+O(X\exp(-c\sqrt{\log{X}})).
\]
\end{lmm}
\begin{proof}[Proof of Lemma \ref{lmm:PrimeIdeal}, \ref{lmm:ZeroFree} and \ref{lmm:TwistedPrimeIdeal}]
 See \cite[Theorem 1.9]{Weiss}, for example.
\end{proof}
\begin{lmm}[Growth of Hecke $L$-series]\label{lmm:LGrowth}
Let $r_1$ and $2r_2$ denote the number of real and complex embeddings of $K$, and let $\lambda_1,\dots,\lambda_{r_1+r_2-1}$ be a basis for the torsion-free Hecke characters. Let $\chi$ be a Hecke character of conductor $\qf$, and let $q=N(\qf)$. Then $\chi$ factors as $\chi=\chi_1\lambda_1^{m_1}\dots \lambda_{r_1+r_2-1}^{m_{r_1+r_2-1}}$ where $\chi_1$ is a class character mod $q$ and $m_1,\dots,m_{r_1+r_2-1}\in\mathbb{Z}$. Then we have
\[
L(1-\sigma+it,\chi)\ll_\epsilon \Bigl(\Bigl(1+|t|+\sum_{i=1}^{r_1+r_2-1}|m_i|\Bigr)q\Bigr)^{n\sigma/2+\epsilon}
\]
for $|\sigma+it|\ge 1/10$. The implies constant depends on $K$ and the choice of basis $\lambda_1,\dots,\lambda_{r_1+r_2-1}$.
\end{lmm}
\begin{proof}
This follows from the Phragm\'en-Lindlel\"of principle - see \cite[Equation 1.2.8]{Duke}, for example. 
\end{proof}

\begin{lmm}[Lower bound in zero-free type region]\label{lmm:ZetaLower}
There is a constant $c_K>0$ such that for $\sigma>1-c_K/\log{t}$ we have
\[
\frac{1}{\zeta_K(\sigma+it)}\ll \log(2+|t|)+\frac{1}{|1-\sigma -it|}.
\]
\end{lmm}
\begin{proof}
This follows from \cite[Lemma $b$ and $\gamma$]{Titchmarsh} and \cite{Coleman}, for example.
\end{proof}
\section{Initial Manipulations}\label{sec:Initial}
We begin our first steps in the proof of Theorem \ref{thrm:MainTheorem} and Theorem \ref{thrm:LowerBound} for $K=\Qt$. Here we use a simple decomposition to reduce our problem to counting principal prime ideals whose generators are localized. We may assume without loss of generality that $\theta$ is a positive integer if $n$ is odd. We note that $N_K(X,1,0,\dots,0)=X^n-\theta$ has no fixed prime divisor, and so $N_K$ does not have a fixed prime divisor (in particular, $\mathfrak{S}\ne 0$). We wish to reduce the proof to the following proposition, where we set
\begin{equation}
\eta_1=(\log{X})^{-100}\label{eq:Eta1Def}.
\end{equation}
\begin{prpstn}[Localized prime ideal counts]\label{prop:InitialProp}
Let $\mathscr{R}=\{\xb\in\Rr^{n-k}:\,x_i\in [X_i,(1+\eta_1)X_i]\}$ be a hyperrectangle fully contained in $\{\xb\in\Rr^k:\epsilon X\le x_i\le X, N_K(\xb)\ge \epsilon X^n\}$. Let
\[\Ac(\ab_0)=\Bigl\{\Bigl(\sum_{i=1}^{n-k}a_i\Ti\Bigr):\,\ab\in\mathscr{R}\cap\Zz^{n-k},\,\ab\equiv\ab_0\Mod{q^*}\Bigr\}.\]
Then if $n\ge 4k$ we have
\[\sum_{\ab_0\in[1,q^*]^{n-k}}\#\{\af\in\Ac(\ab_0):\,\pf|\af\Rightarrow N(\pf)>X^{n/2+\epsilon}\}=\Bigl(\Sf+O(\epsilon^{1/n})\Bigr)\frac{\#(\mathscr{R}\cap\Zz^{n-k})}{n\log{X}},\]
and if $n\ge 22k/7$ we have 
\[\sum_{\ab_0\in[1,q^*]^{n-k}}\#\{\af\in\Ac(\ab_0):\,\pf|\af\Rightarrow N(\pf)>X^{n/2+\epsilon}\}\gg \frac{\#(\mathscr{R}\cap\Zz^{n-k})}{\log{X}}.\]
\end{prpstn}
We note that since we are summing over all relevant choices of $\ab_0$, the restrictions $\mod{q^*}$ in Proposition \ref{prop:InitialProp} are somewhat artificial. We have included them since we will consider each $\mathbf{a}_0$ separately in our later analysis. As an intermediate step, we establish the following lemma from Proposition \ref{prop:InitialProp}.
\begin{lmm}[Localised prime counts in $\mathcal{O}_K$]\label{lmm:Localised}
Let $\mathscr{R}=\{\xb\in\Rr^{n-k}:\,x_i\in [X_i,(1+\eta_1)X_i]\}$ be a hyperrectangle fully contained in $\{\xb\in\Rr^k:\epsilon X\le x_i\le X, N_K(\xb)\ge \epsilon X^n\}$. Let $\mathscr{A}'(\mathbf{a}_0)\subseteq\mathcal{O}_K$ be given by
\[\mathscr{A}'(\ab_0)=\Bigl\{\sum_{i=1}^{n-k}a_i\Ti:\,\ab\in\mathscr{R}\cap\Zz^{n-k},\,\ab\equiv\ab_0\Mod{q^*}\Bigr\}.\]
Then if $n\ge 4k$ we have
\[\sum_{\ab_0\in[1,q^*]^{n-k}}\#\{\alpha\in\mathscr{A}'(\ab_0):\,\text{$N(\alpha)$ prime}\}=\Bigl(\Sf+O(\epsilon^{1/n})\Bigr)\frac{\#(\mathscr{R}\cap\Zz^{n-k})}{n\log{X}},\]
and if $n\ge 22k/7$ we have 
\[\sum_{\ab_0\in[1,q^*]^{n-k}}\#\{\alpha\in\mathscr{A}'(\ab_0):\,\text{$N(\alpha)$ prime}\}\gg \frac{\#(\mathscr{R}\cap\Zz^{n-k})}{\log{X}}.\]
\end{lmm}
\begin{proof}[Proof of Lemma \ref{lmm:Localised} assuming Proposition \ref{prop:InitialProp}]
This is simply a question of converting a count of prime algebraic integers to counting principal prime ideals. We define
\[\Ac(\ab_0)=\Bigl\{ \Bigl(\sum_{i=1}^{n-k} a_i\Ti \Bigr):\,\ab\in\mathscr{A}'(\ab_0)\Bigr\}\]
to be the set of principal ideals generated by elements of $\mathscr{A}'(\ab_0)$.
 
We claim that every ideal in $\Ac(\ab_0)$ has a unique generator in $\mathscr{A}'(\ab_0)$. If $\xb\in\mathscr{R}$ then $N_K(\xb)\ge \epsilon X^n$ and $x_i\le X$ for all $i$. Thus it follows that $|\sum_{i=1}^{n-k}x_i\Ti^\sigma|\ll X$ for all embeddings $\sigma$. This gives (letting $\iota$ denote the identity embedding, and $\Sigma(K/\Qq)$ the set of embeddings of $K/\Qq$)
\[
\Bigl|\sum_{i=1}^{n-k}x_i\Ti\Bigr|=\frac{N_K(\xb)}{\prod_{\substack{\sigma \in \Sigma(K/\Qq)\\ \sigma\ne \iota}}|\sum_{i=1}^{n-k}x_i\Ti^\sigma|}\gg \epsilon X.
\]
 In particular, if $\mathbf{y},\xb\in\mathscr{R}$ then $\mathbf{y}=\xb+O(\eta_1 X)$, and so $\sum_{i=1}^{n-k}y_i\Ti/\sum_{i=1}^{n-k}x_i\Ti=1+O_\epsilon(\eta_1)$, which cannot be a non-trivial unit when $\eta_1$ is sufficiently small (since the units of $\Oc_K$ distinct from 1 are bounded uniformly away from 1). Thus there are no two associates $\sum_{i=1}^{n-k}x_i\Ti$ and $\sum_{i=1}^{n-k}y_i\Ti$ in $\mathscr{A}'(\ab_0)$. We therefore see that $\Ac(\ab_0)$ is indeed in bijection with $\mathscr{A}'(\ab_0)$.

There are $O(X^{n/2})$ prime ideals $\pf$ with $N(\pf)<X$ not prime (i.e a prime ideal of degree greater than 1), so it suffices to simply count prime ideals in $\Ac$ at the cost of an error term of size $O(X^{n/2})$. Putting this together, we see that
\[\begin{split}
\#\{\alpha\in\mathscr{A}'(\ab_0):\,N(\alpha)\text{ prime}\}=\#\{\af\in\Ac(\ab_0):\,\pf|\af\Rightarrow N(\pf)>X^{n/2+\epsilon}\}+O(X^{n/2}).
\end{split}\]
We now see that the statements of Lemma \ref{lmm:Localised} follow immediately from Proposition \ref{prop:InitialProp}, giving the result.
\end{proof}

\begin{proof}[Proof of Theorem \ref{thrm:LowerBound} and Theorem \ref{thrm:MainTheorem} for $K=\Qt$  assuming Proposition \ref{prop:InitialProp}]
We aim to reduce the statement of Theorem \ref{thrm:LowerBound} and Theorem \ref{thrm:MainTheorem} to that of Lemma \ref{lmm:Localised}, by considering the contribution from small regions separately.

The measure of $\tb\in[1,X]^{n-k}$ such that $N_K(\tb)\le \kappa X^n$ is $O(\kappa^{1/n}X^{n-k})$ uniformly in $\kappa$, and so we see that
\[
\idotsint\limits_{\substack{\tb\in[1,X]^{n-k}\\ N_K(\tb)\ge 2}}\frac{d t_1\dots d t_{n-k}}{\log{N_K(\tb)}}=(1+o(1))\frac{X^{n-k}}{n\log{X}}.
\]
Thus it suffices to show that if $n\ge 4k$ we have
\begin{equation}
\#\{\ab\in\Zz^{n-k}:\,1\le a_i\le X,\,N_K(\ab)\text{ prime}\}=\Bigl(\Sf+O(\epsilon^{1/2n})\Bigr)\idotsint\limits_{\substack{\tb\in[1,X]^{n-k}\\ N_K(\tb)\ge 2}}\frac{d t_1\dots d t_{n-k}}{\log{N_K(\tb)}},\label{eq:InitialTarget}
\end{equation}
and if $n\ge 22k/7$ then the left hand side of \eqref{eq:InitialTarget} is bounded below by a positive constant times the right hand side. 

We first consider the region 
\[\mathscr{E}=\{\xb\in\Rr^{n-k}:0\le x_i\le \epsilon X\text{ for some $i$}\}.\]
By a simple sieve upper bound (see \cite[Theorem 5.1]{HalberstamRichert}, or Lemma \ref{lmm:FundamentalLemma} of Section \ref{sec:TypeI}), the number of primes values of $N_K(a_1,\dots,a_{n-k})$ for $\ab\in\mathscr{E}\cap\Zz^{n-k}$ is $O(\epsilon X^{n-k}/\log{X})$. The contribution of $\tb\in\mathscr{E}$ to the integral on the right hand side of \eqref{eq:InitialTarget} is also $O(\epsilon X^{n-k}/\log{X})$. Thus we may restrict $\mathbf{a}$ and $\mathbf{t}$ to lie outside of $\mathscr{E}$, and so in the region where $x_i> \epsilon X$ for all $i$.

We recall from \eqref{eq:Eta1Def} that $\eta_1=(\log{X})^{-100}$. We cover the region $\{\xb\in\Rr^{n-k}:\,\epsilon X\le x_i\le X\}$ with $O(\epsilon^{-o(1)}\eta_1^{-(n-k)})$ disjoint hyperrectangles of the form $\{\xb\in\Rr^{n-k}:x_i\in (X_i,X_i+\eta_1 X_i]\}$. Again a sieve upper bound shows that the number of prime values of $N_K(a_1,\dots,a_{n-k})$ for integer vectors $\ab$ in such a hyperrectangle is $O(\eta_1^{n-k}X^{n-k}/\log{X})$. Thus the total number of prime values of $N_K$ from the $O_\epsilon(\eta_1^{-(n-k-1)})$ hyperrectangles not entirely contained within our region $\{\xb\in\Rr^{n-k}:\,\epsilon X<x_i\le X\}$ is $O_\epsilon(\eta_1X^{n-k}/\log{X})$. Similarly, we see the total contribution to the integral on the right hand side over real vectors $\tb$ in the union of such boundary hyperrectangles is $O_\epsilon(\eta_1 X^{n-k}/\log{X})$. Thus we may restrict our attention to hyperrectangles fully contained in the region $\{\xb\in\Rr^{n-k}:\,\epsilon X\le x_i\le X\}$.

We can clearly discard any hyperrectangles for which the norm is always negative, since they make no contribution to either side of \eqref{eq:InitialTarget}. We note that $\frac{\partial}{\partial x_j}N_K(x_1,\dots,x_{n-k})\ll X^{n-1}$ on $[1,X]^{n-k}$ for all $j\in\{1,\dots,n-k\}$. Thus, if $|N_K(\xb)|\le\epsilon X^n$, then all points $\mathbf{y}$ in the same hyperrectangle as $\xb$ satisfy $|N_K(\mathbf{y})|\le 2\epsilon X^n$. But there are $O(\epsilon^{1/n}X^{n-k})$ integer points $\ab\in[1,X]^{n-k}$ for which $|N_K(\ab)|\le 2\epsilon X^n$, since given any choice of $a_2,\dots,a_{n-k}\le X$, $N_K(\ab)$ is a non-zero integer polynomial of degree $n$ in $a_1$, and we see $a_1$ must lie within $O(\epsilon^{1/n}X)$ of one of the (complex) roots of this polynomial. Thus there are $O(\epsilon^{1/n-o(1)} \eta_1^{-(n-k)})$ hyperrectangles containing a point $\xb$ with $|N_K(\xb)|\le \epsilon X^n$, and the total contribution from these hyperrectangles is $O(\epsilon^{1/n-o(1)}X^{n-k}/\log{X})$. Similarly, the contribution to the integral on the right hand side from $\tb$ in the union of such hyperrectangles is $O(\epsilon^{1/n-o(1)}X^{n-k}/\log{X})$. Thus we may further restrict our attention to hyperrectangles with $N_K(\xb)\ge \epsilon X^n$ for all $\xb$ in the hyperrectangle.

Thus we only need to consider hyperrectangles fully contained in the region $\{\xb\in\Rr^{n-k}:\epsilon X\le x_i, N_K(\xb)> \epsilon X^n\}$. But for such hyperrectangles the result follows immediately from Lemma \ref{lmm:Localised}, since $N(\sum_{i=1}^k a_i\Ti)=N_K(\mathbf{a})$.
\end{proof}

Thus we are left to establish Proposition \ref{prop:InitialProp}.
\section{Sieve Decomposition}\label{sec:Sieve}
In this section we give a combinatorial decomposition of the number of primes in $\Ac$ based on Harman's sieve \cite{HarmanBook}, and reduce our result to establishing a suitable Type I and Type II estimate.

\subsection{Initial Setup}
It will be notationally convenient to fix a (slightly artificial) ordering of ideals in $K$ for this section. We first fix an ordering of prime ideals of $K$ such that $\pf_1<\pf_2$ if $N(\pf_1)<N(\pf_2)$, and we choose an arbitrary ordering of prime ideals of the same norm. We extend this to a total ordering of all ideals so that $\af<\bfr$ if $N(\af)<N(\bfr)$ whilst if $N(\af)=N(\bfr)$ we have $\af<\bfr$ if the least prime ideal factor of $\af/\gcd(\af,\bfr)$ is less than the least prime ideal factor of $\bfr/\gcd(\af,\bfr)$. Given a set of ideals $\mathcal{C}$ and an ideal $\af$, we let
\begin{align*}
\mathcal{C}_\af&=\{\bfr:\,\af\bfr\in\mathcal{C}\},\\
S(\mathcal{C},\af)&=\#\{\bfr\in\mathcal{C}:\pf|\bfr\Rightarrow \pf>\af\}.
\end{align*}
For convenience, we let $\theta=0.3182$, and we fix ideals $\mathfrak{r}_1,\mathfrak{r}_2$ chosen maximally with respect to this ordering such that 
\begin{align}
N(\mathfrak{r}_1)&\le \begin{cases}
X^{n(1-3\theta)},\qquad &n< 4k,\\
X^{n-3k-4\epsilon},&n\ge 4k,
\end{cases}\label{eq:r1Def}\\
N(\mathfrak{r}_2)&\le X^{n(1/2+\epsilon)}.
\end{align}
In particular, we see that
\[\#\{\af\in\Ac(\ab_0):\,\pf|\af\Rightarrow N(\pf)>X^{n(1/2+\epsilon)}\}=S(\Ac(\ab_0),\mathfrak{r}_2).\]
We now wish to decompose $S(\Ac(\ab),\mathfrak{r}_2)$ into various terms such that each term can either be estimated asymptotically, or the term is positive and can be dropped for a lower bound. To ease notation we suppress the dependence of $\mathcal{A}(\ab_0)$ on $\ab_0$, and so write $\mathcal{A}=\mathcal{A}(\ab_0)$. Roughly speaking, we will be able to asymptotically estimate terms of the form $S(\Ac_\df,\mathfrak{r}_1)$ when $N(\df)<X^{n-k-4\epsilon}$ and terms $S(\Ac_\df,\mathfrak{r})$ for fairly arbitrary ideals $\mathfrak{r}$ if $X^{k+\epsilon}\le N(\df)\le X^{n-2k-\epsilon}$ (this latter type we refer to as the `Type II range'). To make this precise we introduce some further notation.

To keep track of the decomposition for $\mathcal{A}$, we perform the identical decompositions to a simpler set $\Bc$, which we use to compare to $\Ac$. To account for the impact of a possible exceptional character $\chi^*$, we consider ideals with a fixed value of a real Hecke character so that the number of prime ideals in $\Bc$ fluctuates in the same manner as those in $\Ac$. Let $\af_0=(\sum_{i=1}^{n-k}(\ab_0)_i\Ti)$ be the ideal generated by the algebraic integer corresponding to $\ab_0$, and let $\chi^*$ be a real Hecke character on ideals with modulus $\qf^*$, and let $q^*=N(\qf^*)$. $\chi^*$ will be taken to be an exceptional character, if one exists, and an arbitrary such character otherwise, and $q^*$ will satisfy $(\log{x})^\epsilon\ll q^*\ll \exp(\sqrt[4]{\log{X}})$. $\mathfrak{q}^*$ will be square-free as an ideal, apart from a possible factor of norm $O(1)$. We see $\chi^*$ takes values in $\{0,1,-1\}$, and factors on principal ideals as $\chi^*((\alpha))=\chi_f^*(\alpha)\chi^*_\infty(\alpha)$ as its finite and infinite components. Since all elements of $\Ac$ are principal and their coordinates are localized such that no norms are small, $\chi^*_\infty$ takes a constant value on $\Ac$; let us call this $\chi^*_\infty(\Ac)$. Since all elements of $\Ac$ come from a vector $\ab\equiv\ab_0\Mod{q^*}$, we also have that $\chi^*_f(\alpha)$ is constant and equal to $\chi^*_f(\sum_{i=1}^{n-k}(\ab_0)_i\Ti)$ for all ideals $(\alpha)\in\Ac$. Let $N_0\asymp_\epsilon X$ be such that the smallest norm of an ideal in $\Ac$ is $N_0^n$. We then define the set $\Bc$ of ideals of $\Oc_K$ by
\[\Bc=\Bc(\ab_0)=\{\text{ideals }\bfr\text{ of }\Oc_K:\,N(\bfr)\in [N_0^n,(1+\eta_1)N_0^n],\,\chi^*(\bfr)=\chi^*_\infty(\Ac)\chi_f^*(\alpha_0)\}.\]
Here $\alpha_0=\sum_{i=1}^{n-k}(\ab_0)_i\Ti$. 

By a \emph{polytope} in $\Rr^\ell$ we mean a bounded region defined by a set of linear inequalities, where the inequalities can be strict, weak or a combination of strict and weak inequalities. Given a polytope $\mathcal{R}\subseteq\mathbb{R}^\ell$ for some $\ell$, we define
\begin{equation}
\1_{\Rc}(\af)=\begin{cases}
1,\qquad &\af=\pf_1\dots\pf_\ell\text{ with }N(\pf_i)=X^{e_i}, (e_1,\dots,e_\ell)\in\Rc,\\
0,&\text{otherwise.}
\end{cases}\label{eq:1RDef}
\end{equation}
We see that $\1_{\Rc}$ is the indicator function of ideals with a particular type of prime ideal factorization, given by the polytope $\Rc$. Since we are only concerned with $\mathbf{1}_{\Rc}(\af)$ for $\af\in\mathcal{A}$ or $\af\in\mathcal{B}$, we will only consider points with $N(\af)\in [N_0^n,(1+O(\eta_1))N_0^n]$, and so we could restrict our attention to polytopes $\mathcal{R}$ with $\sum_{i=1}^\ell e_i=n\log{N_0}/\log{X}+O(\eta_1)$. For technical reasons, we find it useful to actually consider larger $\mathcal{R}$ without this restriction which are independent of $X$, although it is useful to keep in mind the fact that only these points will actually contribute to our final estimates. With this set-up, we are now able to state our two key propositions and the main lemmas.
\subsection{Key Propositions and Lemmas}

\begin{prpstn}[Type II sums]\label{prpstn:TypeII}
 Let $\Rc\subseteq[\epsilon^2,2n]^\ell$ be a polytope in $\Rr^\ell$ such that $(e_1,\dots,e_\ell)\in\Rc\Rightarrow k+\epsilon\le\sum_{j=1}^{\ell'}e_j\le n-2k-\epsilon$ for some $\ell'\le \ell$ and such that $\mathcal{R}$ contains points $\mathbf{x},\mathbf{y}$ with $\sum_{i=1}^\ell x_i>n+\epsilon$, $\sum_{i=1}^\ell y_i\le n-\epsilon$.
Then we have
\[\sum_{\af\in\Ac}\1_{\Rc}(\af)-\Sft\frac{\#\Ac}{\#\Bc}\sum_{\bfr\in\Bc}\1_{\Rc}(\bfr)\ll_\Rc \eta_1^{1/2}\#\Ac.\]
Here
\begin{align*}
\Sft&=\prod_{p\nmid q^*}\Bigl(1-\frac{\nu(p)}{p^{n-k}}\Bigr)\Bigl(1-\frac{\nu_2(p)}{p^n}\Bigr)^{-1},\\
\nu(p)&=\#\Bigl\{1\le a_1,\dots,a_{n-k}\le p:\,N_{K}(\ab)\equiv0\Mod{p}\Bigr\},\\
\nu_2(p)&=\#\Bigl\{1\le a_1,\dots,a_{n}\le p:\,N\Bigl(\sum_{i=1}^n a_i\Ti\Bigr)\equiv0\Mod{p}\Bigr\}.
\end{align*}
\end{prpstn}
\begin{prpstn}[Sieve asymptotic terms]\label{prpstn:SieveAsymptotic}
Let $\Rc\subseteq[\epsilon^2,2n]^\ell$ be a polytope in $\Rr^\ell$ such that $(e_1,\dots e_\ell)\in\Rc\Rightarrow \sum_{i=1}^\ell e_i<n-k-4\epsilon$, and $\mathcal{R}$ contains points $\mathbf{x},\mathbf{y}$ with $\sum_{i=1}^\ell x_i>n+\epsilon$, $\sum_{i=1}^\ell y_i\le n-\epsilon$. Let $X^{\epsilon^2}<N(\af_1)\le X^{n-3k-4\epsilon}$. Then we have
\[\sum_{\df}\1_{\Rc}(\df)\Bigl(S(\Ac_\df,\mathfrak{a}_1)-\Sft\frac{\#\Ac}{\#\Bc} S(\Bc_{\df},\mathfrak{a}_1)\Bigr)\ll_\Rc \frac{\exp(-\epsilon^{-1/2})\#\Ac}{\log{X}}\prod_{p|q^*}\Bigl(1-\frac{\nu(p)}{p^{n-k}}\Bigr)^{-1}.\]
Here $\Sft$ and $\nu(p)$ are as in Proposition \ref{prpstn:TypeII}.
\end{prpstn}
Assuming these two Propositions, it is fairly straightforward to establish Proposition \ref{prop:InitialProp} when $n\ge 4k$. We first record a couple of estimates for the set $\mathcal{B}=\Bc(\ab_0).$
\begin{lmm}\label{lmm:BSize}
\[
\#\mathcal{B}=\frac{\gamma_K}{2}\frac{\phi_K((q^*))}{q^{*n}}\eta_1N_0^n+O(N_0^{n-1+o(1)}).
\]
\end{lmm}
In the lemma above, $\phi_K(\mathfrak{a})=\#\{\bfr\Mod{\af}:\,\gcd(\bfr,\af)=1\}$ is Euler's $\phi$ function for ideals of $K$.
\begin{proof}
This is a simple exercise in counting via Perron's formula, using the bound $L(1-\sigma+it,\chi^{*2}),L(1-\sigma+it,\chi^*)\ll ((1+|t|)q^*)^{n\sigma/2+\epsilon}$ for $|\sigma+it|\ge 1/10$ from Lemma \ref{lmm:LGrowth}. Let $c=1+1/\log{N_0}$ and $T=N_0$. Moving the line of integration to $\Re(s)=1/2$ gives
\begin{align*}
\#\mathcal{B}&=\sum_{\substack{\mathfrak{b}\\ N(\bfr)\in[N_0^n,(1+\eta_1)N_0^n]}}\frac{\chi^*(\bfr)^2+\chi^*(\bfr)\chi^*_f(\mathbf{a}_0)\chi^*_\infty(\mathcal{A})}{2}\\
&=\frac{1}{2\pi i}\int_{c-i T}^{c+i T}\Bigl(L(s,(\chi^*)^2)+\chi^*_f(\mathbf{a}_0)\chi^*_\infty(\mathcal{A})L(s,\chi^*)\Bigr)\frac{N_0^{n s}\Bigl((1+\eta)^s-1\Bigr)d s}{2s}\\
&\qquad+O\Bigl(\frac{N^n_0(\log{N_0})^2}{T}\Bigr)\\
&=\frac{\eta_1N_0^n}{2}\Res_{s=1}\Bigl(L(s,(\chi^*)^2)\Bigr)+O\Bigl(N_0^{n-1+o(1)}\Bigr)\\
&=\frac{\gamma_K}{2}\frac{\phi_K((q^*))}{q^{*n}}\eta_1N_0^n+O(N_0^{n-1+o(1)}).
\end{align*}
Here $\gamma_K$ is the residue of $\zeta_K(s)$ at $s=1$, and the first summation is over all ideals of $\mathcal{O}_K$ with the norm restriction. 
\end{proof}
Trivially we have that the size of $\Ac$ is given by $\#\Ac=(1+o(1))\eta_1^{n-k}q^{*-(n-k)}\prod_{i=1}^{n-k}X_i$ for any choice of $\ab_0$. 

Finally, we have the following lemmas which show that if we sum over all $\ab_0\in[1,q^*]^{n-k}$ then we remove any distortions cause by a possible exceptional character from primes in $\Bc$. We delay the proof of Lemma \ref{lmm:PolyaVino} to Section \ref{sec:TypeI}.
\begin{lmm}\label{lmm:PolyaVino}
Let $\Rc\subseteq[\epsilon^2,2n]^\ell$ be a closed polytope which contains points $\mathbf{x},\mathbf{y}$ with $\sum_{i=1}^\ell x_i>n+\epsilon$,$\sum_{i=1}^\ell y_i\le n-\epsilon$. Then
\[\sum_{\substack{\ab_0\in[1,q^*]^{n-k} \\ \gcd(N_K(\ab_0),q^*)=1}}\sum_{\bfr\in\Bc(\ab_0)}\1_{\Rc}(\bfr)=\frac{q^{*(n-k)}\eta_1 N_0^{n}}{2\log{X}}\prod_{p|q^*}\Bigl(1-\frac{\nu(p)}{p^{n-k}}\Bigr)\Bigl(I_\Rc+o_{\Rc}(1)\Bigr),\]
where
\[
I_\Rc=n\idotsint\limits_{\substack{(e_1,\dots,e_\ell)\in\Rc\\ \sum_{i=1}^{\ell}e_i=n}}\frac{d e_1\dots d e_{\ell-1}}{e_1\dots e_\ell}.
\]
If $\ell=1$ then $I_\Rc$ is interpreted as $1$ if $n\in\Rc$ and $0$ otherwise.
\end{lmm}
\begin{lmm}\label{lmm:SieveMainTerm}
Let $\Rc\subseteq[\epsilon^2,2n]^\ell$ be a closed polytope. Then
\begin{equation*}
\sum_{\mathbf{a}_0\in[1,q^*]^{n-k}}\tilde{\mathfrak{S}}\frac{\#\mathcal{A}(\ab_0)}{\#\mathcal{B}(\ab_0)}\sum_{\bfr\in\Bc(\ab_0)}\1_\Rc(\bfr)=
\mathfrak{S}\frac{\#(\mathscr{R}\cap\Zz^{n-k})}{n\log{X}}(I_\Rc+o_\Rc(1)),
\end{equation*}
where $I_\Rc$ is as in Lemma \ref{lmm:PolyaVino}. In particular, choosing $\Rc=[n(1/2+\epsilon),2n]$ we have
\begin{equation*}
\sum_{\mathbf{a}_0\in[1,q^*]^{n-k}}\tilde{\mathfrak{S}}\frac{\#\mathcal{A}(\ab_0)}{\#\mathcal{B}(\ab_0)}S(\mathcal{B}(\ab_0),\mathfrak{r}_2)=
(1+o(1))\mathfrak{S}\frac{\#(\mathscr{R}\cap\Zz^{n-k})}{n\log{X}}.
\end{equation*}
\end{lmm}
\begin{proof}[Proof of Lemma \ref{lmm:SieveMainTerm} assuming Lemma \ref{lmm:PolyaVino}]
We recall that 
 \[
 \#\Ac(\ab_0)=(1+o(1))\frac{\eta_1^{n-k}\prod_{i=1}^{n-k}X_i}{q^{*(n-k)}}=(1+o(1))\frac{\#(\mathscr{R}\cap\Zz^{n-k})}{q^{*(n-k)}}
 \]
 for all choices of $\mathbf{a}_0$, and that $\#\mathcal{B}(\ab_0)=(\gamma_K/2+o(1))\eta_1N_0^n\phi_K((q^*))/q^{*n}$ by Lemma \ref{lmm:BSize}. Since $\1_\Rc(\bfr)$ is supported on ideals with no factors of small norm, we see that there is no contribution from $\ab_0$ with $\gcd(N_K(\ab_0),q^*)\ne 1$. We see that the number of choices of $\ab_0\in[1,q^*]^{n-k}$ such that $\mathfrak{a}_0=(\sum_{i=1}^{n-k}(\ab_0)_i\Ti)$ has no common ideal factor with $(q^*)$  is given by $q^{*(n-k)}\prod_{p|q^*}(1-\nu(p)/p^{n-k})$. Thus, by Lemma \ref{lmm:PolyaVino} and our estimates for $\#\mathcal{A}(\ab_0)$ and $\#\mathcal{B}(\ab_0)$, we have that
\[
\sum_{\mathbf{a}_0\in[1,q^*]^{n-k}}\tilde{\mathfrak{S}}\frac{\#\mathcal{A}(\ab_0)}{\#\mathcal{B}(\ab_0)}\sum_{\bfr\in\Bc(\ab_0)}\1_{\Rc}(\bfr)=\frac{(I_{\Rc}+o_\Rc(1))q^{*n}\tilde{\mathfrak{S}}}{\gamma_K\phi_K((q^*))}\frac{\#(\mathscr{R}\cap\Zz^{n-k})}{n\log{X}}\prod_{p|q^*}\Bigl(1-\frac{\nu(p)}{p^{n-k}}\Bigr).
\]

We recall that $\gamma_K=\prod_p(1-\nu_2(p)p^{-n})^{-1}(1-p^{-1})$ is the residue at $s=1$ of $\zeta_K(s)$, and so find that
\[\frac{q^{*n}}{\gamma_K\phi_K((q^*))}\Sft\prod_{p|q^*}\Bigl(1-\frac{\nu(p)}{p^{n-k}}\Bigr)=\prod_p\Bigl(1-\frac{\nu(p)}{p^{n-k}}\Bigr)\Bigl(1-\frac{1}{p}\Bigr)^{-1}=\Sf.\]
Thus we find that
\begin{equation*}
\sum_{\mathbf{a}_0\in[1,q^*]^{n-k}}\tilde{\mathfrak{S}}\frac{\#\mathcal{A}(\ab_0)}{\#\mathcal{B}(\ab_0)}\sum_{\bfr\in\Bc(\ab_0)}\1_{\Rc}(\bfr)=
(I_\Rc+o_\Rc(1))\mathfrak{S}\frac{\#(\mathscr{R}\cap\Zz^{n-k})}{n\log{X}}.\qedhere
\end{equation*}
\end{proof}

\begin{proof}[Proof of Proposition \ref{prop:InitialProp} assuming Proposition \ref{prpstn:TypeII}, Proposition \ref{prpstn:SieveAsymptotic}, Lemma \ref{lmm:PolyaVino} and $n\ge 4k$]
We first consider $n>6k$. In this case  $n-3k-4\epsilon>n/2+\epsilon$, so it follows from Proposition \ref{prpstn:SieveAsymptotic} that (explicitly putting in our dependence on $\ab_0$)
\[
S(\mathcal{A}(\ab_0),\mathfrak{r}_2)=\tilde{\mathfrak{S}}\frac{\#\mathcal{A}(\ab_0)}{\#\mathcal{B}(\ab_0)}S(\mathcal{B}(\ab_0),\mathfrak{r}_2)+O\Bigl(\frac{\exp(-\epsilon^{-1/2})\#\Ac(\ab_0)}{\log{X}}\prod_{p|q^*}\Bigl(1-\frac{\nu(p)}{p^{n-k}}\Bigr)^{-1}\Bigr).
\]
We now sum over the choices of $\mathbf{a}_0$, noting that there is only a contribution from those such that $\mathfrak{a}_0=(\sum_{i=1}^{n-k}(\ab_0)_i\Ti)$ has no common ideal factor with $(q^*)$. The number of such $\mathbf{a}_0$ is $(q^*)^{n-k}\prod_{p|q^*}(1-\nu(p)/p^{n-k})$, and we recall that $\#\Ac(\ab_0)=(1+o(1)) \#(\mathscr{R}\cap\Zz^{n-k})q^{*-(n-k)}$. Thus we obtain
\begin{align*}
\sum_{\mathbf{a}_0\in[1,q^*]^{n-k}}S(\mathcal{A}(\ab_0),\mathfrak{r}_2)
&=\sum_{\mathbf{a}_0\in[1,q^*]^{n-k}}\tilde{\mathfrak{S}}\frac{\#\mathcal{A}(\ab_0)}{\#\mathcal{B}(\ab_0)}S(\mathcal{B}(\ab_0),\mathfrak{r}_2))\\
&\qquad+O\Bigl(\frac{\exp(-\epsilon^{-1/2})\#(\mathscr{R}\cap\Zz^{n-k})}{\log{X}}\Bigr).
\end{align*}
Lemma \ref{lmm:SieveMainTerm} gives an asymptotic estimate for the main term, giving
\[
\sum_{\mathbf{a}_0\in[1,q^*]^{n-k}}S(\mathcal{A}(\ab_0),\mathfrak{r}_2)=\mathfrak{S}\frac{\#(\mathscr{R}\cap\Zz^{n-k})}{n\log{X}}\Bigl(1+O(\exp(-\epsilon^{-1/2}))\Bigr).
\]
This gives the result in the case $n>6k$.

We now consider $6k\ge n>4k$. We see that by Buchstab's identity (this simply applies inclusion-exclusion according to the smallest prime factor), we have
\begin{align*}
S(\Ac,\mathfrak{r}_2)=S(\Ac,\mathfrak{r}_1)-\sum_{\mathfrak{r}_1<\pf\le \mathfrak{r}_2}S(\Ac_\pf,\pf).
\end{align*}
Applying the same decomposition to $\Bc$, and subtracting the difference weighted by $\tilde{\mathfrak{S}}\#\Ac/\#\Bc$, we see that
\begin{align*}
S(\Ac,\mathfrak{r}_2)&=\frac{\tilde{\mathfrak{S}}\#\Ac}{\#\Bc}S(\Bc,\mathfrak{r}_2)+\Bigl(S(\Ac,\mathfrak{r}_1)-\frac{\tilde{\mathfrak{S}}\#\Ac}{\#\Bc}S(\Bc,\mathfrak{r}_1)\Bigr)\\
&\qquad -\Bigl(\sum_{\mathfrak{r}_1<\pf\le \mathfrak{r}_2}S(\Ac_\pf,\pf)-\frac{\tilde{\mathfrak{S}}\#\Ac}{\#\Bc}\sum_{\mathfrak{r}_1<\pf\le \mathfrak{r}_2}S(\Bc_\pf,\pf)\Bigr).
\end{align*}
By Proposition \ref{prpstn:SieveAsymptotic}, the first term in parentheses is negligible. The second term in parentheses counts ideals with $O(1)$ prime ideal factors, one of which lies between $\mathfrak{r}_1$ and $\mathfrak{r}_2$ and all of which are larger than $\mathfrak{r}_1$. Therefore, splitting the sum according to the number of prime factors, it can be written as a sum of $O(1)$ terms of the form
\[
\sum_{\mathfrak{a}\in\Ac}\1_{\Rc}(\af)-\frac{\tilde{\mathfrak{S}}\#\Ac}{\#\Bc}\sum_{\bfr\in\Bc}\1_{\Rc}(\bfr)
\]
for some polytope $\Rc$ satisfying the conditions of Proposition \ref{prpstn:TypeII}. Explicitly, we can choose
\begin{align*}
\Rc_1&=\{\mathbf{e}\in\mathbb{R}^2:\,n-3k-4\epsilon\le e_1\le n(1/2+\epsilon),\,e_1\le e_2,\,e_2\le n\},\\
\Rc_2&=\{\mathbf{e}\in\mathbb{R}^3:\,n-3k-4\epsilon\le e_1\le n(1/2+\epsilon),\,e_1\le e_2\le e_3,\,e_3\le n\},\\
\Rc_3&=\{\mathbf{e}\in\mathbb{R}^4:\,n-3k-4\epsilon\le e_1\le n(1/2+\epsilon),\,e_1\le e_2\le e_3\le e_4,\,e_4\le n\},\\
\end{align*}
(We note that since $n>4k$ elements with all prime ideal factors bigger than $\mathfrak{r}_1$ can have at most 4 prime ideal factors.) In particular, it follows from Proposition \ref{prpstn:TypeII} that these terms are negligible. Using Lemma \ref{lmm:SieveMainTerm}, we see that this gives (making explicit the dependence of $\mathcal{A}$ on $\ab_0$)
\[
\sum_{\mathbf{a}_0\in[1,q^*]^{n-k}}S(\mathcal{A}(\ab_0),\mathfrak{r}_2)=\mathfrak{S}\frac{\#(\mathscr{R}\cap\Zz^{n-k})}{n\log{X}}\Bigl(1+O(\exp(-\epsilon^{-1/2}))\Bigr)
\]
for $n>4k$.

Finally, we consider the case when $n=4k$. In this case we cannot estimate terms $S(\mathcal{A}_\pf,\pf)$ with $N(\pf)\in\mathcal{E}$, where $\mathcal{E}=[X^{n-3k-4\epsilon},X^{k+\epsilon}]\cup[X^{2k-\epsilon},X^{n/2+\epsilon}]$, since this lies outside the range of our Type II estimates. However, bounding these terms by $0\le S(\Ac_\pf,\pf)\le S(\Ac_\pf,\mathfrak{r}_1)$ introduces a negligible error term to the final estimates since this range of $\pf$ is short. Specifically, letting $\lambda=\lambda(\ab_0)=\tilde{\mathfrak{S}}\#\Ac/\#\Bc$, we have
\begin{align*}
\sum_{\ab_0\in[1,q^*]^{n-k}}S(\Ac,\mathfrak{r}_2)&=\sum_{\ab_0\in[1,q^*]^{n-k}}\lambda S(\Bc,\mathfrak{r}_2)+\sum_{\ab_0\in[1,q^*]^{n-k}}\Bigl(S(\Ac,\mathfrak{r}_1)-\lambda S(\Bc,\mathfrak{r}_1)\Bigr)\\
&\qquad -\sum_{\ab_0\in[1,q^*]^{n-k}}\sum_{X^{k+\epsilon}<N(\pf)\le X^{2k-\epsilon}}\Bigl(S(\Ac_\pf,\pf)-\lambda S(\Bc_\pf,\pf)\Bigr)\\
&\qquad +\sum_{\ab_0\in[1,q^*]^{n-k}}\sum_{N(\pf)\in \mathcal{E}}O\Bigl(S(\Ac_\pf,\mathfrak{r}_1)+\lambda S(\Bc_\pf,\mathfrak{r}_1)\Bigr).
\end{align*}
As before, the first term in parentheses on the right hand side is negligible by Proposition \ref{prpstn:SieveAsymptotic}, and the second term in parentheses is negligible by Proposition \ref{prpstn:TypeII}. Finally, by Proposition \ref{prpstn:SieveAsymptotic} and Lemma \ref{lmm:SieveMainTerm}, the last term is 
\[
\ll \sum_{\ab_0\in[1,q^*]^{n-k}}\sum_{N(\pf)\in \mathcal{E}}\frac{\tilde{\mathfrak{S}}\#\Ac(\ab_0)}{\#\Bc(\ab_0)}S(\Bc_\pf(\ab_0),\mathfrak{r}_1)\ll \epsilon \frac{\#(\mathscr{R}\cap\Zz^{n-k})}{\log{X}}.
\]
Thus this is negligible, and so using Lemma \ref{lmm:SieveMainTerm} for the main term we obtain the result.
\end{proof}

When $n<4k$, we require a more complicated decomposition based on the use of Harman's sieve. Here we discard some terms through positivity, which restricts us to obtaining a lower bound of the correct order of magnitude. Because the ranges of our `Type I' and `Type II' estimates are the same as those used in Harman's work on the problem of Diophantine approximation by primes, we could use precisely the same decomposition as Harman uses in \cite{HarmanII}. The only minor difference is that in our case the summations are over prime ideals rather than rational primes, but this does not effect the final estimates since they both have the same density. Instead, since Harman's decomposition is not fully explicit, we have included an explicit description of an adequate decomposition in the appendix to this article, along with a Mathematica file performing the relevant numerical computations for this decomposition. The result of this is the following proposition.

\begin{prpstn}[Sieve decomposition for $n<4k$]\label{prpstn:Decomp2}
There exist sets $\mathcal{S}_1,\dots,\mathcal{S}_5$ of polytopes which are independent of $X$ such that for any set $\mathcal{C}$ of ideals $\af$ with $\epsilon X^n<N(\af)\ll X^n$, we have
\begin{align}
S(\mathcal{C},\mathfrak{r}_2)&= \sum_{\Rc\in \mathcal{S}_1}\sum_{\df}\mathbf{1}_{\Rc}(\df)S(\mathcal{C}_{\df},\mathfrak{r}_1)-\sum_{\Rc\in\mathcal{S}_2}\sum_{\df}\1_{\Rc}(\df)S(\mathcal{C}_\df,\mathfrak{r}_1)+\sum_{\Rc\in\mathcal{S}_3}\sum_{\af\in\mathcal{C}}\1_{\Rc}(\af)\nonumber \\
&\qquad-\sum_{\Rc\in\mathcal{S}_4}\sum_{\af\in\mathcal{C}}\1_{\Rc}(\af)+\sum_{\Rc\in\mathcal{S}_5}\sum_{\af\in\mathcal{C}}\1_{\Rc}(\af).\nonumber
\end{align}
Moreover, the sets $\mathcal{S}_1,\dots,\mathcal{S}_5$ satisfy:
\begin{enumerate}[label=\arabic*.]
\item $\#\mathcal{S}_i\ll 1$ for each $i$.
\item (All terms involve a bounded number of primes factors) Each polytope $\Rc\in\cup_{i=1}^5\mathcal{S}_i$ lies in $\mathbb{R}^\ell$ for some $\ell\le 1/\epsilon^2$ (but different polytopes may be of different dimensions).
\item (No term involves small prime factors) If $\mathcal{R}\in\cup_{i=1}^5\mathcal{S}_i$ and $(e_1,\dots,e_\ell)\in\mathcal{R}$, then $e_j\ge \epsilon^2$ for all $j\in \{1,\dots,\ell\}$.
\item ($\mathcal{R}$ does not depend too much on the norms) Each polytope $\Rc\in\cup_{i=1}^5\mathcal{S}_i$ contains a point $\mathbf{x}$ and a point $\mathbf{y}$ with $\sum_{i=1}^\ell x_i>n+\epsilon$ and $\sum_{i=1}^\ell y_i<n-\epsilon$.
\item ($\mathcal{S}_1$ and $\mathcal{S}_2$ correspond to simpler sieve terms) If $\mathcal{R}\in\mathcal{S}_1\cup\mathcal{S}_2$ and $(e_1,\dots,e_\ell)\in\mathcal{R}$, then $\sum_{i=1}^{\ell}e_i<n-k-4\epsilon$. 
\item ($\mathcal{S}_3$ and $\mathcal{S}_4$ correspond to Type II terms) If $\mathcal{R}\in\mathcal{S}_3\cup\mathcal{S}_4$  and $(e_1,\dots,e_\ell)\in\mathcal{R}$, then there is some $\ell'$ such that
\[
k+\epsilon \le \sum_{i=1}^{\ell'}e_i\le n-2k-\epsilon
\] 
\item (The terms from $\mathcal{S}_5$ do not contribute too much) We have all $\mathcal{R}\in\mathcal{S}_5$ are closed and
\[
\sum_{\mathcal{R}\in\mathcal{S}_5}I_\mathcal{R}<0.99
\]
where 
\[I_{\Rc}= n\idotsint\limits_{\substack{(e_1,\dots,e_{\ell})\in\Rc\\ \sum_{i=1}^{\ell} e_i=n}}\frac{de_1\dots de_{\ell-1}}{e_1\dots e_\ell}.\]
\end{enumerate}
\end{prpstn}

As mentioned above, the proof of Proposition \ref{prpstn:Decomp2} essentially follows from the work of Harman \cite{HarmanII}, but in the interests of explicitness and verifiability we have included an alternative proof in the appendix. Since the full decomposition is complicated to write down (and requires non-trivial numerical computation) we just highlight some key details here.

 In general we use two means of transforming terms $S(\mathcal{C}_\df,\af)$ in our decomposition:
\begin{enumerate}
\item Buchstab iterations: Given ideals $\af_1<\af_2$ and $\df$ with $N(\df)<X^{n-k-4\epsilon}/N(\af_2)$, we can apply two Buchstab iterations, which gives
\[S(\mathcal{C}_\df,\af_2)=S(\mathcal{C}_\df,\af_1)-\sum_{\af_1<\pf_1\le \af_2}S(\mathcal{C}_{\mathfrak{d p}_1},\af_1)+\sum_{\substack{\af_1<\pf_2\le \pf_1\le \af_2}}S(\mathcal{C}_{\df\pf_1\pf_2},\pf_2).\]
If $\af_1=\mathfrak{r}_1$ then the first two sums correspond to polytopes in $\mathcal{S}_1$ and $\mathcal{S}_2$. Some of the terms in the final sum will involve factors which fall into our Type II range, and so correspond to polytopes in $\mathcal{S}_3$ and $\mathcal{S}_4$; we are left to obtain a suitable estimate for the remaining terms.
\item Reversal of roles: If $\mathcal{T}$ is a set of ideals $\mathfrak{t}$ satisfying $\bfr<\mathfrak{t}<\bfr^2$, we can write
\[\sum_{\pf\in\mathcal{T}\text{ prime}}S(\Ac_\mathfrak{d p},\af)=\sum_{\mathfrak{u}\in\mathcal{U}}S(\Ac^*_\mathfrak{u d},\bfr),\]
where $\Ac^*_{\mathfrak{u d}}=\{\mathfrak{t}\in\Ac_{\mathfrak{u d}}:\mathfrak{t}\in\mathcal{T}\}$ and $\mathcal{U}=\{\mathfrak{u}:\pf|\mathfrak{u}\Rightarrow \pf>\af\}$.
\end{enumerate}
Having applied these transformations in some combination a finite number of times, we produce a decomposition of the required shape
\begin{align*}
S(\mathcal{C},\mathfrak{r}_2)&= \sum_{\Rc\in \mathcal{S}_1}\sum_{\df}\mathbf{1}_{\Rc}(\df)S(\mathcal{C}_{\df},\mathfrak{r}_1)-\sum_{\Rc\in\mathcal{S}_2}\sum_{\df}\1_{\Rc}(\df)S(\mathcal{C}_\df,\mathfrak{r}_1)+\sum_{\Rc\in\mathcal{S}_3}\sum_{\af\in\mathcal{C}}\1_{\Rc}(\af)\nonumber \\
&\qquad-\sum_{\Rc\in\mathcal{S}_4}\sum_{\af\in\mathcal{C}}\1_{\Rc}(\af)+\sum_{\Rc\in\mathcal{S}_5}\sum_{\af\in\mathcal{C}}\1_{\Rc}(\af),
\end{align*}
for some explicit sets $\mathcal{S}_1,\dots,\mathcal{S}_5$ of polytopes $\Rc$ independent of $X$ and with $\#\mathcal{S}_j\ll 1$. Here we recall $\1_\Rc(\af)$ is the indicator function of ideals which have a particular shape of prime factorization determined by $\Rc$. It then requires a numerical verification that for this particular choice of decomposition we have $\sum_{\Rc\in\mathcal{S}_5}I_{\Rc}<0.99$.
\begin{proof}[Proof of Proposition \ref{prop:InitialProp} assuming Proposition \ref{prpstn:TypeII}, Proposition \ref{prpstn:SieveAsymptotic}, Lemma \ref{lmm:PolyaVino} and $n<4k$]
Applying Proposition \ref{prpstn:Decomp2} to $\Ac$, we obtain
\begin{align}
S(\Ac,\mathfrak{r}_2)&= \sum_{\Rc\in \mathcal{S}_1}\sum_{\df}\mathbf{1}_{\Rc}(\df)S(\Ac_{\df},\mathfrak{r}_1)-\sum_{\Rc\in\mathcal{S}_2}\sum_{\df}\1_{\Rc}(\df)S(\Ac_\df,\mathfrak{r}_1)+\sum_{\Rc\in\mathcal{S}_3}\sum_{\af\in\Ac}\1_{\Rc}(\af)\nonumber \\
&\qquad-\sum_{\Rc\in\mathcal{S}_4}\sum_{\af\in\Ac}\1_{\Rc}(\af)+\sum_{\Rc\in\mathcal{S}_5}\sum_{\af\in\Ac}\1_{\Rc}(\af),\label{eq:SieveDecomposition}
\end{align}
The point of this decomposition is that we can obtain asymptotic estimates for the terms coming from polytopes in $\mathcal{S}_1,\mathcal{S}_2,\mathcal{S}_3$ and $\mathcal{S}_4$ by a combination of our `Type I' and `Type II' estimates, and so we obtain a lower bound for $S(\Ac,\mathfrak{r}_2)$ by dropping the terms coming from $\mathcal{S}_5$ through positivity. Specifically, the terms from $\mathcal{S}_1$ and $\mathcal{S}_2$ can be estimated using Proposition \ref{prpstn:SieveAsymptotic} and the terms from $\mathcal{S}_3$ and $\mathcal{S}_4$ can be estimated using Proposition \ref{prpstn:TypeII}. It will turn out that since $\sum_{\Rc\in\mathcal{S}_5}I_\Rc<1$ we still obtain a positive lower bound for $S(\Ac, \mathfrak{r}_2)$, giving the result.

Applying the same decomposition of Proposition \ref{prpstn:Decomp2} to $\Bc$, and subtracting these terms multiplied by a constant $\lambda=\Sft\#\Ac/\#\Bc$ from \eqref{eq:SieveDecomposition} gives
\begin{align}
S(\Ac&,\mathfrak{r}_2)= \lambda S(\Bc,\mathfrak{r}_2)+\sum_{\Rc\in \mathcal{S}_1}\sum_{\df}\mathbf{1}_{\Rc}(\df)\Bigl(S(\Ac_{\df},\mathfrak{r}_1)-\lambda S(\Bc_\df,\mathfrak{r}_1)\Bigr)\nonumber\\
&-\sum_{\Rc\in \mathcal{S}_2}\sum_{\df}\mathbf{1}_{\Rc}(\df)\Bigl(S(\Ac_\df,\mathfrak{r}_1)-\lambda S(\Bc_{\df},\mathfrak{r}_1)\Bigr)+\sum_{\Rc\in\mathcal{S}_3}\Bigl(\sum_{\af\in\Ac}\1_{\Rc}(\af)-\lambda\sum_{\bfr\in\Bc}\1_{\Rc}(\bfr)\Bigr)\nonumber\\
&-\sum_{\Rc\in\mathcal{S}_4}\Bigl(\sum_{\af\in\Ac}\1_{\Rc}(\af)-\lambda\sum_{\bfr\in\Bc}\1_{\Rc}(\bfr)\Bigr)+\sum_{\Rc\in\mathcal{S}_5}\sum_{\af\in\Ac}\1_{\Rc}(\af)-\lambda\sum_{\Rc\in\mathcal{S}_5}\sum_{\bfr\in\Bc}\1_{\Rc}(\bfr)\nonumber\\
&\ge \lambda \Bigl(S(\Bc,\mathfrak{r}_2)-\sum_{\Rc\in\mathcal{S}_5}\sum_{\bfr\in\Bc}\1_{\Rc}(\bfr)\Bigr)-\sum_{\Rc\in\mathcal{S}_1\cup\mathcal{S}_2}\Bigl|\sum_{\df}\1_{\Rc}(\df)\Bigl(S(\Ac_{\df},\mathfrak{r}_1)-\lambda S(\Bc_\df,\mathfrak{r}_1)\Bigr)\Bigr|\nonumber\\
&\qquad-\sum_{\Rc\in\mathcal{S}_3\cup\mathcal{S}_4}\Bigl|\sum_{\af\in\Ac}\1_{\Rc}(\af)-\lambda\sum_{\bfr\in\Bc}\1_{\Rc}(\bfr)\Bigr|\label{eq:FullDecomposition}
\end{align}
Here we have dropped the non-negative terms $\sum_{\Rc\in\mathcal{S}_5}\sum_{\af\in\Ac}\1_\Rc(\af)$ for a lower bound.

By Proposition \ref{prpstn:SieveAsymptotic}, if $\theta n>k+4\epsilon/3$ then the second term on the right hand side of \eqref{eq:FullDecomposition} involving a sum over $\Rc\in\mathcal{S}_1\cup\mathcal{S}_2$ is negligible since $\mathcal{S}_1$ and $\mathcal{S}_2$ only contain polytopes with sum of coordinates at most $n-k-4\epsilon$. Similarly, last term on the right hand side of \eqref{eq:FullDecomposition} involving $\Rc\in\mathcal{S}_3\cup\mathcal{S}_4$ is negligible by Proposition \ref{prpstn:TypeII}, since they only involve polytopes where a subset of the coordinates lies in the Type II range. This gives us the lower bound for $k/\theta +8\epsilon< n<4k$
\[S(\Ac,\mathfrak{r}_2)\ge\Sft\frac{\#\Ac}{\#\Bc} \Bigl(S(\Bc,\mathfrak{r}_2)-\sum_{\Rc\in\mathcal{S}_5}\sum_{\bfr\in\Bc}\1_{\Rc}(\bfr)\Bigr)+O\Bigl(\frac{\epsilon \#\Ac}{\log{X}}\prod_{p|q^*}\Bigl(1-\frac{\nu(p)}{p^{n-k}}\Bigr)^{-1}\Bigr).\]
We now sum over $\ab_0\in[1,q^*]^{n-k}$ such that $(\sum_{i=1}^{n-k}(\ab_0)_i\Ti)$ has no ideal factor in common with $q^*$. By Lemma \ref{lmm:SieveMainTerm}, we have
\[
\sum_{\mathbf{a}_0\in[1,q^*]^{n-k}}\tilde{\mathfrak{S}}\frac{\#\mathcal{A}(\ab_0)}{\#\mathcal{B}(\ab_0)}S(\mathcal{B}(\ab_0),\mathfrak{r}_2)=
(1+o(1))\mathfrak{S}\frac{\#(\mathscr{R}\cap\Zz^{n-k})}{n\log{X}},
\]
and for each $\mathcal{R}\in\mathcal{S}_5$
\[
\sum_{\mathbf{a}_0\in[1,q^*]^{n-k}}\tilde{\mathfrak{S}}\frac{\#\mathcal{A}(\ab_0)}{\#\mathcal{B}(\ab_0)}\sum_{\bfr\in\Bc(\ab_0)}\1_{\Rc}(\bfr)=
(1+o(1))\mathfrak{S}\frac{\#(\mathscr{R}\cap\Zz^{n-k})}{n\log{X}}I_{\Rc}.
\]
Putting these estimates together, we obtain
\begin{align*}
\sum_{\mathbf{a}_0\in[1,q^*]^{n-k}}S(\Ac(\ab_0),\mathfrak{r}_2)&\ge\mathfrak{S}\frac{\#(\mathscr{R}\cap\Zz^{n-k})}{n\log{X}}\Bigl(1-\sum_{\Rc\in\mathcal{S}_5}I_{\Rc}+O(\epsilon)\Bigr)\\
&\gg \frac{\Sf\#(\mathscr{R}\cap\Zz^{n-k})}{\log{X}},
\end{align*}
since, by Proposition \ref{prpstn:Decomp2} we have that $\sum_{\Rc\in\mathcal{S}_5}I_{\Rc}<0.99$.
This gives the result whenever $n>k/\theta+8\epsilon>22k/7$, as required.
\end{proof}

Thus, to establish Proposition \ref{prop:InitialProp}, and hence Theorems \ref{thrm:MainTheorem} and \ref{thrm:LowerBound} for $K=\Qt$, it suffices to prove Lemma \ref{lmm:PolyaVino}, and Propositions \ref{prpstn:TypeII} and \ref{prpstn:SieveAsymptotic}.
\section{Type I sums}\label{sec:TypeI}
In this section we establish Lemma \ref{lmm:PolyaVino} and Proposition \ref{prpstn:SieveAsymptotic} under the assumption of  Proposition \ref{prpstn:TypeII} by using estimates from the geometry of numbers.
\begin{lmm}[Geometry of Numbers]\label{lmm:Davenport}
 Let $\Rc\subseteq\Rr^\ell$ be a region such that any line parallel to the coordinate axes intersects $\Rc$ in $O(1)$ intervals. Then we have
\[\#\{\ab\in\Zz^\ell\cap\Rc\}=\vol{\Rc}+O\Bigl(1+\sum_{j=1}^{\ell-1}V_j\Bigr),\]
where $V_j$ is the sum of all the $(\ell-j)$-dimensional volumes of the projections of $\Rc$ formed by equating $j$ coordinates to zero. In particular, if $\Rc$ is contained in an $\ell$-dimensional hypercube of side length $B$ and $\Lambda\subseteq\Zz^\ell$ is a rank $\ell$ lattice with successive minima $Z_1\le \dots \le Z_{\ell}$, then we have
\[\#\{\ab\in\Lambda\cap\Rc\}=\frac{\vol{\Rc}}{\det(\Lambda)}+O\Bigl(1+\sum_{j=1}^{\ell-1}\frac{B^j}{\prod_{i=1}^{j}Z_i}\Bigr).\]
\end{lmm}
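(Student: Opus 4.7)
The statement splits naturally into two parts: the first is essentially Davenport's classical Lipschitz-principle estimate for counting integer points in a region, and the second is a reduction of the lattice-counting problem to the integer lattice via a Minkowski-reduced basis of $\Lambda$. My plan is to establish the first part by induction on dimension and deduce the second by an affine change of variables.

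For the first part I would induct on $\ell$. The base case $\ell=1$ is the observation that a union of $O(1)$ intervals has integer count within $O(1)$ of its total length. For the inductive step, I would slice $\mathcal{R}$ by the integer hyperplanes $\{x_\ell=t\}$ for $t\in\mathbb{Z}$, apply the inductive hypothesis in dimension $\ell-1$ to each slice, and then convert the sum over $t$ into the integral giving $\vol(\mathcal{R})$. The accumulated interior errors from the slices assemble into the projection volumes $V_j$ of $\mathcal{R}$ with $j\ge 2$, while replacing the Riemann sum in $t$ by the integral costs an additional $V_1$ term.

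For the second part I would pick a Minkowski-reduced basis $\mathbf{v}_1,\dots,\mathbf{v}_\ell$ of $\Lambda$ so that $\|\mathbf{v}_i\|\asymp Z_i$ and $\prod_i\|\mathbf{v}_i\|\asymp\det(\Lambda)$ (with constants depending only on $\ell$), and let $T:\mathbb{R}^\ell\to\mathbb{R}^\ell$ be the linear map sending $\mathbf{y}$ to $\sum_i y_i\mathbf{v}_i$. Then $T$ is a bijection $\mathbb{Z}^\ell\to\Lambda$ with Jacobian $\det(\Lambda)$, so counting $\Lambda$-points in $\mathcal{R}$ is the same as counting integer points in $T^{-1}(\mathcal{R})$, which has volume $\vol(\mathcal{R})/\det(\Lambda)$. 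The standard property of reduced bases tells me that $T(\mathbf{y})\in\mathcal{R}\subseteq[-B,B]^\ell$ forces $|y_i|\ll B/Z_i$, and hence $T^{-1}(\mathcal{R})\subseteq\prod_i[-CB/Z_i,CB/Z_i]$ for some constant $C=C(\ell)$. The $(\ell-j)$-dimensional projection obtained by setting $j$ of the $\mathbf{y}$-coordinates to zero is then bounded by $\sum_{|S|=\ell-j}\prod_{i\in S}B/Z_i$, and since $Z_1\le\dots\le Z_\ell$ the dominant contribution comes from $S=\{1,\dots,\ell-j\}$, giving $V_j\ll B^{\ell-j}/\prod_{i=1}^{\ell-j}Z_i$. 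Applying the first part to $T^{-1}(\mathcal{R})$ and re-indexing with $j'=\ell-j$ then produces exactly the stated bound.

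The main technical nuisance I foresee is verifying that the hypothesis \emph{``any axis-parallel line meets the region in $O(1)$ intervals''} is preserved under the change of variables $T^{-1}$, since $T$ does not send coordinate axes to coordinate axes. However, for the polytopal (or smoothly bounded) regions that arise elsewhere in the paper this intersection count is controlled by the number of defining faces of $\mathcal{R}$, and this number is invariant under the invertible linear map $T$, with only the implicit $O(1)$ constant changing (which can be absorbed into constants depending on $\ell$). Beyond this verification the argument is mechanical; the only nontrivial external input is Minkowski's second theorem, which supplies the comparison between $\|\mathbf{v}_i\|$ and $Z_i$.
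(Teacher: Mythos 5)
Your argument takes essentially the same route as the paper: invoke (or reprove by slicing and induction) Davenport's estimate over $\mathbb{Z}^\ell$, then transfer to $\Lambda$ via a reduced basis with $\|\mathbf{z}_i\|\asymp Z_i$ and bound the projection volumes of the preimage region by the resulting box $\prod_i[-CB/Z_i,CB/Z_i]$. Your observation that the axis-parallel-line regularity hypothesis is not automatically preserved under $T^{-1}$ is a genuine subtlety that the paper's one-line proof glosses over; your resolution --- that for the polytopal regions actually used in the paper the intersection count with \emph{any} line is controlled by the facial structure, which is invariant under invertible linear maps --- is the right fix and covers every application of this lemma in the paper.
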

\begin{proof}
The first statement is Davenport's theorem \cite{Davenport}. For the second statement, there is a basis $\zb_1,\dots,\zb_{\ell}$ of $\mathbf{\Lambda}$ with $\|\zb_i\|\asymp Z_i$ and $\|\sum_{i=1}^\ell a_i\zb_i\|\gg \sum_{i=1}^\ell\|a_i\zb_i\|$ for any $\ab\in\mathbb{R}^\ell$ by Lemma \ref{lmm:Basis}. Letting $M$ be the $\ell\times \ell$ matrix with columns $\zb_1,\dots, \zb_\ell$, we see that counting $\xb\in\Lambda\cap\Rc$ is the same as counting $\xb'\in\Zz^\ell \cap M^{-1}\Rc$. This region has volume $\vol{\Rc}/\det(M)=\vol{\Rc}/\det(\Lambda)$. Any point $\ab=\sum_{i=1}^\ell a_i\zb_i$ must be a distance $O(B)$ from the centre $\mathbf{c}=\sum_{i=1}^\ell c_i\zb_i$ of the hypercube containing $\Rc$, and so $\sum_{i=1}^\ell \|(a_i-c_i)\zb_i\|\ll \|\sum_{i=1}^\ell (a_i-c_i)\zb_i\|\ll B$. This means that $a_i$ is constrained to lie in an interval of length $O(B/Z_i)$, and hence in this case $V_j=O(B^{\ell-j}/\prod_{i=1}^{\ell-j}Z_i)$.
\end{proof}

\begin{lmm}\label{lmm:BasicLambdaEst}
Given $\db,\eb\in\mathbb{Z}^n\backslash\{\mathbf{0}\}$, let $\eb\diamond\db$ be the vector $\bb$ such that
\[
\sum_{i=1}^n b_i\Ti=\sum_{i=1}^n e_i\Ti\times\sum_{i=1}^n d_i\Ti
\]
and let $\Lambda_{\db}$ be the lattice
\[
\Lambda_{\db}=\{\mathbf{e}\in\mathbb{Z}^n:\,(\db\diamond \eb)_j=0\text{ for $n-k<j\le n$}\}.
\]
Then for any $\db\in\mathbb{Z}^{n}\backslash\{\mathbf{0}\}$
\begin{enumerate}
\item $\Lambda_{\db}$ is a rank $n-k$ lattice.
\item $\det(\Lambda_\db)\ll \|\db\|^k$.
\end{enumerate}
\end{lmm}
\begin{proof}
We see that the $j^{th}$ component of $\eb\diamond \db$ is $\vb_{j,\db}\cdot \eb$, where $\vb_{j,\db}$ is the $j^{th}$ row in the multiplication-by-$\sum_{i=1}^n d_i\Ti$ matrix with respect to the basis $\{\Ti\}_{i=1}^n$. The multiplication-by-$\sum_{i=1}^n d_i\Ti$ matrix has determinant $N(\sum_{i=1}^n d_i\Ti)$, and so is non-zero for any $\db\in\mathbb{Z}^n\backslash\{\mathbf{0}\}$. Thus the vectors $v_{n-k+1,\db},\dots ,v_{n,\db}$ are $k$ linearly independent vectors, so $\Lambda_{\db}$ is a lattice of rank $n-k$.

Since the components of $v_{j,\db}$ have size $O(\|\mathbf{d}\|)$, the lattice has determinant $\det{\Lambda_{\db}}\ll \prod_{j=n-k+1}^n\|\vb_{j,\db}\|\ll \|\mathbf{d}\|^k$. (This bound follows from considering the dual lattice, or is an immediate consequence of Lemma \ref{lmm:Latticedets}.)
\end{proof}

\begin{lmm}\label{lmm:ShortVectors}
Let $\db\in(\mathbb{Z}^n\backslash\{\mathbf{0}\})\cap[-D,D]^n$, and $\Lambda_{\db}$ be as in Lemma \ref{lmm:BasicLambdaEst}. Let $\zb_1(\db)$ denote a shortest non-zero vector in $\Lambda_{\db}$. Then we have $\|\zb_1(\db)\|\ll D^{k/(n-k)}$ and
\[
\#\{\db\in [1,D]^n:\,\|\zb_1(\db)\|\le Z\}\ll D^{n-k+o(1)}Z^{n-k}.
\]
In particular
\[
\sum_{\|\db\|\le D}\frac{1}{\|\zb_1(\db)\|^{n-k-1}}\ll D^{n-k+k/(n-k)+o(1)} .
\]
\end{lmm}
\begin{proof}
By Lemma \ref{lmm:BasicLambdaEst}, $\Lambda_\db$  has rank $n-k$ and determinant $O(D^k)$ when $\db\in[-D,D]^n$. By Lemma \ref{lmm:Basis}, if $\lambda_1\le \lambda_2\le \dots \le\lambda_{n-k}$ are the successive minima of $\Lambda_{\db}$, then 
\[
\|\zb_1(\db)\|^{n-k}=\lambda_1^{n-k}\le \lambda_1\dots \lambda_{n-k}\ll \det(\Lambda_\db)\ll D^k,
\]
so $\|\zb_1(\db)\|\ll D^{k/(n-k)}$. This gives the first claim.

Since $\zb_1(\db)\in \Lambda_{\db}$, we have $(\db\diamond\zb_1(\db))_j=0$ for $n-k<j\le n$. By Lemma \ref{lmm:DivisorBound}, given $\xb\in\Zz^n\backslash\{\mathbf{0}\}$, there are at most $\tau(\sum_{i=1}^{n}x_i\Ti)\ll \|\xb\|^{o(1)}$ choices of $\db$ and $\zb$ such that $\zb\diamond\db=\xb$, since $\sum_{i=1}^{n}z_i\Ti$ and $\sum_{i=1}^n d_i\Ti$ must be divisors of $\sum_{i=1}^{n}x_i\Ti$. Moreover, such a $\xb$ must have $x_j=0$ for $j>n-k$. Putting this together, for any choice of $Z>0$, we find that
\begin{align*}
\sum_{\substack{\db\in[1,D]^n\\  \|\zb_1(\db)\|\le Z}}1&\le\sum_{\substack{\zb\in\Zz^n \\ \|\zb\|\le Z}}\sum_{\substack{\db\in [1,D]^n\\ (\db\diamond \zb)_j=0\text{ if }j>n-k}}1\\
&\le \sum_{\substack{\xb\in\Zz^{n-k}\\ \|\xb\|\ll D Z}}\tau\Bigl(\sum_{i=1}^{n-k} x_i\Ti\Bigr)\\
&\le  D^{n-k+o(1)}Z^{n-k+o(1)}.
\end{align*}
This gives the second claim.

By considering $\|\zb_1(\db)\|$ in dyadic intervals $[Z,2Z]$, we find
\begin{align*}
\sum_{\db\in[1,D]^n}\frac{1}{\|\zb_1(\db)\|^{n-k-1}}&\ll \log{D}\sup_{Z\ll D^{k/(n-k)}}\frac{1}{Z^{n-k-1}}\sum_{\substack{\db\in[1,D]^n\\ Z\le \|\zb_1(\db)\|\le 2Z}}1\\
&\ll \sup_{Z\ll D^{k/(n-k)}}D^{n-k+o(1)}Z\\
&\ll D^{n-k+k/(n-k)+o(1)}.
\end{align*}
This gives the final claim.
\end{proof}

\begin{lmm}[Weak Type I estimate]\label{lmm:WeakTypeI}
Let $\df$ be an ideal of $\mathcal{O}_K$ with $N(\df)$ coprime to $Q$. Let $\mathcal{R}\subset[-X,X]^{n-k}$ satisfy the conditions of Lemma \ref{lmm:Davenport}.Then we have
\begin{align*}
\#\Bigl\{\ab\in\mathbb{Z}^{n-k}\cap\mathcal{R}:&\,\df|(\sum_{i=1}^{n-k}a_i\Ti),\,\ab\equiv \ab_0\Mod{Q}\Bigr\}\\
&\qquad\qquad=\frac{\rho(\df)\vol{\mathcal{R}}}{N(\df) Q^{n-k}}+O( N(\df)^n X^{n-k-1}).
\end{align*}
Here $\rho$ is the function defined by
\[
\rho(\df)=\frac{\#\{\ab\in[1,N(\df)]^{n-k}:\,\df| (\sum_{i=1}^{n-k}a_i\Ti)\}}{N(\df)^{n-k-1}}.
\]
\end{lmm}
\begin{proof}
We split the count into residue classes modulo $Q N(\df)$. We note that if $\ab\equiv \bb\Mod{N(\df)}$ then $\df|(\sum_{i=1}^{n-k}b_i\Ti)$ if and only if $\df|(\sum_{i=1}^{n-k}a_i\Ti)$. Therefore
\begin{align*}
\sum_{\substack{\ab\in\mathbb{Z}^{n-k}\cap\mathcal{R}\\ \ab\equiv \ab_0\Mod{Q}\\ \df|(\sum_{i=1}^{n-k}a_i\Ti)}}1=\sum_{\substack{\bb\in [1,Q N(\df)]^{n-k}\\ \bb\equiv \ab_0  \Mod{Q}\\ \df|(\sum_{i=1}^{n-k}b_i\Ti)}} \sum_{\substack{\ab\in \mathbb{Z}^{n-k}\cap\mathcal{R}\\ \ab\equiv \bb\Mod{Q N(\df)}}}1.\\ 
\end{align*}
By letting $\ab=\bb+\ab_2 Q N(\df)$ we see that the inner sum is over $\ab_2\in\mathbb{Z}^{n-k}\cap\mathcal{R}'$ where $\mathcal{R}'=(\mathcal{R}-\bb)/Q N(\df)$ is a translated and scaled copy of $\mathcal{R}$. Since $\mathcal{R}'$ is contained in a hypercube of side length $X/Q N(\df)$, by Lemma \ref{lmm:Davenport}, the inner sum is given by
\[
\vol{\mathcal{R}'}+O\Bigl(1+\frac{X^{n-k-1}}{Q^{n-k-1}N(\df)^{n-k-1}}\Bigr)=\frac{\vol{\mathcal{R}}}{Q^{n-k}N(\df)^{n-k}}+O\Bigl(1+\frac{X^{n-k-1}}{Q^{n-k-1}N(\df)^{n-k-1}}\Bigr).
\]
Since $Q$ and $N(\df)$ are coprime, there are precisely $N(\df)^{n-k-1}\rho(\df)\ll N(\df)^n$ terms in the sum over $\bb$ by the Chinese Remainder Theorem. This gives the result.
\end{proof}

\begin{prpstn}[Type I estimate]\label{prpstn:TypeI}
Let $\Rc=\Rc(X)\subseteq[-X,X]^{n-k}$ be a region 
such that any line parallel to the coordinate axes intersects $\mathcal{R}$ in $O(1)$ intervals. Given a vector $\ab_0\in\Zz^{n-k}$, and a quantity $Q\le X^{1/2}$, we define the set
\[\mathscr{C}=\Bigl\{\sum_{i=1}^{n-k} a_i\Ti:\,\ab\in\Zz^{n-k}\cap\Rc,\,\ab\equiv \ab_0\Mod{Q}\Bigr\}.\]
We let $\mathscr{C}_{\df}=\{\kappa\in\mathscr{C}:\,\df|(\kappa)\}$ be the elements of $\mathscr{C}$ which generate an ideal which is a multiple of $\df$. Then we have
\begin{align*}
\sum_{\substack{N(\df)\in [D,2D]\\ \gcd(N(\df),Q)=1}}\left|\#\mathscr{C}_\df-\frac{\rho(\df)\vol{\Rc}}{Q^{n-k}N(\df)}\right|\ll X^{n-k-1}Q^{n+o(1)} D^{1/(n-k)+o(1)}+D Q^{n+o(1)}.\end{align*}
In particular, taking $\Rc=[X_1,X_1+\eta_1X_1]\times\dots\times [X_{n-k},X_{n-k}+\eta_1 X_{n-k}]$, $\xb_0=\ab_0$ and $Q=q^*$, we have
\[\sum_{\substack{N(\df)\in[D,2D]\\ \gcd(N(\df),q^*)=1}}\left|\#\Ac_{\df}-\frac{\rho(\df)\#\Ac}{N(\df)}\right|\ll X^{n-k-1+o(1)}D^{1/(n-k)+o(1)}+D X^{o(1)}.\]
Here $\rho$ is the function defined by
\[\rho(\df)=\frac{\#\{\ab\in[1,N(\df)]^{n-k}:\,\df| (\sum_{i=1}^{n-k}a_i\Ti)\}}{N(\df)^{n-k-1}}.\]
\end{prpstn}
\begin{proof}
We consider separately the contribution from ideals $\df$ occurring in each class $\Cc\in Cl_K$. Given a class $\Cc$, we fix a representative integral ideal $\cf\in\Cc$ with $\gcd(N(\cf),Q)=1$. We can choose such an ideal with $N(\cf)=Q^{o(1)}$. (Since $Q$ has $O(\log{Q})$ prime factors, there must be a prime ideal in $\Cc$ with norm coprime to $Q$ amongst the first $O(\log{Q})$ prime ideals in $\Cc$.) We let $(\delta_\cf)$ be the principal fractional ideal $\df\cf^{-1}$, where the generator $\delta_\cf=\sum_{i=1}^n d_i\Ti/(\theta n N(\cf))^n$ is chosen such that $d_i\in\Zz$ with $d_i\ll D^{1/n}Q^{o(1)}$. (The $d_i$ can be taken as integers since $\df\cf^{-1}(N(\cf))$ is integral and $\Zt$ is an order in $\Oc_K$ of index dividing $(\theta n)^n$. The $d_i$ can be chosen to be of size $O(D^{1/n}Q^{o(1)})$ by Lemma \ref{lmm:UnitSize}.) We note that $|\delta_{\cf}^{\sigma_0}|=N(\delta_{\cf})/\prod_{\sigma\ne \sigma_0}|\delta_{\cf}^\sigma|\gg D^{1/n}Q^{o(1)}$ for any embedding $\sigma_0$.

We see that
\begin{align*}
\#\{\alpha\in\mathscr{C}:\df|(\alpha)\}&=\#\{\alpha\in\mathscr{C}:\,(\alpha)=\af'\df=\af'\cf\df\cf^{-1}\text{ for some integral }\af'\}\\
&=\#\{\beta\in \Oc_K:\,\delta_\cf\beta\in\mathscr{C},\,\cf|(\beta)\}.
\end{align*}
Here we have put $\beta$ as a generator of the principal ideal $\af'\cf$.

We let $\beta=(\theta n)^{-n}\sum_{i=1}^n b_i\Ti$ with $\bb\in\Zz^n$. All such $\beta$ have such a representation since $\Zt$ is an order in $\Oc_K$ of index dividing $(\theta n)^n$. Moreover, since $\Zt\subseteq\mathcal{O}_K$, provided $\bb$ lies in a suitable residue class  $\Mod{(\theta n)^n}$ we have that $(\theta n)^{-n}\sum_{i=1}^n b_i\Ti\in\mathcal{O}_K$, and so with this restriction on residue classes the representation is then bijective. We may introduce a further restriction $\Mod{N(\cf)^n(\theta n)^{2n}}$ to ensure $\beta\delta_\cf\in\Zt$ and $\cf|(\beta)$. We now split the count into residue classes $\mod{q=Q N(\cf)^n(\theta n )^{2n}}$, so that we are left to estimate
\begin{equation}
\sideset{}{'}\sum_{\bb_0}
\sum_{\substack{\bb\in\Zz^n\\ \bb\equiv\bb_0\Mod{q}\\ \delta_\cf\beta\in\mathscr{C}}}1.
\label{eq:B0Sum}\end{equation}
Here $\sum'_{\bb_0}$ indicates we sum over $\bb_0\in[1,q]^n$ restricted to the residue classes $\Mod{N(\cf)^n(\theta n)^{2n}}$ described above and also such that the coefficient of $\Ti$ in $\beta_0\delta_\cf\in\Zt$ is congruent to $0 \Mod{Q}$ for $i>n-k$ and congruent to $(\ab_0)_i\Mod{Q}$ for $i\le n-k$.

We concentrate on the inner sum. Recall that $\db\diamond\bb$ denotes the vector $\mathbf{e}$ such that $\sum_{i=1}^{n}e_i\Ti=\sum_{i=1}^n d_i\Ti\times\sum_{i=1}^n b_i\Ti$. Since $\delta_{\cf}\beta\in\mathscr{C}$, we must have that $(\db\diamond \bb)_j=0$ for $n-k<j\le n$. Thus $\bb$ is restricted to lie in the lattice $\Lambda_{\db}$ described by Lemma \ref{lmm:BasicLambdaEst}.
If there is no vector $\bb^{(1)}\in\Lambda_\df$ such that $\bb^{(1)}\equiv \bb_0\Mod{q}$, then the inner sum of \eqref{eq:B0Sum} is clearly empty. If there is such a vector, we write $\bb=\bb^{(1)}+q\bb^{(2)}$, giving
\[
\sideset{}{'}\sum_{\bb_0}\sum_{\substack{\bb\in \Zz^n \\ \bb\equiv \bb_0\Mod{q}\\ \delta_\cf\beta\in\mathscr{C}}}1=\sideset{}{''}\sum_{\bb_0}\sum_{\substack{\bb^{(2)}\in\Lambda_{\df}\\ \delta_\cf\beta_1 +q\delta_\cf\beta_2\in\mathscr{C}%
}}1.
\]
Here $\sum''$ indicates we have the additional condition that such a vector $\bb^{(1)}$ exists, and we have put $\beta_1=(\theta n)^{-n}\sum_{i=1}^{n}b^{(1)}_i\Ti$.
The conditions $\bb^{(2)}\in\Lambda_{\df}$ and $\delta_{\cf}\beta_1+q\delta_{\cf}\beta_2\in\mathscr{C}$ are equivalent to $\bb^{(2)}\in\Lambda_{\df}\cap\Rc'$, for some for some region $\Rc'$. Since, for any embedding $\sigma$, we have $|\delta_\cf^\sigma|\gg D^{1/n}q^{o(1)}$ and any $\alpha\in\mathscr{C}$ has $|\alpha^\sigma|\ll X$, we see that $|(\beta_2+\beta_1/q)^\sigma|\ll X D^{-1/n}q^{-1+o(1)}$. In particular, $\Rc'$ is contained in a hypercube of side length $X D^{-1/n}q^{-1+o(1)}$. Thus, by Lemma \ref{lmm:Davenport}, we have
\begin{equation}
\sum_{\bb^{(2)}\in\Lambda_{\df}\cap \Rc'}1=\frac{\vol{\Rc'}}{\det(\Lambda_{\df})}+O\Bigl(1+\frac{X^{n-k-1}}{\|\zb_1(\df)\|^{n-k-1}D^{(n-k-1)/n}}\Bigr),\label{eq:TypeIGeom}
\end{equation}
where $\zb_1(\df)$ is the shortest non-zero vector in $\Lambda_\df$. We recall that $\mathcal{R}'$ is the region for $\bb^{(2)}$ from the condition that $\delta_{\cf}\beta_1+q\delta_\cf\beta_2\in\mathscr{C}$. From this we see that $\vol\Rc'=\vol{\Rc}/f_{\df,q}$ for some quantity $f_{\df,q}$ independent of $X$. Thus, after summing over $\bb_0$ we find
\[
\#\{\alpha\in\mathscr{C}:\df|(\alpha)\}=\vol{\mathcal{R}}\sideset{}{''}\sum_{\bb_0}\frac{1}{f_{\df,q}}+O\Bigl(q^n+\frac{q^n X^{n-k-1}}{\|\zb_1(\df)\|^{n-k-1}D^{(n-k-1)/n}}\Bigr).
\]
We note that the sum over $\bb_0$ above depends on $q$ and $\df$, but not on $X$ or $\mathcal{R}$. Since this holds for all $X$ and $\mathcal{R}$, if $X$ is large compared with $q,\df$ and $\mathcal{R}$ is the hypercube $[1,X]^{n-k}$ we see that the main term must match that of Lemma \ref{lmm:WeakTypeI}, and so we must have that
\begin{equation*}
\sideset{}{''}\sum_{\bb_0}\frac{1}{f_{\df,q}}=\frac{\rho(\df)}{N(\df)Q^{n-k}}.\label{eq:LatticeMainTerm}
\end{equation*}
Since the above equation is independent of $X$ and $\mathcal{R}$, it must in fact hold regardless of the choice of $X$ and $\mathcal{R}$. Thus 
\begin{equation}
\#\{\alpha\in\mathscr{C}:\df|(\alpha)\}=\vol{\mathcal{R}}\frac{\rho(\df)}{N(\df)Q^{n-k}}+O\Bigl(q^n+\frac{q^n X^{n-k-1}}{\|\zb_1(\df)\|^{n-k-1}D^{(n-k-1)/n}}\Bigr).\label{eq:TypeILattice}
\end{equation}
By Lemma \ref{lmm:ShortVectors}, when summing over $N(\df)\in [D,2D]$, the error term in \eqref{eq:TypeILattice} contributes a total
\begin{align*}
&\ll\sum_{\db\ll D^{1/n}Q^{o(1)}}\Bigl(q^n+\frac{q^n X^{n-k-1}}{\|\zb_1(\df)\|^{n-k-1}D^{(n-k-1)/n}}\Bigr)\\
&\ll D q^n Q^{o(1)}+q^n \frac{X^{n-k-1}}{D^{(n-k-1)/n}}\sum_{\db\ll D^{1/n}Q^{o(1)}}\frac{1}{\|\zb_1(\df)\|^{n-k-1}}\\
&\ll D q^n Q^{o(1)}+q^n X^{n-k-1}D^{1/(n-k)+o(1)}Q^{o(1)}.
\end{align*}
Recalling that $q\ll Q^{1+o(1)}$, we see that this is 
\[
\ll X^{n-k-1}Q^{n+o(1)}D^{1/(n-k)+o(1)}+D Q^{n+o(1)}.
\]
This gives the result.
\end{proof}

\begin{lmm}\label{lmm:Weber}
\[\sum_{N(\df)\le D}\left|\#\Bc_{\df}-\frac{\#\Bc}{N(\df)}\right|\ll X^{n-1+o(1)}D^{1/n}.\]
\end{lmm}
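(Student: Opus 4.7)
My plan is to bound $|\#\mathcal{B}_\mathfrak{d} - \#\mathcal{B}/N(\mathfrak{d})|$ pointwise for each $\mathfrak{d}$ using the classical Landau/Weber ideal-counting theorem together with character orthogonality, and then to sum the resulting errors over $N(\mathfrak{d}) \le D$.

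For fixed $\mathfrak{d}$, the set $\mathcal{B}_\mathfrak{d}$ consists of integral ideals $\mathfrak{e}$ with $N(\mathfrak{e})$ lying in the short interval $I_\mathfrak{d} = [N_0^n/N(\mathfrak{d}),(1+\eta_1^{1/2})N_0^n/N(\mathfrak{d})]$ and satisfying the twisted condition $\chi^*(\mathfrak{d}\mathfrak{e}) = \chi^*(\mathfrak{a}_0)$. Using orthogonality for the quadratic character $\chi^*$ modulo $q^*$, I write this character constraint as a linear combination of the principal character $\chi^{*2}$ (mod $q^*$) and $\chi^*$ itself. Combined with the classical Landau-type estimate
\[\sum_{N(\mathfrak{e})\le Y}\chi(\mathfrak{e}) = \Res_{s=1}\Bigl(L(s,\chi)\frac{Y^s}{s}\Bigr)+O\bigl(Y^{1-1/n}(\log Y)^{O(1)}\bigr),\]
valid uniformly for Hecke characters of conductor at most $q^*\le \exp((\log X)^{1/4})$, and with differencing to restrict to $I_\mathfrak{d}$, the principal-character piece contributes exactly $\#\mathcal{B}/N(\mathfrak{d})$ by the same residue calculation used to evaluate $\#\mathcal{B}$ (via $\Res_{s=1}L(s,\chi^{*2})$). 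The non-principal piece is pure error, yielding the pointwise bound
\[\left|\#\mathcal{B}_\mathfrak{d}-\frac{\#\mathcal{B}}{N(\mathfrak{d})}\right|\ll \Bigl(\frac{N_0^n}{N(\mathfrak{d})}\Bigr)^{1-1/n+o(1)}\ll \frac{X^{n-1+o(1)}}{N(\mathfrak{d})^{1-1/n}}.\]

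Finally, summing over $\mathfrak{d}$ with $N(\mathfrak{d}) \le D$ and applying partial summation to Landau's bound $\#\{\mathfrak{d} : N(\mathfrak{d}) \le t\} \ll t$ gives $\sum_{N(\mathfrak{d})\le D} N(\mathfrak{d})^{-(1-1/n)} \ll D^{1/n}$, which produces the claimed total bound of $X^{n-1+o(1)} D^{1/n}$.

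The main technical step is the short-interval Landau/Hecke bound, uniform in the character $\chi^*$. Since $q^* \le \exp((\log X)^{1/4})$, standard contour-integration proofs give uniformity with the loss absorbed into the $o(1)$, so this is routine. A minor bookkeeping point concerns $\mathfrak{d}$ with $\gcd(N(\mathfrak{d}),q^*)>1$, for which $\mathcal{B}_\mathfrak{d} = \emptyset$ (since $\chi^*(\mathfrak{d})=0$ forces $\chi^*(\mathfrak{d}\mathfrak{e})=0\ne \chi^*(\mathfrak{a}_0)$); the error contribution is then just $\#\mathcal{B}/N(\mathfrak{d})$, but these $\mathfrak{d}$ are rare (supported on primes dividing $q^*$) and their total contribution is easily absorbed into the same bound via the smallness of $q^*$.
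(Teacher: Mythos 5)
Your argument follows the same route as the paper: apply the classical Weber/Landau estimate (twisted by $\chi^*$) pointwise in $\mathfrak{d}$, and then sum the errors $X^{n-1+o(1)}N(\mathfrak{d})^{-(1-1/n)}$ over $N(\mathfrak{d})\le D$. The paper simply quotes Weber's theorem for the count of ideals with prescribed $\chi^*$-value rather than deriving it via orthogonality, but this is a cosmetic difference.

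One caution on your bookkeeping remark about $\gcd(N(\mathfrak{d}),q^*)>1$. You are right that $\mathcal{B}_\mathfrak{d}=\emptyset$ in that case, but the claim that these terms are ``easily absorbed'' is not correct for the lemma as literally stated: such a $\mathfrak{d}$ contributes $\#\mathcal{B}/N(\mathfrak{d})\asymp \eta_1^{1/2}X^n/N(\mathfrak{d})$, and since $q^*$ can have small prime factors, a single $\mathfrak{d}$ of small norm already gives an error of size $\gg X^n/(\log X)^{O(1)}$, which exceeds $X^{n-1+o(1)}D^{1/n}$ whenever $D\ll X^{n-\delta}$. The resolution is not smallness of $q^*$ but rather that the lemma is only invoked for $\mathfrak{d}$ whose prime factors all exceed $\mathfrak{z}_0$ (so $N(\mathfrak{p})>X^{\epsilon^2}\gg q^*$), hence $\gcd(N(\mathfrak{d}),q^*)=1$ automatically. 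The paper quietly makes the same assumption (its identity $\#\mathcal{B}_\mathfrak{d}=\#\{\mathfrak{a}:\ldots,\ \chi^*(\mathfrak{a})=\chi^*(\mathfrak{a}_0)\}$ presupposes $\chi^*(\mathfrak{d})\ne 0$), so you have actually spotted a real gap in the statement, but the fix is a coprimality restriction, not an absorption estimate.
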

\begin{proof}
The proof of Lemma \ref{lmm:BSize} shows that the number of ideals $\af$ of norm at most $Y>X^\epsilon$ with $\chi^*(\af)=\chi^*(\af_0)$ is (recalling $q^*=X^{o(1)}$)
\[
\frac{\gamma_K \phi_K((q^*)) }{2q^{*n}}Y+O(Y^{1-1/n+o(1)}),
\]
where $\gamma_K=\Res_{s=1}\zeta_K(s)$. Letting $\bfr=\af\df\in\Bc_\df$, this gives
\begin{align*}
\#\Bc_\df&=\#\Bigl\{\af:\frac{N_0^n}{N(\df)}\le N(\af)\le \frac{N_0^n(1+\eta_1)}{N(\df)},\,\chi^*(\af\df)=\chi^*(\af_0)\Bigr\}\\
&= \frac{\eta_1\gamma_K\phi_K((q^*)) N_0^n}{2 q^{*n} N(\df)}+O\Bigl(\frac{X^{n-1+o(1)}}{N(\df)^{1-1/n}}\Bigr).\end{align*}
Applying this also with $\df=(1)$, we see the main term above is $(\#\Bc+O(X^{n-1+o(1)}))/N(\mathfrak{d})$. Summing over $\df$ then gives the result.
\end{proof}
Recall from Proposition \ref{prpstn:TypeI} that $\rho$ is defined by \[\rho(\df)=\frac{\#\{\ab\in[1,N(\df)]^{n-k}:\,\df| (\sum_{i=1}^{n-k}a_i\Ti)\}}{N(\df)^{n-k-1}}.\]
we wish to establish some basic properties of this function.
\begin{lmm}\label{lmm:RhoBounds}
\begin{enumerate}[label=(\roman*)]
\item $\rho(\pf)=1$ for any degree one prime ideal $\mathfrak{p}\nmid (\theta n )$.
\item We have
\[\#\Bigl\{\xb\in[1,p^2]^{n-k}:\,p^2|N\Bigl(\sum_{i=1}^{n-k}x_i\Ti\Bigr)\Bigr\}\ll p^{2n-2k-2}.\]
In particular, for any ideal with $N(\mathfrak{e})$ a power of $p$, we have
\[\frac{\rho(\mathfrak{e})}{N(\pf)}\ll \frac{1}{p^2}\]
unless $\mathfrak{e}$ is a degree 1 prime ideal above $p$.
\item $\rho(\af\bfr)=\rho(\af)\rho(\bfr)$ if $\gcd(N(\af),N(\bfr))=1$.
\end{enumerate}
\end{lmm}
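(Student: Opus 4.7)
The lemma splits into three parts, but (i) and (iii) are essentially formalities; the substance is the counting bound in (ii), from which the ``in particular'' claim will follow by an elementary covering argument. My plan is to dispose of (i) and (iii) quickly using linear algebra and the Chinese Remainder Theorem, then prove the main bound of (ii) by a smooth-versus-singular analysis of the hypersurface $\{N\equiv 0\}$ reduced modulo $p$, which is where the real work lies.

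For (i), a degree one prime $\mathfrak{p}$ above $p$ has $\mathcal{O}_K/\mathfrak{p}\cong\mathbb{F}_p$, so $\mathbf{x}\mapsto\sum_i x_i\sqrt[n]{\theta^{i-1}}\pmod{\mathfrak{p}}$ is an $\mathbb{F}_p$-linear functional $\mathbb{F}_p^{n-k}\to\mathbb{F}_p$. Its $x_1$-coefficient is the image of $1\in\mathcal{O}_K$ in $\mathbb{F}_p$, which is nonzero, so the kernel has size $p^{n-k-1}$ and $\rho(\mathfrak{p})=1$. For (iii), the hypothesis $\gcd(N(\mathfrak{a}),N(\mathfrak{b}))=1$ gives a CRT bijection of residues modulo $N(\mathfrak{ab})$ with pairs of residues modulo $N(\mathfrak{a})$ and $N(\mathfrak{b})$; since $N(\mathfrak{a})\in\mathfrak{a}$ and $N(\mathfrak{b})\in\mathfrak{b}$, the divisibilities $\mathfrak{a}|(\alpha)$ and $\mathfrak{b}|(\alpha)$ depend only on the corresponding residues, so the counts factor and $\rho(\mathfrak{ab})=\rho(\mathfrak{a})\rho(\mathfrak{b})$.

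For the main bound of (ii), I first restrict to primes $p\nmid n\theta$ (so $p\nmid\mathrm{disc}(X^n-\theta)$); the finitely many remaining primes are absorbed into the implied constant. For such $p$ the roots $\bar{\omega}_1,\ldots,\bar{\omega}_n\in\bar{\mathbb{F}}_p$ of $X^n-\theta$ are distinct and nonzero, so $N(\mathbf{x})\pmod p$ factors over $\bar{\mathbb{F}}_p$ as $\prod_j\ell_j(\mathbf{x})$ with $\ell_j(\mathbf{x})=\sum_i x_i\bar{\omega}_j^{i-1}$ pairwise linearly independent (the coefficient vectors $(1,\bar{\omega}_j,\ldots,\bar{\omega}_j^{n-k-1})$ are non-proportional for $n-k\ge 2$ because the $\bar{\omega}_j$ are distinct); in particular $N\pmod p$ is square-free. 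Writing $\mathbf{x}=\bar{\mathbf{x}}+p\mathbf{y}$ and Taylor expanding gives $N(\mathbf{x})\equiv N(\bar{\mathbf{x}})+p\,\nabla N(\bar{\mathbf{x}})\cdot\mathbf{y}\pmod{p^2}$. At a smooth $\mathbb{F}_p$-point of $V(N)$ (where $p\mid N(\bar{\mathbf{x}})$ and $\nabla N\not\equiv 0$), the condition $p^2\mid N(\mathbf{x})$ becomes a single nontrivial linear congruence in $\mathbf{y}$, giving $p^{n-k-1}$ lifts; at a singular point it is independent of $\mathbf{y}$ and yields either $0$ or $p^{n-k}$ lifts. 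By Schwartz--Zippel applied to the nonzero degree-$n$ polynomial $N\pmod p$, $|V(N)(\mathbb{F}_p)|\le np^{n-k-1}$, bounding the smooth contribution by $np^{2(n-k-1)}$.

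The remaining and main obstacle is controlling the singular contribution $|V_{\rm sing}(\mathbb{F}_p)|\cdot p^{n-k}$, for which I will exploit the product structure. Since $N=\prod_j\ell_j$ with the $\ell_j$ pairwise independent, $\nabla N$ vanishes at a point precisely when two or more $\ell_j$ vanish there, so $V_{\rm sing}=\bigcup_{j<j'}V(\ell_j,\ell_{j'})$ over $\bar{\mathbb{F}}_p$. Each $V(\ell_j,\ell_{j'})$ is a codimension-$2$ linear subspace of $\bar{\mathbb{F}}_p^{n-k}$, so its intersection with $\mathbb{F}_p^{n-k}$ is an $\mathbb{F}_p$-subspace of dimension at most $n-k-2$ and contributes at most $p^{n-k-2}$ points. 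Summing over the $\binom{n}{2}$ pairs gives $|V_{\rm sing}(\mathbb{F}_p)|\ll p^{n-k-2}$ and a singular contribution $\ll p^{2n-2k-2}$, completing the main bound of (ii). Finally, when $N(\mathfrak{e})=p^m$ with $m\ge 2$, the divisibility $\mathfrak{e}|(\alpha)$ forces $p^m|N(\alpha)$ and so $p^2|N(\alpha)$; partitioning $[1,p^m]^{n-k}$ into $p^{(m-2)(n-k)}$ translates of $[1,p^2]^{n-k}$ and applying the main bound of (ii) on each gives $\rho(\mathfrak{e})\ll p^{m-2}$, whence $\rho(\mathfrak{e})/N(\mathfrak{e})\ll 1/p^2$.
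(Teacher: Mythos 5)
Your proof is correct, and the overall shape (establish (i), (ii), (iii) separately, with (ii) carrying the weight) matches the paper. Parts (i) and (iii) are essentially the same mandatory arguments, though your (i) is cleaner than the paper's: you observe directly that $\mathbf{x}\mapsto\sum_ix_i\sqrt[n]{\theta^{i-1}}\pmod{\mathfrak p}$ is a nonzero $\mathbb{F}_p$-linear functional on $\mathbb{F}_p^{n-k}$, whereas the paper goes through the rank of the multiplication-by-$\alpha$ matrix and invokes the no-fixed-prime-divisor property of $N_K$.

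The substantive divergence is in (ii). The paper's proof there is a one-liner: it notes $N_K(\mathbf{x})$ is irreducible over $\mathbb{Q}$, discards $p\mid\Delta_K$, and says ``the result follows immediately,'' implicitly citing the standard sieve lemma that an irreducible (hence squarefree) polynomial $f\in\mathbb{Z}[X_1,\ldots,X_m]$ has $O(p^{2m-2})$ solutions to $p^2\mid f(\mathbf{x})$ in $(\mathbb{Z}/p^2\mathbb{Z})^m$. You instead prove this from scratch for the norm form, exploiting its factorization $\overline{N}=\prod_j\ell_j$ into pairwise non-proportional linear forms over $\bar{\mathbb{F}}_p$ (valid for $p\nmid n\theta$): the Hensel/Taylor lift count plus Schwartz--Zippel handles the smooth locus, and the observation that the singular locus of a product of distinct linear forms is $\bigcup_{j<j'}V(\ell_j,\ell_{j'})$ (each meeting $\mathbb{F}_p^{n-k}$ in at most $p^{n-k-2}$ points, since the $\bar{\mathbb{F}}_p$-span of the $\mathbb{F}_p$-rational intersection still lies in the codimension-two subspace) handles the rest. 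This is more elementary and self-contained than an appeal to the general lemma, and it has the minor virtue of making the use of $n-k\ge 2$ and $p\nmid n\theta$ explicit, which the paper glosses over. The derivation of the ``in particular'' statement from the main bound of (ii) by covering $[1,p^m]^{n-k}$ with translates of $[1,p^2]^{n-k}$ is also correct, and you rightly note that the only $m=1$ case is the excluded degree-one prime. One small clarification worth making: the displayed inequality in the paper is presumably $\rho(\mathfrak e)/N(\mathfrak e)\ll 1/p^2$ (the $\mathfrak p$ appears to be a typo), and that is exactly what your covering argument delivers.
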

\begin{proof}
(i) Let $N(\pf)=p\nmid \theta n$, so $\Zt/p\Zt\cong \Oc_K/p\Oc_K$. There exists $\ab\in[1,p]^n$ such that $\pf|(\sum_{i=1}^n a_i\Ti)$ but $p^2\nmid N(\sum_{i=1}^n a_i\Ti)$ since there are asymptotically more ideals which are a multiple of $\pf$ than there are ideals having norm a multiple of $p^2$, by Lemma \ref{lmm:Weber}. But then the multiplication-by-$\sum_{i=1}^n a_i\Ti$ matrix (with respect to the basis $\{1,\sqrt[n]{\theta},\dots,\sqrt[n]{\theta^{n-1}}\}$) has determinant a multiple of $p$ but not of $p^2$, and so has rank $n-1$ over $\mathbb{F}_p$. This means the $p^{n-1}$ distinct multiples of $\sum_{i=1}^n a_i\Ti$ in $\Zt/p\Zt$ are all the elements of $\Oc_K/p\Oc_K$ which generate an ideal which is a multiple of $\pf$. In addition, the condition that $\sum_{i=1}^n b_i\Ti$ is congruent modulo $p$ to a multiple of $\sum_{i=1}^n a_i\Ti$ is equivalent to $\mathbf{c}_\pf\cdot \bb\equiv 0\Mod{p}$ for some integer vector $\mathbf{c}_\pf$, since the multiplication-by-$\sum_{i=1}^n a_i\Ti$ matrix has kernel of rank $1$. Therefore $\rho(\pf)$ counts the number of $\xb\in[1,p]^{n-k}\times\{0\}^k$ such that $\mathbf{c}_\pf\cdot\xb=0\Mod{p}$. But $N(\sum_{i=1}^{n-k}x_i\Ti)$ has no fixed prime divisor, so $\mathbf{c}_{\pf}\cdot \xb$ cannot vanish for all $\xb\in [1,p]^{n-k}\times\{0\}^k$. Thus there are exactly $p^{n-k-1}$ such $\xb$, giving $\rho(\pf)=1$.

(ii)  $N_K(\xb)$ is a non-zero polynomial in $x_1$, since the leading term is $x_1^n$. Moreover, the resultant of $N_K(\xb)$ and $\frac{\partial}{\partial x_1}N_K(\xb)$ (viewed as polynomials in $x_1$) is a non-zero polynomial in $x_2,\dots,x_{n-k}$ since $N_K$ is separable. Both of these are therefore non-zero polynomials over $\mathbb{F}_p$ for $p$ sufficiently large. Thus there are $O(p^{n-k-1})$ choices of $x_2,\dots,x_{n-k}\Mod{p}$ such that the resultant is $0\Mod{p}$, and for any such choice there are $O(1)$ values of $x_1\Mod{p}$ with $N_K(\xb)\equiv 0\Mod{p}$. These constraints give rise to $O(p^{2n-2k-2})$ choices of $\xb\Mod{p^2}$. Alternatively, if the resultant is non-zero, then for any such choice of $x_2,\dots,x_{n-k}$ there are $O(1)$ choices of $x_1\Mod{p}$ such that $N(\xb)\equiv 0\Mod{p}$, and all of these choices of $x_1$ lift (by Hensel's lemma) to a unique $x_1\Mod{p^2}$ such that $N_K(\xb)\equiv 0\Mod{p^2}$. Thus in either case there are $O(p^{2n-2k-2})$ choices. The result follows.

(iii) This follows immediately from the Chinese Remainder Theorem.
\end{proof}

\begin{lmm}[Divisor Bound for $\mathcal{O}_K$]\label{lmm:DivisorBound}
For any positive integer $m$, we have
\[\sum_{\substack{\|\xb\|\ll X\\ x_j=0\text{ if }j>n-k}}\tau(\sum_{i=1}^{n-k}x_i\Ti)^{m}\ll X^{n-k}(\log{X})^{O_m(1)}.\]
\end{lmm}
\begin{proof}
By, \cite[Lemma 4.4]{HB}, given any integer $r>0$, an ideal $\af$ has an ideal factor $\bfr|\af$ with $N(\bfr)\le N(\af)^{1/r}$ and $\tau(\af)\le 2^{r-1}\tau(\bfr)^{2r-1}$. Thus, taking $r=n^2$ we have
\begin{align*}
\sum_{\substack{\|\xb\|\ll X\\ x_j=0\text{ if }j>n-k}}&\tau(\sum_{i=1}^{n-k}x_i\Ti)^m\ll \sum_{N(\df)\ll X^{1/n}}\tau(\df)^{2m n^2}\sum_{\substack{\|\xb\|\ll X\\ x_j=0\text{ if }j>n-k \\ \df|(\sum_{i=1}^{n-k}x_i\Ti)}} 1\\
&\ll \sum_{N(\df)\ll X^{1/n}}\tau(\df)^{2m n^2}\rho(\df)N(\df)^{n-k-1}\Bigl(\frac{X^{n-k}}{N(\df)^{n-k}}+O(X^{n-k-1})\Bigr)\\
&\ll X^{n-k}\sum_{N(\df)<X^{1/n}}\frac{\tau(\df)^{2m n^2}\rho(\df)}{N(\df)}.
\end{align*}
Here we bounded the number of $\xb$ with $\df|(\sum_{i=1}^{n-k}x_i\Ti)$ trivially by splitting the $x_i$ into arithmetic progressions $\Mod{N(\df)}$. The sum over $\df$ is then bounded by
\begin{align*}
\prod_{N(\pf)\le X^{1/n}}\Bigl(1+\frac{2^{2m n^2}\rho(\pf)}{N(\pf)}+O\Bigl(\frac{1}{N(\pf)^2}\Bigr)\Bigr)
&\ll \prod_{p<X^{1/n}}\Bigl(1+\frac{2^{2m n^2}\nu_p}{p}+O\Bigl(\frac{1}{p^2}\Bigr)\Bigr)\\
&\ll (\log{X})^{O_m(1)},
\end{align*}
by Lemma \ref{lmm:RhoBounds}.
\end{proof}

\begin{lmm}[Fundamental Lemma]\label{lmm:FundamentalLemma}
Let $\zf_0$ be chosen maximally with $N(\zf_0)\le X^{\epsilon^2}$. Then we have
\[\sum_{\substack{N(\df)<X^{n-k-\epsilon}\\ \pf|\df\Rightarrow \pf>\zf_0}}\tau(\df)\Bigl|S(\Ac_{\df},\zf_0)-\Sft\frac{\#\Ac}{\#\Bc} S(\Bc_{\df},\zf_0)\Bigr|\ll \frac{\exp(-\epsilon^{-2/3})}{\log{X}}\#\Ac\prod_{p|q^*}\Bigl(1-\frac{\nu(p)}{p^{n-k}}\Bigr)^{-1}.\]
Here $\Sft=\prod_{p\nmid q^*}\Bigl(1-\frac{\nu(p)}{p^{n-k}}\Bigr)\Bigl(1-\frac{\nu_2(p)}{p^n}\Bigr)^{-1}$ is as in Proposition \ref{prpstn:TypeII}.
\end{lmm}
\begin{proof}
We first relate the estimate to a sieving problem over $\Qq$, where the result then follows from the classical `Fundamental lemma' of sieve methods. We have
\begin{align*}
S(\Ac_{\df},\zf_0)&=\#\{\af\in\Ac_{\df}:\pf|\af\Rightarrow \pf>\zf_0\}\\
&=\#\{\af\in\Ac_{\df}:p|N(\af)\Rightarrow p> X^{\epsilon^2}\}+O\Bigl(\sum_{p\in[X^{\epsilon^2/n},X^{\epsilon^2}]}\#\{\af\in\Ac_{\df}: p^2|N(\af)\}\Bigr).
\end{align*}
By Proposition \ref{prpstn:TypeI} and Lemma \ref{lmm:RhoBounds}, the final term is $O(X^{n-k-\epsilon^2/n})$. The first term is a classical sieve quantity. 

Define a function $\rho_2$ on primes by 
\[
\rho_2(p)=\frac{\#\{\ab\in[1,p^n]^{n-k}:\,p|N(\sum_{i=1}^{n-k} a_i\Ti)\}}{p^{n(n-k)}}=\frac{\nu(p)}{p^{n-k}},
\]
and extend $\rho_2$ to a function on square-free integers by multiplicativity. By inclusion-exclusion we have that
\[
\rho_2(p)=\sum_{\substack{\pf\\ p|N(\pf)}}\frac{\rho(\pf)}{N(\pf)}-\sum_{\substack{\pf_1< \pf_2\\ p|N(\pf_1),N(\pf_2)}}\frac{\rho(\pf_1\pf_2)}{N(\pf_1\pf_2)}+\dots
\]
For a square-free integer $e$ satisfying $\gcd(e,N(\df))=1$ and $\gcd(q^*,e N(\df))=1$, we define $R_{\df}(e)$ by
\begin{align*}
R_\df(e)=\#\{\af\in\Ac_{\df}:e|N(\af)\}-\frac{\rho_2(e)\rho(\df)\#\Ac}{N(\df)}.
\end{align*}
We see from the inclusion-exclusion formula above, that
\[
R_{\df}(e)\ll \sum_{\substack{\mathfrak{e}\\ e|N(\mathfrak{e})\\ N(\mathfrak{e})|e^n }}\mu^2(\mathfrak{e})\Bigl|\mathcal{A}_{\mathfrak{de}}-\frac{\rho(\mathfrak{de})}{N(\mathfrak{de})}\#\mathcal{A}\Bigr|.
\]
Thus, by Proposition \ref{prpstn:TypeI}, the error terms $R_\df(e)$ satisfy
\begin{align}
\sum_{\substack{N(\df)<X^{n-k-\epsilon}\\ \pf|\df\Rightarrow \pf>\zf_0}}&\sum_{\substack{e<X^{\epsilon/(2n^2)}\\ \gcd(e,q^*N(\df))=1}}\tau(\df)\mu^2(e)|R_\df(e)|\nonumber\\
&\le\sum_{N(\df)<X^{n-k-\epsilon}}\sum_{\substack{N(\mathfrak{e})<X^{\epsilon/2n}\\ \gcd(N(\mathfrak{e}),q^*N(\df))=1}}X^{o(1)}\Bigl|\#\Ac_{\mathfrak{d e}}-\frac{\rho(\mathfrak{d e})}{N(\mathfrak{d e})}\#\Ac\Bigr|\nonumber\\
&\ll X^{n-k-\epsilon/2n+o(1)}.\label{eq:SieveErrorBound}
\end{align}
Here we used the divisor bound $\tau(\df)<X^{o(1)}$ in the second line and Proposition \ref{prpstn:TypeI} in the final line. We note that $\rho_2(p)=\nu(p)/p^{n-k}=\nu_p/p+O(p^{-2})$ by Lemma \ref{lmm:RhoBounds}, where $\nu_p$ is the number of degree one prime ideals of $\mathcal{O}_K$ above $p$. By the Fundamental Lemma of sieve methods (see, for example, \cite[Theorem 6.9]{FriedlanderIwaniec}) and the bound \eqref{eq:SieveErrorBound} we have
\begin{align}
&\sum_{\substack{N(\df)<X^{n-k-\epsilon}\\ \pf|\df\Rightarrow\pf>\zf_0}}\tau(\df)\Bigl|S(\Ac_{\df},\zf_0)-\frac{\rho(\df)\#\Ac}{N(\df)}\prod_{\substack{p<X^{\epsilon^2}\\ p\nmid q^*}}\Bigl(1-\frac{\nu(p)}{p^{n-k}}\Bigr)\Bigr|\nonumber\\
&\qquad\ll \exp(-\epsilon^{-1})\prod_{\substack{p<X^{\epsilon^2}\\ p\nmid q^*}}\Bigl(1-\frac{\nu(p)}{p^{n-k}}\Bigr)\#\Ac\sum_{\substack{N(\df)<X^{n-k-\epsilon}\\ \pf|\df\Rightarrow\pf>\zf_0}}\frac{\tau(\df)\rho(\df)}{N(\df)}+O(X^{n-k-\epsilon/2n+o(1)}).\label{eq:FundSplit}
\end{align}
Here we used the fact that $\df$ has no prime factors with norm $\le N(\zf_0)$, so must satisfy $\gcd(q^*e,N(\df))=1$ since $q^*\le X^{o(1)}$, and we only consider $e$ with prime factors $p\le N(\zf_0)$ 

The sum over $\df$ in the final bound is then easily seen to be $O(\epsilon^{-4})$ by an Euler product upper bound and Lemma \ref{lmm:RhoBounds}. 

We now replace $\rho(\df)$ with the constant 1 in the main term of \eqref{eq:FundSplit}. Since $\rho(\pf)=1$ on degree 1 prime ideals, and $\df$ is restricted to prime factors $\pf>\zf_0$, by Lemma \ref{lmm:RhoBounds} we have that
\[\sum_{\substack{N(\df)<X^{n-k-\epsilon}\\ \pf|\df\Rightarrow\pf>\zf_0}}\frac{\tau(\df)|\rho(\df)-1|}{N(\df)}\ll X^{o(1)}\Bigl(\prod_{p>X^{\epsilon^2/n}}\Bigl(1+\frac{O(1)}{p^2}\Bigr)-1\Bigr)\ll X^{-\epsilon^2/2n},\]
so this change introduces a negligible error term. Thus, since $\Sf=\prod_p(1-\nu(p)p^{-(n-k)})(1-p^{-1})^{-1}=O(1)$, we have
\[\sum_{\substack{N(\df)<X^{n-k-\epsilon}\\ \pf|\df\Rightarrow\pf>\zf_0}}\tau(\df)\Bigl|S(\Ac_{\df},\zf_0)-\frac{\#\Ac}{N(\df)}\prod_{\substack{p<X^{\epsilon^2}\\ p\nmid q^*}}\Bigl(1-\frac{\nu(p)}{p^{n-k}}\Bigr)\Bigr|\ll \frac{\exp(-\epsilon^{-2/3})\#\Ac}{\log{X}}\prod_{p|q^*}\Bigl(1-\frac{\nu(p)}{p^{n-k}}\Bigr)^{-1}.\]
An identical argument works for the sets $\Bc_{\df}$, with $\nu_2(p)/p^n$ instead of $\nu(p)/p^{n-k}$. Subtracting these expressions, and noting the main terms cancel, we have
\[\sum_{\substack{N(\df)<X^{n-k-\epsilon}\\ \pf|\df\Rightarrow\pf>\zf_0}}\tau(\df)\Bigl|S(\Ac_{\df},\zf_0)-\Sft\frac{\#\Ac}{\#\Bc} S(\Bc_{\df},\zf_0)\Bigr|\ll \frac{\exp(-\epsilon^{-2/3})}{\log{X}}\#\Ac\prod_{p|q^*}\Bigl(1-\frac{\nu(p)}{p^{n-k}}\Bigr)^{-1}.\qedhere\]
\end{proof}
Using Lemma \ref{lmm:FundamentalLemma} we can now prove Proposition \ref{prpstn:SieveAsymptotic}, assuming Proposition \ref{prpstn:TypeII}.
\begin{proof}[Proof of Proposition \ref{prpstn:SieveAsymptotic} assuming Proposition \ref{prpstn:TypeII}]
To ease notation let $\af_0,\af_2,\af_3$ be chosen maximally with respect to our ordering of ideals subject to $N(\af_0)\le X^{\epsilon^2}$, $N(\af_2)\le X^{k+2\epsilon}$ and $N(\af_3)\le X^{n-2k-2\epsilon}$, and let $\af_1$ be as in the statement of the proposition. We see from this choice that $\af_0=\zf_0$ defined previously, and that $N(\af_1)\le X^{n-3k-4\epsilon}$ so that $\af_1\af_2\le\af_3$. We first consider the contribution from $\df< \af_2$. Given a set of ideals $\Cc$ we let
\begin{align*}
T_m(\Cc;\df)&=\sum_{\substack{\af_0<\pf_m\le \dots \le \pf_1\le \af_1\\ \df\pf_1\dots \pf_m\le \af_2}}S(\Cc_{\pf_1\dots \pf_m},\af_0),\\
U_m(\Cc;\df)&=\sum_{\substack{\af_0<\pf_m\le \dots \le \pf_1\le \af_1 \\ \df\pf_1\dots \pf_m\le \af_2}}S(\Cc_{\pf_1\dots \pf_m},\pf_m),\\
V_m(\Cc;\df)&=\sum_{\substack{\af_0<\pf_m\le \dots \le \pf_1\le \af_1 \\ \af_2<\df\pf_1\dots \pf_m\le \af_2\pf_m}}S(\Cc_{\pf_1\dots \pf_m},\pf_m).
\end{align*}
Since $\af_2\af_1\le \af_3$, all products $\df\pf_1\dots\pf_m$ occurring in $V_m(\Cc;\df)$ lie in our Type II range between $\af_2$ and $\af_3$. 

By Buchstab's identity, we have that
\[U_m(\Cc;\df)=T_m(\Cc;\df)-U_{m+1}(\Cc;\df)-V_{m+1}(\Cc;\df).\]
 We define $T_0(\Cc;\df)=S(\Cc;\af_0)$ and $V_0(\Cc;\df)=0$. This gives
\[S(\Cc,\af_1)=T_0(\Cc;\df)-V_1(\Cc;\df)-U_1(\Cc;\df)=\sum_{m\ge 0}(-1)^m(T_m(\Cc;\df)+V_m(\Cc;\df)).\]
We apply the above decomposition to $\Ac_\df$. This gives an expression with $O(\epsilon^{-2})$ terms since trivially $T_m(\Ac_{\df})=U_m(\Ac_{\df})=V_m(\Ac_{\df})=0$ if $m>n/\epsilon^2$. Applying the same decomposition to $S(\Bc_{\df},\af_1)$, subtracting the difference weighted by $\lambda=\Sft\#\Ac/\#\Bc$, and summing over $\df<\af_2$ with $\1_{\Rc}(\df)\ne0$, we obtain
\begin{align}
\sum_{\df<\af_2}\1_{\Rc}(\df)\Bigl(S(\Ac_{\df},\af_1)-\lambda S(\Bc_{\df},\af_1)\Bigl)
&\ll \sum_{0\le m\le n/\epsilon^2}\,\sum_{\df<\af_2}\1_{\Rc}(\df)\Bigl|T_m(\Ac_{\df};\df)-\lambda T_m(\Bc_{\df};\df)\Bigr|\nonumber\\
&+\sum_{0\le m\le n/\epsilon^2}\Bigl|\sum_{\df<\af_2}\1_{\Rc}(\df)\Bigl(V_m(\Ac_{\df};\df)-\lambda V_m(\Bc_{\df};\df)\Bigr)\Bigr|.\label{eq:Decomposition}\end{align}
For the first term on the right hand side of \eqref{eq:Decomposition}, we expand $T_m$ as a sum, giving
\begin{align*}
\sum_{\df<\af_2}\1_{\Rc}(\df)&\Bigl|T_m(\Ac_\df;\df)-\lambda T_m(\Bc_\df;\df)\Bigr|\\
&\le \sum_{\df<\af_2}\1_{\Rc}(\df)\sum_{\substack{\af_0<\pf_m\le \dots \le \pf_1\le \af_1\\ \df\pf_1\dots \pf_m\le \af_2}}\Bigl|S(\Ac_{\df\pf_1\dots\pf_m},\af_0)-\lambda S(\Bc_{\df\pf_1\dots\pf_m},\af_0)\Bigr|.
\end{align*}
We put $\df'=\pf_1\dots\pf_m\df$ and note that any given $\df'$ occurs at most $\epsilon^{-2}\tau(\df')$ times in the sum above and satisfies $\df'\le\af_2$. Thus, using Lemma \ref{lmm:FundamentalLemma}, we have
\begin{align*}
\sum_{\df<\af_2}\1_{\Rc}(\df)\Bigl|T_m(\Ac_{\df};\df)-\lambda T_m(\Bc_\df;\df)\Bigr|
&\ll \sum_{\substack{\df'\le\af_2\\ \pf|\df'\Rightarrow\pf>\af_0}}\epsilon^{-2}\tau(\df')\Bigl|S(\Ac_{\mathfrak{d'}},\af_0)-\lambda S(\Bc_{\df'},\af_0)\Bigr|\\
&\ll \epsilon^{-2}\frac{\exp(-\epsilon^{-2/3})\#\Ac}{\log{X}}\prod_{p|q^*}\Bigl(1-\frac{\nu(p)}{p^{n-k}}\Bigr)^{-1}.
\end{align*}
For the second term on the right hand side of \eqref{eq:Decomposition}, we expand $V_m$ and $S(\Ac_{\df\pf_1\dots\pf_m},\pf_m)$. For the part of the inner sum involving $\Ac_\df$, this gives
\begin{align*}
\sum_{\df<\af_2}\1_{\Rc}(\df)V_m(\Ac_\df;\df)
&=\sum_{\df<\af_2}\1_{\Rc}(\df)\sum_{\substack{\af_0<\pf_m\le \dots \le \pf_1\le \af_1\\ \af_2<\df\pf_1\dots\pf_m\le \af_2\pf_m}}S(\Ac_{\df\pf_1\dots\pf_m},\pf_m)\\
&=\sum_{\df<\af_2}\1_{\Rc}(\df)\sum_{\substack{\af_0<\pf_m\le \dots \le \pf_1\le \af_1\\ \af_2<\df\pf_1\dots\pf_m\le \af_2\pf_m}}\sum_{\substack{\af\\ \df\pf_1\dots\pf_m\af\in \mathcal{A}\\ \pf|\af\Rightarrow \pf>\pf_m}}1.
\end{align*}
Since $\af$ occurring in the sum above has all prime ideal factors bigger than $\af_0$, it has $O(\epsilon^{-2})$ prime factors constrained only to be larger than $\pf_m$. Thus we may rewrite the above expression as
\[
\sum_{\Rc'}\sum_{\af'\in\Ac}\1_{\Rc'}(\af),
\]
where $\Rc'$ ranges over $O(\epsilon^{-2})$ polytopes describing the possible prime factorizations of $\af$, all independent of $X$. Each polytope is in $[\epsilon^2,2n]^\ell$ for some $\ell\ll \epsilon^{-2}$. Moreover, by ordering the coordinates such that the first $\ell'$ coordinates correspond to the factor $\df\pf_1\dots \pf_m$ we see that $(e_1,\dots,e_\ell)\in\Rc\Rightarrow k+\epsilon\le \sum_{i=1}^{\ell'}e_i\le n-2k-\epsilon$ since $\af_2<\df\pf_1\dots\pf_m\le \af_3$. Applying the same manipulations to $\lambda V_m(\Bc_\df;\df)$, we find
\[
\sum_{\df<\af_2}\1_{\Rc}(\df)(V_m(\Ac_\df;\df)-\lambda V_m(\Bc_\df;\df))\ll \sum_{\Rc'}\Bigl|\sum_{\af\in\Ac}\1_{\Rc'}(\af)-\lambda \sum_{\bfr\in\Bc}\1_{\Rc'}(\bfr)\Bigr|.
\]
By Proposition \ref{prpstn:TypeII} this is $O(\#\Ac(\log{X})^{-10})$. This completes the proof for $\df\le\af_2$.

The contribution from $\df$ with $\df \ge\af_3$ and $N(\df)\le X^{2k+2\epsilon}$ can be handled by an essentially identical argument. Let $\bfr_2$, $\bfr_3$ be chosen maximally such that $N(\bfr_2)\le X^{2k+2\epsilon}$ and $N(\bfr_3)\le X^{n-k-2\epsilon}$ and let $T_m',U_m',V_m'$ be $T_m,U_m,V_m$ with $\af_2$ replaced by $\bfr_2$ in the conditions on the summation. Applying an analogous decomposition to the argument above, it suffices to handle only the terms corresponding to $T_m'$ and $V_m'$. Since $\bfr_2\af_1\le \bfr_3$, all products $\df\pf_1\dots\pf_m$ occurring in $V_m'$ lie in the range $[\bfr_2,\bfr_3]$. In particular, if $\af\in\Ac_{\df\pf_1\dots\pf_m}$ for such a product $\df\pf_1\dots\pf_m$, then $\af=\af'\df\pf_1\dots\pf_m$ for some ideal $\af'$ with $N(\af')\in [X^{k+\epsilon},X^{n-2k-\epsilon}]$. Such sums can be handled by our Type II estimate given by Proposition \ref{prpstn:TypeII}. Similarly, any product $\df\pf_1\dots\pf_m$ occurring in $T_m'$ satisfies $\df\pf_1\dots\pf_m\le \bfr_2$, and so the terms $T_m'$ can be handled by our Type I estimate given by Proposition \ref{prpstn:TypeI}.

Finally, the contribution from $\df$ with $\af_2\le \df\le \af_3$ or $X^{2k+2\epsilon}\le N(\df)\le X^{n-k-2\epsilon}$ is negligible without any Buchstab decompositions since it can be written as a sum over $O(\epsilon^{-2})$ polytopes to which Proposition \ref{prpstn:TypeII} applies. This gives the result.
\end{proof}

\begin{lmm}[P\'olya-Vinogradov type inequality]\label{lmm:BasicPolyaVino}
Let $\qf$ be an ideal with a prime ideal factor of norm at least $\log\log\log{X}$ and $q=N(\qf)$. Let $\chi_f$ be a character of $\mathcal{O}_K$ with modulus $\qf$ and no infinite component (i.e. $\chi_f$ factors through $(\mathcal{O}_K/\qf\mathcal{O}_K)^\times$). Then we have
\begin{equation}
\sum_{\substack{\ab\in[1,q]^{n-k} \\ \gcd(N_K(\ab),q)=1}}\chi_f\Bigl(\sum_{i=1}^{n-k}a_i\Ti\Bigr)=o\Bigl(\sum_{\substack{\ab\in[1,q]^{n-k} \\ \gcd(N_K(\ab),q)=1}}1\Bigr).\label{eq:PolyaVinoTarget}
\end{equation}
\end{lmm}
\begin{proof}
This follows from a P\'olya-Vinogradov-type inequality for $\Zt/q\Zt$, but there are some technical complications relating the restrictions on the algebraic integers $\alpha_0$ appearing to ideals and the modulus $\qf$ of $\chi_f$. We let $\qf_2'$ be a prime ideal factor of $\qf$ of largest norm, and factor $\qf=\qf_1\qf_2$ with $\qf_1$ the largest factor of $\qf$ with norm coprime to $N(\qf_2')$. By assumption, we have that $N(\qf_2')\gg \log\log\log{X}$ and is a prime power of exponent at most $n$, so $\qf_2$ is coprime to the ideal generated by $n\theta$. Correspondingly, we factor $\chi_f=\chi_1\chi_2$ into characters modulo $\qf_1$ and $\qf_2$. Letting $q_2=N(\qf_2)$, we see that $\qf_2|(q_2)$ and so we can view $\chi_2$ as a character on $\Zt/q_2\Zt\cong \Oc_K/q_2\Oc_K$. (We have $\Zt/q_2\Zt\cong \Oc_K/q_2\Oc_K$ since $\qf_2$ is coprime to the ideal generated by $n\theta$.) Finally, we note that we have $q_2\gg N(\mathfrak{q}_2')\gg \log\log\log{X}$.  By writing $\ab_0=q_2\ab_1+q_1\ab_2$ (where $q_1=N(\qf_1)$) and using the Chinese Remainder Theorem, we see it is sufficient to show that
\[
\sum_{\ab\in[1,q_2]^{n-k}}\chi_2\Bigl(\sum_{i=1}^{n-k}a_i\Ti\Bigr)=o(q_2^{n-k}).
\]
Finally, we let $\psi$ be the additive character of $\Zt/q_2\Zt$ given by $\psi(\sum_{j=1}^n a_j\Tj)=\exp(2\pi i a_n/q_2)$ and $\hat{\chi_2}$ be the Fourier transform of $\chi_2$ given by
\[\hat{\chi_2}(\beta)=\frac{1}{q_2^n}\sum_{\gamma\in\Zt/q_2\Zt}\chi_2(\gamma)\psi(\beta\gamma).\]
We have
\[
\sum_{\ab\in[1,q_2]^{n-k}}\chi_2\Bigl(\sum_{i=1}^{n-k}a_i\Ti\Bigr)=\sum_{\beta\in\Zt/q_2\Zt}\hat{\chi_2}(\beta)\sum_{\ab\in [1,q_2]^{n-k}}\psi(-\alpha\beta)
\]
where $\alpha=\sum_{i=1}^{n-k}a_i\Ti$, viewed as an element of $\Zt/q_2\Zt$. The inner sum is $0$ unless the final $n-k$ components of $\beta$ are equal to $0$, in which case it is $q_2^{n-k}$. Thus we are left to show
\[
\sum_{b_1,\dots,b_k\in\Zz/q_2\Zz}\hat{\chi_2}\Bigl(\sum_{i=1}^k b_i\Ti\Bigr)=o(1).
\]
We note that
\begin{align*}
q_2^n\hat{\chi_2}(\beta)&=\sum_{\gamma\in\Zt/q_2\Zt}\chi_2(\gamma)\psi(\beta\gamma)\\
&=\sum_{\alpha\in\beta\Zt/q_2\Zt}\psi(\alpha)\sum_{\substack{\lambda\in\Zt/q_2\Zt \\ \beta\lambda=\alpha}}\chi_2(\lambda).
\end{align*}
Denote the inner sum by $f_\beta(\alpha)$. We then see that $\chi_2(\mu)f_\beta(\alpha)=f_\beta(\mu\alpha)$ for any invertible $\mu\in \Zt/q_2\Zt$. But if $\mu\equiv1\Mod{(q_2)/\gcd((q_2),(\alpha))}$ then $\mu\alpha=\alpha$ in $\Zt/q_2\Zt$, and so $f_\beta(\alpha)=0$ unless $\chi_2(\mu)=1$ for all invertible $\mu\equiv1\Mod{(q_2)/\gcd((q_2),(\alpha))}$. Here the ideals are viewed as ideals in $\Oc_K$, noting that the choice of lift of $\alpha\in\Zt/q_2\Zt$ does not affect the ideal $\gcd((\alpha),(q_2))$ (recall that $\Zt/q_2\Zt\cong\Oc_K/q_2\Oc_K$). But $\chi_2$ is induced by a primitive character $\Mod{\mathfrak{q}_2}$, and so this only occurs if $\mathfrak{q}_2|(q_2)/\gcd((q_2),(\alpha))$ i.e. if $\mathfrak{q}_2\nmid (\alpha)$ (since $\mathfrak{q}'_2$ is a prime ideal of large norm, and so lies above an unramified rational prime). Thus $\hat{\chi_2}(\beta)=0$ if $\mathfrak{q}_2|(\beta)$.

We also note that $\hat{\chi_2}(\mu\beta)=\overline{\chi_2(\mu)}\hat{\chi_2}(\beta)$ for any invertible $\mu$, so $\hat{\chi_2}$ is of constant magnitude $c_\mathfrak{d}$ on all $\beta$ such that $\gcd((\beta),(q_2))=\mathfrak{d}$. 
By Parseval's identity, we have 
\[\sum_{\alpha\in\Zt/q_2\Zt}|\hat{\chi_2}(\alpha)|^2=\frac{1}{q_2^n}\sum_{\beta\in\Zt/q_2\Zt}|\chi_2(\beta)|^2=\phi_K((q_2))/q_2^n\ll 1.\]
Thus, since there are $O(q_2^n/N(\mathfrak{d}))$ elements $\beta\in\Zt/q_2\Zt$ with $\gcd((\beta),(q_2))=\mathfrak{d}$, we see that $c_\mathfrak{d}^2 q_2^n/N(\mathfrak{d})\ll1$, which gives
\[
|\hat{\chi_2}(\beta)|\le \frac{N(\gcd((\beta),(q_2)))^{1/2}}{q_2^{n/2}}.
\]
Thus, recalling that $\hat{\chi_2}(\beta)=0$ if $\mathfrak{q}_2|(\beta)$, we have
\[
\Bigl|\sum_{b_1,\dots,b_k\in\Zz/q_2\Zz}\hat{\chi_2}\Bigl(\sum_{i=1}^k b_i\Ti\Bigr)\Bigr|\le \frac{1}{q_2^{n/2}}\sum_{\df|(q_2)/\qf_2}N(\mathfrak{\df})^{1/2}\sum_{\substack{1\le b_1,\dots,b_k\le q_2 \\ \df|(\sum_{i=1}^k b_i\Ti)}}1.
\]
We see that the final sum is counting points in a bounded region in a lattice of rank $k$. Any point $(b_1,\dots,b_k)$ with $\mathfrak{d}|(\sum_{i=}^k b_i\Ti)$ must have $\sum_{i=1}^k|b_i|\gg N(\sum_{i=1}^k b_i\Ti)^{1/n}\ge N(\mathfrak{d})^{1/n}$. Thus all non-zero vectors in the lattice, and in particular the basis vectors, must have length $\gg N(\mathfrak{d})^{1/n}$. Therefore the number of points is $O(1+q_2^k/N(\mathfrak{d})^{k/n})$. This gives
\begin{align*}
\Bigl|\sum_{b_1,\dots,b_k\in\Zz/q_2\Zz}\hat{\chi_2}\Bigl(\sum_{i=1}^k b_i\Ti\Bigr)\Bigr|&\le \frac{1}{q_2^{n/2}}\sum_{\df|(q_2)/\qf_2}\Bigl(N(\mathfrak{\df})^{1/2}+q_2^k N(\mathfrak{d})^{1/2-k/n}\Bigr)\\
&\ll_\epsilon \frac{q_2^{\epsilon}}{N(\mathfrak{q}_2)^{1/2}}+\frac{q_2^\epsilon}{N(\mathfrak{q}_2)^{1/2-k/n}}.
\end{align*}
Here we used the divisor bound in the final line. Since $2k<n$, this is $o(1)$, as required.
\end{proof}

We finish this section with a proof of Lemma \ref{lmm:PolyaVino}.
\begin{proof}[Proof of Lemma \ref{lmm:PolyaVino}]

We recall the definition of $\Bc(\ab_0)$:
\[
\Bc(\ab_0)=\{\text{ideals }\bfr\text{ of }\Oc_K:\, N(\bfr)\in [N_0^n,(1+\eta_1)N_0^n],\,\chi^*(\bfr)=\chi_\infty^*(\Ac)\chi^*_f(\alpha_0)\},
\]
where here, and throughout the lemma, $\alpha_0=\sum_{i=1}^{n-k}(\ab_0)_i\Ti$. To ease notation, let $q=q^*$. We have
\begin{align*} 
\sum_{\substack{\ab_0\in[1,q]^{n-k} \\ \gcd(N_K(\ab_0),q)=1}}&\sum_{\bfr\in\Bc(\ab_0)}\1_{\Rc}(\bfr)
=\sum_{\substack{\ab_0\in[1,q]^{n-k} \\ \gcd(N_K(\ab_0),q)=1}}\sum_{\substack{N(\bfr)\in[N_0^n,(1+\eta_1)N_0^n] \\ \chi^*(\bfr)=\chi_\infty^*(\Ac)\chi^*_f(\alpha_0)}}\1_\Rc(\bfr)\\
&\quad=\sum_{\substack{N(\bfr)\in[N_0^n,(1+\eta_1)N_0^n]}}\1_\Rc(\bfr)\sum_{\substack{\ab_0\in[1,q]^{n-k} \\ \gcd(N_K(\ab_0),q)=1}}\frac{1+\chi^*(\bfr)\chi_\infty^*(\Ac)\chi^*_f(\alpha_0)}{2}\\
&\quad=\Bigl(\sum_{\substack{\mathbf{a}_0\in[1,q]^{n-k} \\ \gcd(N_K(\mathbf{a}_0),q)=1}}\frac{1}{2}\Bigr)\Bigl(\sum_{\substack{N(\bfr)\in[N_0^n,(1+\eta_1)N_0^n]}}\1_\Rc(\bfr)\Bigr)\\
&\qquad\qquad+O\Bigl(\frac{\eta_1N_0^n}{\log{X}}\Bigl|\sum_{\substack{\ab_0\in[1,q]^{n-k} \\ \gcd(N_K(\ab_0),q)=1}}\chi_f^*(\alpha_0)\Bigr|\Bigr).
\end{align*}
In the second line we have used the fact that $\1_{\Rc}$ is supported on ideals with norm coprime to $q$, and so on ideals with $(\chi^*)^2=1$. In the last line we have separated the summations and used a simple sieve bound for the sum over $\bfr$ in the error term. Recalling the definition of $\nu(p)$, the first term in parentheses is
\[\sum_{\substack{\mathbf{a}_0\in[1,q]^{n-k} \\ \gcd(N_K(\mathbf{a}_0),q)=1}}\frac{1}{2}=\frac{q^{n-k}}{2}\prod_{p|q}\Bigl(1-\frac{\nu(p)}{p^{n-k}}\Bigr).\]
By the Prime Ideal Theorem (Lemma \ref{lmm:PrimeIdeal}), the second term in parentheses is
\begin{align*}
&\sum_{N(\bfr)\in[N_0^n,(1+\eta_1)N_0^n]}\1_{\Rc}(\bfr)\\
&=\frac{q^{n-k}}{\log{X}}\prod_{p|q}\Bigl(1-\frac{\nu(p)}{p^{n-k}}\Bigr)\Bigl(\idotsint\limits_{\substack{(e_1,\dots,e_\ell)\in\Rc\\ X^{\sum_{i=1}^{\ell}e_i}\in[N_0^n,(1+\eta_1)N_0^n]}}\frac{X^{\sum_{i=1}^{\ell}e_i}d e_1\dots d e_{\ell}}{e_1\dots e_\ell}+o(1)\Bigr)\\
&=\frac{q^{n-k}N_0^n}{\log{X}}\prod_{p|q}\Bigl(1-\frac{\nu(p)}{p^{n-k}}\Bigr)\Bigl(\idotsint\limits_{\substack{(e_1,\dots,e_\ell)\in\Rc\\ X^{\sum_{i=1}^{\ell}e_i}\in[N_0^n,(1+\eta_1)N_0^n]}}\frac{d e_1\dots d e_{\ell}}{e_1\dots e_\ell}+o_\mathcal{R}(1)\Bigr).
\end{align*}
Since we have an error term which depends on $\mathcal{R}$, we may think of $\mathcal{R}$ as fixed and $\eta_1$ as small. Since $\mathcal{R}$ is closed and $\log{N_0^n}/\log{X}=n+o(1)$, we see that the integral is equal to
\[
\eta_1\idotsint\limits_{\substack{(e_1,\dots,e_\ell)\in\Rc\\ \sum_{i=1}^{\ell}e_i=n}}\frac{d e_1\dots d e_{\ell-1}}{e_1\dots e_\ell}+o_{\mathcal{R}}(1)=I_\mathcal{R}+o(1).
\]
Finally, we recall that $\qf^*$ is square-free apart from an ideal factor of norm $O(1)$ and $N(\qf^*)\gg (\log{x})^\epsilon$, and so $\chi_f^*$ satisfies the conditions of Lemma \ref{lmm:BasicPolyaVino}. Thus we have that
\[
\frac{\eta_1N_0^n}{\log{X}}\Bigl|\sum_{\substack{\ab_0\in[1,q]^{n-k} \\ \gcd(N_K(\ab_0),q)=1}}\chi_f^*(\alpha_0)\Bigr|=o\Bigl(\frac{q^{n-k}\eta_1 N_0^{n}}{\log{X}}\prod_{p|q}\Bigl(1-\frac{\nu(p)}{p^{n-k}}\Bigr)\Bigr).
\]
This gives the result.
\end{proof}
Thus we are left to establish Proposition \ref{prpstn:TypeII}.
\section{Type II Estimate: The \texorpdfstring{$L^1$}{L1} Bounds}\label{sec:L1}
In this section we introduce an approximation $\tilde{\1}_{\Rc}\approx\1_{\Rc}$ in our Type II sums, and establish various $L^1$ estimates based on this. Much of this section is a generalization of the corresponding estimates of Heath-Brown \cite{HB}. The aim of this section is to reduce the proof of Proposition \ref{prpstn:TypeII} to Proposition \ref{prpstn:L2Target}.

We wish to establish Proposition \ref{prpstn:TypeII}, namely that
\begin{equation}
\sum_{\af\in\Ac}\1_\Rc(\af)-\Sft\frac{\#\Ac}{\#\Bc}\sum_{\bfr\in\Bc}\1_\Rc(\bfr)\ll_\mathcal{R} \eta_1^{1/2}\#\Ac,\label{eq:TypeIISum}
\end{equation}
where $\Rc\subseteq [\epsilon^2,2n]^\ell$ is a polytope such that there is an $\ell'\le \ell$ so that any $\eb\in\Rc$ satisfies $k+\epsilon\le \sum_{i=1}^{\ell'} e_i\le n-2k-\epsilon$. We recall that $\eta_1=(\log{X})^{-100}$ and that
\begin{align*}
\1_{\Rc}(\af)&=\begin{cases}
1,\qquad &\af=\pf_1\dots \pf_\ell\text{ with }N(\pf_i)=X^{e_i},\,(e_1,\dots,e_\ell)\in\Rc,\\
0,&\text{otherwise,}
\end{cases}\\
\Ac&=\Bigl\{ (\sum_{i=1}^{n-k} a_i\Ti ) :X_i\le a_i\le X_i+\eta_1 X_i,\, a_i\equiv (\ab_0)_i\Mod{q^*}\Bigr\}, \\
\Bc&=\{\bfr:N(\bfr)\in [N_0^n,(1+\eta_1) N_0^n],\,\chi^*(\bfr)=\chi^*_\infty(\Ac)\chi_f^*(\alpha_0)\},\\
\Sft&=\prod_{p\nmid q^*}\Bigl(1-\frac{\nu(p)}{p^{n-k}}\Bigr)\Bigl(1-\frac{\nu_2(p)}{p^n}\Bigr)^{-1},
\end{align*}
with $N_0^n\ge \epsilon X^n$ the smallest norm of an ideal in $\Ac$. Since the implied constant is allowed to depend on $\mathcal{R}$, we may assume that $\mathcal{R}$ is defined by a bounded number of linear inequalities, none of which depend on our underlying parameter $X$. We will therefore suppress the dependence on $\mathcal{R}$ for the rest of this section.

We now wish to reduce Proposition \ref{prpstn:TypeII} to the following statement.
\begin{lmm}\label{lmm:Cube}Let $\mathcal{R}$ satisfy the assumptions of Proposition \ref{prpstn:TypeII}. Given a hypercube $\mathcal{C}$, write $\mathcal{C}=\Rc_1\times\Rc_2$ with $\Rc_2$ representing the first $\ell'$ coordinates and $\Rc_1$ the final $\ell-\ell'$ coordinates. Then for any set of non-overlapping hypercubes of side length $\eta_1^2$ which covers $\Rc$, we have
\begin{align*}
\sum_{\substack{\mathcal{C}=\Rc_1\times\Rc_2\\ \mathcal{C}\cap \Rc\ne \emptyset}}\Bigl(\sum_{\substack{\af_1,\af_2\\ \af_1\af_2\in\Ac}}\1_{\Rc_1}(\af_1)\1_{\Rc_2}(\af_2)-\Sft\frac{\#\Ac}{\#\Bc} \sum_{\substack{\bfr_1,\bfr_2\\ \bfr_1\bfr_2\in\Bc}}\1_{\Rc_1}(\bfr_1)\1_{\Rc_2}(\bfr_2)\Bigr)&\ll_\Rc \eta_1^{1/2}\#\Ac,\nonumber\\
\sum_{\substack{\mathcal{C}=\Rc_1\times\Rc_2\\ \mathcal{C}\subseteq \Rc}}\Bigl(\sum_{\substack{\af_1,\af_2\\ \af_1\af_2\in\Ac}}\1_{\Rc_1}(\af_1)\1_{\Rc_2}(\af_2)-\Sft\frac{\#\Ac}{\#\Bc} \sum_{\substack{\bfr_1,\bfr_2\\ \bfr_1\bfr_2\in\Bc}}\1_{\Rc_1}(\bfr_1)\1_{\Rc_2}(\bfr_2)\Bigr)&\ll_\Rc \eta_1^{1/2}\#\Ac.\label{eq:Cubes}
\end{align*}
\end{lmm}
We note $\1_{\Rc_2}$ is supported on ideals $\bfr$ with $N(\bfr)\in[ X^{k+\epsilon/2},X^{n-2k-\epsilon/2}]$ from our bounds on $\sum_{i=1}^{\ell'}e_i$.

\begin{proof}[Proof of Proposition \ref{prpstn:TypeII} assuming Lemma \ref{lmm:Cube}]
We cover $\Rc$ by $O(\eta_1^{-2\ell})$ non-overlapping hypercubes $\mathcal{C}$ so that each of $e_1,\dots,e_\ell$ lie in intervals of side length $\eta_1^2$. We see that
\[
\sum_{\Cc\subseteq\Rc}\1_\Cc(\mathfrak{a})\le \1_\Rc(\mathfrak{a})\le \sum_{\Cc\cap\Rc\ne \emptyset}1_\Cc(\af).
\]
Thus, first upper bounding the sum over $\af\in\Ac$ and lower bounding the sum over $\bfr\in\Bc$, and then lower bounding the sum over $\af$ and upper bounding the sum over $\bfr$, we obtain
\begin{align}
\Bigl|\sum_{\af\in\Ac}\1_\Rc(\af)-\Sft\frac{\#\Ac}{\#\Bc}\sum_{\bfr\in\Bc}\1_\Rc(\bfr)\Bigr|&\le\Bigl| \sum_{\Cc\cap\Rc\ne \emptyset}\Bigl(\sum_{\af\in\Ac}\1_\Cc(\af)-\Sft\frac{\#\Ac}{\#\Bc}\sum_{\bfr\in\Bc}\1_\Cc(\bfr)\Bigr)\Bigr|\nonumber\\
&+\Bigl| \sum_{\Cc\subseteq\Rc}\Bigl(\sum_{\af\in\Ac}\1_\Cc(\af)-\Sft\frac{\#\Ac}{\#\Bc}\sum_{\bfr\in\Bc}\1_\Cc(\bfr)\Bigr)\Bigr|\nonumber\\
&+\Sft\frac{\#\Ac}{\#\Bc}\sum_{\substack{\Cc\cap\Rc\ne \emptyset \\ \Cc\not\subseteq\Rc}}\sum_{\bfr\in\Bc}\1_{\mathcal{C}}(\bfr).\label{eq:CubeBound1}
\end{align}
By the Prime Ideal Theorem (Lemma \ref{lmm:PrimeIdeal}, we have for $e_1,\dots,e_\ell \ge\epsilon^2$
\[
\sum_{\substack{\mathfrak{p}_1,\dots,\mathfrak{p}_\ell \\ N(\mathfrak{p}_i)\in [X^{e_i},X^{e_i+\eta_1^2}]}}1\ll_\epsilon \eta_1^{2\ell}X^{\sum_{i=1}^\ell e_i}.
\]
Thus, since $\Bc$ is supported on ideals $\bfr$ with $N(\bfr)\in [N_1,(1+\eta_1)N_1]$, we see that for any hypercube $\mathcal{C}$ under consideration
\begin{equation}
\sum_{\bfr\in\Bc}\1_\Cc(\bfr)\ll_\epsilon \eta_1^{2\ell} N_1\ll \eta_1^{2\ell-1}\#\Bc.\label{eq:PrimeIdealCubeBound}
\end{equation}
 There are $O_\Rc(\eta_1^{-2(\ell-1)})$ hypercubes $\Cc$ intersecting the boundary of $\Rc$, since $\mathcal{R}$ is a polytope defined by $O_\Rc(1)$  inequalities.  Therefore, by \eqref{eq:PrimeIdealCubeBound}, the final term on the right hand side of \eqref{eq:CubeBound1} contributes
\[ \Sft\frac{\#\Ac}{\#\Bc}\sum_{\substack{\Cc\cap\Rc\ne \emptyset\\ \Cc\not\subseteq\Rc}}\sum_{\bfr\in\Bc}\1_{\Cc}(\bfr)\ll_{\epsilon} \#\Ac\sum_{\substack{\Cc\cap\Rc\ne \emptyset\\ \Cc\not\subseteq\Rc}}\eta_1^{2\ell-1} \ll_{\epsilon,\Rc} \eta_1\#\Ac,\]
which is negligible. Thus it suffices to show 
\[
\Bigl|\sum_{\Cc\cap\Rc\ne \emptyset}\Bigl(\sum_{\af\in\Ac}\1_\Cc(\af)-\Sft\frac{\#\Ac}{\#\Bc}\sum_{\bfr\in\Bc}\1_\Cc(\bfr)\Bigr)\Bigr|\ll_\Rc \eta_1^{1/2}\#\mathcal{A},
\]
and similarly when summing over all $\Cc$ with $\Cc\subseteq\Rc$.

Any hypercube $\Cc$ can be identified with $\Rc_1\times \Rc_2$ with $\Rc_2$ representing the first $\ell'$ coordinates of $\Cc$. Call $\Cc$ \emph{good} if $\Cc\cap\Rc\ne \emptyset$ and $\Cc$ does not contain any point $\eb$ such that $|e_i-e_j|\ll \eta_1^2$ for some $1\le i<j\le \ell$. If $\Cc$ is good then any $\af$ with $\1_{\Cc}(\af)\ne 0$ has a unique representation as $\af=\af_1\af_2$ with $\1_{\Rc_1}(\af_1)=\1_{\Rc_2}(\af_2)=1$. If $\Cc$ does contain a point $\eb$ such that $|e_i-e_j|\ll \eta_1^2$, then there can be between 1 and $n$ different representations $\af=\af_1\af_2$. Thus
\begin{align}
&\sum_{\Cc\cap\Rc\ne \emptyset}\Bigl(\sum_{\af\in\Ac}\1_\Cc(\af)-\Sft\frac{\#\Ac}{\#\Bc}\sum_{\bfr\in\Bc}\1_\Cc(\bfr)\Bigr)\nonumber\\
&\ll\Bigl|\sum_{\Cc\cap\Rc\ne \emptyset}\Bigl(\sum_{\af_1\af_2\in\Ac}\1_{\Rc_1}(\af_1)\1_{\Rc_2}(\af_2)-\Sft\frac{\#\Ac}{\#\Bc}\sum_{\bfr_1\bfr_2\in\Bc}\1_{\Rc_1}(\bfr_1)\1_{\Rc_2}(\bfr_2)\Bigr)\Bigr|\label{eq:GoodCubes}\\
&\qquad+O\Bigl(\Sft\frac{\#\Ac}{\#\Bc}\sum_{\substack{\Cc\cap\Rc\ne \emptyset\\ \Cc\text{ not good}}}\Bigl|\sum_{\bfr\in\Cc}\1_{\Cc}(\bfr)\Bigr|\Bigr),\nonumber
\end{align}
and similarly when considering all $\mathcal{C}\subseteq\Rc$.

There are $O(\eta_1^{-2(\ell-1)})$ hypercubes which contain a point $\eb$ with $|e_i-e_j|\ll \eta_1^2$ for some $1\le i< j\le \ell$. By \eqref{eq:PrimeIdealCubeBound} each such hypercube contributes $O_\epsilon(\eta_1^{2\ell-1}\#\Bc)$ to the inner sum above. Thus the contribution from hypercubes which are not good is $O_\epsilon(\eta_1\#\mathcal{A})$.

Finally, Lemma \ref{lmm:Cube} shows that the first term on the right hand side of \eqref{eq:GoodCubes} is $O_\Rc(\eta_1^{1/2}\#\Ac)$, giving Proposition \ref{prpstn:TypeII}.
\end{proof}
It will be convenient to split the sum to localize the size of the norm of $\af_1\af_2$ and $\bfr_1\bfr_2$. We let 
\begin{align*}
\eta_2&=\eta_1^{10\ell},\\
\Ac'&=\Bigl\{ (\sum_{i=1}^{n-k} a_i\Ti ) :X_i\le a_i\le X_i+\eta_1 X_i,\, a_i\equiv (\ab'_0)_i\Mod{J! q^*}, \\
&\qquad N(\sum_{i=1}^{n-k} a_i\Ti )\in[X_0^n,X_0^n+\eta_2 X_0^n] \Bigr\},\\
\Bc'&=\{\bfr\in\Bc:N(\bfr)\in[X_0^n,X_0^n+\eta_2 X_0^n]\}.
\end{align*}
Here we have extended the congruence conditions in $\mathcal{A}$ from $\ab\equiv \ab_0\Mod{q^*}$ to $\ab\equiv \ab_0'\Mod{J!q^*}$, for a suitable constant $J\ll1 $ which will be chosen later do be large enough in terms of $n$ and $k$. We consider separately all $\ab_0'$ such that $\ab_0'\equiv \ab_0\Mod{q^*}$ and $\ab\equiv \ab_0'\Mod{J!}\implies p\nmid N(\sum_{i=1}^n a_i\Ti)\,\forall p\le J$. (It is sufficient to only consider such $\ab_0'$ since $\1_{\Rc}$ is supported on ideals with no small factors). The key estimate we wish to establish is the following.
\begin{prpstn}\label{prpstn:ABsums}Let $\mathcal{R}$ satisfy the assumptions of Proposition \ref{prpstn:TypeII}. 
Uniformly for $X_0\in[N_0,(1+O(\eta_1))N_0]$ and over all hypercubes $\mathcal{C}=\Rc_1\times\Rc_2\cap\Rc\ne \emptyset$ occurring in Lemma \ref{lmm:Cube}, we have
\begin{equation*}
\sum_{\substack{\af_1,\af_2\\ \af_1\af_2\in\Ac'}}\1_{\Rc_1}(\af_1)\1_{\Rc_2}(\af_2)=\frac{q_0^{n}\Sft c_{\Rc_1\times\Rc_2}(X_0^n)\#\Ac'}{\phi_K((q_0))\gamma_K}\Bigl(1+\frac{\chi^*(\af_0)}{(-\beta^*)^\ell X_0^{n-n\beta^*}}\Bigr)+O\Bigl(\eta_2^{1/3}\#\Ac'\Bigr),
\end{equation*}
where $q_0=J!q^*$, and
\begin{equation*}
\sum_{\substack{\bfr_1,\bfr_2\\ \bfr_1\bfr_2\in\Bc'}}\1_{\Rc_1}(\bfr_1)\1_{\Rc_2}(\bfr_2)=\frac{q^{*n} c_{\Rc_1\times\Rc_2}(X_0^n)\#\Bc'}{\phi_K((q^*))\gamma_K}\Bigl(1+\frac{\chi^*(\af_0)}{(-\beta^*)^\ell X_0^{n-n\beta^*}}\Bigr)+O(\eta_2^{1/3} \#\Bc'),
\end{equation*}
where $\beta^*\in [0,1]$ is a quantity depending only on $X$ and where for a set $\mathcal{S}\subset\mathbb{R}^\ell$
\begin{align*}
c_{\mathcal{S}}(t)&=\idotsint\limits_{\substack{(e_1,\dots,e_{\ell})\in\mathcal{S}\\ \sum_{i=1}^{\ell}e_i
\in\mathcal{I}_t
}}\frac{d e_1\dots d e_{\ell}}{\eta_2^{1/2}\prod_{i=1}^{\ell}e_i},\\
\mathcal{I}_t&=\Bigl[\frac{\log{t}}{\log{X}},\frac{\log(t+\eta_2^{1/2}t)}{\log{X}}\Bigr]
.
\end{align*}
\end{prpstn}
Here $\beta^*$ will be a possible exceptional zero if one exists, and 0 otherwise.

We note that for any set $\mathcal{S}\subseteq[\epsilon^2,2n]^{\ell}$, we have the following Lipschitz bounds.
\begin{lmm}\label{lmm:LipschitzBound}
Let $\mathcal{S}\subseteq[\epsilon^2,2n]^\ell$, and let $s^+=\sup\{\sum_{i=1}^\ell e_i:\,\mathbf{e}\in\mathcal{S}\}$ and $s^-=\inf\{\sum_{i=1}^\ell e_i:\,e_i\in\mathcal{S}\}$.
\begin{enumerate}[label=(\roman*)]
\item We have
\begin{align*}
c_{\mathcal{S}}(t+\delta)-c_{\mathcal{S}}(t)\ll_\epsilon \frac{\delta}{\eta_2^{1/2}t}.
\end{align*}
\item If $\mathcal{S}$ is a polytope and $\log{t}/\log{X}\in[s^-+\epsilon,s^+-\epsilon]$, then we have
\[
c_{\mathcal{S}}(t+\delta)-c_{\mathcal{S}}(t)\ll_{\epsilon,\mathcal{S}} \frac{\delta}{t}
\]
\item If $\mathcal{S}$ is a hypercube (with edges parallel to the coordinate axes) and $\ell>1$, then
\[
c_{\mathcal{S}}(t+\delta)-c_{\mathcal{S}}(t)\ll_\epsilon \frac{\delta}{t}
\]
\end{enumerate}
All implied constants may depend on $n$ and $\ell$.
\end{lmm}
We note that the implied constant in the first bound is independent of $\mathcal{S}$, whereas the implied constant in the second bound depends on $\mathcal{S}$.
\begin{proof}
The first bound is straightforward. For any choice of $e_1,\dots,e_{\ell-1}$ we have
\[
\Bigl|\int\limits_{\substack{e_{\ell}\\ (e_1,\dots,e_\ell)\in\mathcal{S}\\ \sum_{i=1}^{\ell}e_i\in\mathcal{I}_t}}\frac{d e_{\ell}}{e_\ell}-\int\limits_{\substack{e_{\ell}\\ (e_1,\dots,e_\ell)\in\mathcal{S}\\ \sum_{i=1}^{\ell}e_i\in\mathcal{I}_{t+\delta}}}\frac{d e_{\ell}}{e_\ell}\Bigr|\ll_\epsilon \#(\mathcal{I}_{t+\delta}\setminus\mathcal{I}_t)+\#(\mathcal{I}_{t}\setminus\mathcal{I}_{t+\delta})\ll \frac{\delta}{t}.
\]
Expanding $c_{\mathcal{S}}(t+\delta)-c_{\mathcal{S}}(t)$ by the integral definition and substituting this bound then gives the first claim.

We now consider the second claim of the lemma. The result is trivial if $\delta>\epsilon^3$, so we may assume $\delta<\epsilon^3$. Since $\mathcal{S}$ is a polytope, the $(\ell-1)$-dimensional region $\mathcal{S}_u$ of $\mathbf{e}\in\mathcal{S}$ with $\sum_{i=1}^{\ell}e_i=u$ is a polytope depending on $u$. After translating $\mathcal{S}_u$ by $O(v)$, we see it differs from $\mathcal{S}_{u+v}$ by a region of ($(\ell-1)$-dimensional) volume $O_{\mathcal{S}}(v)$, unless $\mathcal{S}$ has a face contained in $\sum_{i=1}^{\ell-1}e_i=u_0$ for some $u_0\in[u,u+v]$. But $\mathcal{S}$ cannot contain such a face for $u\in[s^-,s^+-v]$ since it is convex. Therefore, for $u\in[s^-,s^+-\epsilon]$ and $v\le \epsilon^3$ we have
\[
\idotsint_{(e_1,\dots,e_\ell)\in\mathcal{S}_u}\frac{d e_1\dots d e_{\ell-1} }{e_1\dots e_\ell}=\idotsint_{(e_1,\dots,e_\ell)\in\mathcal{S}_{u+v}}\frac{d e_1\dots d e_{\ell-1} }{e_1\dots e_\ell}+O_{\epsilon,\mathcal{S}}(v)
\]
Here we used the fact that if $\mathbf{e}\in\mathcal{S}$ then $e_i\ge\epsilon^2$. Thus we find $|c_{\mathcal{S}}(t+\delta)-c_{\mathcal{S}}(t)|$ is
\begin{align*}
&\ll \int_{u\in\mathcal{I}_t}\frac{1}{\eta_2^{1/2}}\Bigl(\idotsint\limits_{\substack{\mathbf{e}\in\mathcal{S}\\ \sum_{i=1}^{\ell}e_i=u}}\frac{d e_1\dots d e_{\ell-1}}{e_1\dots e_{\ell}}-\idotsint\limits_{\substack{\mathbf{e}\in\mathcal{S}\\ \sum_{i=1}^{\ell}e_i=u+\frac{\log(1+\delta/t)}{\log{X}}}}\frac{d e_1\dots d e_{\ell-1}}{e_1\dots e_{\ell}}\Bigr)d u\\
&\ll_{\epsilon,\mathcal{S}}\int_{u\in\mathcal{I}_t}\frac{1}{\eta_2^{1/2}}\frac{\delta}{t}\ll_{\mathcal{S}} \frac{\delta}{t}.
\end{align*}
This gives the second claim.

Finally, if $\mathcal{S}\subseteq[\epsilon^2,2n]^\ell$ is a hypercube with edges parallel to the coordinate axes and $\ell>1$, then the $(\ell-1)$-dimensional volume of $\mathbf{e}\in\mathcal{S}$ with $\sum_{i=1}^\ell e_i=u$ is a region which varies in a Lipschitz manner as described above, with Lipschitz constant $O(1)$ independent of $\mathcal{S}$, since all faces of $\mathcal{S}$ are at an angle $\gg 1$ from the hyperplanes $\sum_{i=1}^\ell e_i=u$. Using this in the bound above gives the final claim.
\end{proof}

We first show that Proposition \ref{prpstn:ABsums} gives Lemma \ref{lmm:Cube}, and so Proposition \ref{prpstn:TypeII}. We then will go on to establish Proposition \ref{prpstn:ABsums}. 

\begin{proof}[Proof of Lemma \ref{lmm:Cube} assuming Proposition \ref{prpstn:ABsums}]
Summing the first estimate of Proposition \ref{prpstn:ABsums} over all hypercubes $\Cc\subseteq \Rc$ under consideration, we obtain
\begin{align*}
\sum_{\Rc_1\times\Rc_2=\Cc\subseteq\Rc}\sum_{\substack{\af_1,\af_2\\ \af_1\af_2\in\Ac'}}\1_{\Rc_1}(\af_1)\1_{\Rc_2}(\af_2)
&=\frac{q_0^{n}\Sft\#\Ac'}{\phi_K((q_0))\gamma_K}\Bigl(1+\frac{\chi^*(\af_0)}{(-\beta^*)^\ell X_0^{n-n\beta^*}}\Bigr)\sum_{\mathcal{C}\subseteq\Rc}c_{\mathcal{C}}(X_0^n).\\
&+ O(\eta_2^{1/3}\eta_1^{-2(\ell-1)}\#\Ac'),
\end{align*}
Since $\mathcal{R}$ is convex and contains points with sum of coordinates bigger than $n+\epsilon$ and smaller than $n-\epsilon$, there are $O_\Rc(\eta_1^{-2(\ell-2)})$ hypercubes $\mathcal{C}=[a_1,a_1+\eta_1^2)\times\dots\times[a_\ell,a_\ell+\eta_1^2)$ intersecting the boundary of $\Rc$ with $\sum_{i=1}^\ell a_i=n\log{X_0}/\log{X}+O(\eta_1^2)$. For each such hypercube $\mathcal{C}$, we see $c_{\mathcal{C}}(X_0^n)\ll \eta_1^{2\ell-2}$ Therefore we see that
\[
\sum_{\mathcal{C}\subseteq\Rc}c_{\mathcal{C}}(X_0^n)=c_{\Rc}(X_0^n)+O_\mathcal{R}\Bigl(\eta_1^{-2\ell+4}\sup_{\mathcal{C}\cap\mathcal{R}\ne\emptyset}c_\mathcal{C}(X_0^n)\Bigr)=c_{\Rc}(X_0^n)+O_\Rc(\eta_1^2).
\]
Thus we have
\begin{align*}
\sum_{\Rc_1\times\Rc_2=\Cc\subseteq\Rc}\sum_{\substack{\af_1,\af_2\\ \af_1\af_2\in\Ac'}}\1_{\Rc_1}(\af_1)\1_{\Rc_2}(\af_2)
&=\frac{q_0^{n}\Sft\#\Ac'}{\phi_K((q_0))\gamma_K}\Bigl(1+\frac{\chi^*(\af_0)}{(-\beta^*)^\ell X_0^{n-n\beta^*}}\Bigr)c_{\mathcal{R}}(X_0^n).\\
&+ O_\Rc(\eta_1^2\#\Ac'),
\end{align*}
We note that for all $N_0^n\le X_0^n\le (1+O(\eta_1))N_0^n$ we have $c_{\Rc}(X_0^n)=c_{\Rc}(N_0^n)+O_{\mathcal{R}}(\eta_1)$ by Lemma \ref{lmm:LipschitzBound} and we have $1/X_0^{n-n\beta^*}=(1+O(\eta_1))/N_0^{n-n\beta^*}$. We recall that $q_0\le N_0$ and $\eta_1= (\log{X})^{-100}$, so $q_0^n/\phi_K((q))<\eta_1^{-1/100}$. Thus, inserting these bounds and summing over a suitable set of disjoint choices of $\Ac'$ covering $\Ac$, noting that there are $\phi_K((q_0))/\phi_K((q^*))$ choices of $\ab_0'$, we obtain
\begin{align*}
\sum_{\Rc_1\times\Rc_2=\Cc\subseteq\Rc}\sum_{\substack{\af_1,\af_2\\ \af_1\af_2\in\Ac}}\1_{\Rc_1}(\af_1)\1_{\Rc_2}(\af_2)
&=\frac{q^{*n}\Sft c_{\Rc}(N_0^n)\#\Ac}{\phi_K((q^*))\gamma_K}\Bigl(1+\frac{\chi^*(\af_0)}{(-\beta^*)^\ell N_0^{n-n\beta^*}}\Bigr)\\
&+ O_{\mathcal{R}}(\eta_1^{9/10}\#\Ac).
\end{align*}
 We obtain an entirely analogous result for $\Bc$ which is larger by a factor $\#\Bc/(\Sft\#\Ac)$. This gives the second claim of Lemma \ref{lmm:Cube}. The first claim is entirely analogous, but we sum over $\mathcal{C}\cap\mathcal{R}\ne\emptyset$ instead of $\mathcal{C}\subseteq\mathcal{R}$.
 \end{proof}
Thus we are left to establish Proposition \ref{prpstn:ABsums}, which we will do over the next two sections.

We first wish to replace $\1_{\Rc_2}(\af)$ with a more easily controlled approximation $\tilde{\1}_{\Rc_2}(\af)$. To do this we will take into account the possible effect of an exceptional character distorting the distribution of prime ideals in residue classes $\Mod{\qf}$, and so we recall the results on zero-free regions for Hecke $L$-functions given by Lemma \ref{lmm:ZeroFree} and Lemma \ref{lmm:TwistedPrimeIdeal}. This also makes precise the choice of $q^*$, $\chi^*$ in the definitions of $\Ac,\Bc$ which so far have been treated as arbitrary quantities, and the quantity $\beta^*$ appearing in Proposition \ref{prpstn:ABsums}.

We now describe how we define $\chi^*$, $\qf^*$ and $\beta^*$, and our approximation $\tilde{\1}_{\Rc_2}$. If an exceptional character $\chi_{\df^*}$ does exist (in the sense of Lemma \ref{lmm:ZeroFree}) and $N(\df^*)\le \exp(\sqrt[4]{\log{X}})$, then we let $\chi^*=\chi_{\df^*}$ with corresponding modulus $\qf^*=\df^*$ and real zero $\beta^*=\beta_{\df^*}$. If $\chi_{\df^*}$ does not exist or if $N(\df^*)>\exp(\sqrt[4]{\log{X}})$, then we make an arbitrary choice of $\qf^*$ and $\chi^*$ such that $\chi^*$ is a non-trivial primitive real character to a square-free modulus $\qf^*$ with $N(\qf^*)\asymp \exp(\sqrt[5]{\log{X}})$, and we take $\beta^*=1/2$. 

With this choice of $\qf^*,\chi^*,\beta^*$, regardless of which situation we are in, we recall the consequences of Lemma \ref{lmm:TwistedPrimeIdeal}: we have that
\begin{equation}
\sum_{N(\af)\le X}\Lambda(\af)\chi(\af)\ll X\exp(-c\sqrt{\log{X}})\label{eq:NormalPrimeBound}
\end{equation}
uniformly over all non-trivial primitive Hecke characters $\chi=\chi_1\prod_{i=1}^{n-1}\lambda_i^{m_i}\ne\chi^*$ with torsion part $\chi_1$ of conductor $\le q^{*(\log\log{X})^2}\exp(\sqrt[5]{\log{X}})$ and with $m_i\le q^{*(\log\log{X})^2}\exp(\sqrt[5]{\log{X}})$ for all $1\le i\le n-1$. If instead $\chi=\chi^*$ we have
\begin{equation}
\sum_{N(\af)\le X}\Lambda(\af)\chi^*(\af)=\frac{-X^{\beta^*}}{\beta^*}+O(X\exp(-c\sqrt{\log{X}})).\label{eq:ExceptionalPrimeBound}
\end{equation}
If $\beta^*=1/2$ then all the terms involving $\chi^*$ or $\beta^*$ will be negligible and can be ignored on a first reading.

We then define
\begin{equation}
\tilde{\1}_{\Rc_2}(\bfr)=c_{\Rc_2}(N(\bfr))\Bigl(1+\frac{\chi^*(\bfr)}{(-\beta^*)^{\ell'}N(\bfr)^{1-\beta^*}}\Bigr)\sum_{\df|\bfr}\lambda_{\df},\label{eq:TildeDef}
\end{equation}
where
\begin{align*}
R&=X^{\epsilon^2},\\
\lambda_{\df}&=\begin{cases}
\mu(\df)\log{\frac{R}{N(\df)}},\qquad &N(\df)<R,\\
0,&\text{otherwise,}
\end{cases}
\end{align*}
and we recall the definition of $c_{\Rc_2}(t)$ from Proposition \ref{prpstn:ABsums}. 

The sum $\sum_{\df|\bfr}\lambda_{\df}$ should be thought of as a sieve weight which approximates the indicator function of ideals with no prime ideal factors of norm less than $R$, whilst the $c_{\Rc_2}(N(\bfr))$ factor represents the density of $\1_{\Rc_2}$ on ideals of norm approximately $N(\bfr)$.

We will now proceed to show that the first estimate of Proposition \ref{prpstn:ABsums} holds with $\tilde{\1}_{\Rc_2}$ in place of $\1_{\Rc_2}$, and establish the second estimate directly. This then reduces the problem to showing $\1_{\Rc_2}(\af_2)\approx\tilde{\1}_{\Rc_2}(\af_2)$ for $\af_1\af_2\in\Ac'$, which we do by our $L^2$ estimate in the next section.

To ease notation, we fix $\af_0$ such that $\chi^*(\af_0)=\chi^*_\infty(\mathcal{A})\chi^*_f(\alpha_0)$.
\begin{lmm}\label{lmm:BSum} Let $\mathcal{C}=\mathcal{R}_1\times\mathcal{R}_2$ be as in Proposition \ref{prpstn:ABsums}. Then
\[\sum_{\substack{\af,\bfr\\ \af\bfr\in\Bc'}}\1_{\Rc_1}(\af)\1_{\Rc_2}(\bfr)=\frac{q^{*n} c_{\Rc_1\times\Rc_2}(X_0^n)\#\Bc'}{\phi_K((q^*))\gamma_K}\Bigl(1+\frac{\chi^*(\af_0)}{(-\beta^*)^\ell X_0^{n-n\beta^*}}\Bigr)+O(\eta_2\#\Bc').\]
\end{lmm}
\begin{proof}
This essentially follows from the Prime Ideal Theorem. We recall that $\Bc'=\{\af:N(\af)\in \mathcal{I},\chi^*(\af)=\chi^*(\af_0)\}$, where $\mathcal{I}$ is the interval $[X_0^n,X_0^n+\eta_2 X_0^n]$. Since $\chi^{*}(\af\bfr)^2=1$ if $\gcd(\af\bfr,\qf^*)=1$, which occurs on the support of $\1_{\Rc_1}(\af)\1_{\Rc_2}(\bfr)$, we have 
\[\sum_{\substack{\af,\bfr\\ \af\bfr\in\Bc'}}\1_{\Rc_1}(\af)\1_{\Rc_2}(\bfr)=\frac{1}{2}\sum_{\substack{\af,\bfr\\ N(\af\bfr)\in\mathcal{I}}}\1_{\Rc_1}(\af)\1_{\Rc_2}(\bfr)(1+\chi^*(\af\bfr)\chi^*(\af_0)).\]
By the Prime Ideal Theorem (Lemma \ref{lmm:PrimeIdeal}), partial summation and Lemma \ref{lmm:LipschitzBound}, we have
\begin{align*}
\sum_{\substack{\af,\bfr\\ N(\af\bfr)\in\mathcal{I}}}\1_{\Rc_1}(\af)\1_{\Rc_2}(\bfr)&=\idotsint\limits_{\substack{(e_1,\dots,e_{\ell})\in\Rc_1\times\Rc_2\\ X^{\sum_{i=1}^\ell e_i}\in\mathcal{I}}}\frac{X^{\sum_{i=1}^{\ell} e_i}d e_1\dots d e_{\ell}}{\prod_{i=1}^{\ell} e_i}+O\Bigl( X_0^n\exp(-\frac{c}{2}\sqrt{\log{X_0}})\Bigr)\\
&=X_0^n\log{X}\int_{X^t\in\mathcal{I}}\Bigl(\idotsint\limits_{\substack{\mathbf{e}\in\mathcal{R}_1\times\mathcal{R}_2\\ \sum_{i=1}^\ell e_i=t}}\frac{d e_1\dots d e_{\ell-1}}{e_1\dots e_\ell} \Bigr) d t+O(\eta_2^2X_0^n),\\
&=\eta_2 X_0^n c_{\Rc_1\times\Rc_2}(X_0^n)+O(\eta_2^2 X_0^n).
\end{align*}
Similarly, using \eqref{eq:ExceptionalPrimeBound}, we have
\begin{align*}
\sum_{\substack{\af,\bfr\\ N(\af\bfr)\in\mathcal{I}}}\1_{\Rc_1}(\af)\1_{\Rc_2}(\bfr)\chi^*(\af\bfr)&=\frac{1}{(-\beta^*)^\ell}\idotsint\limits_{\substack{(e_1,\dots,e_{\ell})\in\Rc_1\times\Rc_2\\ X^{\sum_{i=1}^\ell e_i}\in\mathcal{I}}}\frac{X^{\beta^*\sum_{i=1}^{\ell} e_i}d e_1\dots d e_{\ell}}{\prod_{i=1}^{\ell} e_i}+O(\eta_2^2 X_0^n)\\
&=\frac{\eta_2 }{(-\beta^*)^\ell}X_0^{n\beta^*} c_{\Rc_1\times\Rc_2}(X_0^n)+O(\eta_2^2X_0^n).
\end{align*}
Since $\#\Bc' =\phi_K((q^*))\gamma_K\eta_2 X_0^n/2q^{*n}+O(X_0^{n-1})$ this gives the result.
\end{proof}
\begin{lmm}\label{lmm:SieveSum}
There is a constant $c>0$ such that for any integer $q$, we have
\[\sum_{\substack{N(\df)<R\\ \gcd(N(\df),q)=1}}\frac{\mu(\df)\rho(\df)}{N(\df)}\log\frac{R}{N(\df)}=\frac{q^{n}\Sft}{\phi_K((q))\gamma_K}+O\Bigl(q^{o(1)}\exp(-c\sqrt{\log{R}})\Bigr).\]
\end{lmm}
\begin{proof}
This is an application of counting via complex analysis and the zero-free region of $\zeta_K(s)$. By Perron's formula we have (noting that the integrals converge absolutely)
\begin{align}
\sum_{\substack{N(\df)<R\\ \gcd(N(\df),q)=1}}\frac{\mu(\df)\rho(\df)}{N(\df)}\log\frac{R}{N(\df)}&=\frac{1}{2\pi i}\int_{1-i\infty}^{1+i\infty}\frac{R^s}{s^2}\Bigl(\sum_{\gcd(N(\df),q)=1}\frac{\mu(\df)\rho(\df)}{N(\df)^{1+s}}\Bigr)d s\nonumber\\
&=\frac{1}{2\pi i}\int_{1-i\infty}^{1+i\infty}\frac{R^s}{s^2\zeta_K(1+s)}f(1+s)d s,\label{eq:PerronIntegral}
\end{align}
where $f(s)$ is given by the Euler product
\begin{align*}
f(s)&=\prod_{\pf\nmid (q)}\Bigl(1-\frac{\rho(\pf)}{N(\pf)^{s}}\Bigr)\Bigl(1-\frac{1}{N(\pf)^{s}}\Bigr)^{-1}\prod_{\pf|(q)}\Bigl(1-\frac{1}{N(\pf)^s}\Bigr)^{-1}\\
&=\prod_{p\nmid q}\Bigl(1-\frac{\nu_p}{p^{s}}+O(p^{-2\Re(s)})\Bigr)\Bigl(1-\frac{\nu_p}{p^{s}}+O(p^{-2\Re(s)})\Bigr)^{-1}\prod_{\pf|(q)}\Bigl(1-\frac{1}{N(\pf)^s}\Bigr)^{-1}\\
&=\prod_{p\nmid q}\Bigl(1+O(p^{-2\Re(s)})\Bigr)\prod_{\pf|(q)}\Bigl(1-\frac{1}{N(\pf)^s}\Bigr)^{-1}.
\end{align*}
Here we have made use of Lemma \ref{lmm:RhoBounds} to bound the error terms in the Euler product and assumed that $\Re(s)\ge 3/4$. In particular $f(1+s)$ converges absolutely for $\Re(s)\ge -1/4$ and is of size $O(q^{o(1)})$ in this region. 

We first move the line of integration in \eqref{eq:PerronIntegral} to $\Re(s)=1/\log{R}$ (covering a region where the integrand is analytic), giving
\[\sum_{\substack{N(\df)<R\\ \gcd(N(\df),q)=1}}\frac{\mu(\df)\rho(\df)}{N(\df)}\log\frac{R}{N(\df)}=
\frac{1}{2\pi i}\int_{1/\log{R}-i\infty}^{1/\log{R}+i\infty}\frac{R^s}{s^2\zeta_K(1+s)}f(1+s)d s.
\]
Using the bound $\zeta_K(1+1/\log{R}+it)^{-1}\ll \log(2+|t|)$ for $|t|\ge 1$ from Lemma \ref{lmm:ZetaLower}, we see that the contribution from $|\Im(s)|>T:=\exp(\sqrt{\log{R}})$ is
\[
\ll \int_{t>T}\frac{q^{o(1)}\log{t}}{t^2}d t\ll \frac{q^{o(1)}\log{T}}{T}.
\]
Thus we may discard this part of the integral at the cost of a negligible error. We now move the truncated contour of integration to the left again, to $\Re(s)=-2c/\log{T}$, where $c=c_K/2>0$ is defined in terms of the constant of Lemma \ref{lmm:ZetaLower}, so we have the bound $\zeta_K(s)^{-1}\ll \log(2+|s|)$ within this region. This introduces a term from the pole at $s=0$, an integral over on the line $\Re(s)=-2c/\log{T}$, and small contour integrals along the lines $\Im(s)=\pm T$. The contours integrals with $|\Im(s)|=T$ contribute $O(q^{o(1)}\log{T}/T^2)$, and so are negligible. The contour integral with $\Re(s)=-2c/\log{T}$ contributes
\begin{align*}
\frac{1}{2\pi i}\int_{-2c/\log{T}-i T}^{-2c/\log{T}+i T}\frac{R^sf(1+s)}{s^2\zeta_K(1+s)}d s&\ll q^{o(1)}(\log{T})^2 R^{-2c/\log{T}}\\
&\ll q^{o(1)}\exp(-c\sqrt{\log{R}}).
\end{align*}
Thus only the residue at $s=0$ makes a non-negligible contribution, and we have
\begin{align*}
\sum_{\substack{N(\df)<R\\ \gcd(N(\df),q)=1}}\frac{\mu(\df)\rho(\df)}{N(\df)}\log\frac{R}{N(\df)}&=\Res_{s=0}\frac{R^s f(1+s)}{s^2\zeta_K(1+s)}+O\Bigl(q^{o(1)}\exp(-c\sqrt{\log{R}})\Bigr)\\
&=\gamma_K^{-1}f(1)+O\Bigl(q^{o(1)}\exp(-c\sqrt{\log{R}})\Bigr).
\end{align*}
The result follows on noting that $f(1)=q^{n}\Sft/\phi_K((q))$.
\end{proof}
\begin{lmm}\label{lmm:ASum}We have
\[\sum_{\substack{\af,\bfr\\ \af\bfr\in\Ac'}}\1_{\Rc_1}(\af)\tilde{\1}_{\Rc_2}(\bfr)=\frac{q_0^{n}\#\Ac'\Sft c_{\Rc_1\times\Rc_2}(X_0^n)}{\phi_K((q_0))\gamma_K}\Bigl(1+\frac{\chi^*(\af_0)}{(-\beta^*)^\ell X_0^{n-n\beta^*}}\Bigr)+O\Bigl(\eta_2^{1/3}\#\Ac'\Bigr),\]
where $q_0=J!q^*$.
\end{lmm}
\begin{proof}
This is a sieve calculation, relying on Proposition \ref{prpstn:TypeI} and the Prime Ideal Theorem in the form \eqref{eq:NormalPrimeBound} and \eqref{eq:ExceptionalPrimeBound}. We substitute the definition \eqref{eq:TildeDef} of $\tilde{\1}_{\Rc_2}(\bfr)$ and swap the order of summation to give
\begin{align}
\sum_{\substack{\af,\bfr\\ \af\bfr\in\Ac'}}\1_{\Rc_1}(\af)\tilde{\1}_{\Rc_2}(\bfr)&=\sum_{\af}\1_{\Rc_1}(\af)\sum_{N(\df)<R}\lambda_{\df}\sum_{\substack{\mathfrak{u}\in\Ac' \\ \mathfrak{ad}|\mathfrak{u}}}c_{\Rc_2}(N(\mathfrak{u}/\af))\Bigl(1+\frac{\chi^*(\mathfrak{u}/\af)}{(-\beta^*)^{\ell'}N(\mathfrak{u}/\af)^{1-\beta^*}}\Bigr).\label{eq:SieveExpanded}
\end{align}
We wish to replace $c_{\Rc_2}(N(\mathfrak{u}/\af))$ with $c_{\Rc_2}(X_0^n/N(\af))$. 
Since all ideals in $\Ac'$ have norm $X_0^n+O(\eta_2 X_0^n)$ with $X_0^n\gg X^n$, we have that $c_{\Rc_2}(N(\mathfrak{u}/\af))=c_{\Rc_2}(X_0^n/N(\af))+O(\eta_2^{1/2})$ by Lemma \ref{lmm:LipschitzBound}. This error term contributes
\[
\ll \eta_2^{1/2}\log{X}\sum_{\af}\1_{\Rc_1}(\af)\sum_{N(\df)<R}\frac{|\lambda_{\df}|}{\log{X}}\#\Ac'_{\af\df}.
\]
We recall $\1_{\Rc_1}$ is supported on ideals $\af$ with $N(\af)\ll X^{n-k-\epsilon}$ and with all prime factors $\pf$ of $\af$ satisfying $N(\pf)\ge X^{\epsilon^2}=R$. Thus $\df,\af$ must be coprime if they make a contribution to the sum. We let $\mathfrak{e}=\mathfrak{ad}$, and recall that $|\lambda_{\df}|\ll \log{X}$. Putting this together, the error in replacing $c_{\Rc_2}(N(\mathfrak{u}/\af))$ with $c_{\Rc_2}(X_0^n/N(\af))$ contributes a total
\[
\ll\eta_2^{1/2} \log{X}\sum_{N(\mathfrak{e})\ll RX^{n-k-\epsilon}} \#\Ac'_{\mathfrak{e}}\ll \eta_2^{1/2}\log{X} \#\Ac'\sum_{N(\mathfrak{e})<X^{n-2k-\epsilon/2}}\frac{\rho(\mathfrak{e})}{N(\mathfrak{e})}+X^{n-k-\epsilon/2n}
\]
by Proposition \ref{prpstn:TypeI}, noting that if $\gcd(N(\mathfrak{e}),q_0)\ne 1$ then $\#\Ac'_\mathfrak{e}=0$. The sum here is $O(\log{X})$ by an Euler product upper bound and Lemma \ref{lmm:RhoBounds}, so the total error is $O(\eta_2^{1/3}\#\Ac')$.

An essentially identical argument shows that we can replace $N(\mathfrak{u}/\af)^{1-\beta^*}$ in \eqref{eq:SieveExpanded} with $X_0^{n-n\beta^*}/N(\af)^{1-\beta^*}$ at the cost of an error term of size $O(\eta_2^{1/2}\#\Ac')$.

Since all elements $\mathfrak{u}$ of $\Ac'$ have $\chi^*(\mathfrak{u})=\chi^*(\af_0)$, we are left to evaluate
\[\sum_{\af}\1_{\Rc_1}(\af)c_{\Rc_2}\Bigl(\frac{X_0^n}{N(\af)}\Bigr)\Bigl(1+\frac{\chi^*(\af)\chi^*(\af_0)N(\af)^{1-\beta^*}}{(-\beta^*)^{\ell'}X_0^{n-n\beta^*}}\Bigr)\sum_{N(\df)<R}\lambda_{\df}\#\Ac'_{\mathfrak{ad}}.\]
Since all elements of $\Ac'$ have norm coprime to $q_0$, we can restrict to $\gcd(N(\df),q_0)=1$. Using Proposition \ref{prpstn:TypeI}, again, we may then replace $\#\Ac'_{\mathfrak{ad}}$ with $\rho(\mathfrak{ad})\#\Ac'/N(\mathfrak{ad})$ at the cost of an error $O(X^{n-k-\epsilon/2n})$, which is negligible. Thus we are left to evaluate
\[\#\Ac'\sum_{\af}\1_{\Rc_1}(\af)c_{\Rc_2}\Bigl(\frac{X_0^n}{N(\af)}\Bigr)\Bigl(1+\frac{\chi^*(\af)\chi^*(\af_0)N(\af)^{1-\beta^*}}{(-\beta^*)^{\ell'} X_0^{n-n\beta^*}}\Bigr)\sum_{\substack{N(\df)<R \\ \gcd(N(\df),q_0)=1}}\frac{\lambda_{\df}\rho(\af\df)}{N(\mathfrak{ad})}.\]
Any pairs $\af$, $\df$ making a contribution must be coprime since $\1_{\Rc_1}$ is supported on ideals with all factors having norm at least $R$. Thus we may replace $\rho(\mathfrak{ad})$ with $\rho(\df)\rho(\af)$, and so the double sum factorizes as
\[\Biggl(\sum_{\af}\frac{\rho(\af)\1_{\Rc_1}(\af)}{N(\af)}c_{\Rc_2}\Bigl(\frac{X_0^n}{N(\af)}\Bigr)\Bigl(1+\frac{\chi^*(\af)\chi^*(\af_0)N(\af)^{1-\beta^*}}{(-\beta^*)^{\ell'} X_0^{n-n\beta^*}}\Bigr)\Biggr)\Biggl(\sum_{\substack{N(\df)<R \\ \gcd(N(\df),q_0)=1}}\frac{\lambda_{\df}\rho(\df)}{N(\df)}\Biggr).\]
By Lemma \ref{lmm:SieveSum} we have the second factor is $q_0^{n}\Sft/\gamma_K\phi_K((q_0))+O(q_0^{o(1)}\exp(-c\sqrt{\log{R}}))$. 

Since all degree 1 prime ideals have $\rho(\pf)=1$, we see that $\1_{\Rc_1}(\af)\rho(\af)=\1_{\Rc_1}(\af)$ unless $p^2|N(\af)$ for some $p>X^{\epsilon^2}$. Thus we can replace $\rho(\af)$ with the constant 1 in the first factor at the cost of an error
\[\ll \sum_{p>X^{\epsilon^2}}\sum_{\substack{N(\af)<X^{n-k-\epsilon/2}\\ p^2|N(\af)}}\frac{\rho(\af)}{N(\af)}\ll \sum_{p>X^{\epsilon^2}}\frac{1}{p^2}\prod_{N(\pf)<X}\Bigl(1+\frac{\rho(\pf)}{N(\pf)}\Bigr)\ll X^{-\epsilon^2}\log{X},\]
by Lemma \ref{lmm:RhoBounds}. This is negligible, and we can evaluate the resulting expressions by partial summation, the Prime Ideal Theorem (Lemma \ref{lmm:PrimeIdeal}) and \eqref{eq:ExceptionalPrimeBound}. We have
\begin{align*}\sum_{\af}\frac{\1_{\Rc_1}(\af)}{N(\af)}c_{\Rc_2}\Bigl(\frac{X_0^n}{N(\af)}\Bigr)&=\idotsint\limits_{\substack{(e_1,\dots,e_\ell)\in\Rc_1\times\Rc_2\\ \sum_{i=1}^{\ell}e_i\in\mathcal{I}_{X_0^n}}}\frac{d e_1\dots d e_{\ell}}{\eta_2^{1/2}\prod_{i=1}^\ell e_i}+O(\eta_2),\\
\sum_{\af}\frac{\1_{\Rc_1}(\af)\chi^*(\af)}{N(\af)^{\beta^*}}c_{\Rc_2}\Bigl(\frac{X_0^n}{N(\af)}\Bigr)&=\idotsint\limits_{\substack{(e_1,\dots,e_\ell)\in\Rc_1\times\Rc_2\\ \sum_{i=1}^\ell e_i\in\mathcal{I}_{X_0^n}}}\frac{d e_1\dots d e_{\ell}}{(-\beta^*)^{\ell-\ell'}\eta_2^{1/2}\prod_{i=1}^\ell e_i}+O(\eta_2).
\end{align*}
Combining these estimates gives the result.
\end{proof}
With these lemmas in place, we can reduce the proof of Proposition \ref{prpstn:ABsums} to the following proposition.
\begin{prpstn}\label{prpstn:L2Target}
Let $\cf$ be a fixed ideal. Uniformly over all hypercubes $\Rc_1\times\Rc_2$ intersecting $\Rc$ and uniformly over all $\Ac'\subseteq\Ac$ we have
\begin{equation}
\sum_{\substack{\af,\,\bfr\,\mathrm{ principal}\\ \cf|\bfr,\,\cf'|\af\\
\af\bfr/N(\cf)\in\Ac'}}\1_{\Rc_1}(\af/\cf')\Bigl(\1_{\Rc_2}(\bfr/\cf)-\tilde{\1}_{\Rc_2}(\bfr/\cf)\Bigr)\ll \eta_2^{1/2}\#\Ac'.\label{eq:L2Target}
\end{equation}
\end{prpstn}
\begin{proof}[Proof of Proposition \ref{prpstn:ABsums} assuming Proposition \ref{prpstn:L2Target}]
Lemma \ref{lmm:BSum} gives the second statement of Proposition \ref{prpstn:ABsums}, and Lemma \ref{lmm:ASum} gives the first statement with $\tilde{\1}_{\Rc_2}$ in place of $\1_{\Rc_2}$. We are therefore left to show that the error introduced by replacing $\1_{\Rc_2}$ with $\tilde{\1}_{\Rc_2}$ in the first statement is suitably small. In particular it is sufficient to show that uniformly over all hypercubes $\mathcal{C}=\Rc_1\times\Rc_2$ with $\mathcal{C}\cap\Rc\ne \emptyset$ and all sets $\Ac'$
\[\sum_{\mathfrak{a b}\in\Ac'}\1_{\Rc_1}(\af)\Bigl(\1_{\Rc_2}(\bfr)-\tilde{\1}_{\Rc_2}(\bfr)\Bigr)\ll \eta_2^{1/2}\#\Ac'.\]
We split the sum over $\bfr$ into ideal classes $\Cc\in Cl_K$. We let $\cf$ be an ideal in $\Cc$, and $\cf'=(N(\cf))/\cf$. Then $\mathfrak{ac}'$ and $\mathfrak{b c}$ are both principal integral ideals, so can be written as $(\alpha),\,(\beta)$ say with $\cf'|(\alpha)$ and $\cf|(\beta)$. The above estimate now follows immediately from Proposition \ref{prpstn:L2Target}.
\end{proof}
Thus we are left to establish Proposition \ref{prpstn:L2Target}.
\section{Localized Ideal Counts}\label{sec:Localized}
The aim of this section is to show that $\1_{\Rc_2}(\bfr/\cf)\approx \tilde{\1}_{\Rc_2}(\bfr/\cf)$ when $\bfr$ is localized to a particular ideal class, residue mod q and angle of Hecke character. The main result of this section is Proposition \ref{prpstn:SieveCube}, which will be important in establishing Proposition \ref{prpstn:L2Target} (and hence Theorem \ref{thrm:MainTheorem} and Theorem \ref{thrm:LowerBound}) in the later sections.
\begin{lmm}\label{lmm:Generator}
Let $\lambda_1,\dots,\lambda_{n-1}$ be a fixed basis of the torsion-free Hecke characters of $K$. Let $\Delta>0$. Let $\alpha\in\mathcal{O}_K$ and $\af=(\alpha)$. Let $\bfr$ be a principal ideal such that for each $j\in \{1,\dots,n-1\}$ we have
\[
|\lambda_j(\bfr)-\lambda_j(\af)|\le \Delta
\]
and such that $|N(\bfr)-N(\af)|\le \Delta N(\af)$.

Then there is a generator $\beta$ of $\bfr$ such that
\[
\beta=\alpha(1+O(\Delta)).
\]
\end{lmm}
We caution that the implied constant above may depend on the choice of basis, but for the purposes of this paper we just consider a single fixed basis.
\begin{proof}
This fact is given, for example, in \cite[Section 3.2]{Duke}. Alternatively, it follows from the characterizations of torsion-free characters from \cite[Chapter 7, \S 6]{Neukirch}. 
A torsion-free character (i.e. of pure infinity type) takes the form
\[
\chi((\gamma))=\exp\Bigl( \sum_{\sigma}\Bigl(p_\sigma\log\Bigl(\frac{\gamma^\sigma}{|\gamma^\sigma|}\Bigr)+i q_\sigma\log|\gamma^\sigma|\Bigr)\Bigr).
\]
where the sum is over embeddings $\sigma$, $p_\sigma\in\mathbb{Z}$ satisfy $p_\sigma p_{\overline{\sigma}}=0$ and $q_\sigma\in\mathbb{R}$ satisfy $q_\sigma=q_{\overline{\sigma}}$ and $\sum_\sigma q_\sigma=0$. Provided the right hand side is trivial on units this is a well-defined character on principal ideals.

The result follows from Lemma \ref{lmm:UnitSize} if $\Delta\gg 1$, so we may assume $\Delta$ is sufficiently small. Let $\epsilon_1,\dots,\epsilon_{r_1+r_2-1}$ be a basis for the torsion-free units in $\mathcal{O}_K$. Given $\mathbf{m}=(m_1,\dots, m_{r_1+r_2-1})$, there is a choice of coefficients $q_{\sigma}$ such that
\[
\sum_{\sigma} q_\sigma\log|\epsilon_j^\sigma|=2\pi m_j.
\]
(This is a system of linearly independent linear equations - the linear independence follows from the non-vanishing of the regulator.) By considering $\mathbf{m}$ as the standard basis vectors of $\mathbb{Z}^{r_1+r_2-1}$, we see that there are choices $q_{j,\sigma}$ for $1\le j\le r_1+r_2-1$ such that
\[
i\sum_\sigma q_{j,\sigma}\log|\epsilon_r^\sigma|=\begin{cases}2\pi i\qquad&\text{if $r=j,$}\\ 0 &\text{otherwise,}\end{cases}
\]
and these give rise to Hecke characters $\chi_1,\dots,\chi_{r_1+r_2-1}$ such that
\[
\chi_j((\gamma))=\exp\Bigl(i \sum_\sigma q_{j,\sigma}\log|\gamma^\sigma|\Bigr),
\]
since the right hand side is invariant under multiplication of $\gamma$ by units. These $\chi_j$ are torsion-free, so of the form $\lambda_1^{e_1}\dots \lambda_{n-1}^{e_{n-1}}$ for a some $e_1,\dots,e_{n-1}\ll_{\mathbf{m}}1$. Thus, since $\lambda_j((\alpha))=\lambda_j((\beta))+O(\Delta)$, we have $\chi_j((\alpha))=\chi_j((\beta))+O(\Delta)$, and so
\[
i \sum_\sigma q_{j,\sigma}\log|\alpha^\sigma|=i \sum_\sigma q_{j,\sigma}\log|\beta^\sigma|+O(\Delta) \Mod{2\pi i}.
\]
But, by construction, we see that we can find $\beta_2=\epsilon_1^{m_1}\dots \epsilon_{r_1+r_2-1}^{m_{r_1+r_2-1}}\beta$ for suitable $\mathbf{m}\in\mathbb{Z}$ such that 
\[
i \sum_\sigma q_{j,\sigma}\log|\alpha^\sigma|=i \sum_\sigma q_{j,\sigma}\log|\beta_2^\sigma|+O(\Delta).
\]
Moreover, since $N(\beta_2)=N(\alpha)(1+O(\Delta))$ we also have $\sum_{\sigma}\log|\beta_2^\sigma|=\sum_{\sigma}\log|\alpha^\sigma|(1+O(\Delta))$. Thus we see that since the $(q_{j,\sigma})_\sigma$ are linearly independent, $|\beta_2^\sigma|=(1+O(\Delta))|\alpha^\sigma|$ for all $\sigma$.

Similarly, we can choose $p_{\sigma_0}=1$ for a complex embedding $\sigma_0$, and $p_\sigma=0$ for all other embeddings, and then find constants $q_\sigma$ such that
\[
\chi_{\sigma_0}((\gamma))=\exp\Bigl(\log\Bigl(\frac{\gamma^{\sigma_0}}{|\gamma^{\sigma_0}|}\Bigr)+\sum_\sigma q_\sigma \log|\gamma^\sigma|\Bigr)
\]
is a Hecke character. Again, we must have $\chi_{\sigma_0}(\alpha)=\chi_{\sigma_0}(\beta_2)(1+O(\Delta))$. But since $|\alpha^\sigma|=|\beta_2^\sigma|(1+O(\Delta))$ for all $\sigma$, we see that this implies $\alpha^{\sigma_0}=\beta_2^{\sigma_0}(1+O(\Delta))$. Thus we have $\alpha^\sigma=\beta_2^\sigma(1+O(\Delta))$ for all complex embeddings $\sigma$, and that $|\alpha^\sigma|=|\beta_2^\sigma|(1+O(\Delta))$ for all real embeddings. From this we see that $\alpha=\beta_2(1+O(\Delta))$.
\end{proof}
\begin{lmm}\label{lmm:wSum}
Let $\lambda_1,\dots,\lambda_{n-1}$ be a basis of the torsion-free Hecke characters, and define
\[
W(\af;\bfr;\Delta)=
\begin{cases}\prod_{j=1}^{n-1}\Bigl(1-\frac{1}{2\pi \Delta}\Bigl|\arg\Bigl(\frac{\lambda_j(\af)}{\lambda_j(\bfr)}\Bigr)\Bigr|\Bigr),\qquad&\text{ if }\Bigl|\arg\Bigl(\frac{\lambda_j(\af)}{\lambda_j(\bfr)}\Bigr)\Bigr|\le 2\pi\Delta\,\forall j,\\
0,&\text{otherwise.}\end{cases}
\]
Let $A\asymp B^n$, and $\Delta>A^{-\epsilon^2/2n}$. Then we have
\begin{equation*}
\sum_{\substack{A\le N(\af)\le A+\Delta A\\ \af\text{ principal}}}\hspace{-0.3cm}W(\af;\bfr;\Delta)=\frac{\gamma_K\Delta^{n}A}{h_K}(1+O(\Delta)).
\end{equation*}
(Here we use the branch of $\arg(x)$ such that $\arg(x)\in[-\pi,\pi)$.)
\end{lmm}
\begin{proof}
The result is trivial if $\Delta\gg1$, so we assume that $\Delta$ is sufficiently small. By Fourier expansion, if $|z|= 1$ then
\[
2\pi\Delta\sum_{m\in\mathbb{Z}}z^m\Bigl(\frac{\sin{\pi m\Delta}}{\pi m\Delta}\Bigr)^2=
\begin{cases}\Bigl(1-\frac{1}{2\pi \Delta}|\arg(z)|\Bigr),\qquad&\text{ if }|\arg(z)|\le 2\pi\Delta,\\
0,&\text{otherwise.}\end{cases}\nonumber\\
\]
Thus
\begin{align}
W(\af;\bfr;\Delta)&=\Delta^{n-1}\sum_{\mb\in\Zz^{n-1}}\prod_{j=1}^{n-1}\frac{\lambda_j(\af)^{m_j}}{\lambda_j(\bfr)^{m_j}}\Bigl(\frac{\sin\pi m_j\Delta}{\pi m_j\Delta}\Bigr)^2\nonumber\\
&=\Delta^{n-1}\sum_{\mb\in\Zz^{n-1}}\frac{\chi^{\mb}(\af)}{\chi^{\mb}(\bfr)}\hat{w}(\mb),\label{eq:WFourier}
\end{align}
Here $\chi^{\mb}(\af)=\prod_{j=1}^{n-1}\lambda_j^{m_j}(\af)$, $\hat{w}(\mb)=\prod_{j=1}^{n-1}(\sin \pi m_j\Delta/\pi m_j\Delta)^2$, and we take $\sin{\pi m_j\Delta}/\pi m_j\Delta$ to be 1 when $m_j=0$.

We note that
\begin{equation}
\sum_{\substack{A\le N(\af)\le A+\Delta A\\ \af\text{ principal}}}\hspace{-0.3cm}W(\af;\bfr;\Delta)=\frac{\Delta^{n-1}}{h_K}\sum_{\xi}\sum_{\mb\in\Zz^{n-1}}\chi^{\mb}(\bfr)^{-1}\hat{w}(\mb)\sum_{A\le N(\af)\le A+\Delta A}\hspace{-0.3cm}\chi^{\mb}(\af)\xi(\af),\label{eq:WExpansion}
\end{equation}
where $\xi$ runs over all characters of the class group $Cl_K$.%

Since $\hat{w}(\mb)\ll \prod_{j=1}^{n-1}\min(1,(m_j\Delta)^{-2})$, those terms with $m_j>M_0$ for some $j$ contribute $O(\Delta^{-n+2}A/M_0)$ in total to \eqref{eq:WExpansion}. Choosing $M_0=\Delta^{-2n}$ shows that these contribute $O(\Delta^{n+2}A)$.

If $\|\mb\|\ll M_0<A^{\epsilon^2}$ and $\chi^{\mb}\xi$ is non-trivial, then the inner sum over $\af$ in \eqref{eq:WExpansion} is $O(A^{1-\epsilon})$ by Perron's formula and the bound $L_K(s,\chi^{\mb}\xi)\ll O(|s|+\|\mb\|)^{n(1-\sigma)/2}$ from Lemma \ref{lmm:LGrowth}. Thus these terms contribute $O(\Delta^{n-1}M_0^{n-1}A^{1-\epsilon})=O( \Delta^{2n}A)$ in total to \eqref{eq:WExpansion}. 

Finally, the term with $\chi^{\mb}\xi=1$ contributes $\gamma_K\Delta^n A(1+O(\Delta))/h_K$. Putting these estimates together gives the result.
\end{proof}
\begin{lmm}\label{lmm:AlgIntSum}
Let $\cf$ be a fixed ideal and $q\ll q^{*\log\log{B}}\exp(\sqrt[6]{\log{B}})$ with $(\theta n)^n N(\cf)|q$. Let $\beta_0,\alpha\in\mathcal{O}_K$ be such that $\gcd((q),(\beta_0))=\cf$ and $\beta_0=\alpha(1+O(\delta_0))$. Let $\Delta=\delta_0^n$ and $N_K(\alpha)\asymp B^k$. Define
\[
V(\alpha)=\sum_{\substack{\beta\in\mathcal{O}_K\\ |\beta-\alpha|\le \Delta|\alpha|/\delta_0^{1/2n}\\ \beta\equiv \beta_0\Mod{q} \\ N(\af)/ (1+\Delta)\le N(\bfr)\le N(\af)}}\hspace{-0.5cm}\1_{\Rc_2}(\bfr/\cf)W(\af;\bfr;\Delta)
\]
Then if $\qf^*|(q)/\cf$ we have
\[
V(\alpha)=\frac{\Delta^{n-1}}{h_K\phi_K((q)/\mathfrak{c})}\sum_{\substack{\bfr\\ N(\af)/(1+\Delta)\le N(\bfr)\le N(\af)\\ \cf|\bfr}}\1_{\Rc_2}(\bfr/\cf)\Bigl(1+\chi^*(\bfr/\cf)\overline{\chi^*(\bfr_0/\cf)}\Bigr)+O(\delta_0^{1/2}\Delta^n B^n).
\]
If instead $\qf^*\nmid (q)/\cf$ we have
\[
V(\alpha)=\frac{\Delta^{n-1}}{h_K\phi_K((q)/\mathfrak{c})}\sum_{\substack{\bfr\\ N(\af)/ (1+\Delta)\le N(\bfr)\le N(\af)\\ \cf|\bfr}}\1_{\Rc_2}(\bfr/\cf)+O(\delta_0^{1/2}\Delta^n B^n).
\]
Here $\bfr$ denotes the ideal $(\beta)$ generated by $\beta\in\mathcal{O}_K$. Similarly $\af=(\alpha)$ the ideal generated by $\alpha$ and $\bfr_0=(\beta_0)$ the ideal generated by $\beta_0$.
\end{lmm}
\begin{proof}
We first detect $\beta\equiv\beta_0'\Mod{q}$ by characters $\chi_{\mathfrak{f}}$ of the multiplicative group $(\Oc_K/\mathfrak{f})^\times$ where $\mathfrak{f}=(q)/\cf$. Since $\gcd((q),(\beta_0))=\cf$, we see that $\beta/\beta_0'$ can be viewed as an element of $\Oc_K/\mathfrak{f}$ if $\cf|\bfr$. We see that $\#(\Oc_K/\mathfrak{f})^\times=\phi_K((q)/\cf)$, and so
\begin{equation}
V(\alpha)=\frac{1}{\phi_K((q)/\mathfrak{c})} \sum_{\chi_{\mathfrak{f}}}\sum_{\substack{\beta\in\Oc_K\\ |\beta-\alpha|\le \Delta|\alpha|/\delta_0^{1/2n} \\ 1\le N(\af/\bfr)\le 1+\Delta \\ \cf|\bfr}}\chi_{\mathfrak{f}}(\beta/\beta_0')\1_{\Rc_2}(\bfr/\cf)W(\af,\bfr;\Delta),\label{eq:CharacterTarget}
\end{equation}
where $\sum_{\chi_{\mathfrak{f}}}$ is a sum over all characters of $(\Oc_K/\mathfrak{f})^\times$.

The characters $\chi_{\mathfrak{f}}$ are not a characters of ideals, and so we first translate them to this setting. Given a character $\chi_{\mathfrak{f}}$ on $( \Oc_K/\mathfrak{f} )^\times$, as in the proof of Lemma \ref{lmm:Generator}, there is a choice of constants $p_{\sigma,\chi_{\mathfrak{f}}},q_{\sigma,\chi_{\mathfrak{f}}}\ll 1$ for each embedding $\sigma:K\hookrightarrow\mathbb{C}$ such that 
\[
\tilde{\chi}_{\mathfrak{f}}(\gamma)=\chi_{\mathfrak{f}}(\gamma)\exp(\sum_{\sigma}q_{\sigma,\chi_{\mathfrak{f}}}\log|\gamma^{\sigma}|)\prod_{\sigma}(\gamma^\sigma/|\gamma^\sigma|)^{p_{\sigma,\chi_{\mathfrak{f}}}}
\]
is trivial on units of $\Oc_K$.  This then defines a character on principal ideals coprime to $\mathfrak{f}$, which we can lift to a character on all ideals coprime to $\mathfrak{f}$. The resulting character is not unique, since there are $O(1)$ possible choices of the constants $p_{\sigma,\chi_{\mathfrak{f}}}$, $q_{\sigma,\chi_{\mathfrak{f}}}$ and the lift is only unique up to multiplication by Hilbert characters. This lack of uniqueness is irrelevant to us, so we arbitrarily fix a lift for each $\chi_{\mathfrak{f}}$, which we also denote by $\tilde{\chi}_{\mathfrak{f}}$.

We would like to replace $\chi_{\mathfrak{f}}(\beta/\beta_0')$ by $\tilde{\chi}_{\mathfrak{f}}(\bfr/\bfr_0')$ in \eqref{eq:CharacterTarget} so that we have characters of ideals. Since $\bb,\bb_0'\in\mathcal{C}$, we have $\bb=\bb_0'(1+O(\delta_0))$, and so, since $\log$ is continuous, $\chi_{\mathfrak{f}}(\beta/\beta_0')=\tilde{\chi}_{\mathfrak{f}}(\bfr/\bfr_0')(1+O(\delta_0))$. This error term $O(\delta_0)$ contributes
\[\ll \frac{1}{\phi_K((q)/\mathfrak{c})}\sum_{\chi_{\mathfrak{f}}}\sum_{\beta=\alpha(1+O(\Delta/\delta_0^{1/2n}))}\delta_0\ll \delta_0^{1/2}\Delta^n B^n\]
to \eqref{eq:CharacterTarget}, which is negligible. Thus
\begin{equation*}
V(\alpha)=\frac{1}{\phi_K((q)/\mathfrak{c})} \sum_{\chi_{\mathfrak{f}}}\sum_{\substack{\beta\in \Oc_K \\ |\beta-\alpha|\le \Delta|\alpha|/\delta_0^{1/2n} \\ 1\le N(\af/\bfr)\le 1+\Delta \\ \cf|\bfr}}\tilde{\chi}_{\mathfrak{f}}(\bfr/\bfr_0')\1_{\Rc_2}(\bfr/\cf)W(\af,\bfr;\Delta)+O(\delta_0^{1/2}\Delta^n B^n).
\end{equation*}
Since all $\beta$ in the above sum satisfy $\beta=\alpha(1+O(\Delta/\delta_0^{1/2n}))$ and that $\Delta/\delta_0^{1/2n}$ is sufficiently small, we see that no two terms appearing are associates. Therefore $(\beta)$ ranges over a set of principal prime ideals $\bfr$ with $|N(\bfr)-N(\af)|\le \Delta N(\af)$. Since $W(\af,\bfr;\Delta)=0$ unless $|\lambda_j(\af)-\lambda_j(\bfr)|\ll \Delta$, we may restrict the summation over $\beta$ such that this holds. But then by Lemma \ref{lmm:Generator}, every such ideal $\bfr$ occurs exactly once in the above sum. Therefore
\begin{equation*}
V(\alpha)=\frac{1}{\phi_K((q)/\mathfrak{c})} \sum_{\chi_{\mathfrak{f}}}\sum_{\substack{\bfr \text{ principal} \\ 1\le N(\af/\bfr)\le 1+\Delta \\ \cf|\bfr}}\tilde{\chi}_{\mathfrak{f}}(\bfr/\bfr_0')\1_{\Rc_2}(\bfr/\cf)W(\af,\bfr;\Delta)+O(\delta_0^{1/2}\Delta^n B^n).
\end{equation*}
We use characters $\xi$ of the class group $Cl_K$ to detect the condition that $\bfr$ is principal, and insert the Fourier expansion \eqref{eq:WFourier} of $W$. This gives
\begin{align}
V(\alpha)&=\frac{\Delta^{n-1}}{h_K\phi_K((q)/\mathfrak{c})}\sum_{\mathbf{m}\in\Zz^{n-1}}\hat{w}(\mathbf{m})\sum_{\chi_{\mathfrak{f}}}\sum_{\xi}\chi^{\mb}(\af)\chi^{-\mb}(\cf)\xi(\cf)\overline{\tilde{\chi}_{\mathfrak{f}}(\bfr_0/\cf)}\nonumber\\
&\qquad\times\sum_{\substack{\cf|\bfr \\ 1\le N(\af/\bfr)\le 1+\Delta}}\chi^{-\mathbf{m}}(\bfr/\cf)\xi(\bfr/\cf)\tilde{\chi}_{\mathfrak{f}}(\bfr/\cf)\1_{\Rc_2}(\bfr/\cf)+O(\delta_0^{1/2}\Delta^n B^n).\label{eq:FullExpansion}
\end{align}

By partial summation and \eqref{eq:NormalPrimeBound}, we have that if $\chi^{-\mb}\xi\tilde{\chi}_{\mathfrak{f}}$ is non-trivial (i.e. takes values not in $\{0,1\}$) and not induced by an exceptional character $\chi^*$, then there is a constant $c_0>0$ such that
\[\sum_{N(\af)/(1+\Delta)\le N(\bfr)\le N(\af)}\chi^{-\mb}(\bfr/\cf)\xi(\bfr/\cf)\tilde{\chi}_{\mathfrak{f}}(\bfr/\cf)\1_{\Rc_2}(\bfr/\cf)\ll B^n\exp(-c_0\sqrt{\log{B}})\]
uniformly for $q$, $\|\mb\|\le q^{*(\log\log{B})^2}\exp(\sqrt[5]{\log{B}})$. This implies that the total contribution to \eqref{eq:FullExpansion} from all such characters $\chi^{-\mb}\xi\tilde{\chi}_{\mathfrak{f}}$ with $\|\mb\|\ll M_0=\Delta^{-2n}\ll q^{*2n\log\log{B}}\exp(2n\sqrt[6]{\log{B}})$ is 
\[
\ll \Delta^{-1}B^n M_0^{n-1}\exp(-c_0\sqrt{\log{B}}),
\]
which is negligible. Thus we only need to consider characters with $\|\mb\|>M_0$ or when $\chi^{-\mb}\xi\tilde{\chi}_{\mathfrak{f}}$ is a finite order character induced by 1 or $\chi^*$.

As before, using the trivial bound $\hat{w}(\mb)\ll \prod_j\min(1,(m_j\Delta)^{-2})$, those characters with $\|\mb\|\ge M_0$ contribute $\ll \Delta^{-2n+2} B^n M_0^{-1}\ll \delta_0\Delta^{n}B^n$, which is negligible. We are therefore left only with the contribution from when $\chi^{-\mb}\xi\tilde{\chi}_{\mathfrak{f}}$ is induced by the trivial character $1$ or is induced by $\chi^*$.

By considering the finite part of $\chi^{-\mb}\xi\tilde{\chi}_{\mathfrak{f}}$ we see that this character can only be induced by $\chi^*$ if $\qf^*|(q)/\cf$, and in this case there is a unique choice of $\tilde{\chi}_{\mathfrak{f}}$, $\xi$ and $\mb\ll1$ such that $\xi\chi^{-\mb}\tilde{\chi}_{\mathfrak{f}}$ is %
 induced by $\chi^*$. Similarly, there is a unique choice of $\tilde{\chi}_{\mathfrak{f}}$, $\xi$ and $\mb\ll1$ such that $\xi\chi^{-\mb}\tilde{\chi}_{\mathfrak{f}}$ is %
 induced by $1$.%

Since $\1_{\Rc_2}$ is supported only on ideals coprime to $q$ (because $q<X^{\epsilon^2}$), if $\chi^{-\mb}\xi\tilde{\chi}_{\mathfrak{f}}$ is induced by $\chi^*$ then we can replace it with $\chi^*$, and if it is induced by $1$ we can replace it by 1. We note that $\hat{w}(\mb)=1+O(\Delta)$ and $\chi^{\mb}(\af/\bfr_0')=1+O(\delta_0)$ if $\mb\ll1$, and recall that $\bfr_0$ is principal so $\xi(\bfr_0)=1$.

Thus, putting the above estimates together, we find that if $\qf^*|(q)/\cf$ then
\begin{align*}
V(\alpha)&=\frac{\Delta^{n-1}}{h_K\phi_K((q)/\mathfrak{c})} \sum_{\substack{\bfr \\ 1\le N(\af/\bfr)\le 1+\Delta \\ \cf|\bfr}}\1_{\Rc_2}(\bfr/\cf)\\
&\qquad+\frac{\Delta^{n-1}\overline{\chi^*(\bfr_0/\cf)}}{h_K\phi_K((q)/\mathfrak{c})} \sum_{\substack{\bfr \\ 1\le N(\af/\bfr)\le 1+\Delta \\ \cf|\bfr}}\chi^*(\bfr/\cf)\1_{\Rc_2}(\bfr/\cf)+O(\delta_0^{1/2}\Delta^n B^n).
\end{align*}
If instead $\qf^*\nmid (q)/\cf$, then we obtain the same expression but without the second summation. This gives the result.
\end{proof}
\begin{lmm}\label{lmm:PrimeCube}
Let $\cf$ be an integral ideal of norm $O(1)$. Let $\delta_0$ and $B$ be quantities satisfying $\exp(-\sqrt[6]{\log{B}})q^{*-\log\log{B}}\le \delta_0\le \eta_2$ and $X^{1/10}\le B\le X$. Let $\Cc\subseteq\Rr^n$ be a hypercube of side length $\delta_0 B$ which contains a point $\bb_0\in\Zz^n$ such that $\|\bb_0\|\ll B$ and $\bfr_0=((\theta n)^{-n}\sum_{i=1}^n (\bb_0)_i\Ti)$ is an integral ideal which satisfies $N(\bfr_0)=B_0^n\gg B^n$ and $\cf|\bfr_0$. 

Then uniformly over all $q\ll q^{*\log\log{B}}\exp(\sqrt[6]{\log{B}})$ with $(\theta n)^n N(\cf)|q$ and over all such $\Cc,\bb_0$, we have:
\begin{itemize}
\item  If $\gcd((q),\bfr_0)\ne \cf$ then
\[\sum_{\substack{\bb\in \Cc\\ \bb\equiv \bb_0\Mod{q}}}\hspace{-0.2cm}\1_{\Rc_2}\Bigl(\frac{\bfr}{\cf}\Bigr)=0.\]
 \item If  $\gcd((q),\bfr_0)=\cf$ and $\chi^*(\bfr/\cf)=\chi^*(\bfr_0/\cf)$ for all $\bb\equiv \bb_0\Mod{q}$ then
\begin{align*}
\sum_{\substack{\bb\in \Cc\\ \bb\equiv \bb_0\Mod{q}}}\hspace{-0.2cm}\1_{\Rc_2}(\bfr/\cf)&=\frac{1}{\gamma_K\phi_K((q)/\cf)N(\cf)}\hspace{-0.5cm}\idotsint\limits_{\substack{\ab\in\Cc,\eb\in\Rc_2\\ \sum_{i=1}^{\ell'} e_i= \log{N(\af/\cf)}/\log{X}}}\hspace{-0.5cm}\frac{d e_1\dots d e_{\ell'-1}d\ab}{\log{X}\prod_{i=1}^{\ell'}e_i}+O(\delta_0^{n+1/2}B^n).\\
&+\frac{\chi^*(\bfr_0/\cf)B_0^{n(\beta^*-1)}}{\gamma_K(-\beta^*)^{\ell'}\phi_K((q)/\cf)N(\cf)^{\beta^*}}\hspace{-0.5cm}\idotsint\limits_{\substack{\ab\in\Cc,\eb\in\Rc_2\\ \sum_{i=1}^{\ell'} e_i= \log{N(\af/\cf)}/\log{X}}}\hspace{-0.5cm}\frac{d e_1\dots d e_{\ell'-1}d\ab}{\log{X}\prod_{i=1}^{\ell'}e_i}.
\end{align*}
\item If  $\gcd((q),\bfr_0)=\cf$ but $\chi^*(\bfr/\cf)\ne\chi^*(\bfr_0/\cf)$ for some $\bb\equiv \bb_0\Mod{q}$ then
\[\sum_{\substack{\bb\in \Cc\\ \bb\equiv \bb_0\Mod{q}}}\hspace{-0.2cm}\1_{\Rc_2}(\bfr/\cf)=\frac{1}{\gamma_K\phi_K((q)/\cf)N(\cf)}\hspace{-0.5cm}\idotsint\limits_{\substack{\ab\in\Cc,\eb\in\Rc_2\\ \sum_{i=1}^{\ell'} e_i= \log{N(\af/\cf)}/\log{X}}}\hspace{-0.5cm}\frac{d e_1\dots d e_{\ell'-1}d\ab}{\log{X}\prod_{i=1}^{\ell'}e_i}+O(\delta_0^{n+1/2}B^n).\]

\end{itemize}
Here $\bfr$ denotes the ideal $((\theta n)^{-n}\sum_{i=1}^n b_i\Ti)$ depending on the vector $\bb$. All the implied constants are effectively computable. 
\end{lmm}
\begin{proof}
Fundamentally this is an exercise in counting localized ideals via Hecke characters, although there are some technical complications passing conditions between the vectors $\bb$, elements of the order $\Zt$, algebraic integers $\beta$ and ideals $\bfr$.

We note that the sum is 0 if the ideal $\bfr_0/\cf$ is not coprime to $(q)$, since $\1_{\Rc_2}$ is non-zero only when all prime ideal factors have norm at least $X^{\epsilon^2}>N(q)$, and this gives the first statement. Thus we may assume $\gcd((q),\bfr_0)=\cf$.

We first detect the condition $\bb\in\Cc$ by Hecke characters. Since $\Cc$ has side length $\delta_0 B$ and contains a point $\bb_0$ with $N(\bfr_0)=B_0^n\asymp B^n$ (from the assumptions of the lemma), we have that $N(\bfr)=B_0^n+O(\delta_0 B_0^n)$ for all $\bb\in\Cc$. Here, and throughout, given $\bb\in\Zz^n$, we let $\beta=(\theta n)^{-n}\sum_{i=1}^n b_i\Ti$ and $\bfr=(\beta)$. By Lemma \ref{lmm:wSum}, choosing $A=N(\bfr)$ and $\Delta=\delta_0^n$, we have
\begin{align*}
\sum_{\substack{\bb\in\Cc\\ \bb\equiv \bb_0\Mod{q}}}\hspace{-0.4cm}\1_{\Rc_2}(\bfr/\cf)&=\frac{h_K}{\gamma_K\Delta^n B_0^n}\sum_{\af\text{ principal}}\sum_{\substack{\bb\in\Cc\\ 1\le N(\af/\bfr)\le 1+\Delta\\ \bb\equiv \bb_0\Mod{q}}}\hspace{-0.5cm}\1_{\Rc_2}(\bfr/\cf)W(\af;\bfr;\Delta)+O(\delta_0^{n+1}B^n).
\end{align*}
Here we used the fact that $\Delta=\delta_0^{n}\le \delta_0$.

Let $\mathfrak{a}=(\alpha)$ with $\alpha=(\theta n)^{-n}\sum_{i=1}^n a_i\Ti$ for some vector $\mathbf{a}$. We see that if $W(\mathfrak{a},\mathfrak{b};\Delta)\ne 0$ then $\lambda_j(\mathfrak{a})=\lambda_j(\mathfrak{b})(1+O(\Delta))$ for all $j\in\{1,\dots,n-1\}$. Since we also have the condition $N(\mathfrak{a})=N(\alpha)=N(\bfr)(1+O(\Delta))$, by Lemma \ref{lmm:Generator} there is a generator $\alpha$ of $\af$ such that $\alpha^\sigma=\beta^\sigma(1+O(\Delta))$ for all embeddings $\sigma$, and so $\ab=\bb(1+O(\Delta))$. Moreover, since $\bb\in\Cc$, a hypercube of elements of norm $\gg B^n$ of side length $\delta_0 B$, all such $\alpha$ lie within a fundamental domain for the action by the unit group of $\Oc_K$. In particular, the $\alpha$ such that $\ab$ is within $O(\Delta B)$ of $\Cc$ are in one-to-one correspondence with a set containing all the ideals $\af$ making a non-zero contribution.

If the distance from $\ab$ to the boundary of $\Cc$ is a sufficiently large multiple of $\Delta B$, then the vectors $\bb$ with $\ab=\bb(1+O(\Delta))$ are either all outside of $\Cc$ or all inside $\Cc$ depending on whether $\ab\notin\Cc$ or $\ab\in\Cc$. Since there are $O(\Delta B^n)$ vectors $\ab$ within $O(\Delta B)$ of the boundary of $\Cc$, these $\ab$ contribute a total
\[\ll \frac{h_K \Delta B^n}{\gamma_K\Delta^n B_0^n}\sup_{ \|\ab\|\ll B}\sum_{\bb=\ab+O(\Delta B)}1\ll \Delta B^n\ll \delta_0^{n+1}B^n.\]
Thus we can restrict to $\ab\in\Cc'$, a hypercube inside $\Cc$ with all points at least a certain multiple of $\Delta B$ from the boundary of $\Cc$. This leaves us with
\begin{equation}
\frac{h_K}{\gamma_K\Delta^n B_0^n}\sum_{\substack{\ab\in\Cc' \\ \alpha \in \Oc_K}}\sum_{\substack{\bb=\ab(1+O(\Delta))\\ \bb\equiv \bb_0\Mod{q} \\ 1\le N(\af/\bfr)\le 1+\Delta}}\hspace{-0.5cm}\1_{\Rc_2}(\bfr/\cf)W(\af;\bfr;\Delta)\label{eq:FirstExpansion}
\end{equation}
We can relax the condition $\bb=\ab(1+O(\Delta))$ to $|\bb-\ab|\le \Delta\|\ab\|/\delta_0^{1/2n}$ since by our above discussion the additional terms make no contribution.

We now consider the condition $\bb\equiv\bb_0\Mod{q}$. We see that $\bb\equiv \bb_0\Mod{q}$ is equivalent to $\beta=(\theta n)^{-n}\sum_{i=1}^n b_i\Ti\in\Oc_K$ and $\beta\equiv\beta_0'\Mod{q}$ over $\Oc_K$ for one of $[(\theta n)^{-n}\Zt:\Oc_K]\ll 1$ different algebraic integers $\beta_0'$. (Here we are using the fact that $(\theta n)^n|q$ and $\bfr_0$ is integral.) We may choose $\beta_0'$ such that $\beta_0'=(\theta n)^{-n}\sum_{i=1}^n(b_0')_i\Ti$ for some vector $\bb_0'\in\Cc$. We consider each such $\beta_0'$ separately. By Lemma \ref{lmm:AlgIntSum}, the inner sum depends on whether $\qf^*|(q)/\cf$ or not. We argue now in the case when this happens; if $\qf^*\nmid (q)/\cf$ the argument is identical with all terms involving $\chi^*$ simply omitted. By Lemma \ref{lmm:AlgIntSum}, we find that
\begin{align*}
\sum_{\substack{\beta\in\mathcal{O}_K\\ \beta\equiv \beta_0'\Mod{q}\\ 1\le N(\mathfrak{a}/\mathfrak{b})\le 1+\Delta}}&\1_{\Rc_2}(\bfr/\cf)W(\af,\bfr;\Delta)=O(\delta_0^{1/2}\Delta^n B^n)\\
&+\frac{\Delta^{n-1}}{h_k\phi_K((q)/\cf)}\sum_{\substack{\bfr\\ N(\af)/(1+\Delta)\le N(\bfr)\le N(\af)\\ \cf|\bfr}}\1_{\Rc_2}(\bfr/\cf)\Bigl(1+\chi^*(\bfr/\cf)\overline{\chi^*(\bfr_0/\cf)}\Bigr).
\end{align*}
We can estimate the inner sum of \eqref{eq:FullExpansion} by partial summation and Lemma \ref{lmm:TwistedPrimeIdeal}, giving

\begin{align*}
\sum_{\substack{N(\af)/(1+\Delta)\le N(\bfr)\le N(\af) \\ \cf|\bfr}}\hspace{-0.5cm}\1_{\Rc_2}(\bfr/\cf)&=\idotsint\limits_{\eb\in\Rc_2\cap I(\af)}\frac{X^{\sum_{i=1}^{\ell'}e_i}de_1\dots de_{\ell'}}{\prod_{i=1}^{\ell'}e_i}\\
&\qquad\qquad +O(B^n\exp(-c_0\sqrt{\log{B}})),\\
\sum_{\substack{N(\af)/(1+\Delta)\le N(\bfr)\le N(\af) \\ \cf|\bfr}}\hspace{-0.5cm}\chi^*(\bfr/\cf)\1_{\Rc_2}(\bfr/\cf)&=\idotsint\limits_{\substack{\eb\in\Rc_2\cap I(\af)}} \frac{X^{\beta^*\sum_{i=1}^{\ell'}e_i}d e_1\dots d e_{\ell'}}{(-\beta^*)^{\ell'}\prod_{i=1}^{\ell'}e_i}\\
&\qquad\qquad +O(B^n\exp(-c_0\sqrt{\log{B}})),
\end{align*}
where
\[I(\af)=\Bigl\{\eb\in\Rr^{\ell'}:\frac{\log{N(\af/\cf)}}{(1+\Delta)\log{X}}\le \sum_{i=1}^{\ell'}e_i\le \frac{\log{N(\af/\cf)}}{\log{X}}\Bigr\}.\]
We note that $\hat{w}(\mb)=1+O(\Delta)$ and $\chi^{\mb}(\af/\bfr_0')=1+O(\delta_0)$ if $\mb\ll1$, and recall that $\bfr_0$ is principal so $\xi(\bfr_0)=1$. Thus \eqref{eq:FirstExpansion} simplifies to give
\begin{align*}
&\sum_{\substack{\bb\in \Cc\\ \beta\equiv \beta_0'\Mod{q}}}\hspace{-0.2cm}\1_{\Rc_2}\Bigl(\frac{\bfr}{\cf}\Bigr)=\frac{1}{\gamma_K\Delta B_0^n\phi_K((q)/\mathfrak{c})}\sum_{\substack{\ab\in\Cc'\\ \alpha\in\Oc_K}}\idotsint\limits_{\eb\in\Rc_2\cap I(\af)} \frac{X^{\sum_{i=1}^{\ell'}e_i}d e_1\dots d e_{\ell'}}{\prod_{i=1}^{\ell'}e_i}\\
&+\frac{1}{\gamma_K\Delta B_0^n\phi_K((q)/\mathfrak{c})}\sum_{\substack{\ab\in\Cc'\\ \alpha\in\Oc_K}}\frac{\chi^*(\bfr_0'/\cf)}{(-\beta^*)^{\ell'}}\idotsint\limits_{\eb\in\Rc_2\cap I(\af)} \frac{X^{\beta^*\sum_{i=1}^{\ell'}e_i}d e_1\dots d e_{\ell'}}{\prod_{i=1}^{\ell'}e_i}+O(\delta_0^{n+1/2}B^n).
\end{align*}
The condition $\alpha\in\Oc_K$ is equivalent to a congruence condition on $\ab\Mod{(\theta n)^n}$ which holds for a proportion $r_K^{-1}=[(\theta n)^{-n}\Zt:\Oc_K]^{-1}$ of the vectors $\ab$ in a cube of side length $(\theta n)^n$. Using the fact that $X^{\sum_{i=1}^{\ell'}e_i}=(1+O(\delta_0\log{X}))B_0^n/N(\cf)$, we see that partial summation shows the right hand side above is
\begin{align*}
\frac{1}{\gamma_K r_K N(\cf)\phi_K((q)/\cf)}\Bigl(1+\frac{\chi^*(\bfr_0'/\cf)}{(-\beta^*)^{\ell'}N(\cf)^{\beta^*}B_0^{n-n\beta^*}}\Bigr)\int_{\ab\in\Cc'}c_{\Rc_2}(N(\af)/N(\cf))d\ab+O(\delta_0^{n+1/2}B^n).
\end{align*}
We now sum over the $r_K$ values of $\beta_0'$. (We recall these are the elements of $\Oc_K/q\Oc_K$ of the form $\beta_0'=(\theta n)^{-n}\sum_{i=1}^n(\bb_0')_i\Ti$ with $\bb_0'\equiv\bb_0\Mod{q}$.) We see that the terms involving $\chi^*(\bfr_0'/\cf)$ cancel unless all $\bb\equiv\bb_0\Mod{q}$ have $\chi^*(\bfr/\cf)=\chi^*(\bfr_0/\cf)$ since $\chi^*$ is primitive. The rest of the expression is independent of the $\beta_0'$. Thus, if $\chi^*(\bfr/\cf)=\chi^*(\bfr_0/\cf)$ for all $\bb\equiv\bb_0\Mod{q}$ we have
\begin{align*}
\sum_{\substack{\bb\in \Cc\\ \bb\equiv \bb_0\Mod{q}}}\hspace{-0.2cm}\1_{\Rc_2}\Bigl(\frac{\bfr}{\cf}\Bigr)&=\frac{1}{\gamma_K\phi_K((q)/\cf)N(\cf)}\int_{\ab\in\Cc'}c_{\Rc_2}(N(\af)/N(\cf))d\ab+O(\delta_0^{n+1/2}B^n)\\
&+\frac{\chi^*(\bfr_0/\cf)}{\gamma_K(-\beta^*)^{\ell'}\phi_K((q)/\cf)N(\cf)^{\beta^*}B_0^{n-n\beta^*}}\int_{\ab\in\Cc'}c_{\Rc_2}(N(\af)/N(\cf))d\ab,
\end{align*}
and if $\chi^*$ is not constant over these $\bfr$ then we have the same expression with the final term removed.

Finally, extending the integration over $\ab$ from $\Cc'$ to $\Cc$ introduces an error of size $O(\Delta B^n)$, since the integrand is of size $O(1)$ and this increases the volume of the region of integration by $O(\Delta B^n)$. This then gives the result.
\end{proof}

\begin{lmm}\label{lmm:1Sum}
Let $\df$ be a square-free ideal with $\gcd(\df,(q))|\bfr_0$ and let $(\theta n)^{2n}|q$. Then we have
\begin{equation*}
\sum_{\substack{\bb\in\Cc\\ \bb\equiv \bb_0\Mod{q}\\ \df|\bfr}}1=\frac{\vol{\Cc}}{N(\lcm((q),\df))}+O(B^{n-1} N(\df)^{n-1}q^{n(n-1)}).
\end{equation*}
\end{lmm}
\begin{proof}
Let $Q_1=N(\lcm(\df,(q)))\le q^n N(\df)$. Splitting into residue classes $\Mod{Q_1}$, we have that
\[\sum_{\substack{\bb\in\Cc\\ \bb\equiv \bb_0\Mod{q}\\ \df|\bfr}}1=\sum_{\substack{\ab\Mod{Q_1}\\ \ab\equiv \bb_0\Mod{q} \\ \df|\af}}\,\sum_{\substack{\bb\in\Cc\\ \bb\equiv \ab\Mod {Q_1}}}1.\]
Here we remind the reader again that $\af$ is the ideal generated by $\alpha=(\theta n)^{-n}\sum_{i=1}^na_i\Ti$. Since $(\theta n)^{2n}|q$ the condition $\ab\equiv \bb\Mod{q}$ is equivalent to $\alpha\equiv \beta'\Mod{q}$ over $\Oc_K$ for one of $[(\theta n)^{-n}\Zt:\Oc_K]$ different $\beta'$, all of which satisfy $\beta'\equiv \beta\Mod{q'}$ over $\Oc_K$ where $q'=q/(\theta n)^n$. Since $q'$ has the same square-free part as $q$ and $\mathfrak{d}$ is square-free, we then see that the outer sum has no terms unless $\gcd(\df,(q))|\bfr_0$, in which case there are $Q_1^{n-1}$ terms in the outer sum. The inner sum is $(\vol{\Cc})/Q_1^n+O(\delta_0^{n-1} B^{n-1})$. 
\end{proof}

\begin{lmm}\label{lmm:SimpleSieve}
If $\gcd(\bfr_0,(q))=\cf$ then
\[
\sum_{\substack{\df<R\\ \gcd(\df\cf,(q))|\bfr_0}}\frac{\lambda_{\df}}{\lcm(N(\mathfrak{d c}),N((q)))}=\frac{1}{\gamma_K\phi_K((q)/\cf)N(\cf)}+O(\delta_0),
\]
and if $\gcd(\bfr_0,(q))\ne \cf$ then the left hand side is $O(\delta_0)$.
\end{lmm}
\begin{proof}
We estimate this in an analogous way to Lemma \ref{lmm:SieveSum}. We let $(q)=\cf\qf_1\qf_2$, with $\gcd(\qf_2,\bfr_0/\cf)=1$ and $\qf_1$ composed only of primes which divide $\bfr_0/\cf$. Since $\lambda_{\df}=0$ if $\df$ is not square-free and $\cf|\bfr_0$, we may replace the condition $\gcd(\df\cf,(q))|\bfr_0$ with $\gcd(\df,\qf_2)=1$. The argument used to prove Lemma \ref{lmm:SieveSum} then gives
\begin{align*}
\sum_{\substack{N(\df)<R\\ \gcd(\qf_2,\df)=1}}\frac{\mu(\df)\log\frac{R}{N(\df)}}{N(\lcm(\mathfrak{d c},\cf\qf_1\qf_2))}&=\frac{1}{2\pi i N(\qf_2\cf)}\int_{1-i\infty}^{1+i\infty}\frac{R^s g(1+s)}{s^2\zeta_K(1+s)}d s\\
&=\frac{1}{N(\qf_2\cf)}\Res_{s=0}\frac{R^s g(1+s)}{s^2\zeta_K(1+s)}+O\Bigl(\exp(-c\sqrt{\log{R}})\Bigr),
\end{align*}
where
\[g(1+s)=\prod_{\pf|\qf_1}\frac{N(\pf)^{-1}-N(\pf)^{-1-s}}{1-N(\pf)^{-1-s}}\prod_{\pf|\qf_2}\frac{1}{1-N(\pf)^{-1-s}}.\]
We see that the residue is 0 if $\qf_1\ne (1)$, whereas if $\qf_1=(1)$ (so $\gcd(\bfr_0,(q))=\cf$) the residue is $\gamma_K^{-1} N(\qf_2)/\phi_K(\qf_2)$. Thus, if $\gcd(\bfr_0,(q))=\cf$ then
\[
\sum_{\substack{\df<R\\ \gcd(\df\cf,(q))|\bfr_0}}\frac{\lambda_{\df}}{\lcm(N(\mathfrak{d c}),N((q)))}=\frac{1}{\gamma_K\phi_K((q)/\cf)N(\cf)}+O(\delta_0),
\]
and if $\gcd(\bfr_0,(q))\ne \cf$ then the left hand side is $O(\delta_0)$.
\end{proof}

\begin{prpstn}\label{prpstn:SieveCube}
Let $\cf,\delta_0,B,\Cc,\bb_0$ be as in Lemma \ref{lmm:PrimeCube}. Then uniformly over all $q\ll q^{*\log\log{B}}\exp(\sqrt[6]{\log{B}})$ with $N(\cf)(\theta n)^{n}|q$ and over all such $\Cc,\bb_0$ we have
\[\sum_{\substack{\bb\in \Cc\\ \bb\equiv \bb_0\Mod{q}}}\1_{\Rc_2}(\bfr/\cf)=\sum_{\substack{\bb\in \Cc\\ \bb\equiv \bb_0\Mod{q}}}\tilde{\1}_{\Rc_2}(\bfr/\cf)+O(\delta_0^{n+1/2}B^n).\]
Here $\bfr$ denotes the ideal generated by $(\theta n)^{-n}\sum_{i=1}^n b_i\Ti$. The implied constant is effectively computable.
\end{prpstn}
\begin{proof}
If the result holds for any residue class $\bb_0\Mod{N(\cf)(\theta n)^n q}$ instead of any residue class $\Mod{q}$, then (after perhaps adjusting the implied constants) by summing over all $\mathbf{b}_0$ in a given residue class $\Mod{q}$ we see that the result also holds for any residue class $\Mod{q}$. Thus we may assume that $N(\cf)^2(\theta n)^{2n}|q$.%

We will evaluate the sum on the right hand side, which is a standard sieve quantity and show that it gives the same result as Lemma \ref{lmm:PrimeCube} gives for the left hand side.

Substituting the definition \eqref{eq:TildeDef} of $\tilde{\1}_{\Rc_2}$ and swapping the order of summation, we have
\begin{align*}
\sum_{\substack{\bb\in\Cc\\ \bb\equiv \bb'_0\Mod{q}}}\tilde{1}_{\Rc_2}(\bfr/\cf)&=\sum_{N(\df)<R}\lambda_{\df}\sum_{\substack{\bb\in\Cc\\ \bb\equiv\bb'_0\Mod{q}\\ \df|\bfr/\cf}}c_{\Rc_2}(N(\mathfrak{b/\cf}))\Bigl(1+\frac{\chi^*(\bfr/\cf)}{(-\beta^*)^{\ell'}N(\bfr/\cf)^{1-\beta^*}}\Bigr).
\end{align*}
We split $\mathcal{C}$ into $O(\delta_0^{-n})$ disjoint smaller hypercubes $\Cc'$ of side length $\delta_0^2 B$. %
Since $c_{\Rc_2}(N(\bfr/\cf))$ satisfies the Lipschitz bound of Lemma \ref{lmm:LipschitzBound}, we can replace $c_{\Rc_2}(N(\bfr/\cf))$ with
\[c_{\Rc_2}(\Cc'):=\frac{1}{\vol{\Cc'}}\idotsint\limits_{\substack{\ab\in\Cc',\eb\in\Rc_2\\ \sum_{i=1}^{\ell'} e_i\in \mathcal{I}_{N(\af/\cf)}}}\hspace{-0.5cm}\frac{d e_1\dots d e_{\ell'}d\ab}{\log{X}\prod_{i=1}^{\ell'}e_i}\]
on the hypercube $\Cc'$, at the cost of an error of total size
\[\ll\sum_{N(\df)<R}\log{X}\sum_{\substack{\bb\in\Cc\\ \bb\equiv \bb'_0\Mod{q}\\ \df|\bfr/\cf}}\frac{\delta_0}{\eta_2^{1/2}}\ll \sum_{N(\df)<R}\frac{\delta_0 \vol{\Cc}\log{X} }{\eta_2^{1/2} N(\df)}\ll \delta_0^{1/2} \vol{\Cc}.\]
Similarly, we can replace $N(\bfr/\cf)$ with $N(\bfr_0/\cf)$ at the cost of an error $O(\delta_0^{1/2} \vol{\Cc})$. 

Thus we are left to evaluate
\begin{equation}
\sum_{\Cc'}c_{\Rc_2}(\Cc')\sum_{N(\df)<R}\lambda_{\df}\sum_{\substack{\bb\in\Cc'\\ \bb\equiv \bb'_0\Mod{q}\\ \df|\bfr/\cf}}\Bigl(1+\frac{\chi^*(\bfr/\cf)}{(-\beta^*)^{\ell'}N(\bfr_0/\cf)^{1-\beta^*}}\Bigr).\label{eq:SmallCubeTarget}
\end{equation}
Recall $\lambda_{\df}$ is supported on square-free $\df$. For such $\df$, by Lemma \ref{lmm:1Sum}, we see that provided $\gcd(\df\cf,(q))|\bfr_0$, we have
\begin{equation}
\sum_{\substack{\bb\in\Cc'\\ \bb\equiv \bb'_0\Mod{q}\\ \df|\bfr/\cf}}1=\frac{\vol{\Cc'}}{N(\lcm((q),\mathfrak{d c}))}+O(B^{n-1}R^{n-1}),\label{eq:ConstantCubeTerms}
\end{equation}
and otherwise the sum is 0. The $O(B^{n-1}R^{n-1})$ error term makes a total contribution $O(B^{n-1}R^{n}\log{X})$, which is negligible. 

We now consider the terms involving $\chi^*$. We have that $\chi^*(\bfr/\cf)=0$ if $\gcd(\qf^*,\bfr/\cf)\ne 1$, and so there are no contributions from terms with $\gcd(\df,\qf^*)\ne 1$. By splitting the sum into residue classes $\Mod{Q_2}$ where $Q_2=N(\lcm(\mathfrak{d c q^*},(q) )$, we see that
\[
\sum_{\substack{\bb\in\Cc'\\ \bb\equiv \bb'_0\Mod{q}\\ \df|\bfr/\cf}}\chi^*(\bfr/\cf)=\sum_{\substack{\ab\Mod{Q_2}\\ \ab\equiv\bb'_0\Mod{q} \\ \df|\af/\cf}}\chi^*(\af/\cf)\sum_{\substack{\bb\in\Cc'\\ \bb\equiv \ab\Mod {Q_2}}}1.
\]
By Lemma \ref{lmm:1Sum}, the inner sum is $\vol{\Cc'}/Q_2^n+O(\delta_0^{2n-2}B^{n-1})$, and this error term makes a negligible total contribution. The remaining sum of $\chi^*(\af/\cf)$ is then seen to cancel cancel completely unless $\chi^*(\bfr)=\chi^*(\bfr_0')$ for all $\bb\equiv\bb_0'\Mod{q}$. If this is the case, then by Lemma \ref{lmm:1Sum} we have
\begin{equation}
\sum_{\substack{\bb\in\Cc'\\ \bb\equiv \bb'_0\Mod{q}\\ \df|\bfr/\cf}}\chi^*(\bfr/\cf)=\frac{\chi^*(\bfr'_0/\cf)\vol{\Cc'}}{N(\lcm((q),\mathfrak{d c}))}+O(B^{n-1}R^{n-1}),\label{eq:ChiCubeTerms}
\end{equation}
and otherwise the sum is simply $O(B^{n-1}R^{n-1})$. Again, these $O(B^{n-1}R^{n-1})$ error terms make a total contribution $O(B^{n-1}R^{n}\log{X})$, which is negligible. 

Thus, to estimate \eqref{eq:SmallCubeTarget}, we see from \eqref{eq:ConstantCubeTerms} and \eqref{eq:ChiCubeTerms} that it suffices to estimate
\[
\sum_{\Cc'}c_{\Rc_2}(\Cc')\vol{\mathcal{C}'}\sum_{\substack{\df<R\\ \gcd(\df\cf,(q))|\bfr_0}}\frac{\lambda_{\df}}{\lcm(N(\mathfrak{d c}),N((q)))}.
\]
By Lemma \ref{lmm:SimpleSieve} we have that the inner sum is
\[
\frac{1}{\gamma_K\phi_K((q)/\cf)N(\cf)}+O(\delta_0)
\]
provided $\gcd(\bfr_0,(q))=\cf$. Finally, we note that
\[
\sum_{\Cc'}c_{\Rc_2}(\Cc')\vol\Cc'=c_{\Rc_2}(\Cc)\vol{\Cc}.
\]
Putting all these estimates together, we obtain an expression for
\[
\sum_{\substack{\bb\in \Cc\\ \bb\equiv \bb_0\Mod{q}}}\tilde{\1}_{\Rc_2}(\bfr/\cf)
\]
which is identical to the estimates of Lemma \ref{lmm:PrimeCube}. This gives the result.
\end{proof}
\section{Some lattice estimates}\label{sec:Lattice}
In this section we collect some information about the structure of ideals $\bfr\in\Ac'_{\af}$, before we finishing our Type II estimate in the next section. It is here we exploit some of the simple structure from the fact $K=\Qt$.

If $\af=(\alpha)$ is principal, then $\bfr\in\Ac'_{\af}$ if $\bfr=(\beta)$ with $(\beta\alpha)=(\sum_{i=1}^{n-k}x_i\Ti)$ for some $\xb\in\Zz^n$ with $x_i\in [X_i,X_i+\eta_1 X_i]$ for $1\le i\le n-k$ and $x_i=0$ for $n-k<i\le n$ and $\xb\equiv\xb_0\Mod{q^*}$ and $N(\sum_{i=1}^{n-k}x_i\Ti)\in [X_0^n,X_0^n+\eta_2X_0^n]$.

Since $\Zt$ is an order in $\Oc_K$ of finite index dividing $(\theta n)^n$, any principal ideal $\bfr$ has a unique representation as $(\beta)$ with $\beta=(\theta n)^{-n}\sum_{i=1}^n b_i\Ti$ and $\bb\in\Zz^n\cap\mathcal{F}$ for a fundamental domain $\mathcal{F}$ by the action of the group of units $\mathcal{U}_K$, with $\bb$ satisfying some integral linear congruence conditions $\mathbf{L}(\bb)\equiv\mathbf{0}\Mod{(\theta n)^n}$. We have
\[\Bigl(\sum_{i=1}^{n}b_i\Ti\Bigr)\Bigl(\sum_{i=1}^{n}a_i \Ti\Bigr)=\Bigl(\sum_{i=1}^{n}c_i \Ti\Bigr)\]
with
\[c_j=\Bigl(\sum_{i=1}^{j} b_{j-i+1}a_i+\theta \sum_{i=j+1}^{n}b_{n+j-i+1}a_i\Bigr)=T^{n-j}(\rev{\bb})\cdot \ab,\]
where $\cdot$ is the usual Euclidean dot product on $\Rr^n$, $\rev{\vb}$ indicates the reverse of the coordinates of $\vb$ (i.e. $\rev{v}_j=v_{n+1-j}$) and $T^{i}$ indicates the $i^{th}$ iterate of the linear map $T:\Rr^n\rightarrow\Rr^n$ given by
\begin{align*}
T(\vb)_j&=\begin{cases}v_{j+1},\qquad &j<n,\\
\theta v_{1},&j=n.\end{cases}
\end{align*}
We let $\diamond$ denote the above operation, so that $\mathbf{c}=\bb\diamond\ab$. We note that
\[
N(\vb)=\det\Bigl(T^0(\vb)\,|\,T(\vb)\,|\,\dots\,|\,T^{n-1}(\vb)\Bigr)
\]
In particular, if $\vb\ne 0$ then $T^j(\vb)$ are linearly independent for $0\le j<n$.

Thus, there is a bijection between pairs of principal ideals $\af,\bfr$ with $\af\bfr/N(\cf)\in\Ac'$, and vectors $\ab\in\Zz^n\cap\mathcal{F},\,\bb\in\Zz^n$ (for any choice of fundamental domain $\mathcal{F}$ for the action of the unit group $\mathcal{O}_K^*$) with $\mathbf{L}(\ab)\equiv\mathbf{L}(\bb)\equiv\mathbf{0}\Mod{(\theta n)^n}$ and with $\ab\diamond\bb\in \Rc_X$, where $\Rc_X$ is given by
\begin{align}
\Rc_X=\Bigl\{\xb\in\Rr^n:x_i\in [X_i',X_i'+\eta_1 X_i']&\text{ for }i\le n-k,\,x_i=0\text{ for }i>n-k,\nonumber\\
&\textstyle N(\sum_{i=1}^n x_i\Ti)\in[ X_0'^n,X_0'^n+\eta_2 X_0'^n]\Bigr\}.\label{eq:RxDef}
\end{align}
Here $X_i'=(\theta n)^{2n} N(\cf)X_i$, which still satisfy $X_i'\asymp_\cf X$. We see that, given $\ab\in\Zz^n$, the conditions $(\bb\diamond\ab)_j=0$ force $\bb$ to satisfy $k$ integral linear equations, and hence lie in a sublattice of $\Zz^n$. With this in mind, we define the lattices
\begin{align}
\Lambda_{\vb}&=\{\xb\in\Zz^n:(\xb\diamond \vb)_i=0,\,n-k< i\le n\}\nonumber\\
&=\{\xb\in\Zz^n:\xb\cdot T^i(\rev{\vb})=0,\,0\le i\le k-1\},\nonumber\\
\Lambda_{\vb_1,\vb_2}&=\{\xb\in\Zz^n:(\xb\diamond \vb_1)_i=(\xb\diamond \vb_2)_i=0,\,n-k< i\le n\}\nonumber\\
&=\{\xb\in\Zz^n:\xb\cdot T^i(\rev{\vb_1})=\xb\cdot T^i(\rev{\vb_2})=0,\,0\le i\le k-1\}.\label{eq:LambdaDef}
\end{align}
We first establish some basic properties of these lattices.
\begin{lmm}\label{lmm:Latticedets}
Let $\vb$, $\vb_1$, $\vb_2\in\Zz^n\backslash\{\mathbf{0}\}$. Let $\wedge(\vb)\in\Zz^{\binom{n}{k}}$ be the vector of determinants of $k\times k$ submatrices of the $k\times n$ matrix formed by the k vectors $T^0(\vb),\dots,T^{k-1}(\vb)$. Similarly, let $\wedge(\vb_1,\vb_2)\in\Zz^{\binom{n}{2k}}$ be the vector of determinants of the $2k\times 2k$ submatrices of the $2k\times n$ matrix formed of the $2k$ vectors $T^0(\vb_1),\dots,T^{k-1}(\vb_1)$ and $T^0(\vb_2),\dots,T^{k-1}(\vb_2)$. Finally, let $D_{\vb}$ be the largest integer $D$ such that $\wedge(\vb)\equiv \mathbf{0}\Mod{D}$, and $D_{\vb_1,\vb_2}$ be the largest integer $D'$ such that $\wedge(\vb_1,\vb_2)\equiv \mathbf{0}\Mod{D'}$.
Then we have
\begin{align*}
\det(\Lambda_{\vb})&=\frac{\|\wedge(\vb)\|}{D_\vb},\\
\det(\Lambda_{\vb_1,\vb_2})&=\frac{\|\wedge(\vb_1,\vb_2)\|}{D_{\vb_1,\vb_2}}\quad\text{if }\wedge(\bb_1,\bb_2)\ne \mathbf{0}.
\end{align*}
\end{lmm}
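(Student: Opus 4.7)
The plan is to reduce the lemma to the following standard fact from the geometry of numbers: for any integer matrix $M$ of size $r\times n$ and rank $r$, the sublattice $\Lambda=\{\mathbf{x}\in\mathbb{Z}^n:M\mathbf{x}=\mathbf{0}\}$ has rank $n-r$ and satisfies $\det\Lambda=\|\wedge M\|/D_M$, where $\wedge M\in\mathbb{Z}^{\binom{n}{r}}$ is the vector of $r\times r$ minors of $M$ and $D_M$ their greatest common divisor. Both parts of the lemma then follow immediately by applying this identity to the constraint matrix whose rows are the linear functionals defining $\Lambda_{\mathbf{v}}$ (resp.\ $\Lambda_{\mathbf{v}_1,\mathbf{v}_2}$); the $r\times r$ minors of this matrix are precisely the coordinates of $\wedge(\mathbf{v})$ (resp.\ $\wedge(\mathbf{v}_1,\mathbf{v}_2)$) and their gcd is $D_{\mathbf{v}}$ (resp.\ $D_{\mathbf{v}_1,\mathbf{v}_2}$).

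To prove the general identity I would use Smith normal form: write $M=U(D_0\mid 0)V$ with $U\in GL_r(\mathbb{Z})$, $V\in GL_n(\mathbb{Z})$ and $D_0=\operatorname{diag}(d_1,\ldots,d_r)$. A direct Cauchy--Binet computation shows that every $r\times r$ minor of $M$ equals $\pm d_1\cdots d_r$ times the corresponding minor of the top $r$ rows of $V$; Laplace expansion of $\det V=\pm 1$ along these rows shows the latter minors have gcd $1$, giving $D_M=d_1\cdots d_r$ and $\|\wedge M\|=D_M\cdot\|\wedge V_{[r],\bullet}\|$. On the other hand $\Lambda$ is identified with the $\mathbb{Z}$-span of the last $n-r$ columns of $V^{-1}$, and the Jacobi identity $\det(V^{-1}_{I,\{r+1,\ldots,n\}})=\pm\det(V_{[r],I^c})$ combined with Cauchy--Binet applied to this basis matrix gives $\det\Lambda=\|\wedge V_{[r],\bullet}\|$, from which the identity follows.

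For the application the only remaining point is the rank hypothesis on the constraint matrix. In the single-vector case, $\mathbf{v}\ne\mathbf{0}$ makes the element $\beta_{\mathbf{v}}=\sum_i v_i\omega^{i-1}$ of $K$ nonzero, so the multiplication-by-$\beta_{\mathbf{v}}$ map on $K$ (in the basis $1,\omega,\ldots,\omega^{n-1}$) is invertible; the $n$ rows of its matrix are, up to reindexing, the vectors $T^i(\rev{\mathbf{v}})$ for $0\le i<n$, and any $k$ of them are therefore linearly independent. In the two-vector case the rank hypothesis is exactly the assumption $\wedge(\mathbf{v}_1,\mathbf{v}_2)\ne\mathbf{0}$ in the statement. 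The main care required is simply the sign bookkeeping in Cauchy--Binet and the Jacobi identity, which is routine and presents no genuine obstacle.
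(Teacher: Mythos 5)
Your proof is correct, and it is genuinely self-contained where the paper's is not. The paper's argument opens by invoking a lemma of Heath-Brown on lattice duality (that $\det\Lambda=\det\Lambda^*$ with $\Lambda^*=\mathbb{Z}^n\cap\operatorname{span}_{\mathbb{Q}}(\mathbf{v}_1,\dots,\mathbf{v}_r)$), then carries out an explicit prime-by-prime reduction procedure to show that the $\mathbb{Z}$-span $\tilde\Lambda$ of the constraint vectors has index $D_M$ in $\Lambda^*$, and finally identifies $\det\tilde\Lambda$ with the Euclidean norm of the exterior product by choosing an orthonormal basis. You instead apply Smith normal form once to the constraint matrix and read everything off: Cauchy--Binet gives $\|\wedge M\|=d_1\cdots d_r\,\|\wedge V_{[r],\bullet}\|$ and $D_M=d_1\cdots d_r$ (via the gcd-one observation from Laplace expansion of $\det V$), while the Jacobi complementary-minor identity plus Cauchy--Binet on the kernel basis gives $\det\Lambda=\|\wedge V_{[r],\bullet}\|$. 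This replaces the paper's external citation and hand-rolled reduction by standard matrix identities, at the mild cost of needing the Jacobi identity; both routes hinge on the same observation that the gcd of maximal minors measures the index of the row $\mathbb{Z}$-span inside its saturation. Your rank check — that invertibility of the multiplication-by-$\beta_{\mathbf{v}}$ matrix forces any $k$ of the rows $T^i(\rev{\mathbf{v}})$ to be independent, and that $\wedge(\mathbf{v}_1,\mathbf{v}_2)\ne\mathbf{0}$ is exactly the rank condition in the two-vector case — supplies the hypothesis the paper leaves implicit, and is what makes the reduction to the general matrix identity legitimate.
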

\begin{proof}
Let $\vb_1,\dots,\vb_r$ be linearly independent vectors in $\Zz^n$, and let $\Lambda=\{\xb\in\Zz^n:\xb\cdot \vb_1=\dots=\xb\cdot\vb_r=0\}$. By \cite[Lemma 1]{HeathBrownDiophantine}, $\det{\Lambda}=\det{\Lambda^*}$ where $\Lambda^*=\{\xb\in\Zz^n:\xb=\sum_{i=1}^r c_i\vb_i,\,c_i\in\Qq\}$.

Let $D(\vb_1,\dots,\vb_r)$ be the largest integer such that the determinant of all $r\times r$ submatrices of the $n\times r$ matrix formed with linearly independent columns $\vb_1,\dots,\vb_r\in\Zz^n$ vanish $\mod{D(\vb_1,\dots,\vb_r)}$. (I.e. the largest integer $D$ such that $\vb_1,\dots,\vb_r$ are linearly dependent $\Mod{D}$.) We define an reduction procedure as follows. Given $\{\xb_1,\dots,\xb_r\}\in (\Zz^n)^r$ with $D(\xb_1,\dots,\xb_r)\ne 1$ we choose (arbitrarily) a prime $p|D(\xb_1,\dots,\xb_r)$. By definition of $D(\cdot)$, this means that there are constants $c_1,\dots,c_r$ at least one of which is 1, such that $\sum_{i=1}^r c_i\xb_i\equiv \mathbf{0}\Mod{p}$. We choose (arbitrarily) an index $j$ such that $c_j=1$ and replace $\xb_j$ with $\sum_{i=1}^r c_i\xb_i/p\in\Zz^n$ to produce a new set of vectors $(\xb_1',\dots,\xb_r')$, and we see that we must have $D(\xb'_1,\dots,\xb'_r)=D(\xb_1,\dots,\xb_r)/p$. By starting with $\{\vb_1,\dots,\vb_r\}$ and repeatedly performing this reduction we arrive at a $\Zz$-basis $\zb_1,\dots,\zb_r\in\Zz^n$ for $\Lambda^*$. (This process clearly terminates as $D(\xb_1,\dots,\xb_r)$ decreases at each stage, and the resulting set is a basis since $D(\zb_1,\dots,\zb_r)=1$, so integral vectors in the $\Qq$-span of $\zb_1,\dots,\zb_r$ lie in the $\Zz$-span of $\zb_1,\dots,\zb_r$, and the $\Qq$-span is clearly the whole lattice.) Moreover, we see the $\Zz$-span of $\vb_1,\dots,\vb_r$ is a lattice $\tilde{\Lambda}$ which is an index $D(\vb_1,\dots,\vb_r)$ sublattice of $\Lambda^*$.

Thus $\det{\Lambda}=\det{\Lambda^*}=\det{\tilde{\Lambda}}/D(\vb_1,\dots,\vb_r)$. But $\det{\tilde{\Lambda}}$ is simply the volume of the $r$-dimensional fundamental volume of $\tilde{\Lambda}$. If $\eb_{r+1},\dots,\eb_{n}\in\Rr^n$ are orthonormal vectors orthogonal to $\vb_1,\dots,\vb_r$, then $\det{\tilde{\Lambda}}$ is given by the determinant of the $n\times n$ matrix with columns $\vb_1,\dots,\vb_r,\eb_{r+1},\dots,\eb_n$. This is then seen to be the Euclidean norm of the exterior product of $\vb_1,\dots,\vb_r$, (that is, the vector of all determinants of the $r\times r$ submatrices of the $r\times n$ matrix with columns $\vb_1,\dots,\vb_r$) since both quantities are independent of a choice of orthonormal basis of $\Rr^n$ and agree on the orthonormal basis $\{\eb_1,\dots,\eb_n\}$ extending $\eb_{r+1},\dots,\eb_n$.

Applying the above argument to $\{\vb_1,\dots,\vb_k\}=\{T^0(\vb),\dots,T^{k-1}(\vb)\}$ gives the result for $\Lambda_{\vb}$, whilst using $\{T^0(\vb_1),\dots,T^{k-1}(\vb_1),T^0(\vb_2),\dots,T^{k-1}(\vb_2)\}$ gives the result for $\Lambda_{\vb_1,\vb_2}$.
\end{proof}
\begin{lmm}[Vandermonde Determinant]\label{lmm:VanDerMonde} Let $m_1,\dots m_r$ be non-negative integers and $n=r+\sum_{i=1}^rm_i$. Let $\lambda_1,\dots,\lambda_r\in\mathbb{C}\backslash\{0\}$, and let $M=M(\lambda_1,\dots \lambda_r,m_1,\dots,m_r)$ be the $n\times n$ matrix
\[
\begin{pmatrix}
\lambda_1 		& \lambda_1 		& \dots 	& \lambda_1 			& \lambda_2 		& \dots 	& \lambda_2 			& \dots 	& \lambda_r\\
\lambda_1^2 	& 2\lambda_1^2 	& \dots 	& 2^{m_1}\lambda_1^2 	& \lambda_2^2 	& \dots 	& 2^{m_2}\lambda_2 ^2	& \dots 	& 2^{m_r}\lambda_r^2\\
\vdots 		& \vdots		& 		& \vdots 			& \vdots		& 		& 	\vdots			& 		& \vdots\\
\lambda_1^n 	& n\lambda_1^n 	& \dots 	& n^{m_1}\lambda_1^n 	& \lambda_2^n 	& \dots 	& n^{m_2}\lambda_2^n 	& \dots 	& n^{m_r}\lambda_r^n
\end{pmatrix}
\]
formed with entries in the $j^{th}$ row given by $j^m\lambda_i^j$ for $0\le m\le m_i$ and $1\le i \le r$.

Then we have
\[
\det(M)=\Bigl(\prod_{i=1}^r\prod_{m=1}^{m_i-1} m!\Bigr)\Bigl(\prod_{i=1}^r\lambda_i^{m_i(m_i+1)/2}\Bigr)\Bigl(\prod_{1\le i<j\le r}(\lambda_j-\lambda_i)^{m_i m_j}\Bigr).
\]
In particular, $\det(M)=0$ if and only if $\lambda_i=\lambda_j$ for some $i\ne j$. 
\end{lmm}
\begin{proof}
Let $M$ be the matrix of the lemma. By subtracting a suitable linear combination of the first $j$ columns from the $j^{th}$ column, we see that $\det(M)$ is equal to $\det(M')$, where $M'$ is the matrix with $j^{th}$ row given by $(j-1)\dots (j-m)\lambda_i^j$ for $0\le m\le m_i$ and $1\le i\le r$ instead of $j^m\lambda_i^j$ (we interpret the expression as $\lambda_i^j$ if $m=0$). We see that the $j^{th}$ column of $M'$ is a multiple of $\lambda_1^j$ for all $1\le j\le m_1$. Therefore the determinant is a multiple of $\lambda_1^{m_1(m_1+1)/2}$, and similarly for the other $\lambda_i$ by symmetry.  We now wish to show that $(\lambda_1-\lambda_2)^{m_1m_2}$ divides the determinant. For $\ell=0,\dots,m_1m_2-1$, we consider
\[
\frac{\partial^{\ell}}{\partial\lambda_1^{\ell}}\Big\vert_{\lambda_1=\lambda_2}\det(M)=\sum_{\substack{j_1,\dots,j_{m_1+1}\ge 0\\ j_1+\dots +j_{m_1+1}=\ell}}\binom{n}{j_1,\dots ,j_{m_1+1}}\det(M^{(j_1,\dots,j_{m_1+1})}),
\]
Here $M^{(j_1,\dots,j_{m_1+1})}$ is the matrix formed by replacing the $i^{th}$ column $v_i$ of $M'$ with 
\[
\frac{\partial^{j_i}}{\partial \lambda_1^{j_i}} \Big\vert_{\lambda_1=\lambda_2}v_i
\]
for each $i\in \{1,\dots,m_1+1\}$. We see that this expression has $j^{th}$ entry $(j-1)\dots (j-i)\times (j-j_i+1)\dots j \lambda_2^{j-j_i}$. In particular, for $i\le m_1+1$, we see that the $i^{th}$ column of $M^{(j_1,\dots,j_{m_1+1})}$ is a vector with $j^{th}$ entry $P(j)\lambda_2^{j}$ for some polynomial $P$ of degree $i+j_i$. However, the columns $v_{m_1+2},\dots,v_{m_1+m_2+2}$ also have $j^{th}$ entry of the form $P(j)\lambda_2^j$ for some polynomial $P$ of degree at most $m_2$. Thus we have $m_1+m_2+2$ columns, and for each column there is a polynomial $P$ such that the $j^{th}$ entry of the column is $P(j)\lambda_2^j$ for all $1\le j\le n$. But any $k+2$ vectors whose $j^{th}$ entry is of the form $P(j)\lambda^j$ for a polynomial $P$ of degree at most $k$ must be linearly dependent (this is seen by cancelling the highest coefficients in turn). Thus we see that these columns are linearly independent only if for every $k\in\mathbb{N}$, there are at most $k+1$ columns involving a polynomial of degree at most $k$. But this requires that the sum of degrees of the $m_1+m_2+2$ polynomials be at least $(m_1+m_2+2)(m_1+m_2+1)/2$, which requires
\[
\sum_{i=1}^{m_1+1}(j_i+i)+\sum_{i=1}^{m_2+1}i\ge\frac{(m_1+m_2+2)(m_1+m_2+1)}{2}.
\]
This simplifies to
\[
\ell=\sum_{i=1}^{m_1+1}j_i\ge m_1m_2.
\]
Thus for all $\ell\in\{0,\dots,m_1m_2-1\}$ we see that $\det(M^{(j_1,\dots,j_{m_1+1})})=0$, and so we must have that $(\lambda_1-\lambda_2)^{m_1m_2}$ divides $\det(M')$. By symmetry, we therefore find that $\det(M')$ is a multiple of 
\[
\Bigl(\prod_{i=1}^r\lambda_i^{m_i(m_i+1)/2}\Bigr)\Bigl(\prod_{1\le i<j\le r}(\lambda_j-\lambda_i)^{m_i m_j}\Bigr).
\]
 By expanding the determinant via rows, we see that the determinant is a homogeneous polynomial of degree $n(n+1)/2$ in the $\lambda_i$, and so $\det(M)$ must be proportional to the above expression. Finally, by considering the coefficient of $\lambda_1^{e_1}\lambda_2^{e_2}\dots \lambda_r^{e_r}$ with first $e_1$ minimal, then $e_2$ minimal etc, we see that the coefficient is
 \[
 \prod_{i=1}^r\prod_{j_i=1}^{m_i+1}(j_i-1)!
 \]
This gives the result.
\end{proof}

\begin{lmm}[Difference Equations]\label{lmm:Difference}
Let $c_1,\dots,c_r\in\mathbb{Q}$ with $c_1\ne 0$ and $c_r\ne 0$.  Let $x_1,\dots,x_J$ satisfy
\[
x_j=\sum_{i=1}^r c_i x_{j-i}
\]
for $j>r$. Then there are constants $\lambda_1,\dots,\lambda_{\ell}\in \mathbb{C}$ and polynomials $P_1,\dots,P_\ell$ such that
\[
x_j=\sum_{i=1}^\ell P_i(j)\lambda_i^j.
\]
Moreover, $\sum_{i=1}^{\ell}(1+\deg(P_i))\le r$, the $\lambda_i$ lie in a finite extension of $\Qq$ and the $\lambda_i$ only depend on $c_1,\dots,c_r$.
\end{lmm}
\begin{proof}
Let $M$ be the $r\times r$ matrix
\[
M=\begin{pmatrix}
c_1 & c_2 & c_3 & \dots & c_r\\
1 & 0 & 0 &\dots & 0\\
0 & 1 & 0 & \dots & 0\\
\vdots & \ddots & \ddots & \ddots & \vdots\\
0 & \dots & 0 & 1 & 0
\end{pmatrix},
\]
so if $\xb_j=(x_{j},x_{j-1},\dots,x_{j-r+1})$ then $\xb_{j+1}=M\xb_j$ for $j\ge r$. In particular, $\xb_j=M^{j-r}\xb_r$ for all $j\ge r$. Since $c_r\ne 0$ we see that $M$ is non-singular. But $M$ can be put into Jordan normal form after a change of basis, which means that $M=A^{-1}DA$ for some upper triangular matrix $D$ formed of Jordan blocks. But then $M^{j}=A^{-1}D^{j}A$, and the entries of $D^j$ are all of the form $P_i(j)\lambda_i^j$, where the $\lambda_i$ are the eigenvalues of $M$ and $P_i$ is a polynomial of degree at most one less that the multiplicity of $\lambda_i$. This gives the result for the shape of the $x_j$. Since the $\lambda_i$ are the eigenvalues of $M$ and $\deg(P_i)+1$ is at most the multiplicity of $\lambda_i$, we get the other claims of the lemma.
\end{proof}

\begin{lmm}\label{lmm:LinearSubspaceBound}
Let $n>3k$. Let $\bb\in\Zz^n\backslash\{\mathbf{0}\}$, and let $\mathcal{L}$ be a linear subspace of $\Rr^n$ such that $\wedge(\xb,\bb)=\mathbf{0}$ for all $\xb\in\mathcal{L}$.
Then $\mathcal{L}$ has dimension at most $k$.
\end{lmm}
\begin{proof}
If $\wedge(\xb,\bb)=\mathbf{0}$, then there exists constants $c_0,\dots,c_{k-1},d_0,\dots,d_{k-1}\in\Zz$ not all zero such that
\[\sum_{i=0}^{k-1}c_i T^i(\xb)=\sum_{i=0}^{k-1}d_i T^i(\bb).\]
Since $\bb\ne 0$, we have that $\{T^i(\bb)\}_{i=0}^{n-1}$ are linearly independent vectors in $\Rr^n$. Thus we cannot have $c_0=\dots=c_{k-1}=0$ and we can write $\xb=\sum_{i=0}^{n-1}x_i T^i(\bb)$. With respect to this basis, the above equation implies that $\sum_{i=0}^{k-1}c_i x_{j-i}=0$ for each $k\le j< n$. Since the $c_0,\dots,c_{k-1}$ are not all zero we let $c_\ell$ be the first non-zero element, so we have $x_j=\sum_{i=1}^{k-1-\ell}c'_{\ell+i} x_{j-i}$ for each $k\le j<n$ with $c'_i=c_i/c_\ell$. For notational simplicity we now restrict our argument to the case when $c_0,c_{k-1}\ne 0$; the other cases follow by an entirely analogous argument.

The equation $x_j=\sum_{i=1}^{k-1}c'_ix_{j-i}$ for $k\le j<n$ is a difference equation, so by Lemma \ref{lmm:Difference} has solution $x_j=\sum_i P_i(j)\lambda_i^j$ for $1\le j<n$ for some polynomials $P_1,\dots,P_\ell$ with $\sum_{i}(1+\deg(P_i))\le k-1$ and some constants $\lambda_i$ in a finite extension of $\Qq$, all of which may depend only on the constants $c_i$.

We will show that in any linear space $\mathcal{L}\subseteq\Rr^n$ containing only points with $\wedge(\xb,\bb)=\mathbf{0}$, at most $k-1$ different monomials $j^d\lambda_i^j$ can appear in such an expression over all possible choices of the $c_i$.

Assume the contrary for a contradiction. By taking linear combinations of these monomials, we see there exists $\xb,\mathbf{y}\in\mathcal{L}$ with $(\xb)_j=\sum_{i}P_i(j)\lambda_i^j$ and $(\mathbf{y})_j=\sum_m Q_m(j)\mu_m^j$ for $1<j\le n$, for some polynomials $P_i,Q_m\in \mathbb{C}[X]$ and some algebraic integers $\lambda_i,\mu_m\in \mathbb{C}$ such that $\sum_i(1+\deg(P_i)),\sum_i(1+\deg(Q_i))\le k-1$, but in total at least $k$ different monomials $j^{m_1}\lambda_{m_2}^j$, $j^{m_3}\mu_{m_4}^j$ appear with non-zero coefficients across these two expressions. In particular, there is a real linear combination $a_1\xb+a_2\mathbf{y}$ such that at least $k$ different monomials appear. But $a_1\xb+a_2\mathbf{y}\in\mathcal{L}$, so $(a_1\xb+a_2\mathbf{y})_j$ can also be written as $\sum_{i} R_i(j)\gamma_i^j$ with at most $k-1$ different monomials $j^{m_1}\gamma_{m_2}^j$ appearing and $\sum_i (1+\deg(R_i))\le k-1$. But then we have $\sum_i R_i(j)\gamma_i^j=a_1\sum_i P_i(j)\lambda_i^j+a_2\sum_i Q_i(j)\mu_i^j$ for all $1< j\le n$, so the monomials $j^{m_1}\gamma_{m_2}^j, j^{m_3}\mu_{m_4}^j, j^{m_5}\lambda_{m_6}^j$ satisfy a non-zero linear equation $\sum_{i}e_i M_i(j)=0$ for all $1<j\le n$, for some constants $e_i$ not all zero and distinct monomials $M_i(j)$ of the form $j^{m_1}\gamma_{m_2}^j, j^{m_3}\mu_{m_4}^j$ or $j^{m_5}\lambda_{m_6}^j$ (for some integers $m_1,\dots,m_6$). Moreover, since $\sum_i(1+\deg(P_i)),\sum_i(1+\deg(Q_i)),\sum_i(1+\deg(R_i))\le k-1$, there are at most $3k-3$ monomials appearing in this expression. In matrix form, this set of equations is
\[
\begin{pmatrix}
M_1(1) & \dots & M_{3k-3}(1)\\ 
\vdots & & \vdots\\ 
M_1(n) & \dots & M_{3k-3}(n)
\end{pmatrix}
\begin{pmatrix}
e_1\\ 
\vdots \\ 
e_{3k-3}
\end{pmatrix}
=\mathbf{0}.
\]
Since $n>3k$ this includes the first $3k-3$ rows which form a $(3k-3)\times (3k-3)$ generalized Vandermonde matrix. By Lemma \ref{lmm:VanDerMonde} the determinant of this matrix is non-zero. Thus the vector $(e_1,\dots,e_{3k-3})$ must be zero, a contradiction to our assumption that it is non-zero. Thus only $k-1$ different monomials can appear, and so $\mathcal{L}$ has dimension at most $k$ (since $x_0$ is a free variable).
\end{proof}
\begin{rmk}
The bound in Lemma \ref{lmm:LinearSubspaceBound} is tight, since the subspace generated by the vectors $T^0(\bb),\dots, T^{k-1}(\bb)$ has dimension $k$.
\end{rmk}
\begin{lmm}\label{lmm:NiceBasis}
Let $n>3k$. Let $\ab\in\Zz^n\backslash\{\mathbf{0}\}$ and $\Lambda_{\ab}$ have successive minima $Z_1\le \dots\le Z_{n-k}$. Then $\Lambda_{\ab}$ has a $\Zz$-basis $\zb_1,\dots,\zb_{n-k}$ such that
\begin{itemize}
\item For each $i\in\{1,\dots,n-k\}$ we have $Z_i\ll \|\zb_i\|\ll Z_i$.
\item $\wedge(\zb_1,\zb_{k+1})\ne \mathbf{0}$.
\item For any $\lambda_1,\dots,\lambda_{n-k}\in \Rr^{n-k}$, $\|\sum_{i=1}^{n-k}\lambda_i\zb_i\|\gg \sum_{i=1}^{n-k}\|\lambda_i\zb_i\|$
\end{itemize}
\end{lmm}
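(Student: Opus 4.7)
The strategy is to start from a Minkowski-reduced basis and modify only the $(k+1)$-th vector. Let $\mathbf{w}_1, \ldots, \mathbf{w}_{n-k}$ be a Minkowski-reduced basis of $\Lambda_{\mathbf{a}}$, which by standard results in the geometry of numbers satisfies $\|\mathbf{w}_i\| \asymp Z_i$ and the quasi-orthogonality relation $\|\sum_i \lambda_i \mathbf{w}_i\| \gg \sum_i \|\lambda_i \mathbf{w}_i\|$ with constants depending only on $n-k$. Set $\mathbf{z}_i = \mathbf{w}_i$ for every $i \ne k+1$; the problem is then to produce $\mathbf{z}_{k+1} = \mathbf{w}_{k+1} + \sum_{j \le k} c_j \mathbf{w}_j$ for some bounded integers $c_1, \ldots, c_k$ such that $\wedge(\mathbf{w}_1, \mathbf{z}_{k+1}) \ne \mathbf{0}$.

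The existence of such a perturbation is extracted from Lemma \ref{lmm:LinearSubspaceBound}. Since $\mathrm{Span}_{\mathbb{R}}(\mathbf{w}_1, \ldots, \mathbf{w}_{k+1})$ is a $(k+1)$-dimensional linear subspace and $\mathbf{w}_1 \ne \mathbf{0}$, it cannot be entirely contained in the algebraic set $\{\mathbf{x} : \wedge(\mathbf{x}, \mathbf{w}_1) = \mathbf{0}\}$. Equivalently, the polynomial
\[h(c_0, c_1, \ldots, c_k) = \wedge\bigl(c_0 \mathbf{w}_{k+1} + c_1 \mathbf{w}_1 + \cdots + c_k \mathbf{w}_k,\, \mathbf{w}_1\bigr),\]
which is homogeneous of degree $k$ in its variables, is not identically zero. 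By homogeneity the specialization $g(c_1, \ldots, c_k) := h(1, c_1, \ldots, c_k)$ is then a non-zero polynomial of degree at most $k$ in $k$ variables, and a short induction on the number of variables shows that such a polynomial cannot vanish on all of $\{0, 1, \ldots, k\}^k$. This yields integers $c_1, \ldots, c_k \in \{0, 1, \ldots, k\}$ with $g(c_1, \ldots, c_k) \ne 0$, and the corresponding $\mathbf{z}_{k+1}$ satisfies the second bullet by construction. Since the substitution $\mathbf{w}_{k+1} \mapsto \mathbf{z}_{k+1}$ is an elementary row operation, $\{\mathbf{z}_i\}$ remains a $\mathbb{Z}$-basis of $\Lambda_{\mathbf{a}}$.

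It remains to verify that the first and third bullets continue to hold after the modification. The bound $\|\mathbf{z}_{k+1}\| \ll Z_{k+1}$ is immediate from the triangle inequality and $\|\mathbf{w}_j\| \asymp Z_j \le Z_{k+1}$ for $j \le k$, while $\|\mathbf{z}_{k+1}\| \gg Z_{k+1}$ follows because the orthogonal projection of $\mathbf{z}_{k+1}$ onto the orthogonal complement of $\mathrm{Span}(\mathbf{w}_1, \ldots, \mathbf{w}_k)$ agrees with that of $\mathbf{w}_{k+1}$, which has norm $\gg Z_{k+1}$ by the quasi-orthogonality of the original Minkowski basis. The third bullet transfers from the $\mathbf{w}_i$ to the $\mathbf{z}_i$ by writing $\sum \lambda_i \mathbf{z}_i = \sum \mu_i \mathbf{w}_i$ with $\mu_i = \lambda_i + c_i \lambda_{k+1}$ for $i \le k$, $\mu_{k+1} = \lambda_{k+1}$, and $\mu_i = \lambda_i$ otherwise, and then applying the reverse triangle inequality together with $\|\mathbf{w}_j\| \ll \|\mathbf{w}_{k+1}\|$ for $j \le k$ to pass between $\sum_i |\lambda_i| \|\mathbf{z}_i\|$ and $\sum_i |\mu_i| \|\mathbf{w}_i\|$ up to constants. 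The only substantial step is securing the perturbation, which is exactly the content of the dimension bound in Lemma \ref{lmm:LinearSubspaceBound}; everything else is routine bookkeeping.
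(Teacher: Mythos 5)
Your proof is correct and follows essentially the same route as the paper's: both start from a Minkowski-reduced basis (equivalently, the successively shortest linearly independent vectors), invoke Lemma~\ref{lmm:LinearSubspaceBound} to see that the $(k+1)$-dimensional span of the first $k+1$ basis vectors cannot lie in the vanishing locus of $\wedge(\cdot,\mathbf{z}_1)$, deduce a non-zero polynomial obstruction, choose bounded integer coefficients where it does not vanish, and then verify the other two bullets via the quasi-orthogonality of the reduced basis. Your write-up is somewhat more detailed than the paper (the homogeneity argument for specializing $c_0=1$ and the induction for finding the bounded integers are stated explicitly where the paper only asserts them), but the underlying idea is identical.
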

\begin{proof}
Since $T^0(\ab),\dots,T^{k-1}(\ab)$ are linearly independent, we see that $\Lambda_{\ab}$ has rank $n-k$. By Lemma \ref{lmm:Basis}, $\Lambda_{\ab}$ has a Minkowski-reduced basis $\{\zb_1,\dots,\zb_{n-k}\}$. The space generated by $\zb_1,\dots,\zb_{k+1}$ is a linear space of dimension $k+1$, so by Lemma \ref{lmm:LinearSubspaceBound} we have that $\wedge(\xb,\zb_1)$ does not vanish for all $\xb$ in this space. But since $\wedge(\cdot,\zb_1)=\mathbf{0}$ is given by the vanishing of a system of homogeneous polynomials of degree $O(1)$, this means that there is a non-zero homogeneous polynomial $f\in\Zz[X_1,\dots,X_{k+1}]$ of degree $O(1)$ such that $\wedge(\sum_{i=1}^{k+1}\lambda_i\zb_i,\zb_1)=\mathbf{0}$ only if $f(\lambda_1,\dots,\lambda_{k+1})=0$. But there is then a choice of $\lambda_1,\dots,\lambda_{k+1}\in\Zz$ with $\lambda_{k+1}=1$ and $\lambda_i\ll 1$ for all $1\le i\le k$ such that $f(\lambda_1,\dots,\lambda_{k+1})\ne 0$. Let $\zb_{k+1}'=\sum_{i=1}^{k+1}\lambda_i\zb_i$. We claim that $\{\zb_1,\dots,\zb_k,\zb_{k+1}',\zb_{k+2}\dots,\zb_{n-k}\}$ gives a basis with the required properties. Since $\zb_{k+1}'$ is a linear combination of $\zb_1,\dots,\zb_{k+1}$ with $\zb_{k+1}$-coefficient equal to 1, we see that this is indeed a basis since $\{\zb_1,\dots,\zb_{n-k}\}$ is. Since $f(\lambda_1,\dots,\lambda_{k+1})\ne 0$, we have that $\wedge(\zb_{k+1}',\zb_1)\ne 0$. Since $\lambda_i\ll1$, we see that $\|\zb_{k+1}'\|\asymp \sum_{i=1}^{k+1}\|\lambda_i\zb_i\|\asymp Z_{k+1}$. Finally, since $\lambda_i\ll 1$ we have
\begin{align*}
\|a_{k+1}\zb_{k+1}+\sum_{i\ne k+1}a_i\zb_i\|&=\|\sum_{i=1}^k (a_i+O(a_{k+1}))\zb_i+a_{k+1}\zb_{k+1}+\sum_{i=k+2}^{n-k}a_i\zb_{i}\|\\
&\asymp \sum_{i=1}^k |a_i+O(a_{k+1})|Z_i+|a_{k+1}|Z_{k+1}+\sum_{i=k+2}^{n-k}|a_i|Z_{i}\\
&\asymp\sum_{i=1}^{n-k} |a_i|Z_i.
\end{align*}
In the last line, we used the fact that if $|a_i+O(a_{k+1})|\gg a_i$ then the contribution is $\asymp |a_i|Z_i$, whereas if $|a_i+O(a_{k+1})|\ll a_{k+1}$ then the contribution is $O(a_{k+1}Z_{k+1})$ since $Z_1\le \dots \le Z_{k+1}$, and the (non-negative) contribution is suitably bounded by the contribution from $a_{k+1}Z_{k+1}$. This gives the result.
\end{proof}

\section{Type II Estimate: The \texorpdfstring{$L^2$}{L2} bound}\label{sec:L2}
In this section we use the Linnik dispersion method and estimates from the geometry of numbers and elementary algebraic geometry to prove Proposition \ref{prpstn:L2Target} and so finish the proof of our Type II estimate. We will make use of Proposition \ref{prpstn:SieveCube} and the estimates of Section \ref{sec:Lattice}. It is this section which involves the key new ideas behind our proof.

We recall from Proposition \ref{prpstn:L2Target} that we wish to show that
\[\sum_{\substack{\af,\bfr\,\text{principal}\\ \cf|\bfr,\,\cf'|\af\\ \af\bfr/N(\cf)\in\Ac'}}\1_{\Rc_1}(\af/\cf')(\1_{\Rc_2}(\bfr/\cf)-\tilde{\1}_{\Rc_2}(\bfr/\cf))\ll \eta_2^{1/2}\#\Ac'.\]
Here $\eta_2=(\log{X})^{-100(4\ell+2)}$ and
\begin{align*}
\Ac'&=\Bigl\{ (\sum_{i=1}^{n-k} a_i\Ti ) :X_i\le a_i\le X_i+\eta_1 X_i,\, a_i\equiv (\ab'_0)_i\Mod{J! q^*}, \\
&\qquad N(\sum_{i=1}^{n-k} a_i\Ti )\in[X_0^n,X_0^n+\eta_2 X_0^n], \Bigr\}.
\end{align*}
We first want to reduce this to the following proposition.
\begin{prpstn}\label{prpstn:Bilinear}
Let $\mod{\tilde{q}}=(\theta n)^{n}q^*N(\cf)(J!)^J$ and $\epsilon_0=\tilde{q}^{-4n}\exp(-\sqrt[7]{\log{X}})$. Let $X^{k+\epsilon/2}\le B\le X^{n-2k-\epsilon/2}$ and $AB\asymp X$. Let
\begin{align*}
g_{\bb}&=\begin{cases}
\1_{\Rc_2}(\bfr/\cf)-\tilde{\1}_{\Rc_2}(\bfr/\cf),\qquad &\tau(\bfr)\le \epsilon_0^{-2},\\
0,&\text{otherwise.}\end{cases}\\
\Rc_{\bb_1,\bb_2}&=\{\ab\in \Rr^n:\|\ab\|\in [A,2A],\,\ab\diamond\bb_1\in\Rc_X,\,\ab\diamond\bb_2\in\Rc_X\}.
\end{align*}
Then we have
\begin{equation*}
\sum_{\substack{\|\bb_1\|,\|\bb_2\|\in[B,2B]\\ \bb_1,\bb_2\equiv \bb_0\Mod{\tilde{q}}}}g_{\bb_1}\overline{g_{\bb_2}}\sum_{\ab\in\Lambda_{\bb_1,\bb_2}\cap\Rc_{\bb_1,\bb_2}}1\ll \epsilon_0 A^{n-2k}B^{2n-2k}.
\end{equation*}
\end{prpstn}
We recall that the lattice $\Lambda_{\mathbf{b}_1,\mathbf{b}_2}$ is defined in \eqref{eq:LambdaDef} and the region $\mathcal{R}_X$ is defined in \eqref{eq:RxDef}.
\begin{proof}[Proof of Proposition \ref{prpstn:L2Target} assuming Proposition \ref{prpstn:Bilinear}]
From the discussion at the beginning of Section \ref{sec:Lattice}, $\af\bfr/N(\cf)\in\Ac'$ for principal $\af$, $\bfr$ is equivalent to $\af=((\theta n)^{-n}\sum_{i=1}^{n}a_i\Ti))$ and $\bfr=((\theta n)^{-n}\sum_{i=1}^{n}b_i\Ti)$ for some $\ab\in\Zz^n\cap\mathcal{F}$, $\bb\in\Zz^n$, for any choice of fundamental domain $\mathcal{F}$ of the action of the group of units $U_K$ and with $\ab\diamond\bb\in\Rc_X$ satisfying some congruence condition $\tilde{\mathbf{L}}(\ab,\bb)\equiv\mathbf{0}\Mod{(\theta n)^n}$. Here we recall from \eqref{eq:RxDef} that
\begin{align*}
\Rc_X=\{\xb\in\Rr^n:x_i\in [X_i',X_i'+\eta_1 X_i']&\text{ for }i\le n-k,\,x_i=0\text{ for }i>n-k,\\
&\textstyle N(\sum_{i=1}^n x_i\Ti)\in[ X_0'^n,X_0'^n+\eta_2 X_0'^n]\}.
\end{align*}
We recall that we have localized the norms of the ideals appearing, so that if $\1_{\Rc_1}(\af)\ne 0$ then $N(\af)\in[A,2A]$ for some quantity $A$, and if we also have $\af\bfr/N(\cf)\in\Ac'$ then $N(\bfr)\in [B,2B]$ for some quantity $B$ with $X^{k+\epsilon/2}\le B\le X^{n-2k-\epsilon/2}$ and $X\ll AB\ll X$.

Any element $\xb\in\Rc_X\cap\Zz^n$ has $\|\xb\|\ll X$, and so $\gamma=\sum_{i=1}^n x_i\Ti$ has $|\gamma^\sigma|\ll X$ for all embeddings $\sigma$. Since $N(\gamma)=\prod_\sigma \gamma^\sigma\gg X^n$, this implies $|\gamma^\sigma|\gg X$ for all $\sigma$ as well. We may choose a suitable fundamental domain $\mathcal{F}$ such that the vector $\ab$ satisfies $\|\ab\|\ll A$ by Lemma \ref{lmm:UnitSize}. This implies that $\alpha=(\theta n)^{-n}\sum_{i=1}^n a_i\Ti$ has $|\alpha^\sigma|\ll A$ for all embeddings $\sigma$, and so any $\beta=\gamma/\alpha$ will then satisfy $|\beta^\sigma|\ll B$ for all $\sigma$. Thus this choice of $\mathcal{F}$ allows us to restrict to $a_i\ll A$ and $ b_i\ll B$ for all $1\le i\le n$.

Thus, splitting $\ab,\bb$ into residue classes $\mod{\tilde{q}}=(\theta n)^{n}q^*N(\cf)(J!)^J$, (where $J$ is the constant in the definition of $\Ac'$ which is $O(1)$ and will be eventually chosen large enough in terms of $n$ and $k$), recalling that $q^*\le \exp(\sqrt[4]{\log{X}})$ and letting $\epsilon_0=\tilde{q}^{-4n}\exp(-\sqrt[7]{\log{X}})$, we see that it is sufficient to show that
\begin{equation}
\sum_{\substack{\ab\in\Zz^n\cap\mathcal{F}\\ \|\ab\|\ll A\\ \ab\equiv \ab_0''\Mod{\tilde{q}}}}\sum_{\substack{\bb\in\Lambda_{\ab}\\\bb\equiv\bb_0\Mod{\tilde{q}}\\ \ab\diamond\bb\in\Rc_X}}\1_{\Rc_1}(\af/\cf')(\1_{\Rc_2}(\bfr/\cf)-\tilde{\1}_{\Rc_2}(\bfr/\cf))\ll \epsilon_0^{1/2} A^{n-k}B^{n-k}\label{eq:L2Target2}
\end{equation}
for any $\ab_0'',\bb_0$ with $p\nmid N_K(\bb_0)$ for all $p\le J$.

To sidestep some minor issues associated to $\tilde{\1}_{\Rc_2}$ occasionally being large if $\tau(\bfr)$ is large, we introduce a quantity $g_\bb$, defined by
\[g_{\bb}=\begin{cases}
\1_{\Rc_2}(\bfr/\cf)-\tilde{\1}_{\Rc_2}(\bfr/\cf),\qquad &\tau(\bfr)\le \epsilon_0^{-2},\\
0,&\text{otherwise.}\end{cases}\]
We now replace $\1_{\Rc_2}(\bfr/\cf)-\tilde{\1}_{\Rc_2}(\bfr/\cf)$ with $g_{\bb}$. Since $\1_{\Rc_2}(\bfr/\cf)-\tilde{\1}_{\Rc_2}(\bfr/\cf)\ll \tau(\sum_{i=1}^{n-k}b_i\Ti)\log{X}$, the error introduced by this change is
\begin{align*}
O\Bigl( \sum_{\|\ab\|\ll A}\sum_{\substack{\bb\in\Lambda_{\ab}\\ \|\bb\|\ll B\\ \tau(\bfr)>\epsilon_0^{-2}}}\tau(\bfr)\log{X}\Bigr)&\ll \sum_{\|\ab\|\ll A}\sum_{\substack{\bb\in\Lambda_{\ab}\\ \|\bb\|\ll B}}\epsilon_0^2\tau(\bfr)^2\log{X}\\
&\ll \sum_{\substack{\|\xb\|\ll X\\ x_j=0\text{ if }j>n-k}}\epsilon_0^2\tau(\sum_{i=1}^{n-k}x_i\Ti)^2\\
&\ll \epsilon_0^{2}X^{n-k}(\log{X})^{O(1)},
\end{align*}
by Lemma \ref{lmm:DivisorBound}. Since $\epsilon_0\le \exp(-\sqrt[7]{\log{X}})$, this is $O(\epsilon_0 X^{n-k})$ and so negligible. Thus, in order to show \eqref{eq:L2Target2}, it is sufficient to show
\begin{equation}
\sum_{\substack{\ab\in\Zz^n\cap\mathcal{F}\\ \|\ab\|\in [A,2A]\\ \ab\equiv \ab_0''\Mod{\tilde{q}}}}\sum_{\substack{\bb\in\Lambda_{\ab}\\\bb\equiv\bb_0\Mod{\tilde{q}}\\ \ab\diamond\bb\in\Rc_X}}\1_{\Rc_1}(\af/\cf')g_{\bb}\ll \epsilon_0^{1/2} A^{n-k}B^{n-k}.
\end{equation}
By Cauchy-Schwarz (dropping the constraints $\ab\equiv\ab_0''\Mod{\tilde{q}}$ and $\ab\in\mathcal{F}$, and upper bounding $\1_{\Rc_1}(\af/\cf')$ by 1) we have
\begin{align*}
\sum_{\substack{\ab\in\Zz^n\cap\mathcal{F} \\ \|\ab\|\in [A,2A]\\ \ab\equiv \ab_0''\Mod{\tilde{q}}}}&\sum_{\substack{\bb\in\Lambda_{\ab}\\\bb\equiv\bb_0\Mod{\tilde{q}}\\ \ab\diamond\bb\in\Rc_X}}\1_{\Rc_1}(\af/\cf')g_{\bb}\\
&\ll  \Bigl(\sum_{\|\ab\|\ll A}1\Bigr)^{1/2}\Bigl(\sum_{\|\ab\|\in [A,2A]}\Bigl|\sum_{\substack{\bb\in\Lambda_{\ab}\\\bb\equiv\bb_0\Mod{\tilde{q}}\\ \ab\diamond\bb\in\Rc_X}}g_{\bb}\Bigr|^2\Bigr)^{1/2}.
\end{align*}
The first sum in parentheses is $O(A^n)$, so it suffices to show that
\begin{equation}
\sum_{\substack{\|\bb_1\|,\|\bb_2\|\in[B,2B]\\ \bb_1,\bb_2\equiv \bb_0\Mod{\tilde{q}}}}g_{\bb_1}\overline{g_{\bb_2}}\sum_{\ab\in\Lambda_{\bb_1,\bb_2}\cap\Rc_{\bb_1,\bb_2}}1\ll \epsilon_0 A^{n-2k}B^{2n-2k},\label{eq:L2Target3}
\end{equation}
where 
\[\Rc_{\bb_1,\bb_2}=\{\ab\in \Rr^n:\|\ab\|\in [A,2A],\,\ab\diamond\bb_1\in\Rc_X,\,\ab\diamond\bb_2\in\Rc_X\}.\]
This is precisely given by Proposition \ref{prpstn:Bilinear}.
\end{proof}
Thus we are left to establish Proposition \ref{prpstn:Bilinear}.

If $\wedge(\bb_1,\bb_2)\ne \mathbf{0}$, then $\Lambda_{\bb_1,\bb_2}$ is a rank $n-2k$ lattice, and we expect the inner sum in \eqref{eq:L2Target3} to typically be (using Lemma \ref{lmm:Latticedets}) 
\[\approx\frac{\vol{\Rc_{\bb_1,\bb_2}}}{\det{\Lambda_{\bb_1,\bb_2}}}=\frac{D_{\bb_1,\bb_2}\vol{\Rc_{\bb_1,\bb_2}}}{\|\wedge(\bb_1,\bb_2)\|}\approx \frac{c A^{n-2k}}{B^{2k}}\]
for some suitable constant $c=c_{\bb_1,\bb_2}$ of size $\approx 1$ which varies continuously and slowly with $\bb_1,\bb_2$. The first approximation can fail if $\Lambda_{\bb_1,\bb_2}$ is highly skewed, whilst the second approximation can fail if $\Lambda_{\bb_1,\bb_2}$ has an unusually small determinant. $\Lambda_{\bb_1,\bb_2}$ can have small determinant either for Archimedean reasons (if $\|\wedge(\bb_1,\bb_2)\|$ is small) or for non-Archimedean reasons (if $D_{\bb_1,\bb_2}$ is large). To deal with these issues, we show for most $\bb_1,\bb_2$ these complications do not occur.

\begin{rmk}
Usually one would introduce a smooth weight on the sum over $\ab$ to allow for simpler or more precise analysis of the resulting inner sum. We have deliberately chosen not to smooth here because we  wish to emphasize the elementary nature of the estimates we use from the geometry of numbers. In principal smoothing would allow one to use exponential sums to widen the Type II ranges, but the author has not been able to get suitable control over the resulting exponential sums. Non-trivially estimating these sums for general $n$ requires one to show equidistribution results for skewed lattices. 
\end{rmk}
\begin{rmk}
The diagonal terms $\bb_1=\bb_2$ contribute $A^{n-k}B^{n-k+o(1)}$ to the overall sum, and so we require $A^k<B^{n-k+o(1)}$. If we do not show cancellations in the error terms for the inner sum over $\ab$ above, then we can only hope to gain an asymptotic if $A^{n-2k}>B^{2k}$ (but see the remark below). Together these conditions force $X^k<B<X^{n-2k}$, and our Type II estimate applies in essentially the full range. Similar restrictions apply to any other sequence of density $1-k/n$, which is why the initial work on Diophantine approximation by primes had equivalent restrictions on the Type II range.
\end{rmk}
\begin{rmk}
We can obtain slightly more flexibility in our Type II estimates by restricting $\bb$ to lie in a residue class $\Mod{Q}$ for a suitably sized modulus $Q$ before applying Cauchy-Schwarz. This has the effect of increasing the contribution from the diagonal terms, but enabling us to estimate the off-diagonal terms in a wider range. This has the potential to give an asymptotic formula for primes represented by an incomplete norm form of $\Qt$ in the wider range $n>(2+\sqrt{2})k$. In the interests of brevity and clarity, we will not consider this further here, but we intend to address this in a future paper.
\end{rmk}
\subsection{Archimedean estimates}
We first consider complications when the lattice $\Lambda_{\bb_1,\bb_2}$ is skewed or has small determinant because $\|\wedge(\bb_1,\bb_2)\|$ is small.

We begin with a simple lemma counting the number of times a polynomial can be small. The key point is that this estimate is very uniform in the coefficients of $f$.
\begin{lmm}\label{lmm:PolyBound}
Let $f(x)=f_d x_1^d+f_{d-1}x^{d-1}+\dots+f_0\in\Zz[x]$ with $f_d\ne 0$. Let $D\in\Zz$ be such that $D/\gcd(D,f_d)=\prod_{i=1}^\ell p_i^{e_i}$. Then we have
\begin{align*}
\#\Bigl\{n\in [1,y]:&f(n)\equiv 0\Mod{D},\,|f(n)|\le B\Bigr\}\le d\tau_d(D)\Bigl(1+\min\Bigl(y,\frac{B^{1/d}}{|f_d|^{1/d}}\Bigr)\frac{1}{D'}\Bigr),\end{align*}
where $D'=\prod_{i=1}^{\ell}p_i^{\lceil e_i/d\rceil}>D^{1/d}/\gcd(D,f_d)^{1/d}$.
\end{lmm}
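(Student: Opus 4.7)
The plan is to combine the Chinese remainder theorem with a Lagrange interpolation argument applied to a monic auxiliary polynomial, reducing the problem to a residue-class-counting statement, which is then closed off by an elementary volume bound per class.

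First I would prove the following sub-claim via Lagrange interpolation: if $g(x)\in\mathbb{Z}[x]$ is monic of degree $d$, $p$ is prime, and $V\ge1$, then the number of residue classes modulo $p^{r}$ with $r=\lceil V/d\rceil$ containing an integer $u$ with $g(u)\equiv 0\pmod{p^V}$ is at most $d$. Indeed, given $d+1$ pairwise incongruent $u_1,\dots,u_{d+1}$ modulo $p^{r}$, monicity of $g$ yields the Lagrange identity
\[1=\sum_{i=1}^{d+1}\frac{g(u_i)}{\prod_{j\ne i}(u_i-u_j)},\]
and each summand on the right has $p$-adic valuation $\ge V-d(r-1)$, since each factor $u_i-u_j$ is a nonzero integer with $v_p<r$. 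But $r=\lceil V/d\rceil$ forces $d(r-1)<V$, so every summand has strictly positive $v_p$; the ultrametric inequality then contradicts $v_p(1)=0$.

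To exploit this for $f$, I set $u=f_d n$ and multiply through by $f_d^{d-1}$; a direct expansion gives
\[f_d^{d-1}f(n)=g(u)=u^d+f_{d-1}u^{d-1}+f_df_{d-2}u^{d-2}+\cdots+f_d^{d-1}f_0,\]
which is monic in $u$. The condition $p^v\mid f(n)$ becomes $p^V\mid g(u)$ with $V=v+(d-1)a$, where $a=v_p(f_d)$. The sub-claim places $u$ in one of at most $d$ classes modulo $p^{\lceil V/d\rceil}$, and since $u=f_d n$, whenever $v>a$ this constrains $n$ to one of at most $d$ classes modulo $p^{\lceil V/d\rceil-a}=p^{\lceil(v-a)/d\rceil}$, while for $v\le a$ no nontrivial constraint arises (matching the fact that such primes contribute nothing to $D'$). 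Taking the product over primes via CRT places $n$ in at most $d^{\omega_1(D)}\le\tau_d(D)$ residue classes modulo $D'$, where $\omega_1(D)$ is the number of primes $p$ with $v_p(D)>v_p(f_d)$; the inequality $d^{\omega_1(D)}\le\tau_d(D)$ follows from $\binom{v+d-1}{d-1}\ge d$ for $v\ge1$.

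For the per-class count, I write $n=n_0+kD'$ within a class; then $F(k):=f(n_0+kD')$ is a polynomial in $k$ of degree $d$ with leading coefficient $f_dD'^d$. The condition $n\in[1,y]$ gives at most $y/D'+O(1)$ integer values of $k$, while $|F(k)|\le B$ confines $k$ to a subset of $\mathbb{R}$ of Lebesgue measure $O_d((B/|f_d|D'^d)^{1/d})=O_d((B/|f_d|)^{1/d}/D')$, consisting of at most $d$ intervals. Hence each class contributes $O_d(1+\min(y,(B/|f_d|)^{1/d})/D')$ points, and multiplying by the $\le\tau_d(D)$ classes gives the stated bound (the explicit factor $d$ absorbing the polynomial preimage constant). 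The inequality $D'>D^{1/d}/(D,f_d)^{1/d}$ is immediate from $\lceil e_i/d\rceil\ge e_i/d$. The main obstacle is the Lagrange interpolation sub-claim; it plays the role of a Newton polygon estimate for roots of $g$ in $\overline{\mathbb{Z}_p}$, but is proved here with bare hands.
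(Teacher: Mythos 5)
Your proof is correct, and it takes a genuinely different route from the paper's. The paper factors $f$ over $\overline{\mathbb{Q}}_p$ for each $p\mid D/(D,f_d)$ and over $\mathbb{C}$, observes that $f(n)\equiv 0\ (\mathrm{mod}\ D)$ and $|f(n)|\le B$ force $n$ to be close to one $p$-adic root for each $p$ and to one complex root, and then counts: each tuple of chosen roots pins $n$ to a single residue class $\mathrm{mod}\ D'$ intersected with an interval, and there are at most $d\cdot\tau_d(D/(D,f_d))$ tuples. You instead pass to the monic integral polynomial $g(u)=f_d^{d-1}f(n)$ in $u=f_dn$ and prove the key fact by bare-hands Lagrange interpolation over $\mathbb{Z}$: if $d+1$ integers are pairwise incongruent $\mathrm{mod}\ p^{\lceil V/d\rceil}$, the identity $1=\sum_i g(u_i)/\prod_{j\neq i}(u_i-u_j)$ (the leading coefficient of the Lagrange interpolant, using monicity) has every summand of strictly positive $p$-adic valuation when $p^V\mid g(u_i)$, which is impossible. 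This replaces the paper's appeal to roots in ramified extensions with an entirely rational argument; the conversion $\lceil V/d\rceil-v_p(f_d)=\lceil e_p/d\rceil$ and the CRT step then land you exactly on $D'$ and the bound $d^{\#\{p:\,v_p(D)>v_p(f_d)\}}\le\tau_d(D)$. The per-class Archimedean count and the inequality $D'>D^{1/d}/(D,f_d)^{1/d}$ are handled the same way in both proofs, up to unimportant constants in $d$.

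One further advantage of your route is worth noting: the paper's proof asserts $\|n-\alpha^{(p)}\|_p\le\|D'\|_p=p^{-\lceil e_p/d\rceil}$ directly from $\prod_i\|n-\alpha_{p,i}\|_p\le\|\tilde D\|_p=p^{-e_p}$, but the pigeonhole only yields $\min_i\|n-\alpha_{p,i}\|_p\le p^{-e_p/d}$, which can be strictly larger than $p^{-\lceil e_p/d\rceil}$ when $d\nmid e_p$ (take $f(x)=x^2-p^3$, $D=p^3$: every solution has $\|n-\alpha\|_p=p^{-3/2}>p^{-2}$). The conclusion is still correct, because two integers both within $p^{-e_p/d}$ of the same $\alpha$ are within $p^{-e_p/d}$ of each other, hence congruent $\mathrm{mod}\ p^{\lceil e_p/d\rceil}$ since their difference has integral valuation; but the intermediate inequality as literally written is a small slip. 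Your Lagrange argument works with integer valuations throughout and never meets this issue.
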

\begin{proof}
Let $\tilde{D}=D/\gcd(D,f_d)$. Let $f$ have (not necessarily distinct) roots $\alpha_{p,1},\dots,\alpha_{p,d}$ in a suitable finite extension of $\Qq_p$, and let $\|\cdot\|_p$ be the extension of the norm on $\Qq_p$. Similarly, let $f$ have roots $\alpha_{\infty,1},\dots,\alpha_{\infty,d}$ over $\mathbb{C}$. If $f(n)\equiv 0\Mod{D}$ then $\prod_{i=1}^{d}\|n-\alpha_{p,i}\|_p\le \|\tilde{D}\|_p$ for all primes $p|\tilde{D}$, so certainly there exists a root $\alpha^{(p)}$ for each $p|\tilde{D}$ such that $\|n-\alpha^{(p)}\|_p\le \|D'\|_p$ on recalling the definition of $D'$. Similarly, if $|f(n)|\le B$ then certainly there is a root $\alpha^{(\infty)}$ such that $|n-\alpha^{(\infty)}|\le(B/|f_d|)^{1/d}$.

Let us be given a root $\alpha^{(\infty)}$ over $\mathbb{C}$, and a root $\alpha^{(p)}$ over $\overline{\Qq}_p$ for each prime $p|\tilde{D}$. Then integers $n$ which satisfy $\|n-\alpha^{(p)}\|_p\le \|D'\|_p$ for each $p|D'$ are simply integers in a single residue class modulo $D'$ (by the Chinese Remainder Theorem), and those with $|n-\alpha^{(\infty)}|<(B/|f_d|)^{1/d}$ and $n\in [1,y]$ are integers in an interval of length $\ll\min(y,(B/|f_d|)^{1/d})$. Thus there at most $1+\min(y,(B/|f_d|)^{1/d})/D'$ integers $n\le y$ which satisfy $\|a-\alpha^{(p)}\|_p\le \|D'\|_p$ for each $p|D'$ and $|n-\alpha^{(\infty)}|\le (B/|f_d|)^{1/d}$. But there are at most $d$ choices of $\alpha^{(\infty)}$, and at most $\tau_d(\tilde{D})$ possible choices of roots $\alpha^{(p)}$, so there are at most $d\tau_d(\tilde{D})(1+\min(y,B^{1/d}|f_d|^{-1/d})/D')$ integers $n\le y$ such that $f(n)\equiv 0\Mod{D}$ and $|f(n)|\le B$.
\end{proof}

\begin{lmm}\label{lmm:ArchBound}
Let $n>3k$. Let $\Lambda_{\ab}$ have successive minima $Z_1\le \dots\le Z_{n-k}$ and a Minkowski-reduced basis $\{\zb_1,\dots,\zb_{n-k}\}$. Let $\ell\le 2k$ be such that $\kappa_2=\|\wedge(\zb_1,\zb_\ell)\|Z_1^{-k}Z_\ell^{-k}>0$. Assume that $Z_1Z_\ell\ll BC$. Finally, let
\begin{align*}
S_{\ab}(B,C;\kappa)&=\#\Bigl\{\bb,\mathbf{c}\in\Lambda_{\ab}: \|\bb\|\le B, \|\mathbf{c}\|\le C, \|\wedge(\bb,\mathbf{c})\|\le \kappa B^k C^k\Bigr\}.
\end{align*}
Then we have
\[S_{\ab}(B,C;\kappa)\ll \Bigl(\frac{Z_1}{B}+\frac{Z_1Z_\ell}{BC}+\min\Bigl(1,\Bigl(\frac{\kappa}{\kappa_2}\Bigr)^{1/k}\Bigr)\frac{Z_1}{Z_\ell}\Bigr)\frac{Z_\ell^\ell}{\prod_{i=1}^{\ell}Z_i}\Bigl(\frac{B C}{Z_1Z_\ell}\Bigr)^{n-k}\log{BC}.\] 
\end{lmm}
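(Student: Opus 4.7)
The plan is to parametrize $\mathbf{b}, \mathbf{c} \in \Lambda_{\mathbf{a}}$ in the nice basis guaranteed by Lemma \ref{lmm:NiceBasis}, reduce the bilinear form $\wedge(\mathbf{b}, \mathbf{c})$ to a one-variable polynomial in some chosen coordinate, and apply Lemma \ref{lmm:PolyBound}. Specifically, I write $\mathbf{b} = \sum_{i=1}^{n-k} b_i' \mathbf{z}_i$ and $\mathbf{c} = \sum_{j=1}^{n-k} c_j' \mathbf{z}_j$. The niceness of the basis gives $\|\sum \lambda_i \mathbf{z}_i\| \gg \sum \|\lambda_i \mathbf{z}_i\|$, so the conditions $\|\mathbf{b}\| \le B$ and $\|\mathbf{c}\| \le C$ are equivalent (up to constants) to $|b_i'| \ll B/Z_i$ and $|c_j'| \ll C/Z_j$. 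Then I perform a dyadic decomposition: impose $|b_i'| \in [B_i, 2B_i]$ and $|c_j'| \in [C_j, 2C_j]$, which introduces at most $O(\log^{O(1)} BC)$ boxes; all but one logarithmic factor will be absorbed into the polynomial bound to give the final single $\log BC$.

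The core step is the polynomial estimate. Expand
\[
\wedge(\mathbf{b}, \mathbf{c}) \;=\; \sum_{I, J} \mu_I(\mathbf{b}')\, \nu_J(\mathbf{c}')\cdot \wedge(\mathbf{z}_I, \mathbf{z}_J),
\]
where $I, J$ range over appropriate multi-indices of size $k$ and $\mu_I, \nu_J$ are monomials of degree $k$ in the coordinates. Fix the component index $A$ maximizing $|\wedge(\mathbf{z}_1, \mathbf{z}_\ell)_A|$, so $|\wedge(\mathbf{z}_1, \mathbf{z}_\ell)_A| \gg \kappa_2 Z_1^k Z_\ell^k$. Then I fix all of $\mathbf{b}'$ and $\mathbf{c}' \setminus \{c_1'\}$ and view $P(c_1') := \wedge(\mathbf{b}, \mathbf{c})_A$ as a polynomial of degree at most $k$ in $c_1'$. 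A careful bookkeeping (using $\ell \le 2k$ to control which other basis vectors can interfere) shows that for a suitably generic choice of the other coordinates, the leading coefficient of $P$ is bounded below by a power of $b_\ell'$ times $\wedge(\mathbf{z}_1, \mathbf{z}_\ell)_A$, i.e.\ by $\gg \kappa_2 Z_1^k Z_\ell^k \cdot (b_\ell')^k$. Applying Lemma \ref{lmm:PolyBound} with $d = k$, $y \ll C/Z_1$ and polynomial value bound $\kappa B^k C^k$ yields
\[
\#\{c_1'\} \;\ll\; 1 \;+\; \Bigl(\frac{\kappa}{\kappa_2}\Bigr)^{1/k}\frac{BC}{Z_1 Z_\ell \cdot |b_\ell'|},
\]
up to a divisor factor that is negligible on average.

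Summing the inner bound over the remaining $c_j'$ (each contributing $O(C/Z_j)$), then over all $b_i'$ (each contributing $O(B/Z_i)$), and finally over the dyadic ranges produces the main factor $\frac{Z_\ell^\ell}{\prod_{i=1}^\ell Z_i}(BC/(Z_1 Z_\ell))^{n-k}$ with a multiplier given by the bound from Lemma \ref{lmm:PolyBound}. The three terms in the prefactor $Z_1/B + Z_1 Z_\ell/(BC) + \min(1, (\kappa/\kappa_2)^{1/k}) Z_1/Z_\ell$ arise respectively from: (i)~the degenerate case where $\mathbf{b}$ lies in a strict sub-lattice so that the leading coefficient of $P$ vanishes (handled by dropping one $b$-coordinate from the count, which gives $Z_1/B$); (ii)~the doubly-degenerate case where both $P$ and a secondary polynomial slice in $b_1'$ degenerate (giving $Z_1 Z_\ell/(BC)$); and (iii)~the generic case where Lemma \ref{lmm:PolyBound} applies with the full degree-$k$ leading term, producing $(\kappa/\kappa_2)^{1/k}$ and a ratio $Z_1/Z_\ell$ coming from the relative scales of the basis vectors. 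The min with $1$ is automatic since the count of $c_1'$ is trivially bounded by $C/Z_1$.

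The principal obstacle is step (iii): verifying that for the chosen component $A$ the polynomial $P(c_1')$ has leading coefficient of size $\gg \kappa_2 Z_1^k Z_\ell^k |b_\ell'|^k$, and that the degenerate configurations are indeed captured by the first two terms. This requires exploiting the specific structure of the map $T$ and the wedge product: one must show that the "off-diagonal" contributions from $\wedge(\mathbf{z}_I, \mathbf{z}_J)$ with $I \ne \{1,\dots,k\}$ or $J$ not involving $\mathbf{z}_\ell$ cannot cancel the main term, which follows from Lemma \ref{lmm:LinearSubspaceBound} bounding the dimension of vectors $\mathbf{x}$ with $\wedge(\mathbf{x}, \mathbf{z}_1) = 0$. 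The other subtle point is keeping track of the $\tau_k$ divisor factor from Lemma \ref{lmm:PolyBound}, which contributes negligibly on average because of its $L^1$ boundedness.
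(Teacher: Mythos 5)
Your approach differs from the paper's in a crucial way that introduces a genuine gap. You fix \emph{all} of $\mathbf{b}'$ and all of $\mathbf{c}'$ except $c_1'$ and view $P(c_1') = \wedge(\mathbf{b},\mathbf{c})_A$ as a one-variable polynomial of degree $\le k$. The leading coefficient of $P$ is then $\wedge(\mathbf{b},\mathbf{z}_1)_A$, a degree-$k$ polynomial in the coordinates $b_i'$ of $\mathbf{b}$ --- and you assert that for ``a suitably generic choice of the other coordinates'' this is $\gg \kappa_2 Z_1^k Z_\ell^k |b_\ell'|^k$. That assertion is not justified and fails in general: $\wedge(\mathbf{b},\mathbf{z}_1)_A$ is a sum of monomials $b_{i_0}'\cdots b_{i_{k-1}}' \cdot (\text{fixed minor})_A$ over multi-indices, and while the pure $b_\ell'^k$ term has the known coefficient $\wedge(\mathbf{z}_\ell,\mathbf{z}_1)_A$, the remaining terms can cancel it. There is no uniform lower bound on $|\wedge(\mathbf{b},\mathbf{z}_1)_A|$ for $\mathbf{b}$ ranging over the lattice, nor is the chosen component $A$ (which maximizes $|\wedge(\mathbf{z}_1,\mathbf{z}_\ell)_A|$) guaranteed to avoid cancellation in $\wedge(\mathbf{b},\mathbf{z}_1)$. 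Lemma \ref{lmm:LinearSubspaceBound} tells you only that the vanishing locus of the full vector $\wedge(\mathbf{b},\mathbf{z}_1)$ is low-dimensional; it says nothing about the single component $A$, and certainly nothing quantitative. Without this lower bound, Lemma \ref{lmm:PolyBound} cannot be applied, and the heart of your estimate collapses.

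The paper's proof avoids exactly this pitfall by working with the scalar $\|\wedge(\mathbf{b},\mathbf{c})\|^2$ (not a single component) and keeping \emph{two} free variables, $b_1$ and $c_\ell$, so that the doubly-leading coefficient --- the coefficient of $b_1^{2k}c_\ell^{2k}$ --- is the \emph{known} constant $\|\wedge(\mathbf{z}_1,\mathbf{z}_\ell)\|^2 = \kappa_2^2 Z_1^{2k}Z_\ell^{2k}$. It then introduces a dyadic parameter $U$ for the intermediate quantity $R(c_\ell) = c_\ell^{2k}\|\wedge(\mathbf{z}_1,\mathbf{z}_\ell)\|^2 + f_2(c_\ell)$ (whose leading coefficient in $c_\ell$ is known), counts $c_\ell$ with $R(c_\ell)\in[U,2U]$ via Lemma \ref{lmm:PolyBound}, and then for each such $c_\ell$ counts $b_1$ using the resulting polynomial in $b_1$ with leading coefficient $R(c_\ell)\asymp U$. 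This two-stage decomposition is precisely what lets the argument sidestep the unknown coefficient problem. Your sketch of the three prefactor terms (and the ``degenerate cases'' you invoke) is also too vague to carry; in the paper these come cleanly out of combining the $O(1+\cdots)$ factors from both Lemma \ref{lmm:PolyBound} applications with the trivial bounds $B/Z_1$, $C/Z_\ell$ and summing the $U$-dyadic ranges at the cost of the single $\log BC$.
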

\begin{proof}
By symmetry we may assume without loss of generality that $B\le C$. We may further assume that $Z_1\ll B$ since otherwise there are no vectors $\bb\in\Lambda_{\ab}$ with $\|\bb\|\le B$ and so $S_{\ab}(B,C;\kappa)=0$. 

We recall from Lemma \ref{lmm:Basis} that we can write $\bb=\sum_{i=1}^{n-k}b_i\zb_i$, $\mathbf{c}=\sum_{i=1}^{n-k}c_i\zb_i$ for integers $b_i\ll B/Z_i$ and $c_i\ll C/Z_i$. We have that $\|\wedge(\bb,\mathbf{c})\|^2$ is given by an integer polynomial of degree $4k$ in the coefficients $b_i,c_i$, which is a polynomial of degree $2k$ in the $b_i$ and degree $2k$ in the $c_i$. Since the coefficient of $b_1^{2k}c_{\ell}^{2k}$ is $\|\wedge(\zb_1,\zb_{\ell})\|^2\ne 0$, we have that this polynomial takes the form
\[b_1^{2k}(c_{\ell}^{2k}\|\wedge(\zb_1,\zb_{\ell})\|^2+f_2)+f_3,\]
where $f_2$ is a polynomial independent of $b_1$ and degree at most $2k-1$ in $c_{\ell}$, and $f_3$ is a polynomial of degree at most $2k-1$ in $b_1$.

Let us be given a choice of $b_2,\dots,b_{n-k},c_1,\dots,c_{\ell-1},c_{\ell+1},\dots,c_{n-k}$ and a quantity $U=2^j\ll C^{2k} B^{2k}$. By Lemma \ref{lmm:PolyBound} there are $O(1+U^{1/2k}\|\wedge(\zb_1,\zb_{\ell})\|^{-1/k})$ possible values of $c_{\ell}$ such that $c_{\ell}^{2k}\|\wedge(\zb_1,\zb_{\ell})\|^2+f_2\in [U,2U]$. Here the implied constant does not depend on our choice of the other $b_i,c_i$ or on $U$. For each such choice of $c_{\ell}$ there are $O(1+\kappa^{1/k} B C U^{-1/2k})$ possible choices of $b_1$ such that $\|\wedge(\bb,\mathbf{c})\|^2\ll \kappa^2 B^{2k} C^{2k}$ by Lemma \ref{lmm:PolyBound} again. Thus, combining these bounds with the trivial bounds $B/Z_1$ and $1+C/Z_\ell$ for the number of choices of $b_1$ and $c_{\ell}$ respectively, we find that there are 
\begin{align*}
&\ll \min\Bigl(1+\frac{C}{Z_\ell},1+\frac{U^{1/2k}}{\|\wedge(\zb_1,\zb_\ell)\|^{1/k}}\Bigr)\min\Bigl(\frac{B}{Z_1},1+\frac{\kappa^{1/k} BC}{U^{1/2k}}\Bigr)\\
&\ll 1+ \frac{B}{Z_1}+\frac{C}{Z_\ell}+\frac{\kappa^{1/k}BC}{\|\wedge(\zb_1,\zb_{\ell})\|^{1/k}}\\
&\ll \frac{B}{Z_1}+\frac{C}{Z_\ell}+\frac{\kappa^{1/k}BC}{\kappa_2^{1/k}Z_1Z_\ell}
\end{align*}
possible choices of $b_1,c_{\ell}$ for this value of $U$. Since this bound does not depend on $U$, we can sum over all possible values of $U=2^j$ with $1\le U\ll B^k C^k$ at the cost a factor $O(\log{BC})$. We also have the trivial bound where $\kappa/\kappa_2$ is replaced by $1$. Thus for any choice of $b_2,\dots,b_{n-k},c_1,\dots,c_{\ell-1},c_{\ell+1},\dots,c_{n-k}$ we have
\begin{equation}
\ll \Bigl(\frac{Z_1}{B}+\frac{Z_\ell}{C}+\min\Bigl(1,\Bigl(\frac{\kappa}{\kappa_2}\Bigr)^{1/k}\Bigr)\Bigr)\frac{BC\log{BC}}{Z_1Z_\ell}\label{eq:b1clChoices}
\end{equation}
choices of $b_1,c_{\ell}$ such that $\|\wedge(\bb,\mathbf{c})\|\le \kappa B^k C^k$.

Let $j_B,j_C\le n-k$ chosen maximally such that $Z_{j_B} \le B$ and $Z_{j_C} \le C$. Then, since the number of choices of $b_i$ is $O(1+B/Z_i)$ (and similarly for $c_i$), the number of choices of $b_2,\dots,b_{n-k},c_1,\dots,c_{\ell-1},c_{\ell+1},\dots,c_{n-k}$ is
\begin{align}
\ll \prod_{\substack{1\le i \le j_B\\ i\ne 1}}\frac{B}{Z_i}\prod_{\substack{1\le i \le j_C\\ i\ne \ell}}\frac{C}{Z_i}.\label{eq:RemainingChoices}
\end{align}
We recall that we assume $B\le C$ so $j_B\le j_C$. Thus, splitting into the three cases $j_C \ge j_B\ge \ell$, $j_C\ge \ell>j_B$ and $\ell > j_C\ge j_B$, and pulling out a factor $Z_\ell^{\ell}/\prod_{i=1}^{\ell}Z_i$, we see that \eqref{eq:RemainingChoices} is
\begin{align*}
&\ll Y=\frac{Z_\ell^\ell}{\prod_{i=1}^{\ell}Z_i}\times \begin{cases}
\displaystyle\frac{B^{j_B-1}C^{j_C-1}}{Z_1^{\ell-2}Z_\ell^{j_B+j_C-\ell}},\qquad &j_C\ge j_B \ge \ell, \\
\displaystyle\frac{B^{j_B-1}C^{j_C-1}}{Z_1^{j_B-1}Z_\ell^{j_C-1}},\qquad &j_C \ge\ell > j_B,\\
\displaystyle\frac{B^{j_B-1}C^{j_C}}{Z_1^{j_B-1}Z_\ell^{j_C}},\qquad &\ell>j_C\ge j_B.
\end{cases}
\end{align*}
Define a quantity $F$ by
\begin{align*}
F=\begin{cases}
\Bigl(\frac{BC}{Z_1Z_\ell}\Bigr)^{n-k-j_C}\Bigl(\frac{Z_\ell}{Z_1}\Bigr)^{j_B-\ell}\Bigl(\frac{B}{Z_1}\Bigr)^{j_C-j_B},\qquad &j_C\ge j_B\ge\ell,\\
\Bigl(\frac{BC}{Z_1Z_\ell}\Bigr)^{n-k-j_C}\Bigl(\frac{B}{Z_1}\Bigr)^{j_C-j_B-1},\qquad &j_C\ge \ell>j_B,\\
\Bigl(\frac{BC}{Z_1Z_\ell}\Bigr)^{n-k-j_C-1}\Bigl(\frac{B}{Z_1}\Bigr)^{j_C-j_B},\qquad &\ell>j_C\ge j_B.
\end{cases}
\end{align*}
Since $BC/Z_1Z_\ell,Z_\ell/Z_1,B/Z_1\gg 1$ and we have the bounds $\ell\le 2k<n-k$ and $j_C\le n-k$, we see that $F\ge 1$. Thus we find that for all cases we have
\begin{align*}
Y\le Y F &\le  \frac{ Z_\ell^\ell }{ \prod_{i=1}^{\ell}Z_i }\frac{ B^{n-k-1}C^{n-k-1} }{ Z_1^{n-k-1}Z_\ell^{n-k-1} }\Bigl(\frac{Z_1}{B}+\frac{Z_1}{Z_\ell}\Bigr).
\end{align*}
Combining this with our bound \eqref{eq:b1clChoices} on the number of choices of $b_1,c_{\ell}$, we obtain that the total number of $\bb,\mathbf{c}$ is 
\begin{align*}&\ll \Bigl(\frac{Z_1}{B}+\frac{Z_\ell}{C}+\min\Bigl(1,\Bigl(\frac{\kappa}{\kappa_2}\Bigr)^{1/k}\Bigr)\Bigr)\Bigl(\frac{Z_1}{B}+\frac{Z_1}{Z_\ell}\Bigr)\frac{ Z_\ell^\ell B^{n-k}C^{n-k}\log{BC}}{Z_1^{n-k}Z_\ell^{n-k}\prod_{i=1}^{\ell}Z_i}.\end{align*}

Finally, we note that since $Z_1\le B$
\[\frac{ Z_1 }{ B }\Bigl(\frac{Z_1}{B}+\frac{Z_\ell}{C}+\min\Bigl(1,\Bigl(\frac{\kappa}{\kappa_2}\Bigr)^{1/k}\Bigr)\Bigr)\ll \frac{Z_1 Z_\ell}{BC}+\frac{Z_1}{B},\]
and since $Z_1\le Z_\ell$, $B\le C$
\[\frac{Z_1}{Z_\ell}\Bigl(\frac{Z_1}{B}+\frac{Z_\ell}{C}+\min\Bigl(1,\Bigl(\frac{\kappa}{\kappa_2}\Bigr)^{1/k}\Bigr)\Bigr)\ll\frac{Z_1}{B}+\min\Bigl(1,\Bigl(\frac{\kappa}{\kappa_2}\Bigr)^{1/k}\Bigr)\frac{Z_1}{Z_\ell}.\]
These bounds give the result.
\end{proof}
\begin{lmm}[Determinant rarely small for Archimedean reasons]\label{lmm:MainSum}
Let $n>3k$. Let
\begin{align*}
	S(A;B,C)&=\{(\ab,\bb,\mathbf{c})\in(\Zz^n)^3:\|\ab\|\in[A,2A],\|\bb\|\in[B,2B],\|\mathbf{c}\|\in[C,2C],\\
&\qquad\wedge(\bb,\mathbf{c})\ne \mathbf{0},\,\ab\in\Lambda_{\bb,\mathbf{c}}\}\\
S(A;B,C;\kappa)&=\{(\ab,\bb,\mathbf{c})\in S(A;B,C):\|\wedge(\bb,\mathbf{c})\|\le \kappa B^k C^k\}.
\end{align*}
Then there is a constant $\delta=\delta(n,k)>0$ and $G=G(n,k)$ such that
\begin{align*}
\# S(A;B,C;\kappa)&\ll \Bigl(\kappa^{\delta/k}+\min\Bigl(1,\frac{(BC)^{1/2-\delta}}{B}\Bigr)\Bigr)A^{n-2k}B^{n-k}C^{n-k}\exp(G(\log\log{BC})^2).
\end{align*}
In particular, taking $\kappa=\epsilon_0^{8k/\delta}=\tilde{q}^{32kn/\delta}\exp(-8k\sqrt[7]{\log{X}}/\delta)$ and $B=C\gg X^{\delta}$, we have
\begin{align*}
&\#\{(\ab,\bb_1,\bb_2)\in\mathcal{S}(A;B,B):0<\|\wedge(\bb_1,\bb_2)\|\ll \epsilon_0^{8k/\delta} B^{2k}\}\ll \epsilon_0^{7} A^{n-2k}B^{2n-2k}.
\end{align*}
\end{lmm}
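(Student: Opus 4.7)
The strategy is to fix $\mathbf{a}$, bound the inner sum over pairs $(\mathbf{b},\mathbf{c})$ using Lemma \ref{lmm:ArchBound}, and sum the outcome over $\mathbf{a}$ via a dyadic decomposition of the successive minima of $\Lambda_{\mathbf{a}}$. The crucial opening move is that the product $\diamond$ is commutative (as multiplication in $K$ is), so that the condition $\mathbf{a}\in\Lambda_{\mathbf{b},\mathbf{c}}$ is equivalent to $\mathbf{b},\mathbf{c}\in\Lambda_{\mathbf{a}}$, and hence
\[
\#S(A;B,C;\kappa)\;\le\;\sum_{\substack{\mathbf{a}\in\mathbb{Z}^n\\ \|\mathbf{a}\|\in[A,2A]}}S_{\mathbf{a}}(B,C;\kappa),
\]
with the analogous inequality for $\#S(A;B,C)$ using the trivial bound $\kappa=1$. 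Since $n>3k$, Lemma \ref{lmm:NiceBasis} furnishes for each $\mathbf{a}\ne\mathbf{0}$ a basis $\mathbf{z}_1(\mathbf{a}),\dots,\mathbf{z}_{n-k}(\mathbf{a})$ of $\Lambda_{\mathbf{a}}$ with $\|\mathbf{z}_i\|\asymp Z_i(\mathbf{a})$ and $\wedge(\mathbf{z}_1,\mathbf{z}_{k+1})\ne\mathbf{0}$, so Lemma \ref{lmm:ArchBound} applies with $\ell=k+1$ and yields a bound on $S_{\mathbf{a}}(B,C;\kappa)$ depending only on $Z_1(\mathbf{a})$, $Z_{k+1}(\mathbf{a})$ and $\kappa_2(\mathbf{a})=\|\wedge(\mathbf{z}_1,\mathbf{z}_{k+1})\|/(Z_1^{k}Z_{k+1}^{k})$.

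I would then sum dyadically over these three parameters, and the key auxiliary task is to bound the number of $\mathbf{a}$ with prescribed dyadic values. Here I again exploit symmetry one level down: $\mathbf{z}\in\Lambda_{\mathbf{a}}\iff\mathbf{a}\in\Lambda_{\mathbf{z}}$, so counting $\mathbf{a}$ with $Z_1(\mathbf{a})\asymp Z$ reduces to counting pairs $(\mathbf{z},\mathbf{a})$ with $\|\mathbf{z}\|\asymp Z$ and $\mathbf{a}\in\Lambda_{\mathbf{z}}\cap\{\|\mathbf{a}\|\asymp A\}$. Lemma \ref{lmm:Davenport} applied to $\Lambda_{\mathbf{z}}$ bounds the inner count, with $\det\Lambda_{\mathbf{z}}=\|\wedge(\mathbf{z})\|/D_{\mathbf{z}}$ by Lemma \ref{lmm:Latticedets}. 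The resulting divisor-type sums over $\mathbf{z}$ are absorbed using Lemma \ref{lmm:DivisorBound}, and the accumulated polylogarithmic losses from iterating the dyadic decompositions are what produce the $\exp(O(\log BC)^{2})$ factor.

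For the first (unconstrained) bound this suffices: for generic $\mathbf{a}$, $\det\Lambda_{\mathbf{a}}\asymp A^{k}$ and the per-$\mathbf{a}$ contribution is the expected $A^{-k}B^{n-k}\cdot A^{-k}C^{n-k}$, summing to $A^{n-2k}B^{n-k}C^{n-k}$, while the skewed ranges contribute less by the above count. For the second bound, on the set where $\kappa_2(\mathbf{a})$ is bounded below by an absolute constant, the factor $\min(1,(\kappa/\kappa_2)^{1/k})$ in Lemma \ref{lmm:ArchBound} delivers the $\kappa^{1/k}$ saving directly. The remaining contribution, from $\mathbf{a}$ for which $\|\wedge(\mathbf{z}_1,\mathbf{z}_{k+1})\|$ is anomalously small, is controlled by combining Lemma \ref{lmm:LinearSubspaceBound}, which confines $\mathbf{z}_{k+1}$ to a subspace of dimension $\le k$ whenever $\wedge(\mathbf{z}_{k+1},\mathbf{z}_1)$ vanishes, with Lemma \ref{lmm:PolyBound} applied to the polynomial $\|\wedge(\mathbf{x},\mathbf{z}_1)\|^{2}$ to convert this algebraic near-vanishing into a quantitative count of lattice vectors close to the locus. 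This produces the $(BC)^{1/2-\delta}/B$ term for some $\delta=\delta(n,k)>0$.

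The main obstacle is the last step: turning the qualitative non-degeneracy supplied by Lemma \ref{lmm:LinearSubspaceBound} into an effective power saving with an explicit exponent $\delta>0$. One must work not with exact vanishing of $\wedge(\mathbf{z}_{k+1},\mathbf{z}_1)$ but with $\|\wedge(\mathbf{z}_{k+1},\mathbf{z}_1)\|\le\kappa_2 Z_1^{k}Z_{k+1}^{k}$, and the argument must interlock with the dyadic geometry-of-numbers book-keeping on both $\Lambda_{\mathbf{a}}$ and $\Lambda_{\mathbf{z}}$. Once this is done, the specialisation to $B=C\gg X^{\delta}$ and $\kappa=\epsilon_0^{8k}$ follows by substitution: the saving $\kappa^{1/k}=\epsilon_0^{8}$ from the first term and $B^{-\delta}\ll X^{-\delta^{2}}$ from the second both comfortably dominate the polynomial factor, delivering the claimed $\epsilon_0^{7}A^{n-2k}B^{2n-2k}$.
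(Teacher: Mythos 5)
Your proposal correctly identifies the main tools---Lemma \ref{lmm:ArchBound} as the per-lattice engine, Lemma \ref{lmm:NiceBasis} to secure $\wedge(\mathbf{z}_1,\mathbf{z}_{k+1})\ne\mathbf{0}$, the swap $\mathbf{a}\in\Lambda_{\mathbf{b},\mathbf{c}}\iff\mathbf{b},\mathbf{c}\in\Lambda_{\mathbf{a}}$, and dyadic book-keeping over minima---but the central mechanism is missing: the paper's proof is an \emph{induction on $BC$}. After fixing $(\mathbf{b},\mathbf{c})$, one replaces $\mathbf{a}$ by the \emph{shortest vector} $\mathbf{v}$ of $\Lambda_{\mathbf{b},\mathbf{c}}$ (giving the weight $A^{n-2k}/\|\mathbf{v}\|^{n-2k}$), then counts $(\mathbf{b},\mathbf{c})\in\Lambda_{\mathbf{v}}^2$ via Lemma \ref{lmm:ArchBound}. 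The crucial step is to re-read the factor $Z_{k+1}^{k}/\prod_{i\le k}Z_i$ in that bound as a \emph{lower} bound for the number of $\mathbf{y}\in\Lambda_{\mathbf{v}}$ with $\|\mathbf{y}\|\asymp Z_{k+1}$ and $0<\|\wedge(\mathbf{z}_1,\mathbf{y})\|\ll\kappa_2 Z_1^kZ_{k+1}^k$; dividing by it converts the sum over $\mathbf{v}$ into a sum over triples $(\mathbf{v},\mathbf{z}_1,\mathbf{y})$, that is, a smaller instance $S(V;Z,Y;K)$ of the very quantity being bounded. The hypothesis $n>3k$ forces $ZY\ll(BC)^{1-2\delta}$, so the recursion genuinely descends; unwinding it is what produces both the $\kappa^{1/k}$ saving and the $(BC)^{1/2-\delta}/B$ term, as well as the polylog loss.

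Your substitute for this---Lemma \ref{lmm:LinearSubspaceBound} combined with Lemma \ref{lmm:PolyBound}---does not close the gap on its own. Lemma \ref{lmm:LinearSubspaceBound} bounds only the dimension of the \emph{exact} vanishing locus $\wedge(\mathbf{x},\mathbf{z}_1)=\mathbf{0}$, and Lemma \ref{lmm:PolyBound} is a one-variable tool; neither directly yields a quantitative count for near-vanishing $\|\wedge(\mathbf{z}_1,\mathbf{z}_{k+1})\|\le\kappa_2 Z_1^kZ_{k+1}^k$ in $2(n-k)$ variables, which is exactly what the recursion handles. Two further issues with fixing $\mathbf{a}$ directly rather than passing to the shortest vector of $\Lambda_{\mathbf{b},\mathbf{c}}$: (i) the hypothesis $Z_1Z_\ell\ll BC$ of Lemma \ref{lmm:ArchBound} is not automatic for arbitrary $\mathbf{a}$ with $\|\mathbf{a}\|\asymp A$, whereas it follows for free from $\|\mathbf{v}\|\ll(BC)^{k/(n-2k)}$ and $Z_1^kZ_{k+1}^{n-2k}\ll V^k$; and (ii) the bound on $\sum_{\mathbf{a}}$ with prescribed $Z_1,Z_{k+1},\kappa_2$ that your scheme needs is essentially Lemma \ref{lmm:MinimaSum}, which in the paper is a \emph{byproduct} of this very proof, so invoking it here would be circular.
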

\begin{proof}
We prove the result by induction on the size of $BC$. The lemma trivially holds if $BC\ll 1$. Assume that there is a constant $G$ such that whenever $UV<2^J$ with $U\le V$ we have
\[
\# S(A;U,V;\kappa)\le G\Bigl(\kappa^{\delta/k}+\min\Bigl(1,\frac{(UV)^{1/2-\delta}}{U}\Bigr)\Bigr)A^{n-2k}U^{n-k}V^{n-k}\exp(G(\log\log{UV})^2).
 \] 
 We now wish to bound $\#S(A;B,C;\kappa)$ using the same constant $G$ for $BC\le  2^{J+\delta J}$.
 
Given $\bb,\mathbf{c}$ with $\wedge(\bb,\mathbf{c})\ne \mathbf{0}$ and $\|\bb\|\in[B,2B]$ and $\|\mathbf{c}\|\in[C,2C]$, we have that any $\ab$ such that $(\ab,\bb,\mathbf{c})$ is in $S(A;B,C,\kappa)$ satisfies $\|\ab\|\in[A,2A]$ and $\ab\in\Lambda_{\bb,\mathbf{c}}$. Since $\wedge(\bb,\mathbf{c})\ne \mathbf{0}$, $\Lambda_{\bb,\mathbf{c}}$ is a lattice of rank $n-2k$ and determinant $\ll B^k C^k$. If $\vb=\vb(\bb,\mathbf{c})$ is the shortest vector in $\Lambda_{\bb,\mathbf{c}}$, then $\|\vb\|^{n-2k}\ll B^k C^k$ and the number of $\ab\in\Lambda_{\bb,\mathbf{c}}$ with $\|\ab\|\in[A,2A]$ is $O(A^{n-2k}/\|\vb\|^{n-2k})$. We recall that $\vb\in\Lambda_{\bb,\mathbf{c}}$ implies $\bb,\mathbf{c}\in\Lambda_{\vb}$. Thus, putting $\|\vb\|$ in one of $O(\log{B C})$ dyadic ranges $[V,2V]$ we have
\begin{align}
S(A;B,C;\kappa)& \ll A^{n-2k}\sum_{\substack{\|\bb\|\in[B,2B] \\ \|\mathbf{c}\|\in[C,2C] \\ \|\wedge(\bb,\mathbf{c})\|\le \kappa BC }}\frac{1}{\|\vb(\bb,\mathbf{c})\|^{n-2k}}\nonumber\\
& \ll A^{n-2k} (\log{BC})\sup_{V^{n-2k}\ll B^k C^k}\sum_{\|\vb\|\in[V,2V]}\frac{1}{V^{n-2k}}\sum_{\substack{\bb,\mathbf{c}\in\Lambda_{\vb} \\ \|\bb\|\in [B,2B] \\ \|\mathbf{c}\| \in [C,2C]\\ \|\wedge(\bb,\mathbf{c})\|\le \kappa B C}}1.\label{eq:BCSum}
\end{align}
Since $\vb\ne \mathbf{0}$, $\Lambda_{\vb}$ is a lattice of rank $n-k$. Let this have successive minima $Z_1\le\dots\le Z_{n-k}$. We note that since $n>3k$ and $V^{n-2k}\ll B^k C^k$, we have
\begin{align}
Z_1^k Z_{k+1}^k\ll Z_1^{k}Z_{k+1}^{n-2k}\ll \prod_{i=1}^{n-k}Z_i\ll \det(\Lambda_{\vb})\ll V^{k} \ll (BC)^{k^2/(n-2k)}.\label{eq:BCBound}
\end{align}
Thus $Z_1Z_{k+1}\ll (BC)^{1-2\delta}$ where $\delta=(n-3k)/(2n-4k)>0$. By Lemma \ref{lmm:ArchBound} (taking $\ell=k+1$), the inner sum in \eqref{eq:BCSum} is
\begin{equation}
\ll \Bigl(\frac{Z_1}{B}+\frac{Z_1 Z_{k+1} }{BC}+\min\Bigl(1,\Bigl(\frac{\kappa}{\kappa_2}\Bigr)^{\delta/k}\Bigr)\frac{Z_1}{Z_{k+1}}\Bigr)\frac{Z_{k+1}^{k}}{\prod_{i=1}^{k}Z_i}\Bigl(\frac{B C}{Z_1Z_{k+1}}\Bigr)^{n-k}\log{BC},\label{eq:InnerSumBnd}
\end{equation}
where $\kappa_2=\sup_{\zb_1,\zb_{k+1}}Z_1^{-k}Z_{k+1}^{-k}\|\wedge(\zb_1,\zb_{k+1})\|$ and the supremum is over all $\zb_1,\zb_{k+1}\in\Lambda_{\ab}$ which can be extended to a basis $\zb_1,\dots,\zb_{n-k}$ with $\|\sum_{i=1}^{n-k}\lambda_i\zb_i\|\asymp \sum_{i=1}^{n-k}|\lambda_i|Z_i$. We see that $\kappa_2>0$ by Lemma \ref{lmm:NiceBasis}.

We note that there are
\[\gg \frac{Z_{k+1}^{k}}{\prod_{i=1}^{k}Z_i}\]
different vectors $\mathbf{y}\in\Lambda_{\vb}$ with $Z_{k+1}\ll \|\mathbf{y}\|\ll Z_{k+1}$ such that $0<\|\wedge(\zb_1,\mathbf{y})\|\ll \kappa_2 Z_1^k Z_{k+1}^k$, since given a basis $\zb_1,\dots,\zb_{n-k}$ satisfying the properties of Lemma \ref{lmm:NiceBasis}, all choices $\mathbf{y}=\zb_{k+1}+\sum_{i=1}^{k}\lambda_i\zb_i$ with $\|\wedge(\mathbf{y},\zb_1)\|\ne 0$ and $\lambda_{i}\ll Z_{k+1}/Z_i$ satisfy this by the maximality of $\kappa_2$. Thus we may replace the factor $Z_{k+1}^k/\prod_{i=1}^k Z_i$ of \eqref{eq:InnerSumBnd} with a sum over all such $\mathbf{y}$. Putting $\|\zb_1\|$, $\|\mathbf{y}\|$, $\kappa_2$ each in one of $O(\log{BC})$ dyadic ranges $[Z,2Z]$, $[Y,2Y]$ and $[K,2K]$ respectively, we have
\begin{align}
S(A;B,C;\kappa)&\ll A^{n-2k}B^{n-k}C^{n-k}(\log{BC})^5\nonumber\\
&\qquad \times \sup_{\substack{V^{n-2k}\ll B^k C^k\\ Z\ll Y\\ Z^{k}Y^{n-2k}\ll V^k \\  (Z Y)^{-k}\ll K \ll 1}}\sum_{\|\vb\|\in[V,2V]}\frac{T_{Z,Y;B,C;\kappa,K}}{Z^{n-k}Y^{n-k}V^{n-2k}}\sum_{\substack{\zb_1,\mathbf{y}\in\Lambda_{\vb}\\ \|\zb_1\|\in [Z,2Z] \\ \|\mathbf{y}\|\in [Y,2Y]\\ 0<\|\wedge(\zb_1,\mathbf{y})\|\ll K(Z Y)^{k}}}1\nonumber\\
&\ll A^{n-2k}B^{n-k}C^{n-k}(\log{BC})^5 \sup_{V,Z,Y,K}\frac{T_{Z,Y;B,C;\kappa,K}S(V;Z,Y;K)}{Z^{n-k}Y^{n-k}V^{n-2k}},\label{eq:IterativeBound}
\end{align}
where
\[
T_{Z,Y;B,C;\kappa,K}=\Bigl(\frac{Z}{B}+\frac{Z Y}{BC}+\frac{Z}{Y}\min\Bigl(1,\Bigl(\frac{\kappa}{K}\Bigr)^{\delta/k}\Bigr)\Bigr),
\]
and where the supremum in the final line is over all $V,Z,Y,K$ satisfying the constraints $V^{n-2k}\ll B^k C^k$, $Z\ll Y$, $Z^{k}Y^{n-2k}\ll V^k$ and $(Z Y)^{-k}\ll K \ll 1$.

If $BC<2^{J+\delta J}$, then, since $Z Y\ll (BC)^{1-2\delta}$, we have $Z Y<2^J$ if $J$ is sufficiently large in terms of $n,k$. We can then apply the assumption of the lemma, giving
\begin{align}
&\frac{S(A;B,C;\kappa)}{A^{n-2k}B^{n-k}C^{n-k}(\log{BC})^5}\nonumber\\
&\qquad\ll G\sup_{V,Z,Y,K}T_{Z,Y;B,C;\kappa,K}\Bigl(K^{\delta/k}+\min\Bigl(1,\frac{(Z Y)^{1/2-\delta}}{Z}\Bigr)\Bigr)\exp(G(\log\log{ZY})^2).\label{eq:EpsTargetBound}
\end{align}
Since $Z\ll Y$ and $K\gg (Z Y)^{-k}$ we have
\begin{align*}
\frac{Z}{Y}\min\Bigl(1,\Bigl(\frac{\kappa}{K}\Bigr)^{\delta/k}\Bigr)\Bigl(K^{\delta/k}+\frac{(Z Y)^{1/2-\delta}}{Z}\Bigr)&\ll \kappa^{\delta/k}\Bigl(1+\frac{(Z Y)^\delta(Z Y)^{1/2-\delta}}{Y}\Bigr)\nonumber\\
&\ll \kappa^{\delta/k}.
\end{align*}
Since $K\ll 1$, $Z\ll B$, $Z Y\ll (BC)^{1-2\delta}$ and $Z\ll (Z Y)^{1/2}\ll (BC)^{1/2-\delta}$ we have
\begin{align*}
\Bigl(\frac{Z}{B}+\frac{Z Y}{BC}\Bigr)\Bigl(K^{\delta/k}+1\Bigr)&\ll \frac{Z}{B}+\frac{Z Y}{BC}\nonumber\\
&\ll \min\Bigl(1,\frac{(BC)^{1/2-\delta}}{B}\Bigr)+\frac{1}{(BC)^{2\delta}}\nonumber\\
&\ll \min\Bigl(1,\frac{(BC)^{1/2-\delta}}{B}\Bigr).
\end{align*}
Thus
\[
T_{Z,Y;B,C;\kappa,K}\Bigl(K^{\delta/k}+\min\Bigl(1,\frac{(Z Y)^{1/2-\delta}}{Z}\Bigr)\Bigr)\ll \kappa^{\delta/k}+\min\Bigl(1,\frac{(BC)^{1/2-\delta}}{B}\Bigr).
\]
Since $ZY\ll (BC)^{1-2\delta}$ we have $\log\log{Z Y}<\log\log{BC}-2\delta$. Substituting these bounds into \eqref{eq:EpsTargetBound} gives
\[
\frac{S(A;B,C;\kappa)}{A^{n-2k}B^{n-k}C^{n-k}(\log{BC})^5}\ll G\Bigl(\kappa^{\delta/k}+\min\Bigl(1,\frac{(BC)^{1/2-\delta}}{B}\Bigr)\Bigr)\exp(G(\log\log{BC}-2\delta)^2).
\]
Finally $\exp(G(\log\log{BC}-2\delta)^2)\ll (\log{BC})^{-6}\exp(G(\log\log{BC})^2)$ for $G>2 \delta^{-1}$, and so we obtain the claimed bound for $S(A;B,C;\kappa)$ if $BC$ is large enough.
\end{proof}

\begin{lmm}\label{lmm:MinimaSum}
Let $n>3k$ and $G'$ be sufficiently large in terms of $n$ and $k$. Let $Z_i(\ab)$ be the $i^{th}$ successive minimum of $\Lambda_{\ab}$. Then we have
\[\sum_{0<\|\ab\|\ll A}\frac{Z_{k+1}(\ab)^{k}}{Z_1(\ab)^{n-k}Z_{k+1}(\ab)^{n-k}\prod_{i=1}^{k}Z_i(\ab)}\ll A^{n-2k}\exp(G'(\log\log{A})^2).\]
\end{lmm}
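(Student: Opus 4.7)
The plan is to dyadically decompose $\mathbf{a}$ according to the dyadic values $Z\asymp Z_1(\mathbf{a})$ and $Y\asymp Z_{k+1}(\mathbf{a})$, and on each piece bound the summand by relating it to a count of short pairs $(\mathbf{b},\mathbf{c})$ in $\Lambda_{\mathbf{a}}$. Since $\diamond$ is commutative (it computes coefficients of a product in $\mathbb{Z}[\sqrt[n]{\theta}]$), the condition $\mathbf{a}\in\Lambda_{\mathbf{b},\mathbf{c}}$ is equivalent to $\mathbf{b},\mathbf{c}\in\Lambda_{\mathbf{a}}$, so such triples are exactly those counted by $\#S(A;Z,Y)$, to which I will apply the bound from Lemma \ref{lmm:MainSum}.

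For the per-piece estimate, I will use Lemma \ref{lmm:NiceBasis} to obtain a basis $\mathbf{z}_1,\dots,\mathbf{z}_{n-k}$ of $\Lambda_{\mathbf{a}}$ with $\|\mathbf{z}_i\|\asymp Z_i(\mathbf{a})$ and $\wedge(\mathbf{z}_1,\mathbf{z}_{k+1})\ne \mathbf{0}$. Setting $\mathbf{b}=\mathbf{z}_1$ and $\mathbf{c}=\mathbf{z}_{k+1}+\sum_{i=1}^k\lambda_i\mathbf{z}_i$ with $\lambda_i\in\mathbb{Z}$ and $|\lambda_i|\ll Z_{k+1}/Z_i(\mathbf{a})$ produces $\gg Y^k/\prod_{i=1}^k Z_i(\mathbf{a})$ pairs with $\|\mathbf{b}\|\ll Z$ and $\|\mathbf{c}\|\asymp Y$ (using the third bullet of Lemma \ref{lmm:NiceBasis} for the lower bound $\|\mathbf{c}\|\gg Y$). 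A positive proportion of these pairs have $\wedge(\mathbf{b},\mathbf{c})\ne \mathbf{0}$, because $\wedge(\mathbf{z}_1,\mathbf{c})$ is an affine vector-valued function of the $\lambda_i$ whose constant term $\wedge(\mathbf{z}_1,\mathbf{z}_{k+1})$ is non-zero, so its zero locus is a proper subvariety of codimension at least one. Summing over $\mathbf{a}$ in the dyadic range and applying Lemma \ref{lmm:MainSum} will yield
\[
\sum_{\substack{\|\mathbf{a}\|\ll A\\ Z_1(\mathbf{a})\asymp Z,\,Z_{k+1}(\mathbf{a})\asymp Y}}\frac{Y^k}{\prod_{i=1}^k Z_i(\mathbf{a})}\ll \#S(A;Z,Y)\ll A^{n-2k}Z^{n-k}Y^{n-k}\exp(O(\log\log ZY)^2).
\]
Since the summand of the lemma equals $\asymp Y^k/\bigl(Z^{n-k}Y^{n-k}\prod_{i=1}^k Z_i(\mathbf{a})\bigr)$ on this dyadic range, dividing through by $Z^{n-k}Y^{n-k}$ yields a per-piece bound of $A^{n-2k}\exp(O(\log\log ZY)^2)$.

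To finish, the crude estimate $1\le Z_i(\mathbf{a})\le Z_{n-k}(\mathbf{a})\ll \det(\Lambda_{\mathbf{a}})\le \|\wedge(\mathbf{a})\|\ll A^k$ shows only $O((\log A)^2)$ dyadic pairs $(Z,Y)$ arise, each with $\log\log ZY\le \log\log A+O(1)$; the single vector $\mathbf{a}=\mathbf{0}$ contributes at most $O(1)$. Summing the per-piece bound and absorbing the polylogarithmic factor into the exponential (using $(\log A)^2=\exp(2\log\log A)$) will produce the claimed bound $\ll A^{n-2k}\exp(O(\log\log A)^2)$. The only delicate step is the pair-counting lower bound, and this is immediate from Lemma \ref{lmm:NiceBasis} together with the observation that the vanishing locus of the affine map $\lambda\mapsto \wedge(\mathbf{z}_1,\mathbf{c})$ is a proper subvariety.
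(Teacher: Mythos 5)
Your argument is correct and reconstructs the paper's implicit proof, which is embedded in the derivation of \eqref{eq:IterativeBound} inside the proof of Lemma~\ref{lmm:MainSum}: lower-bound the number of $\wedge$-independent pairs $(\mathbf{b},\mathbf{c})\in\Lambda_{\mathbf{a}}^2$ at scales $(Z_1,Z_{k+1})$ by $\gg Z_{k+1}^k/\prod_{i\le k}Z_i$, trade the per-$\mathbf{a}$ count for $\#S(\cdot;Z,Y)$ via commutativity of $\diamond$, divide by $Z^{n-k}Y^{n-k}$, and sum dyadically. One small imprecision worth noting: the map $\lambda\mapsto\wedge(\mathbf{z}_1,\mathbf{z}_{k+1}+\sum_{i\le k}\lambda_i\mathbf{z}_i)$ is a polynomial of degree up to $k$ in the $\lambda_i$, not affine; nevertheless, since it is non-zero at $\lambda=\mathbf{0}$, a Schwartz--Zippel style count (together with taking the single point $\lambda=\mathbf{0}$ when all ranges $Z_{k+1}/Z_i$ are $O(1)$) still yields the required $\gg Z_{k+1}^k/\prod_{i\le k}Z_i$ non-degenerate pairs, so the conclusion is unaffected.
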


\begin{proof}
We already established a similar estimate in the course of the proof of Lemma \ref{lmm:MainSum}. Let $\Lambda_{\ab}$ have a basis $\zb_1,\dots,\zb_{n-k}$ with $\|\wedge(\zb_1,\zb_{k+1})\|\ne 0$ and $\|\sum_{i=1}^{n-k}\lambda_i\zb_i\|\asymp \sum_{i=1}^{n-k}|\lambda_i|Z_i(\ab)$ for all $\lambda\in\mathbb{R}$. This basis exists by Lemma \ref{lmm:NiceBasis}. There are
\[
\gg \frac{Z_{k+1}^k(\ab)}{\prod_{i=1}^k Z_i(\ab)}
\]
choices of $\mathbf{y}=\zb_{k+1}+\sum_{i=1}^k\lambda_i\zb_i$ with $\|\mathbf{y}\|\asymp Z_{k+1}(\ab)$ and $\wedge(\zb_1,\mathbf{y})\ne \mathbf{0}$. Thus, using Lemma \ref{lmm:MainSum}, we find
\begin{align*}
\sum_{0<\|\ab\|\ll A}&\frac{Z_{k+1}(\ab)^{k}}{Z_1(\ab)^{n-k}Z_{k+1}(\ab)^{n-k}\prod_{i=1}^{k}Z_i(\ab)}\\
&\ll \sum_{0<\|\ab\|\ll A}\sum_{\substack{\zb,\mathbf{y}\in\Lambda_{\ab} \\ \|\zb\|\asymp Z_1(\ab) \\ \|\mathbf{y}\|\asymp Z_{k+1}(\ab) \\ \wedge(\zb,\mathbf{y})\ne \mathbf{0}}}\frac{1}{\|\zb\|^{n-k}\|\mathbf{y}\|^{n-k}}\\
&\ll (\log{A})^3\sup_{0<Z,Y,A'\ll A}\sum_{\|\mathbf{y}\|\asymp Y}\sum_{\substack{\|\zb\|\asymp Z \\ \wedge(\mathbf{y},\zb)\ne \mathbf{0}}}\sum_{\substack{\ab\in \Lambda_{\zb,\mathbf{y}}\\ \|\ab\|\asymp A'}}\frac{1}{Y^{n-k}Z^{n-k}}\\
&\ll (\log{A})^3\sup_{0<Z,Y,A'\ll A}\frac{S(A',Y,Z)}{Y^{n-k}Z^{n-k}}\\
&\ll (\log{A})^3A^{n-2k}\exp(G(\log\log{A^2})^2).
\end{align*}
The result follows on taking $G'=2G$.
\end{proof}

\begin{lmm}\label{lmm:BigVec}
Let $n>3k$, and let $\delta>0$ be sufficiently small in terms of $n$ and $k$. Given a vector $\ab\in\Zz^n\backslash \{\mathbf{0}\}$, let $Z_{n-k}(\ab)$ be the $n-k^{th}$ successive minima of $\Lambda_{\ab}$. Then if $A^{k/(1-\delta)}<B^{n-k}$ we have
\[\#\{(\ab,\bb_1,\bb_2)\in\mathcal{S}(A;B,B):\,Z_{n-k}(\ab)>B^{1-\delta/2}\}\ll A^{n-2k}B^{2n-2k-\delta/2}.\]
\end{lmm}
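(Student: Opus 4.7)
The plan is to sum over $\mathbf{a}$ first, exploiting the skewness of $\Lambda_{\mathbf{a}}$ when $Z_{n-k}(\mathbf{a})$ is large. Since $\mathbf{b}\in\Lambda_{\mathbf{a}}$ is equivalent to $\mathbf{a}\in\Lambda_{\mathbf{b}}$, setting $N(\mathbf{a},B)=\#\{\mathbf{b}\in\Lambda_{\mathbf{a}}:\|\mathbf{b}\|\in[B,2B]\}$ gives
\[\#\{(\mathbf{a},\mathbf{b}_1,\mathbf{b}_2)\in\mathcal{S}(A;B,B):Z_{n-k}(\mathbf{a})>B^{1-\delta/2}\}\le\sum_{\substack{\|\mathbf{a}\|\in[A,2A]\\ Z_{n-k}(\mathbf{a})>B^{1-\delta/2}}}N(\mathbf{a},B)^2.\]

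For each such $\mathbf{a}$, Lemma \ref{lmm:NiceBasis} supplies a basis $\mathbf{z}_1,\dots,\mathbf{z}_{n-k}$ of $\Lambda_{\mathbf{a}}$ with $\|\mathbf{z}_i\|\asymp Z_i(\mathbf{a})$ and $\|\sum\lambda_i\mathbf{z}_i\|\gg\sum\|\lambda_i\mathbf{z}_i\|$. Any $\mathbf{b}=\sum\lambda_i\mathbf{z}_i\in\Lambda_{\mathbf{a}}$ with $\|\mathbf{b}\|\ll B$ has $|\lambda_i|\ll B/Z_i$, and in particular $|\lambda_{n-k}|\ll B/Z_{n-k}\ll B^{\delta/2}$. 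Decomposing $\mathbf{b}=\mathbf{b}'+\lambda_{n-k}\mathbf{z}_{n-k}$ with $\mathbf{b}'$ in the rank-$(n-k-1)$ sublattice $\Lambda''_{\mathbf{a}}$ generated by $\mathbf{z}_1,\dots,\mathbf{z}_{n-k-1}$ (so $\|\mathbf{b}'\|\ll B$), there are at most $O(B^{\delta/2})$ admissible values of $\lambda_{n-k}$ per vector, hence
\[N(\mathbf{a},B)^2\ll B^{\delta}\,N''(\mathbf{a},B)^2,\qquad N''(\mathbf{a},B):=\#\{\mathbf{b}\in\Lambda''_{\mathbf{a}}:\|\mathbf{b}\|\ll B\}.\]

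Next, one adapts the iterative proof of Lemma \ref{lmm:MainSum} to bound $\sum_{\mathbf{a}}N''(\mathbf{a},B)^2$. The pairs $(\mathbf{b}_1',\mathbf{b}_2')\in(\Lambda''_{\mathbf{a}})^2$ lie in an $(n-k-1)$-dimensional linear subspace and satisfy $\mathbf{a}\in\Lambda_{\mathbf{b}_1',\mathbf{b}_2'}$, so the same dyadic bootstrap (with Lemma \ref{lmm:ArchBound} applied for $\ell\le 2k$) yields a bound of shape $\ll A^{n-2k}B^{2(n-k-1)}\exp(O(\log B)^2)$, a factor of $B^2$ smaller than the trivial bound from Lemma \ref{lmm:MainSum}. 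Combined with the $B^\delta$ loss this gives
\[\sum_{\mathbf{a}:Z_{n-k}>B^{1-\delta/2}}N(\mathbf{a},B)^2\ll A^{n-2k}B^{2n-2k-2+\delta}\exp(O(\log B)^2),\]
which is comfortably smaller than the target $A^{n-2k}B^{2n-2k-\delta/2}$ for any $\delta$ small enough in terms of $n,k$, and $B$ sufficiently large. The hypothesis $A^{k/(1-\delta)}<B^{n-k}$ enters here to ensure that $B$ is large compared to $A^{k/(n-k)}$, so the $\exp(O(\log B)^2)$ slack and the fact that we are imposing $Z_{n-k}(\mathbf{a})$ above a threshold strictly larger than the generic minimum $A^{k/(n-k)}$ are compatible.

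The main obstacle is that $\Lambda''_{\mathbf{a}}$ is not canonical: it depends on the choice of reduced basis. To rigorously extract the $B^2$ saving from the rank drop, one can enumerate the candidate ``large direction'' $\mathbf{z}_{n-k}$ by running over primitive short vectors of the dual lattice $\Lambda_{\mathbf{a}}^*$, which by Mahler duality have length $\ll B^{-(1-\delta/2)}$. The hypothesis $A^{k/(1-\delta)}<B^{n-k}$ restricts the number of such candidates so that the enumeration costs only a $B^{o(1)}$ factor, and one then reruns the Archimedean dyadic estimates of Lemmas \ref{lmm:ArchBound} and \ref{lmm:MainSum} with this distinguished direction removed from the basis, yielding the required bound on $\sum_{\mathbf{a}} N''(\mathbf{a},B)^2$.
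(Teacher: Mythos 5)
Your opening step---extracting the $B^{\delta}$ factor from $|\lambda_{n-k}|\ll B/Z_{n-k}\ll B^{\delta/2}$---is the right observation and is the same input the paper uses. The gap is in what comes next. You assert that running ``the same dyadic bootstrap'' on the rank-$(n-k-1)$ sublattice $\Lambda''_{\mathbf{a}}$ yields $\sum_{\mathbf{a}}N''(\mathbf{a},B)^2\ll A^{n-2k}B^{2(n-k-1)}\exp(O(\log B)^2)$, i.e.\ a clean $B^{2}$ saving from the rank drop. This is not justified and is almost certainly false. Dropping the $\mathbf{z}_{n-k}$ direction saves only a factor $(1+B/Z_{n-k})^2$, which under the very hypothesis $Z_{n-k}>B^{1-\delta/2}$ is $\ll B^{\delta}$---the same factor you already booked as a loss. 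For the $\mathbf{a}$ you are summing over, $N''(\mathbf{a},B)\le N(\mathbf{a},B)\ll B^{\delta/2}N''(\mathbf{a},B)$, so if your claimed bound on $\sum N''^{2}$ held, the lemma would follow with an exponent vastly stronger than $B^{2n-2k-\delta/2}$, which should already be a warning sign. Moreover, the iteration in Lemma \ref{lmm:MainSum} proceeds by passing to $\Lambda_{\mathbf{v}}$ for a short $\mathbf{v}\in\Lambda_{\mathbf{b}_1,\mathbf{b}_2}$; nothing in that recursion confines the $\mathbf{b}$'s to a fixed-rank sublattice, so the claimed exponent drop does not propagate. You flag the non-canonicity of $\Lambda''_{\mathbf{a}}$ yourself, but the dual-lattice enumeration you gesture at is a sketch, not an argument, and even if executed it would not deliver the $B^{2}$ you assert.

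The paper's actual argument is shorter and avoids the rank drop entirely. It fixes $\mathbf{a}$, bounds the number of pairs $(\mathbf{b}_1,\mathbf{b}_2)\in\Lambda_{\mathbf{a}}^2$ with $\|\mathbf{b}_i\|\le B$ by $\prod_{i\le j}(B/Z_i)^2$ where $j$ is largest with $Z_j\ll B$, and splits into the cases $j\le k$, $k+1\le j<n-k$, and $j=n-k$. From $A^{k/(1-\delta)}<B^{n-k}$, $n>3k$ and $Z_1^{k}Z_{k+1}^{n-2k}\ll\det\Lambda_{\mathbf{a}}\ll A^k$ one obtains $Z_1Z_{k+1}\ll B^{2-2\delta}$; combining this with $Z_{n-k}>B^{1-\delta/2}$ shows that in every case the pair count is $\ll\frac{Z_{k+1}^{k}}{\prod_{i\le k}Z_i}\cdot\frac{B^{2n-2k-\delta}}{Z_1^{n-k}Z_{k+1}^{n-k}}$. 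Summing over $\mathbf{a}$ by Lemma \ref{lmm:MinimaSum} then gives $\ll A^{n-2k}B^{2n-2k-\delta/2}$. No new iteration, no sublattice, no dual lattice is required; I would encourage you to redo the proof along those lines.
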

\begin{proof}
Let $Z_1,\dots,Z_{n-k}$ be the successive minima of $\Lambda_{\ab}$. Since $A^{k/(1-\delta)}<B^{n-k},$ $n>3k$ and $Z_1^{k}Z_{k+1}^{n-2k}\ll \det(\Lambda_\ab)\ll A^k$, we have $Z_1Z_{k+1}\ll B^{2-2\delta}$. Let $j$ be chosen maximally such that $Z_j\le B$. Then the number of $\bb_1,\bb_2\in\Lambda_{\ab}$ is
\[\ll \frac{B^{2j}}{\prod_{i=1}^j Z_i^2}\ll \frac{Z_{k+1}^{k}}{\prod_{i=1}^{k}Z_i}\times 
\begin{cases}
\displaystyle \frac{B^{2j}}{Z_1^j Z_{k+1}^j},\qquad &j\le k,\\
\displaystyle \frac{B^{2j}}{Z_1^{k} Z_{k+1}^{2j-k}}, &k+1\le j<n-k,\\
\displaystyle \frac{B^{2n-2k}}{Z_1^{k} Z_{k+1}^{2n-3k-2}Z_{n-k}^2},&j=n-k.
\end{cases}\]
Since $Z_{n-k}>B^{1-\delta/2}$ and $Z_1Z_{k+1}\ll B^{2-2\delta}$ and $n>3k$, we have that in each case this is
\[\ll \frac{Z_{k+1}^{k}}{\prod_{i=1}^{k}Z_i}\frac{B^{2n-2k-\delta}}{Z_1^{n-k}Z_{k+1}^{n-k}}.\]
Thus the number of triples $(\ab,\bb_1,\bb_2)$ counted in the lemma is
\begin{align*}
\ll B^{2n-2k-\delta}\sum_{\|\ab\|\in[A,2A]}\frac{Z_{k+1}^{k}}{\prod_{i=1}^{k}Z_i}\frac{1}{Z_1^{n-k}Z_2^{n-k}}.
\end{align*}
But by Lemma \ref{lmm:MinimaSum}, this is $\ll A^{n-2k}B^{2n-2k-\delta/2}$, as required.
\end{proof}
\begin{lmm}[Diagonal Terms]\label{lmm:Diagonal}
Let $n>3k$ and let $\delta>0$ be sufficiently small in terms of $n$ and $k$. Then if $A^{k/(1-\delta)}<B^{n-k}$ we have
\[\#\{(\ab,\bb_1,\bb_2)\in S(A;B,B):\wedge(\bb_1,\bb_2)=\mathbf{0}\}\ll A^{n-2k}B^{2n-2k-\delta/3}.\]
\end{lmm}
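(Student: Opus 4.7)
The plan is to split into two cases according to the size of $Z_{n-k}(\mathbf{a})$, the largest successive minimum of $\Lambda_{\mathbf{a}}$. In the skewed regime $Z_{n-k}(\mathbf{a}) > B^{1-\delta/2}$, I would apply the argument of Lemma \ref{lmm:BigVec} essentially verbatim: its count of lattice pairs does not actually use $\wedge(\mathbf{b}_1,\mathbf{b}_2)\ne\mathbf{0}$, only bounds on the successive minima, so it delivers an upper bound of $\ll A^{n-2k}B^{2n-2k-\delta/2}$ which already dominates the claimed estimate. This reduces the problem to the balanced case $Z_{n-k}(\mathbf{a})\le B^{1-\delta/2}$, in which every successive minimum of $\Lambda_{\mathbf{a}}$ is at most $B^{1-\delta/2}$, and the count of $\mathbf{b}_1\in\Lambda_{\mathbf{a}}$ with $\|\mathbf{b}_1\|\le 2B$ is $\ll B^{n-k}/\det(\Lambda_{\mathbf{a}})$ by Lemma \ref{lmm:Davenport}.

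For each such fixed $\mathbf{b}_1\ne\mathbf{0}$, I would extract the algebraic content of $\wedge(\mathbf{b}_1,\mathbf{b}_2)=\mathbf{0}$ using the proof of Lemma \ref{lmm:LinearSubspaceBound}: there exists $(c_0,\dots,c_{k-1},d_0,\dots,d_{k-1})\in\mathbb{Z}^{2k}\setminus\{\mathbf{0}\}$ with $\sum_{i} c_i T^i(\mathbf{b}_2)=\sum_{i} d_i T^i(\mathbf{b}_1)$, and necessarily $(c_0,\dots,c_{k-1})\ne\mathbf{0}$ (else, since the $T^i(\mathbf{b}_1)$ are linearly independent, all $d_i$ would also vanish). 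Interpreting $T$ as multiplication by $\sqrt[n]{\theta}$ in $K$ and using that $X^n-\theta$ is irreducible over $\mathbb{Q}$ (so $Q(X):=\sum c_i X^i$ is coprime to $X^n-\theta$ and $Q(\sqrt[n]{\theta})$ is invertible in $K$), I obtain $\beta_2=(P(\sqrt[n]{\theta})/Q(\sqrt[n]{\theta}))\beta_1$ for a pair of coprime polynomials $P,Q\in\mathbb{Z}[X]$ of degree less than $k$ with $Q\ne 0$, unique up to common sign. Equivalently, $(\beta_2)=(P(\sqrt[n]{\theta}))(\beta_1)/(Q(\sqrt[n]{\theta}))$ as fractional ideals.

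Counting admissible $\mathbf{b}_2$ therefore amounts to counting the admissible coprime pairs $(P,Q)$. Three constraints cut down the count: the size constraints $\|\mathbf{b}_1\|\asymp\|\mathbf{b}_2\|\asymp B$ force $|P(\sqrt[n]{\theta}^\sigma)/Q(\sqrt[n]{\theta}^\sigma)|\asymp 1$ at every embedding $\sigma$, constraining the sizes of the coefficients of $P$ and $Q$; integrality of $\beta_2$ translates to the ideal divisibility $(Q(\sqrt[n]{\theta}))\mid(P(\sqrt[n]{\theta}))(\beta_1)$ in $\mathcal{O}_K$; and the lattice condition $\mathbf{b}_2\in\Lambda_{\mathbf{a}}$ imposes $k$ additional linear equations on the coefficients of $P$ and $Q$ coming from the vanishing of the top $k$ coordinates of $\alpha\cdot(P/Q)\beta_1$ expressed in the basis $\{\sqrt[n]{\theta^{i-1}}\}$. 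The plan is to localize $\|\wedge(\mathbf{a})\|$ and $\|\mathbf{v}\|$ (the shortest vector of $\Lambda_{\mathbf{a}}$) into dyadic ranges, reducing to counting pairs $\mathbf{b}_1,\mathbf{b}_2\in\Lambda_{\mathbf{v}}$ satisfying $\wedge=\mathbf{0}$, as in the proof of Lemma \ref{lmm:MainSum}.

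The hard part will be to extract the required saving of $B^{\delta/3}$ beyond the dimension-based bound one would naively expect from Lemma \ref{lmm:LinearSubspaceBound}. I expect this saving to come from the observation that in Case 2 the lattice $\Lambda_{\mathbf{a}}$ is balanced, so the rank-at-most-$k$ sublattice $\Lambda_{\mathbf{a}}\cap\mathcal{L}$ (for any maximal linear subspace $\mathcal{L}$ of $V(\mathbf{b}_1)=\{\mathbf{x}:\wedge(\mathbf{b}_1,\mathbf{x})=\mathbf{0}\}$) inherits successive-minima lower bounds $w_i\ge Z_i(\mathbf{a})$ from the ambient lattice. Combining Lemma \ref{lmm:Davenport} for the resulting $\mathbf{b}_2$-count with an averaging over $\mathbf{a}$ via Lemma \ref{lmm:MinimaSum} and the hypothesis $A^{k/(1-\delta)}<B^{n-k}$ should deliver the claimed bound $\ll A^{n-2k}B^{2n-2k-\delta/3}$.
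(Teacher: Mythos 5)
Your reduction to the balanced regime $Z_{n-k}(\mathbf{a})\le B^{1-\delta/2}$ via Lemma \ref{lmm:BigVec} is exactly what the paper does, and you are right that the proof of Lemma \ref{lmm:BigVec} never uses $\wedge(\mathbf{b}_1,\mathbf{b}_2)\ne\mathbf{0}$. The closing appeal to Lemma \ref{lmm:MinimaSum} to sum over $\mathbf{a}$ is also the right move. But the central step --- extracting a $B^{\delta/3}$ saving from the constraint $\wedge(\mathbf{b}_1,\mathbf{b}_2)=\mathbf{0}$ in the balanced case --- is where your route diverges from the paper and where it has a genuine gap.

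The paper's argument here is much shorter than what you propose and does not separate $\mathbf{b}_1$ from $\mathbf{b}_2$. Write $\mathbf{b}_1=\sum_i\lambda_i\mathbf{z}_i$, $\mathbf{b}_2=\sum_i\gamma_i\mathbf{z}_i$ in the simplified basis from Lemma \ref{lmm:NiceBasis}, with $\lambda_i,\gamma_i\ll B/Z_i$. The key point is that, by multilinearity of determinants, the coefficient of $\lambda_1^{2k}\gamma_{k+1}^{2k}$ in $\|\wedge(\mathbf{b}_1,\mathbf{b}_2)\|^2$ is precisely $\|\wedge(\mathbf{z}_1,\mathbf{z}_{k+1})\|^2\ne 0$. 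Hence the vanishing locus of $\|\wedge\|^2$ is that of a \emph{nonzero} polynomial of bounded degree in the $2(n-k)$ coordinates, so its zero set in the box loses at least the factor corresponding to the smallest range, namely $1+B/Z_{n-k}\gg B^{\delta/2}$. Summing via Lemma \ref{lmm:MinimaSum} (after absorbing $\prod Z_i^{-2}\ll Z_1^{-(n-k)}Z_{k+1}^{-(n-k)}$, which uses $n>3k$) finishes the job.

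Your version fixes $\mathbf{b}_1$ first and then tries to count $\mathbf{b}_2$. That separation is salvageable --- for $\mathbf{b}_1\ne\mathbf{0}$ Lemma \ref{lmm:LinearSubspaceBound} together with $\dim(\Lambda_{\mathbf{a}}\otimes\mathbb{Q})=n-k>k$ shows the restricted polynomial $\|\wedge(\sum\gamma_i\mathbf{z}_i,\mathbf{b}_1)\|^2$ is not identically zero in $\gamma$, giving the same saving --- but that is not the route you take. Instead you parametrize $\mathbf{b}_2$ by coprime pairs $(P,Q)$ and aim to control $\Lambda_{\mathbf{a}}\cap\mathcal{L}$ for $\mathcal{L}$ a maximal \emph{linear} subspace of $V(\mathbf{b}_1)=\{\mathbf{x}:\wedge(\mathbf{x},\mathbf{b}_1)=\mathbf{0}\}$. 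This does not suffice: $V(\mathbf{b}_1)$ is a nonlinear determinantal variety of dimension $2k-1$ (the image of the $(P,Q)$-parametrization, modulo scaling), whereas its maximal linear subspaces have dimension only $k$ by Lemma \ref{lmm:LinearSubspaceBound}. Counting lattice points in the rank-$\le k$ sublattice $\Lambda_{\mathbf{a}}\cap\mathcal{L}$ therefore captures only a small piece of $V(\mathbf{b}_1)$, and Lemma \ref{lmm:Davenport} does not apply to counting lattice points on a nonlinear variety. The $(P,Q)$ route also drags in integrality/ideal-divisibility constraints and an iterative localization à la Lemma \ref{lmm:MainSum} that you do not connect into a chain of inequalities. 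In short: the framing (reduce, extract a polynomial saving, average via Lemma \ref{lmm:MinimaSum}) matches the paper, but the middle step is both over-engineered and, as written, incorrect; the observation you are missing is that Lemma \ref{lmm:NiceBasis} hands you a nonzero polynomial in the coordinates directly.
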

\begin{proof}
Let $\ab$ be given, so we wish to count $\bb_1,\bb_2\in\Lambda_\ab$ with $\wedge(\bb_1,\bb_2)=\mathbf{0}$. Let $\Lambda_\ab$ have a basis $\zb_1,\dots,\zb_{n-k}$ satisfying the properties of Lemma \ref{lmm:NiceBasis}, and let $\bb_1=\sum_{i=1}^{n-k}\lambda_i\zb_i$, $\bb_2=\sum_{i=1}^{n-k}\gamma_i\zb_i$, with $\gamma_i,\lambda_i\ll B/Z_i$, where $Z_i=Z_i(\ab)$ are the successive minima of $\Lambda_{\ab}$. By Lemma \ref{lmm:BigVec}, we only need to count the contribution from $\ab$ with $Z_{n-k}(\ab)\ll B^{1-\delta/2}$. Since $\wedge(\zb_1,\zb_{k+1})\ne \mathbf{0}$, we have that $\wedge(\bb_1,\bb_2)=\mathbf{0}$ only if a non-zero polynomial (of degree $O(1)$) in the $\lambda_i,\gamma_i$ vanishes. Thus the number of choices of $\lambda_i,\gamma_i\ll B/Z_i$ such that $\wedge(\bb_1,\bb_2)=\mathbf{0}$ is
\[
\ll \Bigl(1+\frac{B}{Z_{n-k}}\Bigr)\prod_{i=1}^{n-k-1}\Bigl(1+\frac{B}{Z_i}\Bigr)^2.
\]
Since $Z_{n-k}\ll B^{1-\delta/2}$ and $n>3k$, this is
\[
\ll Z_{n-k}\frac{B^{2n-2k-1}}{\prod_{i=1}^{n-k}Z_i^2}\ll \frac{B^{2n-2k-\delta/2}}{Z_1^{n-k}Z_{k+1}^{n-k}}.
\]
But then by Lemma \ref{lmm:MinimaSum}, this means the size of the set in the lemma is of size
\[\ll \sum_{\|\ab\|\in [A,2A]}\frac{Z_{k+1}^{k}}{\prod_{i=1}^{k}Z_i}\frac{B^{2n-2k-\delta/2}}{Z_1^{n-k}Z_{k+1}^{n-k}}\ll A^{n-2k}B^{2n-2k-\delta/3}.\qedhere\]
\end{proof}
\subsection{Non-Archimedean estimates}
We now consider $\bb_1,\bb_2$ for which the determinant of $\Lambda_{\bb_1,\bb_2}$ is small because $D_{\bb_1,\bb_2}$ is large. We first establish a couple of lemmas bounding the number of times a given polynomial $f\in\mathbb{Z}[X]$ can vanish $\Mod{D}$. The key point of these lemmas is that there is only a very weak dependence on the size of the coefficients of $f$.
\begin{lmm}\label{lmm:PolySystems}
Let $\epsilon>0$. Let $\mathbf{f}=(f_1,\dots,f_\ell)\in\Zz[x_1,\dots,x_n]^\ell$ be a vector of $\ell\ge 2$ homogeneous polynomials of degree $d$ with coefficients of size at most $F\ge 2$ in absolute value and no non-constant common factor in $\Zz[X_1,\dots,X_n]$ amongst all of them. For each prime $p$, let $e_p\in\mathbb{N}$ be such that not all the $f_i$ take only the value $0 \Mod{p^{e_p}}$ on $(\Zz/p^{e_p}\Zz)^n$, but that all the $f_i$ only take the value $0\Mod{p^{e_p-1}}$ on $(\Zz/p^{e_p-1}\Zz)^n$. Let $E=\prod_{p:\,e_p>1}p^{e_p}$.

Then for any reals $D_0\ge 1$ and $1\le X_{min}\le X_1,\dots,X_n\le X_{max}$ we have
\begin{align*}
\#&\{(\xb,D)\in\Zz^n\times\Zz,\, |x_i|\le X_i,\, D>D_0,\, \mathbf{f}(\xb)\equiv\mathbf{0}\Mod{D},\,\mathbf{f}(\xb)\ne \mathbf{0}\}\\
&\qquad\ll \Bigl(\frac{1}{D_0^{1/2d}}+\frac{1}{X_{min}}\Bigr)(D_0 F X_{max})^\epsilon E^n\prod_{i=1}^n X_i.
\end{align*}
Here the implied constant depends only on $\ell,n,d,\epsilon$.
\end{lmm}
\begin{proof}
For this proof we let all implied constants depend on $\ell$, $n$, $d$ and $\epsilon$. Without loss of generality we assume that $X_{max}=X_1\ge \dots\ge X_n=X_{min}$.
We first want to show the existence of short vectors $\ub,\vb\in\mathbb{Z}^n$ such that $\mathbf{f}(\ub)$ and $\mathbf{f}(\vb)$ have a small common divisor.

We choose $\ub\in\Zz^n$ such that $\|\ub\|\ll E$, $u_n\ne 0$ and $\mathbf{f}(\ub)\not\equiv \mathbf{0}\Mod{p^{e_p}}$ for any $p\le d$ or any $p|E$. This is possible since $\mathbf{f}$ doesn't vanish on $(\Zz/p^{e_p}\Zz)^n$. 

We now choose $\vb$ such that any integer dividing all components of $\mathbf{f}(\ub)$ and $\mathbf{f}(\vb)$ must divide $E$. For any prime $p$ with $e_p=1$, the fact $\mathbf{f}$ doesn't vanish on $\mathbb{F}_p^n$ means that there is a polynomial $f_{p,1}\in \{f_1,\dots,f_\ell\}$ such that $f_{p,1}$ has a non-zero coefficient over $\mathbb{F}_p$. Viewing $f_{p,1}(\xb)$ as a polynomial in $x_1$, and selecting a non-zero coefficient we find a non-zero polynomial $f_{p,2}$ in $x_2,\dots,x_{n}$ such that $f_{p,1}$ is a non-zero polynomial in $x_1$ if $f_{p,2}\not\equiv 0\Mod{p}$. Repeating this we obtain (possible constant) polynomials $f_{p,2},\dots,f_{p,n}$ with $f_{p,j}$ a non-zero polynomial in $x_j,\dots x_n$ and $f_{p,j}$ is a non-zero polynomial in $x_j$ if $f_{p,j+1}\not\equiv 0\Mod{p}$. We then choose non-zero integers $v_n,\dots,v_1$ in turn as small as possible such that $f_{p,j}(v_j,\dots,v_n)\not\equiv 0\Mod{p}$ for all $j\in\{1,\dots,n\}$ and for any prime $p>d$ which divides all components of $\mathbf{f}(\ub)$ and has $e_p=1$. This is possible since any non-zero polynomial of degree at most $d$ can vanish at at most $d$ points over $\mathbb{F}_p$, and we only consider $p>d$.

Since $\mathbf{f}$ has coefficients of size $O(F)$ and $\|\ub\|\ll E$, we have $\|\mathbf{f}(\ub)\|\ll (FE)^{O(1)}$. Thus there are $O(\log{FE})$ primes $p$ which divide all components of $\mathbf{f}(\mathbf{u})$, and these must all satisfy $p>d$ if $e_p=1$. Each of the polynomials can have at most $d$ roots modulo any prime $p$ under consideration. Therefore each $v_j$ is the least integer which avoids one of $O(1)$ residue classes $\mod{p}$ for $O(\log{FE})$ different primes $p$. By the Fundamental Lemma of sieve methods, we have that $v_j\ll (\log{FE})^{O(1)}$. 

Thus we have found $\ub$, $\mathbf{v}\ll E(\log{F})^{O(1)}$ such that any integer dividing all components of $\mathbf{f}(\ub)$ and $\mathbf{f}(\mathbf{v})$ must divide $E$. In particular, for any integer $D$ we have either $D/\gcd(D,\mathbf{f}(\mathbf{u}))>(D/E)^{1/2}$ or $D/\gcd(D,\mathbf{f}(\mathbf{v}))>(D/E)^{1/2}$. Thus, without loss of generality, it is sufficient to count pairs $(\xb,D)$ as in the lemma with the extra condition that $D/\gcd(D,\mathbf{f}(\mathbf{w}))\ge (D/E)^{1/2}$ where $\mathbf{w}\in\mathbb{Z}^n$ is a fixed vector with $\|\mathbf{w}\|\ll E(\log{F})^{O(1)}$, $w_n\ne 0$ and $\mathbf{f}(\mathbf{w})\ne \mathbf{0}$. By replacing $f_j$ with a suitable integral linear combination of the $f_i$ we may moreover assume that $f_j(\mathbf{w})$ is the same for all $j$.

We now change variables. Since $w_n\ne 0$, $|w_i|\ll E(\log{F})^{O(1)}$ and $X_1\ge \dots \ge X_n$, we can write any vector $\xb\in\Zz^n$ with $|x_i|\le X_i$ as 
\[
w_n\xb=\sum_{i=1}^{n-1}y_i\eb_i+y_n\mathbf{w},
\]
with $|y_i|\ll Y_i=X_i E(\log{F})^{O(1)}$ for $i<n$ and $y_n\ll Y_n=X_n$, where $\eb_i$ are the standard basis vectors of $\Zz^n$. Since the polynomials $f_i$ are homogeneous, we have $\mathbf{f}(w_n\mathbf{x})=w_n^d\mathbf{f}(\mathbf{x})$. Thus it is sufficient to count pairs $(\mathbf{y},D)$ with $D>D_0$, $D/\gcd(D,\mathbf{f}(\mathbf{w}))>(D/E)^{1/2}$, $|y_i|\ll Y_i$ and $\tilde{\mathbf{f}}(\mathbf{y})\equiv \mathbf{0}\Mod{D}$ but $\tilde{\mathbf{f}}(\mathbf{y})\ne \mathbf{0}$, where $\tilde{\mathbf{f}}(\mathbf{y})=\mathbf{f}(\sum_{i=1}^{n-1}y_i\eb_i+y_n\mathbf{w})$.

By the Euclidean algorithm (or calculating a suitable resultant) $D|\tilde{\mathbf{f}}(\mathbf{y})$ only if $D|g(y_1,\dots,y_{n-1})$ for some non-zero polynomial $g$ independent of $y_n$ and of degree at most $d^2$ and with coefficients of size at most $F^{O(1)}$, since the components of $\tilde{\mathbf{f}}$ have no non-constant polynomial common factor. We consider separately the cases when $g(y_1,\dots,y_{n-1})=0$ and when it is non-zero.

There are $\ll \prod_{i=1}^{n-2}Y_i\ll X_{min}^{-1}E^{n-2}(\log{F})^{O(1)}\prod_{i=1}^{n-1}X_{i}$ choices of $y_1,\dots,y_{n-1}$ such that $g(y_1,\dots,y_{n-1})=0$. For any such choice there are $\ll Y_n=X_n$ choices of $y_n$ such that $\tilde{\mathbf{f}}(\mathbf{y})\ne\mathbf{0},$ and then $O((X_{max}F)^{\epsilon})$ choices of $D|\tilde{\mathbf{f}}(\mathbf{y})$. This gives the result in the case $g(y_1,\dots,y_{n-1})=0$. 

There are $\ll \prod_{i=1}^{n-1}Y_i=E^{n-1}(\log{F})^{O(1)}\prod_{i=1}^{n-1}X_i$ choices of $y_1,\dots,y_{n-1}$ such that $g(y_1,\dots,y_{n-1})\ne 0$. Given such a choice, there are then $O((F X_{max})^\epsilon)$ choices of $D|g(y_1,\dots,y_{n-1})$. We now wish to count the number of choices of $y_n$ such that $\tilde{\mathbf{f}}(\mathbf{y})\equiv 0\Mod{D}$. We recall that $f_j(\mathbf{w})$ is the same non-zero integer for all $j$, so $f_1$ is a polynomial of degree $d$ in $y_n$ with lead coefficient $f_1(\mathbf{w})$. Moreover, we only consider $D$ with $D/\gcd(D,\mathbf{f}(\mathbf{w}))=D/\gcd(D,f_1(\mathbf{w}))>(D/E)^{1/2}$. In this case, using Lemma \ref{lmm:PolyBound} we find that the number of choices of $y_n$ such that $f_1(y_1,\dots,y_n)\equiv 0\Mod{D}$ is
\[
\ll \Bigl(1+X_{n}\Bigl(\frac{D}{\gcd(D,f_1(\mathbf{w}))}\Bigr)^{-1/d}\Bigr)D^{\epsilon}\ll D_0^\epsilon E^{1/2d} X_n \Bigl(\frac{1}{X_{min}}+\frac{1}{D_0^{1/2d}}\Bigr).
\]
This gives the result.
\end{proof}
\begin{lmm}\label{lmm:FpBound}
We have
\[\#\{\bb\in\mathbb{F}_p^n:\wedge(\bb)=\mathbf{0}\}\ll p^{k-1}.\]
\end{lmm}
\begin{proof}
We may assume that $p$ is sufficiently large, so $\theta\not\equiv 0\Mod{p}$ and $p>n$. We recall that if $\wedge(\bb)=\mathbf{0}\in\mathbb{F}_p$ then there exists constants $c_0,\dots,c_{k-1}$ not all 0 such that
\[\sum_{i=0}^{k-1}c_i T^i(\bb)=\mathbf{0}.\]
We argue in the case when this is the shortest linear relation of this type (so in particular $c_0,c_{k-1}\ne 0$); the other cases are entirely analogous. By inverting $c_0$, we have $\bb=T^0(\bb)=\sum_{i=1}^{k-1}c'_i T^i(\bb)$ for constants $c'_i$ with $c'_{k-1}\ne 0$. Thus, letting $b_{n+j}=b_j/\theta$, we have $b_j=\sum_{i=1}^{k-1}c'_i b_{j-i}$ for all $j\in\Zz$. Moreover, we may assume that $\bb$ does not satisfy any other recurrence equation of this type because in that case, we may take a linear combination and have $c_{k-1}'=0$. This is a difference equation, and so $b_j=\sum_{i=1}^{k-1}P_i(j)\lambda_i^j$ for some polynomials $P_1,\dots,P_\ell$ with total degree at most $k-1$, and constants $\lambda_i$ in a finite extension of $\mathbb{F}_p$. Moreover, the monomials $j^{m_1}\lambda_{m_2}^j$ uniquely determine $c'_1,\dots,c'_{k-1}$ as the coefficients of the monic polynomial $X^{k-1}-\sum_{i=1}^{k-1}c_i' X^{k-i-1}\in\mathbb{F}_p[X]$ of least degree which has $\lambda_i$ as a root with multiplicity at least $\deg{P_i}$.

But then $\sum_{i}P_i(n+j)\lambda_i^{n+j}=b_{n+j}=b_j/\theta=\theta^{-1}\sum_i P_i(j)\lambda_i^j$ for all $j$. This gives a fixed linear combination of the monomials $j^{m_1}\lambda_{m_2}^j$ which vanishes for all $j$, and so as in Lemma \ref{lmm:LinearSubspaceBound}, the coefficients of all monomials must be zero. Thus, on comparing the coefficient  of $j^\ell \lambda_i^j$ and letting $p_{\ell,i}$ be the coefficient of $x^\ell$ in $P_i(x)$, we have $p_{\ell,i}=\theta\lambda_i^n\sum_{m\ge \ell}p_{m,i} n^{m-\ell}\binom{m}{\ell}$. By considering the coefficients in turn from the highest degree coefficients to the lowest degree, we see that either $P_i(x)=0$ or $\lambda_i^n=\theta^{-1}$ and $p_{m,i}=0$ for all $m\ge 2$.

Thus we have $b_j=\sum_{i}p_{i,1}\lambda_i^j$ where for each $i$ we have $\lambda_i^n=\theta^{-1}$. But then there are $O(1)$ possibilities for the monomials appearing in $b_j$, and so $O(1)$ possible choices for the coefficients $c'_1,\dots,c_{k-1}'$. Since $\bb$ is uniquely determined by $c'_1,\dots,c'_{k-1}$ and $b_1,\dots,b_{k-1}$, there are $O(p^{k-1})$ different possible choices of $\bb$.
\end{proof}
\begin{rmk}
We expect the bound of Lemma \ref{lmm:FpBound} to be sharp for infinitely many $p$, since it involves $n$ equations in $n+k-1$ variables.
\end{rmk}

\begin{lmm}\label{lmm:SmallP}
Let $n>3k$ and $\ab\in\mathbb{Z}^n\backslash\{\mathbf{0}\}$ and $p$ a prime. Then there exists $\bb_1,\bb_2\in\Lambda_{\ab}$ such that $\wedge(\bb_1,\bb_2)\ne \mathbf{0}\Mod{p}$.
\end{lmm}
\begin{proof}
Let $\zb_1,\dots,\zb_{n-k}$ be a basis of $\Lambda_{\mathbf{a}}$. From the definition of $\Lambda_{\ab}$, any $\xb\in\mathbb{Z}^n$ which is in the $\mathbb{Q}$-span of $\zb_1,\dots,\zb_{n-k}$ is in $\Lambda_{\ab}$, and so must actually be in the $\Zz$-span. Therefore for any prime $p$, $\zb_1,\dots,\zb_{n-k}$ are linearly independent $\Mod{p}$. After rearranging the coordinates, this means that the $(n-k)\times (n-k)$ matrix formed by taking the first $n-k$ components of $\zb_1,\dots,\zb_{n-k}$ has non-zero determinant $\Mod{p}$, and so is invertible. But this means that given integers $b_1,\dots b_{n-k}$, there exists $\xb\in\Lambda_{\ab}$ such that $x_j\equiv b_j\Mod{p}$. In particualr, there exists $\xb^{(1)},\xb^{(2)}\in\Lambda_{\ab}$ such that 
\begin{align*}
\xb^{(1)}_{j}&\equiv\begin{cases}
1\Mod{p},\qquad &j=k,\\
0\Mod{p},&1\le j<2k\text{ or }2k<j\le n-k,
\end{cases}\\
\xb^{(2)}_{j}&\equiv\begin{cases}
1\Mod{p},\qquad &j=k,\\
0\Mod{p},&1\le j<2k\text{ or }2k<j\le n-k.
\end{cases}.
\end{align*}
We now consider the component of $\wedge(\xb^{(1)},\xb^{(2)})$ which is the determinant of the $2k\times 2k$ matrix formed by taking first $2k$ components of $\xb^{(1)},\dots,T^{k-1}(\xb^{(1)})$, and $\xb^{(2)}$,\dots, $T^{k-1}(\xb^{(2)})$. We see that this matrix is lower triangular with 1's on the diagonal, and so has determinant 1. Therefore $\wedge(\xb^{(1)},\xb^{(2)})\ne \mathbf{0}\Mod{p}$, as required. 
\end{proof}

\begin{lmm}\label{lmm:TwoPolys}
Let $\mathbf{f}=(f_1,\dots,f_\ell)\in\mathbb{Z}[x_1,\dots,x_n]^\ell$ be such that
\[
\#\{(a_1,\dots,a_n)\in[1,p]^n:\,\mathbf{f}(\mathbf{a})\equiv \mathbf{0}\Mod{p}\}\ll p^{n-2}
\]
for all primes $p$. Then $\mathbf{f}$ has no non-constant common factor.
\end{lmm}
\begin{proof}
Imagine for a contradiction that there is a non-constant polynomial $g\in\mathbb{Z}[x_1,\dots x_n]$ dividing all the $f_i$. Then there is a non-constant polynomial $g_1$ dividing $g$ defined over a finite extension of $\mathbb{Q}$ which is absolutely (i.e. geometrically) irreducible ($g_1=g$ if $g$ is absolutely irreducible). By the Chebotarev Density Theorem, there are infinitely many primes $p$ such that $g\Mod{p}$ has a factor $\overline{g}_1$ corresponding to $g_1$ which is defined over $\mathbb{F}_p$. It follows from the Hilbert Nullstellensatz (see, for example \cite[Proposition 7, page 157]{Lang}) that $\overline{g}_1$ is absolutely irreducible over $\mathbb{F}_p$ for all but finitely many primes $p$. But the Lang-Weil bound implies that there are $(1+o(1))p^{n-1}$ values $\ab\in\mathbb{F}_p^n$ such that $\overline{g}_1(\ab)=0$ for any prime $p$ for which $\overline{g}_1$ is defined over $\mathbb{F}_p$ and is absolutely irreducible over $\mathbb{F}_p$. In particular, there are $\gg p^{n-1}$ zeros of $g$ over $\mathbb{F}_p$ for infinitely many primes $p$. This contradicts the assumption of the Lemma, and so no such non-constant polynomial $g$ can exist.
\end{proof}

\begin{lmm}[Determinant rarely small for non-Archimedean reasons]\label{lmm:NonArch}
Let $n>3k$, $\delta>0$ and $A^{k/(1-\delta)}<B^{n-k}$. Then we have for any constant $C>0$
\begin{align*}
\#\Bigl\{(\ab,\bb_1,\bb_2)\in\mathcal{S}(A;B,B):D_{\bb_1,\bb_2}>\epsilon_0^{-C},&\,\bb_1\equiv\bb_2\equiv \bb_0\Mod{\tilde{q}}\Bigr\}\\
&\ll_C \epsilon_0^{C/20k} A^{n-2k}B^{2n-2k}.
\end{align*} 
\end{lmm}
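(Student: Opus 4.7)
The plan is to apply Lemma \ref{lmm:PolySystems} to the $\binom{n}{2k}$ polynomial components of $\wedge(\mathbf{b}_1,\mathbf{b}_2)$, of degree $2k$ in the $2n$ coordinates of $(\mathbf{b}_1,\mathbf{b}_2)$, using the fact that $D_{\mathbf{b}_1,\mathbf{b}_2}$ divides every entry of $\wedge(\mathbf{b}_1,\mathbf{b}_2)$ by definition. A naive application in $\mathbb{Z}^{2n}$ coupled with the trivial bound $\#\{\mathbf{a}\in\Lambda_{\mathbf{b}_1,\mathbf{b}_2}\cap\mathcal{R}_A\}\ll A^{n-2k}$ would lose a factor of $B^{2k}$, so I will instead rearrange the triple sum with $\mathbf{a}$ on the outside. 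For each $\mathbf{a}$ with $\Lambda_{\mathbf{a}}$ of full rank $n-k$, I pick a reduced basis $\mathbf{z}_1,\ldots,\mathbf{z}_{n-k}$ from Lemma \ref{lmm:NiceBasis} and write $\mathbf{b}_i=\sum_{j=1}^{n-k}c_{i,j}\mathbf{z}_j$ with $(\mathbf{c}_1,\mathbf{c}_2)\in\mathbb{Z}^{2(n-k)}$ and $|c_{i,j}|\ll B/\|\mathbf{z}_j\|$. The components of $\wedge(\sum_j c_{1,j}\mathbf{z}_j,\sum_j c_{2,j}\mathbf{z}_j)$ are then degree-$2k$ polynomials in $(\mathbf{c}_1,\mathbf{c}_2)\in\mathbb{Z}^{2(n-k)}$ to which Lemma \ref{lmm:PolySystems} applies.

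Verifying the hypotheses requires (i) no common non-constant polynomial factor and (ii) non-vanishing mod $p^{e_p}$ for bounded $e_p$. For (i), Lemma \ref{lmm:LinearSubspaceBound} shows that for each fixed $\mathbf{b}_1\ne\mathbf{0}$ the slice $\{\mathbf{b}_2:\wedge(\mathbf{b}_1,\mathbf{b}_2)=\mathbf{0}\}$ lies in a linear subspace of dimension $\le k$, giving $\dim V(\wedge)\le n+k$ in $\mathbb{R}^{2n}$; after the linear substitution the resulting intersection with the image of $(M,M)$ still has dimension $\le n+k\le 2(n-k)-1$ (using $n\ge 3k+1$), making it a proper subvariety of $\mathbb{R}^{2(n-k)}$, so the polynomials have no common non-constant factor. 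For (ii), the two-variable analogue of Lemma \ref{lmm:FpBound}, obtained by running the same difference-equation/generalized Vandermonde argument on $\mathbb{F}_p^{2n}$, shows $\{(\mathbf{b}_1,\mathbf{b}_2)\in\mathbb{F}_p^{2n}:\wedge=\mathbf{0}\}$ is a proper subvariety for each prime $p$, so a bounded $e_p$ suffices.

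Applying Lemma \ref{lmm:PolySystems} with $d=2k$, $D_0=\epsilon_0^{-C}$, coordinate product $\prod X_{i,j}\asymp B^{2(n-k)}/\det(\Lambda_{\mathbf{a}})^2$, and $1/X_{\min}\ll A^{k/(n-k)}/B\ll X^{-\delta'}$ (using the hypothesis $A^{k/(1-\delta)}<B^{n-k}$) yields
\[\#\{(\mathbf{c}_1,\mathbf{c}_2):D_{\mathbf{b}_1,\mathbf{b}_2}>\epsilon_0^{-C},\,\wedge(\mathbf{b}_1,\mathbf{b}_2)\ne\mathbf{0}\}\ll_C\epsilon_0^{C/4k}\frac{B^{2(n-k)+o(1)}}{\det(\Lambda_{\mathbf{a}})^2},\]
since $\epsilon_0=\exp(-\sqrt[3]{\log X})\gg X^{-\delta'}$ for any fixed $\delta'>0$. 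Summing over $\|\mathbf{a}\|\in[A,2A]$ then reduces the lemma to the bound $\sum_{\mathbf{a}}\det(\Lambda_{\mathbf{a}})^{-2}\ll A^{n-2k+o(1)}$, which follows from $\#\{\mathbf{b}\in\Lambda_{\mathbf{a}}:\|\mathbf{b}\|\le B\}\asymp B^{n-k}/\det(\Lambda_{\mathbf{a}})$ (valid since the largest successive minimum of $\Lambda_{\mathbf{a}}$ is $\ll A^{k/(n-k)}\ll B$) together with the bound $\#S(A;B,B)\ll A^{n-2k}B^{2n-2k+o(1)}$ from Lemma \ref{lmm:MainSum}. The main obstacle will be verifying (ii) uniformly, since the one-variable case of Lemma \ref{lmm:FpBound} already required a delicate case analysis ending in a generalized Vandermonde nonvanishing, and the two-variable version must track linear dependencies among $T^i(\mathbf{b}_1)$ and $T^i(\mathbf{b}_2)$ simultaneously; as a contingency, if the mod-$p$ bound breaks down at finitely many small primes, taking $e_p$ large enough there and absorbing $E^{2(n-k)}=O(1)$ into the implied constant still yields the claim.
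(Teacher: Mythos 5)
There is a genuine gap in your application of Lemma \ref{lmm:PolySystems}. That lemma's bound carries a factor $(D_0 F X_{\max})^\epsilon$ for an arbitrary but \emph{fixed} $\epsilon>0$, with the implied constant depending on $\epsilon$. Since $X_{\max}\asymp B\gg X^{\epsilon'}$, this factor is a fixed positive power of $X$. Meanwhile $\epsilon_0=\exp(-\sqrt[3]{\log X})$ is subpolynomial, so $X^{c\epsilon}\gg \epsilon_0^{-N}$ for \emph{every} fixed $N$. Therefore with $D_0=\epsilon_0^{-C}$ your output from Lemma \ref{lmm:PolySystems} is $\ll \epsilon_0^{C/4k}\,X^{c\epsilon}\,B^{2(n-k)}/\det(\Lambda_{\mathbf{a}})^2$, and the $X^{c\epsilon}$ cannot be written as $B^{o(1)}$ — it swamps the $\epsilon_0^{C/4k}$ saving completely. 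Lemma \ref{lmm:PolySystems} only yields useful savings when $D_0$ is itself a fixed power of $B$, because then $D_0^{-1/2d}(D_0FX_{\max})^\epsilon$ is a genuine negative power of $B$ once $\epsilon$ is small enough. This is exactly why the paper's proof splits the $D$-range: for $D>B^{\delta/2}$ it applies Lemma \ref{lmm:PolySystems} with $D_0=B^{\delta/2}$ to obtain a $B^{-\delta/8k+O(\epsilon)}$ saving, while for $\epsilon_0^{-C}<D\le B^{\delta/2}$ it instead does a per-prime residue-class count: Lemma \ref{lmm:FpBound} bounds the number of solutions mod $p$ restricted to $\overline{\Lambda_{\mathbf{a}}}$ by $O(p^{n+k-1})\ll p^{2n-2k-2}$, Lemma \ref{lmm:PolyBound} extends this to prime powers, and CRT plus the $\asymp B^{2n-2k}/(D^{2n-2k}\prod Z_i^2)$ count of lattice points per residue class (valid because $D<B/Z_{n-k}$) gives a convergent sum $\ll \epsilon_0^{C/2k}$ over $D>\epsilon_0^{-C}$. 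Your single-lemma approach has no mechanism to obtain the subpolynomial-in-$X$ saving for the small-$D$ range.

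A secondary issue: the claim $Z_{n-k}(\mathbf{a})\ll A^{k/(n-k)}$ is false. Only $\prod_i Z_i\ll\det(\Lambda_{\mathbf{a}})\ll A^k$ is available, so $Z_{n-k}$ can be as large as $A^{k}$. Consequently your estimate $1/X_{\min}\ll A^{k/(n-k)}/B$ does not hold; you have $X_{\min}\asymp B/Z_{n-k}$, which need not even exceed $1$. The paper resolves this by first invoking Lemma \ref{lmm:BigVec} to throw away the $\mathbf{a}$ with $Z_{n-k}(\mathbf{a})>B^{1-\delta/2}$ — those contribute at most $O(A^{n-2k}B^{2n-2k-\delta/2})$ to $S(A;B,B)$, which is acceptable — and only then proceeds with $1/X_{\min}\ll B^{-\delta/2}$. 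You need this reduction regardless of which lemma you subsequently apply.

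The remaining ideas in your proposal — reparametrising $\mathbf{b}_i$ via the reduced basis of $\Lambda_{\mathbf{a}}$, identifying the need to verify non-vanishing of the polynomial system modulo small primes (the paper does this with a fixed exponent $J$ uniformly in $\mathbf{a}$), and reducing the sum over $\mathbf{a}$ to Lemma \ref{lmm:MinimaSum} — are sound and align with the paper. But the central technical move, obtaining a bound that actually decays like a power of $\epsilon_0$ rather than merely like a power of $B$, requires the two-regime split that you omit.
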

\begin{proof}
By Lemma \ref{lmm:BigVec}, we can restrict our attention to $\ab$ such that $\Lambda_{\ab}$ has all successive minima $Z_1,\dots,Z_{n-k}\ll B^{1-\delta/2}$, and by Lemma \ref{lmm:Diagonal} to $\bb_1,\bb_2$ with $\wedge(\bb_1,\bb_2)\ne \mathbf{0}$. By Lemma \ref{lmm:MinimaSum} (since $\epsilon_0^{-C/20k}\gg\exp(G'(\log\log{A})^2)$), it suffices to show for each such $\ab$ that
\begin{equation}
\sum_{D>\epsilon_0^{-C}}\sum_{\substack{\bb_1,\bb_2\in\Lambda_{\ab}\\ \|\bb_1\|,\|\bb_2\|\in[B,2B] \\  D|\wedge(\bb,\mathbf{c})\ne \mathbf{0} }}1\ll \frac{\epsilon_0^{C/10k} B^{2n-2k}}{\prod_{i=1}^{n-k}Z_i^2}.\label{eq:LaCount}
\end{equation}
We split our argument into different cases, depending on whether $D\le B^{\delta/2}$ or $D>B^{\delta/2}$. We first consider $D\le B^{\delta/2}$. We recall that $\wedge(\bb_1,\bb_2)$ is a vector of homogeneous integer polynomials in the coefficients of $\bb_1,\bb_2$ with coefficients of size $O(1)$ and degree at most $2k$. If $\wedge(\bb_1,\bb_2)\equiv\mathbf{0}\Mod{p}$ then there exists constants $c_0,\dots,c_{k-1},d_0,\dots,d_{k-1}\in\Zz$ at least one of which is 1, such that
\[\sum_{i=0}^{k-1} c_i T^i(\bb_1)\equiv \sum_{i=0}^{k-1} d_i T^i(\bb_2)\Mod{p}.\]
By symmetry we may assume that one of the $c_i$ is equal to 1. Given a choice of $c_0,\dots,c_{k-1},d_0,\dots,d_{k-1}$ and $\bb_2$, we see that we are counting solutions $\bb_1\in\Lambda_{\ab}$ to a linear equation $M\bb_1\equiv \vb\Mod{p}$ for some given $\vb\in\mathbb{F}_p^n$ depending on $\bb_2$ and $d_0,\dots,d_{k-1}$, and some given matrix $M$ depending on $c_0,\dots,c_{k-1}$. The number of such solutions in $\mathbb{F}_p^n$ is at most the number of solutions of $M\bb_1\equiv \mathbf{0}\Mod{p}$ by linearity (it is the same if $\vb$ is in the image of $M$). But the number of choices of $\bb_1,c_0,\dots,c_{k-1}$ with one of the $c_i$ equal to 1 and $M\bb_1\equiv \mathbf{0}\Mod{p}$ is the number of $\bb_1\in\mathbb{F}_p^n$ such that $\wedge(\bb_1)\equiv\mathbf{0}\Mod{p}$. Thus, by Lemma \ref{lmm:FpBound}, there are $O(p^{k-1})$ choices of $\bb_1\Mod{p}$ and $c_0,\dots,c_{k-1}$ given a choice of $\bb_2$ and $d_0,\dots,d_{k-1}$. 

Let $\zb_1,\dots,\zb_{n-k}$ be a basis of $\Lambda_{\ab}$, and $\overline{\Lambda_{\ab}}$ be the reduction of $\Lambda_{\ab}\Mod{p}$. Since the integer vectors in the $\Qq$-span of $\zb_1,\dots,\zb_{n-k}$ are in $\Lambda_{\ab}$ from the definition of $\Lambda_{\ab}$, they must in fact lie in the $\Zz$-span of $\zb_1,\dots,\zb_{n-k}$. Thus any basis $\zb_1,\dots,\zb_{n-k}$ is linearly independent $\Mod{p}$, and so $\overline{\Lambda_{\ab}}$ contains $p^{n-k}$ points. Thus there are $O(p^k)$ choices of $d_0,\dots,d_{k-1}$ $\Mod{p}$ and $O(p^{n-k})$ choices of $\bb_2\in\overline{\Lambda_{\ab}}$. Hence in total there are $O(p^{n+k-1})\ll p^{2n-2k-2}$ choices of $\bb_1,\bb_2\in\overline{\Lambda_{\ab}}$ such that $\wedge(\bb_1,\bb_2)\equiv\mathbf{0}\Mod{p}$. But there are $p^{2n-2k}$ choices of $\bb_1,\bb_2\in\overline\Lambda_{\ab}$. Thus, by Lemma \ref{lmm:TwoPolys}, $\wedge(\sum_{i=1}^{n-k}a_i\zb_i,\sum_{i=1}^k b_i\zb_i)$ is a vector of polynomials in $\ab,\bb$ with no non-constant common factor, and $\wedge(\bb_1,\bb_2)$ does not vanish on $\overline{\Lambda_{\mathbf{a}}}$ for $p$ sufficiently large. If $p$ is bounded by a constant, then by Lemma \ref{lmm:SmallP} we also have that $\wedge(\bb_1,\bb_2)$ does not vanish on $\overline{\Lambda_{\ab}}$. %

Let $D=\prod_{i=1}^\ell p_i^{e_i}= D_1D_2$ with $D_1=\prod_{i=1}^\ell p_i$, $D_2=\prod_{i=1}^\ell p^{e_i-1}$ be factorized into square-free and remaining parts. By the above discussion, there are $O(p_i^{2n-2k-2})$ choices of $\bb_1,\bb_2\Mod{p_i}$ with $\wedge(\bb_1,\bb_2)\equiv\mathbf{0}\Mod{p_i}$ and $\bb_1,\bb_2\in\overline{\Lambda_{\ab}}$, and so certainly $O(p_i^{e_i(2n-2k)-2})$ choices $\Mod{p_i^{e_i}}$. Alternatively, by Lemma \ref{lmm:PolyBound}, there are $O(p_i^{e_i(2n-2k)-\lceil e_i/2k\rceil+o(e_i)})$ choices of $\bb_1,\bb_2\Mod{p_i^{e_i}}$. (After a change of variable one can assume a homogeneous polynomial of degree $d$ has a monomial $c x_1^d$, and so Lemma \ref{lmm:PolyBound} applies for each choice of $x_2,\dots,x_n$.) Thus, by the Chinese Remainder Theorem, the total the number of choices of possible residue classes for $\bb_1,\bb_2\Mod{D}$ is
\[
\ll \frac{D^{2n-2k}}{\prod_i p_i^{\max(2,\lceil e_i/2k\rceil+o(e_i))}}\ll \frac{D^{2n-2k}}{\prod_i (p_i^{2})^{3/4}(p_i^{e_i/2k-1+o(e_i)})^{1/4}}\ll \frac{D^{2n-2k-1/10k}}{D_1^{1+1/50k}D_2^{1/50k+o(1)}}.
\]
Since we are considering $D\le B^{\delta/2}<B/Z_{n-k}$, the number of choices of $\bb_1,\bb_2\in\Lambda_{\ab}$ with $\|\bb_1\|,\|\bb_2\|$ in any given residue class $\Mod{D}$ is $O(B^{2n-2k}D^{-(2n-2k)}/\prod_{i=1}^{n-k}Z_i^2)$. Thus the total contribution from $\epsilon_0^{-C}<D<B^{\delta/2}$ is
\begin{align}
&\ll \sum_{\substack{D>\epsilon_0^{-C}\\ p|D_2\Rightarrow p|D_1}}\frac{D^{2n-2k-1/10k}}{D_1^{1+1/50k}D_2^{1/50k+o(1)}}\cdot\frac{B^{2n-2k}}{D^{2n-2k}\prod_{i=1}^{n-k}Z_i^2}\nonumber\\
&\ll\frac{\epsilon_0^{C/10k} B^{2n-2k}}{\prod_{i=1}^{n-k}Z_i^2}\sum_{\substack{D_1,D_2\ge 1\\  p|D_2\Rightarrow p|D_1}}\frac{1}{D_1^{1+1/50k}D_2^{1/50k+o(1)}}\nonumber\\
&\ll \frac{\epsilon_0^{C/10k} B^{2n-2k}}{\prod_{i=1}^{n-k}Z_i^2}\sum_{D_1\ge 1}\frac{\tau(D_1)}{D_1^{1+1/50k}}\nonumber\\
&\ll \frac{\epsilon_0^{C/10k} B^{2n-2k}}{\prod_{i=1}^{n-k}Z_i^2}.\label{eq:SmallDBound}
\end{align}
This is sufficient to give \eqref{eq:LaCount} when $D\le B^{\delta/2}$.

Thus we are left to consider the contributions when $D>B^{\delta/2}$. Let $\zb_1,\dots,\zb_{n-k}$ be a basis for $\Lambda_{\ab}$, so that $\bb_1=\sum_{i=1}^{n-k}\lambda_i\zb_i$, $\bb_2=\sum_{i=1}^{n-k}\gamma_i\zb_i$ for some integers $\lambda_i,\gamma_i\ll B/Z_i$. From our above discussion, $\wedge(\sum_{i=1}^{n-k}\lambda_i\zb_i,\sum_{i=1}^{n-k}\gamma_i\zb_i)$ is a vector of homogeneous polynomials of degree $2k$ in $\lambda_1,\dots,\lambda_{n-k},\gamma_1,\dots,\gamma_{n-k}$, with coefficients of size $O(B)$ and which does not vanish identically $\Mod{p}$ for $p$ sufficiently large, or $\Mod{p^J}$ for some fixed $J$ for all other primes. Therefore, by Lemma \ref{lmm:PolySystems} the number of triples $(\bb_1,\bb_2,D)$ with $D>B^{\delta/2}$ such that $\wedge(\bb_1,\bb_2)\equiv \mathbf{0}\Mod{D}$ but $\wedge(\bb_1,\bb_2)\ne \mathbf{0}$ is
\begin{equation}
\ll B^{-\delta/8k}\prod_{i=1}^{n-k}\frac{B^2}{Z_i^2}.\label{eq:BigDBound}
\end{equation}
Recalling that $\epsilon_0=\tilde{q}^{-4n}\exp(-\sqrt[7]{\log{X}})\ge \exp(-\sqrt[3]{\log{X}})$ and $B\gg X^\delta$, we see \eqref{eq:BigDBound} gives \eqref{eq:LaCount} in the remaining range $D>B^{\delta/2}$.
\end{proof}
\subsection{Separation of variables and proof of Proposition \ref{prpstn:Bilinear}}
Finally, we are in a position to prove Proposition \ref{prpstn:Bilinear}. We assume that $n>3k$. 

\begin{proof}[Proof of Proposition \ref{prpstn:Bilinear}]
We recall that we wish to show
\[
\sum_{\substack{\|\bb_1\|,\|\bb_2\|\in[B,2B]\\ \bb_1,\bb_2\equiv \bb_0\Mod{\tilde{q}}}}g_{\bb_1}\overline{g_{\bb_2}}\sum_{\ab\in\Lambda_{\bb_1,\bb_2}\cap\Rc_{\bb_1,\bb_2}}1\ll \epsilon_0 A^{n-2k}B^{2n-2k}
\]
for any choice of $\bb_0$, where $A$, $B$ satisfy $X\ll AB\ll X$ and $X^{k+\epsilon/2}\ll B\ll X^{n-2k-\epsilon/2}$. For $\delta$ sufficiently small in terms of $\epsilon$, we see that this implies that $B^{2(1+\delta)k/(n-2k)}<A<B^{(1-\delta)(n-k)/k}$.

Combining Lemmas \ref{lmm:MainSum}, \ref{lmm:Diagonal}, \ref{lmm:NonArch} and recalling that $g_{\bb}\ll \epsilon_0^{-2}$, we have
\[\sum_{\substack{\|\bb_1\|,\|\bb_2\|\in[B,2B]\\ \|\wedge(\bb_1,\bb_2)\|\le \epsilon_0^{8k/\delta} B^{2k}\text{ or } D_{\bb_1,\bb_2}>\epsilon_0^{-24k}}}|g_{\bb_1}\overline{g_{\bb_2}}|\sum_{\substack{\ab\in\Lambda_{\bb_1,\bb_2}\cap\Rc_{\bb_1,\bb_2}}}1\ll \epsilon_0^2 A^{n-2k}B^{2n-2k}.\]
Thus we may restrict our attention to $\bb_1,\bb_2$ such that $\|\wedge(\bb_1,\bb_2)\|\ge \epsilon_0^{8k/\delta} B^{2k}$ and $D_{\bb_1,\bb_2}\le \epsilon_0^{-30k}$.

We first deal with the $D_{\bb_1,\bb_2}$ factor. We note that
\[
\sum_{\substack{D<\epsilon_0^{-30k} \\ D|D_{\bb_1,\bb_2}}}\sum_{\substack{d<\epsilon_0^{-60k}\\d|D_{\bb_1,\bb_2}/D}}\mu(d)=\begin{cases}
1,\qquad &D_{\bb_1,\bb_2}<\epsilon_0^{-30k},\\
0,& \epsilon_0^{-30k}\le D_{\bb_1,\bb_2}<\epsilon_0^{-60k},\\
O(\tau(D_{\bb_1,\bb_2})^2)=\epsilon_0^{-o(1)}, &\epsilon_0^{-60k}\le D_{\bb_1,\bb_2}<\epsilon_0^{-(100k)^2},\\
O(\epsilon_0^{-90k}),&\epsilon_0^{-(100k)^2}\le D_{\bb_1,\bb_2}.
\end{cases}
\]
Thus, using Lemma \ref{lmm:NonArch}, we see that we may replace the condition $D_{\bb_1,\bb_2}<\epsilon_0^{-30k}$ by the double sum on the left hand side at the cost of a negligible error term coming from when $D_{\bb_1,\bb_2}>\epsilon_0^{-60k}$.

We are left with
\[
\sum_{d< \epsilon_0^{-60k},\,D<\epsilon_0^{-30k}}\mu(d)\sideset{}{^*}\sum_{\substack{\|\bb_1\|,\|\bb_2\|\in[B,2B]\\ \bb_1\equiv\bb_2\equiv\bb_0\Mod{\tilde{q}} \\ dD|D_{\bb_1,\bb_2}}}g_{\bb_1}\overline{g_{\bb_2}}\sum_{\substack{\ab\in\Lambda_{\bb_1,\bb_2}\cap\Rc_{\bb_1,\bb_2}}}1,
\]
where $\sum^*$ indicates that we have the condition that $\|\wedge(\bb_1,\bb_2)\|\ge \epsilon_0^{8k/\delta} B^{2k}$. 

Splitting the sum over $\bb_1,\bb_2$ into residue classes modulo $D_1=\lcm(d D,\tilde{q})$, and recalling $\tilde{q}=(\theta n)^{n}q^*N(\cf)\le \epsilon_0^{-1}=\tilde{q}^{4n}\exp(\sqrt[7]{\log{X}})$, it suffices to show that
\[\sup_{\substack{D_1\ll \epsilon_0^{-100k}\\ \db_1,\db_2\in\Zz^n}}\sideset{}{^*}\sum_{\substack{\|\bb_1\|,\|\bb_2\|\in [B,2B]\\ (\bb_1,\bb_2)\equiv (\db_1,\db_2)\Mod{D_1}}}g_{\bb_1}\overline{g_{\bb_2}}\sum_{\substack{\ab\in\Lambda_{\bb_1,\bb_2}\cap\Rc_{\bb_1,\bb_2}}}1\ll \epsilon_0^{400k^2} A^{n-2k}B^{2n-2k}.\]

By Lemma \ref{lmm:Davenport} and Lemma \ref{lmm:Latticedets}, we have that the inner sum is
\[\frac{D_{\bb_1,\bb_2}\vol{\Rc_{A}}}{\|\wedge(\bb_1,\bb_2)\|}+O\Bigl(1+\frac{A^{n-2k-1}}{V^{n-2k-1}}\Bigr),\]
where $V$ is the length of the shortest vector in $\Lambda_{\bb_1,\bb_2}$. By Lemma \ref{lmm:MainSum}, since $V^{n-2k}\ll \det{\Lambda_{\bb_1,\bb_2}}\ll B^{2k}$, this error term contributes
\begin{align*}
& \ll B^{2n}\epsilon_0^{-4}+A^{n-2k-1}(\log{B})\epsilon_0^{-4}\sup_{V^{n-2k}\ll B^{2k}}\frac{S(V;B,B)}{V^{n-2k-1}}\\
& \ll B^{2n+o(1)}+A^{n-2k-1}B^{2n-2k+2k/(n-2k)+o(1)}.
\end{align*}
Here we used the fact that $g_{\bfr}\ll \epsilon_0^{-2}$ and that there are $O(\log{B})$ choices of dyadic interval for $V\ll B^{2k/(n-2k)}$. This is $\ll A^{n-2k-\delta/2}B^{2n-2k}$ since $A\gg B^{2(1+\delta)k/(n-2k)}$ by assumption. Thus we may restrict our attention to the main term.

We split the sum over $\bb_1,\bb_2$ into $O(\delta_0^{-2n})$ non-overlapping hypercubes $\Cc_1,\dots,\Cc_\ell$ of side length $\delta_0 B$, for some suitable $\delta_0$. There are $O(\delta_0^{-2n+1})$ hypercubes which do not have all points with norm either in $[B_0,2B_0]$ or outside of this interval. Thus, on choosing 
\begin{equation}
\delta_0=\epsilon_0^{2000k^2/\delta}
\end{equation}
 we see that these contribute a negligible amount. Thus we are left to show
\[\sum_{1\le i,j\le r}\sideset{}{^*}\sum_{\substack{(\bb_1,\bb_2)\in\Cc_i\times \Cc_j \\ (\bb_1,\bb_2)\equiv (\db_1,\db_2)\Mod{D}}}\frac{g_{\bb_1}\overline{g_{\bb_2}}\vol{\Rc_{\bb_1,\bb_2}}}{\|\wedge(\bb_1,\bb_2)\|}\ll \delta_0^{1/3} A^{n-2k}B^{2n-2k},\]
where $(\Cc_i)_{1\le i\le r}$ are the $O(\delta_0^{-n})$ hypercubes with all points in $\Cc_i$ having norm in $[B,2B]$ (since $g_{\bfr}=0$ if $N(\bfr)\notin[B,2B]$).

Since the hypercubes have side length $\delta_0 B$, and $\|\wedge(\bb_1,\bb_2)\|$ is a continuous function in the components of $\bb_1,\bb_2$, with the derivative with respect to any component $O(B^{2k-1})$, we have that $\|\wedge(\bb_1,\bb_2)\|$ is almost constant on $\Cc_i\times\Cc_j$. Specifically,  if $\|\wedge(\bb_1',\bb_2')\|\ge \delta_0^{1/2} B^{2k}$ for some $\bb_1'\in\mathcal{C}_i$ and $\bb_2'\in\mathcal{C}_j$ then $\|\wedge(\bb_1,\bb_2)\|=\|\wedge(\bb_1',\bb_2')\|(1+O(\delta_0^{1/2}))$ for any $\bb_1\in\Cc_i$, $\bb_2\in\Cc_j$. Let $\mathbf{c}_i$ be the vector in the center of $\Cc_i$. We now extend the sum to all pairs $(\bb_1,\bb_2)\in\Cc_i\times \Cc_j$ for which $\|\wedge(\mathbf{c}_i,\mathbf{c}_j)\|\ge \epsilon_0^{-8k/\delta} B^{2k}/2$. These additional terms can be shown to be negligible in an identical way to how we removed them originally. We are left to bound
\[\sum_{1\le i,j\le r}\frac{1}{\epsilon_0^{8k/\delta} B^{2k}}\Bigl|\sum_{\substack{(\bb_1,\bb_2)\in\Cc_i\times \Cc_j \\ (\bb_1,\bb_2)\equiv (\db_1,\db_2)\Mod{D}}}g_{\bb_1}\overline{g_{\bb_2}}\vol{\Rc_{\bb_1,\bb_2}}\Bigr|.\]
Similarly, $\vol{\Rc_{\bb_1,\bb_2}}$ is the volume of a region whose dependence on $\bb_1,\bb_2$ is through constraints which are linear in the coefficients, and so $\vol{\Rc_{\bb_1,\bb_2}}\ll A^{n-2k}$ can vary by at most $O(\delta_0 A^{n-2k})$ on $\Cc_i\times\Cc_j$. This error contributes $O(\delta_0\epsilon_0^{-8k/\delta} A^{n-2k}B^{2n-2k})$ in total, and so is negligible. Thus we may replace $\vol{\Rc_{A}}$ with the volume evaluated at $\mathbf{c}_i,\mathbf{c}_j$, which we bound by $O(A^{n-2k})$. Thus it suffices to show for any choice of $D<\delta_0^{-1/2}$ and any $i,j$, $\db_1$, $\db_2$
\[\sum_{\substack{(\bb_1,\bb_2)\in\Cc_i\times \Cc_j \\ (\bb_1,\bb_2)\equiv (\db_1,\db_2)\Mod{D}}}g_{\bb_1}\overline{g_{\bb_2}}\ll \delta_0^{2n+1/2}B^{2n}.\]
This sum factorizes as
\[\Bigl(\sum_{\substack{\bb_1\in\Cc_i \\ \bb_1\equiv \db_1\Mod{D}}}g_{\bb_1}\Bigr)\Bigl(\sum_{\substack{\bb_2\in\Cc_j \\ \bb_1\equiv \db_2\Mod{D}}}\overline{g_{\bb_2}}\Bigr).\]
We now replace $g_{\bb}$ with the original coefficients $\1_{\Rc_2}(\bfr)-\tilde{\1}_{\Rc_2}(\bfr)$. As in Lemma \ref{lmm:DivisorBound}, the error introduced by making this change is
\begin{align*}
\ll \sum_{\substack{\bb\in\Cc\\ \tau(\bfr)>\epsilon_0^{-2}}}\tau(\bfr)\log{X}&\ll \epsilon_0^{2000k^2/\delta}\sum_{\bb\in\Cc}\tau(\bfr)^{1000k^2/\delta+2}\\
&\ll \delta_0\sum_{N(\df)<B^{1/2}}\tau(N(\df))^{O(1)}\sum_{\substack{\db\in(\Zz/N(\df)\Zz)^n\\ \df|\sum_{i=1}^nd_i\Ti}}\sum_{\substack{\bb\in\Cc\\ \bb\equiv \db\Mod{N(\df)}}}1\\
&\ll \delta_0^{n+1} B^{n}\sum_{N(\df)<B^{1/2}}\frac{\tau(\df)^{O(1)}}{N(\df)}\\
&\ll \delta_0^{n+1}(\log{B})^{O(1)}B^n.
\end{align*}
Since the trivial bound for either sum is $\delta_0^n B^n\epsilon_0^{-2}$, this makes a negligible contribution. Thus we are left to show that
\[\sum_{\substack{\bb_1\in\Cc_i \\ \bb_1\equiv \db_1\Mod{D}}}\Bigl(\1_{\Rc_2}(\bfr)-\tilde{\1}_{\Rc_2}(\bfr)\Bigr)\ll \delta_0^{n+1/2}B^{2n}.\]
We recall that $\delta_0=q^{*-O(1)}\exp(-O(\sqrt[7]{\log{X}}))\ge q^{*-\log\log{B}}\exp(-\sqrt[6]{\log{B}})$ and that $D<\delta_0^{1/2}\ll q^{*\log\log{B}}\exp(\sqrt[6]{\log{B}})$. Thus we may apply Proposition \ref{prpstn:SieveCube}, which gives the desired result. This completes our proof of Proposition \ref{prpstn:Bilinear}.
\end{proof}
 Thus we have established Theorem \ref{thrm:LowerBound}, and Theorem \ref{thrm:MainTheorem} in the case $K=\Qt$.
\section{General \texorpdfstring{$K=\Qq(\omega)$}{K}}\label{sec:Generalized}
In this section we sketch the changes in the argument required to generalize the above result to $K=\Qq(\omega)$ for $\omega$ a root of a monic irreducible polynomial in $\Zz[X]$ instead of $K=\Qt$. Most of the arguments work with any occurrence of $\Ti$ simply replaced by $\omega^{i-1}$, but in a few places we require some small modifications to the argument. We have used throughout the paper the fact that an element of $\mathcal{O}_K$ can be written as an element of $(\theta n)^{-n}\Zt$. For $K=\Qq(\omega)$ we note that $\Zz[\omega]$ is a finite index lattice in $\Oc_K$, and so $D_K^{-1}\Zz[\omega]\subseteq \Oc_K\subseteq \Zz[\omega]$ for a suitable constant $D_K$. Thus we can simply replace $(\theta n)^n$ by $D_K$ throughout.

We now consider the argument of Sections \ref{sec:L1}-\ref{sec:L2} which establishes the Type II estimate Proposition \ref{prpstn:TypeII}, where a couple of other changes are required. The argument of Section \ref{sec:L1} is essentially unchanged, as in no place did we use the explicit structure of $K$ being of the form $\Qt$.

In Section \ref{sec:Lattice} we make use of the explicit multiplication rules in $\Zt$, and so we need to modify this for $\Zz[\omega]$. We see that
\[\Bigl(\sum_{i=1}^{n}b_i\omega^{i-1}\Bigr)\Bigl(\sum_{i=1}^{n}a_i \omega^{i-1}\Bigr)=\Bigl(\sum_{i=1}^{n}c_i \omega^{i-1}\Bigr)\]
with
\[c_\ell=\Bigl(\sum_{i=1}^{\ell} b_{\ell+1-i}a_i+\sum_{i+j\ge n+2}\varepsilon_{i,j,\ell}b_i a_j\Bigr)=T_{n-\ell}(\bb)\cdot \ab,\]
where $\varepsilon_{i,j,\ell}\in\Zz$ are some constants depending on the coefficients of the minimal polynomial $f$ of $\omega$. Here $T_0,\dots,T_{n-1}$ are linear maps with the property that $T_j(\bb)_\ell$ is equal to $b_{n+1-j-\ell}$ (or 0 if $n\le j+\ell$) plus some integral linear combination of $b_{n-\ell+2},\dots,b_{n}$ (if $\ell\ge 2$). Again, we let $\diamond$ denote the above operation, so that $\mathbf{c}=\bb\diamond\ab$. We then have the corresponding definition of the lattices $\Lambda_\vb$ and $\Lambda_{\vb_1,\vb_2}$
\begin{align*}
\Lambda_{\vb}&=\{\xb\in\Zz^n:(\xb\diamond \vb)_i=0,\,n-k< i\le n\}\\
&=\{\xb\in\Zz^n:\xb\cdot T_i(\vb)=0,\,0\le i\le k-1\},\\
\Lambda_{\vb_1,\vb_2}&=\{\xb\in\Zz^n:(\xb\diamond \vb_1)_i=(\xb\diamond \vb_2)_i=0,\,n-k< i\le n\}\\
&=\{\xb\in\Zz^n:\xb\cdot T_i(\vb_1)=\xb\cdot T_i(\vb_2)=0,\,0\le i\le k-1\},
\end{align*}
and Lemma \ref{lmm:Latticedets} and Lemma \ref{lmm:SmallP} then hold in an identical way with $T^i$ replaced by $T_{i}$. In place of Lemmas \ref{lmm:LinearSubspaceBound} and \ref{lmm:FpBound} we have the following two simple lemmas.
\begin{lmm}\label{lmm:GenLinearSubspace}
Given $\bb\in\Zz^n\backslash\{\mathbf{0}\}$, let $\mathcal{L}$ be a linear subspace of $\Rr^n$ such that $\wedge(\xb,\bb)=\mathbf{0}$ for all $\xb\in\mathcal{L}$.
Then $\mathcal{L}$ has dimension at most $2k-1$.
\end{lmm}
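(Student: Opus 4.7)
The plan is to reinterpret the vanishing of $\wedge(\mathbf{x},\mathbf{b})$ as a linear equation inside the field $K$ via the trace pairing, and then to bound the dimension of the resulting locus by recognizing it as the image of a $\mathbb{G}_m$-invariant rational map.

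First I would set $\alpha=\sum_{i=1}^n x_i\omega^{i-1}$ and $\beta=\sum_{i=1}^n b_i\omega^{i-1}$, and use the identity $T_i(\mathbf{v})\cdot\mathbf{a}=\phi_i(\nu\gamma)$ (where $\phi_i$ is the $\mathbb{Q}$-linear functional on $K$ extracting the coefficient of $\omega^{n-i-1}$, $\nu=\sum v_j\omega^{j-1}$ and $\gamma=\sum a_j\omega^{j-1}$) to convert the linear-dependence condition $\sum c_iT_i(\mathbf{x})+\sum d_iT_i(\mathbf{b})=\mathbf{0}$ into the condition $\sum c_i\phi_i(\alpha\gamma)+\sum d_i\phi_i(\beta\gamma)=0$ for every $\gamma\in K$. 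Using the non-degeneracy of the trace pairing to write $\phi_i(\cdot)=\mathrm{Tr}_{K/\mathbb{Q}}(\delta_i\cdot)$, and letting $V=\mathrm{span}_\mathbb{Q}(\delta_0,\dots,\delta_{k-1})\subseteq K$ (a $k$-dimensional subspace, since the $\phi_i$ are linearly independent), the condition $\wedge(\mathbf{x},\mathbf{b})=\mathbf{0}$ becomes the existence of $(\lambda_c,\lambda_d)\in V\times V\setminus\{(0,0)\}$ with
\[\lambda_c\alpha+\lambda_d\beta=0\quad\text{in }K.\]
Since $\beta\ne 0$ is a unit of $K$, the case $\lambda_c=0$ would force $\lambda_d=0$, so necessarily $\lambda_c\ne 0$ and $\alpha=-\beta\lambda_d/\lambda_c$.

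Next I would complexify. The polynomial identities $\wedge(\mathbf{x},\mathbf{b})=\mathbf{0}$ extend from the $\mathbb{R}$-linear subspace $\mathcal{L}$ to its complexification $\mathcal{L}_\mathbb{C}\subseteq K_\mathbb{C}:=K\otimes_\mathbb{Q}\mathbb{C}$, and $\dim_\mathbb{R}\mathcal{L}=\dim_\mathbb{C}\mathcal{L}_\mathbb{C}$. Since $\beta$ remains a unit of $K_\mathbb{C}$ (a product of copies of $\mathbb{C}$ indexed by the embeddings of $K$), the above reformulation survives, so $\mathcal{L}_\mathbb{C}$ is contained in the image $Z$ of the rational map
\[\mu:V_\mathbb{C}\times V_\mathbb{C}\dashrightarrow K_\mathbb{C},\qquad(\lambda_c,\lambda_d)\mapsto-\beta\lambda_d/\lambda_c,\]
defined on the dense open set where $\lambda_c\in K_\mathbb{C}^\times$ (nonempty, as $\delta_0\in V$ is a unit of $K$). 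The map $\mu$ is invariant under the diagonal scaling $(\lambda_c,\lambda_d)\mapsto(c\lambda_c,c\lambda_d)$, so every nonempty fibre contains a one-parameter $\mathbb{G}_m$-orbit; the fibre dimension theorem then yields $\dim Z\le\dim(V_\mathbb{C}\times V_\mathbb{C})-1=2k-1$. Its Zariski closure $\overline{Z}$ is irreducible of dimension at most $2k-1$ and contains the irreducible subvariety $\mathcal{L}_\mathbb{C}$, so $\dim\mathcal{L}=\dim_\mathbb{C}\mathcal{L}_\mathbb{C}\le 2k-1$, as required.

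The crucial step is the trace-duality reformulation in the first paragraph, after which the bound $2k-1$ is forced by the manifest $\mathbb{G}_m$-symmetry of $\mu$ and requires no finer analysis of the specific subspace $V$ beyond its dimension. The only minor bookkeeping concern is that $K_\mathbb{C}$ is a product of fields rather than a field, so $\mu$ is genuinely only a rational map; however, the locus where $\lambda_c$ fails to be a unit is cut out by the restriction of the norm form and is a proper closed subvariety of $V_\mathbb{C}\times V_\mathbb{C}$, which is all that is needed for the fibre dimension count.
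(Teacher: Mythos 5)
Your approach is essentially the paper's: both parameterize the locus $\{\mathbf{x}:\wedge(\mathbf{x},\mathbf{b})=\mathbf{0}\}$ by the coefficients $(c_0,\dots,c_{k-1},d_0,\dots,d_{k-1})$ of the linear dependence, which after projectivizing gives a $(2k-1)$-dimensional parameter space. Your trace-duality reformulation is an elegant repackaging that makes the rational map $(\lambda_c,\lambda_d)\mapsto -\beta\lambda_d/\lambda_c$ and its $\mathbb{G}_m$-invariance explicit, where the paper merely asserts $\mathbf{x}$ is a rational expression in $(c,d)$; this clarifies what is going on but is not a different argument.

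There is, however, a gap in the complexification step. You claim "$\mathcal{L}_\mathbb{C}$ is contained in the image $Z$ of the rational map $\mu$," but over $K_\mathbb{C}\cong\mathbb{C}^n$ the element $\lambda_c$ can be nonzero yet a zero-divisor, so the deduction $\alpha=-\beta\lambda_d/\lambda_c$ does not survive: a point $\alpha$ of the complexified locus whose only witnessing pairs $(\lambda_c,\lambda_d)$ have $\lambda_c$ a zero-divisor need not lie in $\overline{Z}$, and you have given no reason why $\mathcal{L}_\mathbb{C}$ must avoid that part of the locus. Your closing paragraph acknowledges that $\mu$ is only a rational map but treats this purely as a wrinkle in the fibre-dimension computation, which is not where the danger lies; the danger is in the containment $\mathcal{L}_\mathbb{C}\subseteq\overline{Z}$ itself. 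The paper avoids this by arguing with $\mathbf{x}\in\mathbb{Z}^n$, where $K$ is a field and $\lambda_c\ne 0$ forces $\lambda_c$ invertible. The repair inside your framework is to not complexify at the crucial step: for rational $\alpha\ne 0$ in the locus the witness $\lambda_c$ lies in $V\setminus\{0\}\subseteq K^\times$, so the rational points of the locus (other than $0$) all lie in $Z$; since in the application $\mathcal{L}$ is the real span of lattice vectors and hence $\mathbb{Q}$-rational, its rational points are Zariski dense, giving $\mathcal{L}\subseteq\overline{Z}$, after which your fibre-dimension count yields $\dim\mathcal{L}\le 2k-1$.
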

\begin{proof}
We note that for $\xb\in\Zz^n\backslash\{\mathbf{0}\}$ we have $N(\sum_{i=1}^{n}x_i\omega^{i-1})\ne 0$, so the columns $T_0(\xb),\dots,T_{n-1}(\xb)$ in the multiplication-by-$\sum_{i=1}^{n}x_i\omega^{i-1}$ matrix are linearly independent. Thus there are no constants $c_0,\dots,c_{k-1}$ not all zero such that $\sum_{i=0}^{k-1}c_i T_i(\xb)=\mathbf{0}$. Thus , by linearity of the $T_i$, we see that given $c_0,\dots,c_{k-1}$ not all zero and given $d_0,\dots,d_{k-1}$ there is at most one $\xb\in\Zz^n$ such that
\[\sum_{i=0}^{k-1}c_i T_i(\xb)=\sum_{i=0}^{k-1}d_i T_i(\bb).\]
Hence if $\wedge(\xb,\bb)=\mathbf{0}$ then $\xb$ is given by vector of rational polynomial expressions in $c_0,\dots,c_{k-1},d_0,\dots,d_{k-1}$. Since one of $c_0,\dots,c_{k-1},d_0,\dots,d_{k-1}$ may be assumed to be 1, we see $\xb$ lies in a variety of dimension at most $2k-1$, and so any linear subspace containing only $\xb$ of this form must have dimension at most $2k-1$.
\end{proof}
\begin{lmm}\label{lmm:GenFpBound}
We have
\[\#\{\bb\in\mathbb{F}_p^n:\wedge(\bb)=\mathbf{0}\}\ll p^{2k-2}.\]
\end{lmm}
\begin{proof}
If $\wedge(\bb)=\mathbf{0}\in\mathbf{F}_p^n$, then there are constants $c_0,\dots,c_{k-1}$ one of which is 1 such that $\sum_{i=0}^{k-1}c_i T_i(\bb)=\mathbf{0}$. We argue in the case $c_{k-1}=1$; the other cases are analogous. Looking at the $\ell^{th}$ component for $\ell\le n-k+1$, we see this gives $b_{n-k+2-\ell}$ in terms of $b_{n+3-k-\ell},\dots,b_n$. In particular, $\bb$ is uniquely determined by $b_{n-k+2},\dots,b_n$ and $c_1,\dots,c_{k-2}$. Hence there are at most $p^{2k-2}$ choices of $\bb$.
\end{proof}
Since we have a bound $2k-1$ in Lemma \ref{lmm:GenLinearSubspace} instead of $k$ of Lemma \ref{lmm:LinearSubspaceBound}, we can only ensure that $\Lambda_{\vb}$ has a basis satisfying the first and third conditions of Lemma \ref{lmm:NiceBasis} with $\wedge(\zb_1,\zb_{2k})\ne 0$ instead of $\wedge(\zb_1,\zb_{k+1})\ne 0$. This requires a number of small modifications throughout Section \ref{sec:L2} with each instance of $\zb_{k+1}$ replaced by $\zb_{2k}$ (and some corresponding minor adjustments replacing $k+1$ with $2k$). This affects the argument when we establish \eqref{eq:BCBound}, since instead we have
\[Z_1^{2k}Z_{2k}^{n-3k}\ll \det(\Lambda_{\vb})\ll V^{k}\ll (BC)^{k^2/(n-2k)},\]
and so to deduce that $Z_1Z_{2k}\ll (BC)^{1-2\delta}$ for some $\delta>0$ we require that $n>(5+\sqrt{5})k/2$. Similarly, for Lemma \ref{lmm:BigVec}, to ensure that $Z_1Z_{2k}\ll B^{2-2\delta}$ using $Z_1^{2k}Z_{2k}^{n-3k}\ll \det(\Lambda_{\ab})\ll A^k$ we require that $A^{k/(1-\delta)}\ll B^{2n-6k}$ as well as $A^{k/(1-\delta)}\ll B^{n-k}$. The rest of the Archimedean estimates go through as before.

For the non-Archimedean estimates, we use the bound of Lemma \ref{lmm:GenFpBound} instead of Lemma \ref{lmm:FpBound} in Lemma \ref{lmm:NonArch}. In order to conclude that for $\bb_1,\bb_2\in\Lambda_{\ab}$ we have $\wedge(\bb_1,\bb_2)=\mathbf{0}\Mod{p}$ only if at least two non-zero polynomials with no common factor vanish $\Mod{p}$, we require that $n-k\ge 2k-2+k+2$ instead of $n-k\ge k-1+k+2$; i.e. we require $n\ge 4k$. We note that $4k\ge (5+\sqrt{5})k/2$. With this restriction, the rest of the proof of the Type II estimate goes through as before.

Combining the above restrictions, we see that we have the Type II estimate provided $n\ge 4k$ and any polytope $\Rc\subseteq[\epsilon^2,2n]^\ell$ has 
\[(\xi_1,\dots,\xi_\ell)\in\Rc\Rightarrow \max\Bigl(k+\epsilon,\frac{kn+\epsilon}{2n-5k}\Bigr)<\sum_{j=1}^{\ell'}\xi_j<n-2k-\epsilon\]
 for some $\ell'\le \ell$ (in addition to the assumptions already contained in Proposition \ref{prpstn:TypeII}). For $n<5k$ this has reduced the range of our Type II estimate, and so we require a slightly different decomposition of $S(\Ac,\mathfrak{r}_2)$. 

When $n\ge 4k$, we see we can handle Type II terms if there is a factor with norm in the interval $[X^{n/3+\epsilon},X^{n/2-\epsilon}]$. An identical argument then shows that we have an equivalent of Proposition \ref{prpstn:SieveAsymptotic} for sums $\sum_{\df}S(\Ac_{\df},\mathfrak{r}_1')$ instead of $\sum_{\df}S(\Ac_{\df},\mathfrak{r}_1)$, where $\mathfrak{r}'_1$ is any ideal with $N(\mathfrak{r}_1')<X^{n/6-2\epsilon}$ (since this is the length $X^{n/2-\epsilon}/X^{n/3+\epsilon}$ of our new Type II range). We let $\mathfrak{r}'_1,\mathfrak{r}'_2,\mathfrak{r}'_3,\mathfrak{r}'_4,\mathfrak{r}_5',\mathfrak{r}_6'$ be chosen maximally (with respect to the ordering of ideals from Section \ref{sec:Sieve}) subject to $N(\mathfrak{r}'_1)<X^{n/6-2\epsilon}$, $N(\mathfrak{r}'_2)<X^{n/3+\epsilon}$, $N(\mathfrak{r}'_3)<X^{n/2-\epsilon}$, $N(\mathfrak{r}'_4)<X^{n/2+\epsilon}$, $N(\mathfrak{r}_5')<X^{2n/3-\epsilon}$, $N(\mathfrak{r}_6')<X^{2n/3+2\epsilon}$. By applying Buchstab's identity twice, and splitting up some of the summations which appear, we have
\begin{align*}
&S(\Ac,\mathfrak{r}'_4)=S(\Ac_{\pf},\mathfrak{r}'_1)-\sum_{\mathfrak{r}'_1<\pf\le\mathfrak{r}'_4}S(\Ac_{\pf},\pf)\\
&=S(\Ac,\mathfrak{r}'_1)-\sum_{\mathfrak{r}'_1<\pf_1\le\mathfrak{r}'_2}S(\Ac_{\pf_1},\mathfrak{r}'_1)-\sum_{\mathfrak{r}'_2<\pf\le\mathfrak{r}'_3}S(\Ac_{\pf},\pf)-\sum_{\mathfrak{r}'_3<\pf\le\mathfrak{r}'_4}S(\Ac_{\pf},\pf)\\
&\quad +\sum_{\mathfrak{r}'_1<\pf_2\le\pf_1\le\mathfrak{r}'_2}S(\Ac_{\pf_1\pf_2},\pf_2)\\
&=S(\Ac,\mathfrak{r}'_1)-\sum_{\mathfrak{r}'_1<\pf_1\le\mathfrak{r}'_2}S(\Ac_{\pf_1},\mathfrak{r}'_1)-\sum_{\mathfrak{r}'_2<\pf\le\mathfrak{r}'_3}S(\Ac_{\pf},\pf)-\sum_{\mathfrak{r}'_3<\pf\le\mathfrak{r}'_4}S(\Ac_{\pf},\pf)\\
&\quad+\hspace{-0.5cm}\sum_{\substack{\mathfrak{r}'_1<\pf_2\le\pf_1\le\mathfrak{r}'_2\\ \mathfrak{r}_2'<\pf_1\pf_2\le \mathfrak{r}'_3\text{ or }\mathfrak{r}_4'<\pf_1\pf_2\le\mathfrak{r}_5' }}\hspace{-0.5cm}S(\Ac_{\pf_1\pf_2},\pf_2)+\hspace{-0.5cm}\sum_{\substack{\mathfrak{r}'_1<\pf_2\le\pf_1\le\mathfrak{r}'_2\\ \mathfrak{r}_1'^2<\pf_1\pf_2\le \mathfrak{r}'_2\text{ or }\mathfrak{r}_3'<\pf_1\pf_2\le \mathfrak{r}_4'\text{ or }\mathfrak{r}_5'<\pf_1\pf_2\le\mathfrak{r}_6'}}\hspace{-0.5cm}S(\Ac_{\pf_1\pf_2},\pf_2).
\end{align*}
The first three and the fifth terms in the decomposition above can be evaluated asymptotically by the equivalents of Proposition \ref{prpstn:TypeII} and Proposition \ref{prpstn:SieveAsymptotic}. The fourth and the final terms can be bounded in magnitude by replacing $S(\Ac_{\pf},\pf)$ and $S(\Ac_{\pf_1\pf_2},\pf_2)$ with $S(\Ac_{\pf},\mathfrak{r}_1)$ and $S(\Ac_{\pf_1\pf_2},\mathfrak{r}_1)$ respectively, and the equivalent of Proposition \ref{prpstn:SieveAsymptotic} then shows that these terms contribute $O(\epsilon)$ to the final estimate since the range of norms in the sums is of length $O(\epsilon)$ in the logarithmic scale. Thus we have a decomposition where all terms can be evaluated asymptotically or contribute a negligible amount.

The final minor change is in the proof of Lemma \ref{lmm:PolyaVino}. In establishing \eqref{eq:PolyaVinoTarget}, we used the multiplicative structure of $\Zt$. However, recalling that $(\ab\diamond\bb)_n=\ab\cdot T_0(\bb)$ and $T_0(\bb)_\ell$ is equal to $b_{n+1-\ell}$, we see that
\[\sum_{\substack{\ab\in[1,q]^{n}\\ a_j=0\text{ if }j>n-k}}e(\ab\cdot T_0(\bb)/q)=\begin{cases}
q^{n-k},\qquad &\text{if }b_{n}=\dots =b_{k+1}=0,\\
0,&\text{otherwise.}
\end{cases}\]
Thus the proof goes through exactly as before.
\section{Acknowledgements}
The author is supported by a Clay research fellowship and is a Fellow by Examination of Magdalen College, Oxford. We thank Stanley Xiao for some useful comments, and the anonymous referees for many helpful suggestions.
\appendix
\section{Explicit Sieve Decomposition}
In our appendix, we give a description of an adequate sieve decomposition used in Section \ref{sec:Sieve} in the case $n<4k$. As mentioned previously, work of Harman \cite{HarmanII} in principle gives a decomposition which is adequate for us, but in the interests of clarity we give an different explicit decomposition here.

We recall that we have an ordering on ideals which respects the partial ordering by norm, and that  $\mathfrak{r}_2$ is maximal with $N(\mathfrak{r}_2)<X^{n(1/2+\epsilon)}$. We recall the notation $\mathcal{C}_\mathfrak{d}$ to denote the set of ideals $\mathfrak{c}$ such that $\mathfrak{c}\mathfrak{d}$ lies in the set $\mathcal{C}$, and the notation $S(\mathcal{C},\mathfrak{z})$ to denote ideals in the set $\mathcal{C}$ with all  ideal factors larger than $\mathfrak{z}$.

We wish to obtain a decomposition of $S(\mathcal{C},\mathfrak{r}_2)$ of the type given by Proposition \ref{prpstn:Decomp2}, which then allows us to obtain a lower bound for the number of primes in $\mathcal{A}$ by performing the same decomposition to $\mathcal{B}$, giving a lower bound of the form \eqref{eq:FullDecomposition}.

Rather than directly produce a decomposition of the form  of Proposition \ref{prpstn:Decomp2} for a general set $\mathcal{C}$, it is more convenient and more conceptual for us to deal with $\mathcal{A}$ and $\mathcal{B}$ at the same time so we can pay attention only to those terms which cannot be shown to be negligible by Propositions \ref{prpstn:TypeII} and \ref{prpstn:SieveAsymptotic}, since then the motivation for our decomposition is clear. With this in mind, we define
\[
T(\mathfrak{d},\mathfrak{z})=S(\mathcal{A}_\mathfrak{d},\mathfrak{z})-\tilde{\mathfrak{S}}\frac{\#\mathcal{A}}{\#\mathcal{B}}S(\mathcal{B}_\mathfrak{d},\mathfrak{z}),
\]
and we wish to make a decomposition of $T((1),\mathfrak{r}_2)$ into terms that can be shown to be negligible by Propositions \ref{prpstn:SieveAsymptotic} (giving the sets $\mathcal{S}_1$ and $\mathcal{S}_2$) and \ref{prpstn:TypeII} (giving the sets $\mathcal{S}_3$ and $\mathcal{S}_4$), and some remaining terms for which we can produce an adequate lower bound (giving the set $\mathcal{S}_5$). It will be obvious from our construction that once we have obtained suitable decomposition of $T(\df,\zf)$, this immediately gives a suitable decomposition of the type in Proposition \ref{prpstn:Decomp2}.

We see that Proposition \ref{prpstn:TypeII} and Proposition \ref{prpstn:SieveAsymptotic} show that various averages of $T(\mathfrak{d},\mathfrak{z})$ are negligible. Similarly, although the decomposition of Proposition \ref{prpstn:Decomp2} is given in terms of polytopes, we will deal just with sums of terms of $T(\mathfrak{d},\mathfrak{z})$. Since all these expressions will be involving ideals with at most $1/(3\theta-1)$ prime factors with constraints only on the number and size of the prime factors, we see that they can be re-written in terms of polytopes to give a decomposition of the originally desired form.

We assume throughout that $0.25<k/n+4\epsilon<\theta:=0.3182$, and note that $7/22<0.3182$. We will only use our Type II estimate of Proposition \ref{prpstn:TypeII} to evaluate terms involving an ideal factor with norm in the interval $[X^{n\theta},X^{n(1-2\theta)}]$ or $[X^{2n\theta},X^{n(1-\theta)}]$, and we will only use Proposition \ref{prpstn:SieveAsymptotic} for sums $\sum_{\mathfrak{d}}\mathbf{1}_{\mathcal{R}}(\mathfrak{d})S(\mathcal{A}_d,\mathfrak{r}_1)$ where the $\mathfrak{d}$ in the summation  satisfies $N(\mathfrak{d})<X^{n(1-\theta)}$. (This corresponds to restricting to the conditions $\theta n\le \sum_{i=1}^{\ell'}e_i\le n(1-2\theta)$ and $\sum_{i=1}^\ell e_i<n(1-\theta)$ in Propositions \ref{prpstn:TypeII} and \ref{prpstn:SieveAsymptotic} respectively). We note that the restriction to $k/n+4\epsilon<\theta$ implies that $N(\mathfrak{r}_1)<X^{n-3k-4\epsilon}$, as required by Proposition \ref{prpstn:SieveAsymptotic}. Thus our decomposition will be valid for all $k,n$ satisfying $k/n<\theta-4\epsilon$.

We fix ideals $\mathfrak{z}_1\le \dots\le \mathfrak{z}_6$ chosen maximally (with respect to our ordering) subject to
\begin{align*}
&N(\zf_1)\le X^{n(1-3\theta)},\quad &&N(\zf_2)\le X^{n\theta},\quad &&N(\zf_3)\le X^{n(1-2\theta)},\\
&N(\zf_4)\le X^{n(1/2+\epsilon)},\quad &&N(\zf_5)\le X^{2n\theta},\quad &&N(\zf_6)\le X^{n(1-\theta)}.
\end{align*}
The quantities $\mathfrak{r}_1,\mathfrak{r}_2$ from Section \ref{sec:Sieve} are equal to $\zf_1$ and $\zf_4$ respectively.

Since we can estimate $S(\mathcal{B},\zf_4)$ by the Prime Ideal Theorem (Lemma \ref{lmm:PrimeIdeal}), we see that it suffices to get a suitable lower bound for $T((1),\zf_4)$ to produce the desired lower bound for $S(\mathcal{A},\zf_4)$.

By Buchstab's identity
\begin{align}
T( (1),\zf_4)&=T( (1),\zf_1)-\sum_{\zf_1<\pf\le\zf_4}T(\pf,\pf)\nonumber\\
&=T( (1),\zf_1)-\sum_{\zf_1<\pf\le \zf_2}T(\pf,\zf_1)+\sum_{\zf_1<\pf_2\le \pf_1\le \zf_2}T(\pf_1\pf_2,\pf_2)\nonumber\\
&\quad-\sum_{\zf_2<\pf\le\zf_3}T(\pf,\pf)-\sum_{\zf_3<\pf\le\zf_4}T(\pf,\zf_1)+\sum_{\substack{\zf_3<\pf_1\le\zf_4 \\ \zf_1<\pf_2\le\pf_1}}T(\pf_1\pf_2,\pf_2)\nonumber\\
&=:T_1-T_2+T_3-T_4-T_5+T_6.
\end{align}
By Proposition \ref{prpstn:SieveAsymptotic}, the $T_1$, $T_2$ and $T_5$ term are $O(\epsilon\#\mathcal{A}/\log{X})$, which is acceptable. By Proposition \ref{prpstn:TypeII}, $T_4=o(\#\mathcal{A}/\log{X})$ and so is also negligible. Thus we are left to consider $T_3$ and $T_6$. %

We first split $T_3$ and $T_6$ into subsums $T_{3,1}$, $T_{3,2}$ and $T_{6,1}$, $T_{6,2}$ depending on whether $\pf_1\pf_2^2\le \zf_6$ or not. First we consider $T_{6,1}$, the terms from $T_6$ with $\pf_1\pf_2^2\le\zf_6$, where we can apply further Buchstab iterations. This gives %
\begin{align*}
T_{6,1}:=\sum_{\substack{\zf_3<\pf_1\le \zf_4 \\ \zf_1<\pf_2\le \pf_1 \\ \pf_2^2\pf_1\le\zf_6}}T(\pf_1\pf_2,\pf_2)&=\sum_{\substack{\zf_3<\pf_1\le\zf_4 \\ \zf_1<\pf_2\le\pf_1 \\ \pf_2^2\pf_1\le\zf_6}}T(\pf_1\pf_2,\zf_1)-\sum_{\substack{\zf_3<\pf_1\le\zf_4 \\ \zf_1<\pf_3\le\pf_2\le\pf_1\\ \pf_2^2\pf_1\le\zf_6\\ N(\pf_1\pf_2\pf_3^2)\ll X^n}}T(\pf_1\pf_2\pf_3,\zf_1)\\
&\quad+\sum_{\substack{\zf_3<\pf_1\le\zf_4 \\ \zf_1<\pf_4\le\dots\le\pf_1\\ \pf_2^2\pf_1\le\zf_6\\ N(\pf_1\pf_2\pf_3^2)\ll X^{n}}}T(\pf_1\dots\pf_4,\pf_4).
\end{align*}
Here we have the additional restriction $N(\pf_1\pf_2\pf_3^2)\ll X^n$ since $T(\pf_1\pf_2\pf_3,\pf_3)=0$ otherwise. Since $\pf_1\pf_2,\pf_1\pf_2\pf_3\le\zf_6$, Proposition \ref{prpstn:SieveAsymptotic} shows that the first two terms are negligible. This leaves
\begin{equation}
S_1=\sum_{\substack{\zf_3<\pf_1\le\zf_4 \\ \zf_1<\pf_4\le\dots\le\pf_1\\ \pf_2^2\pf_1\le\zf_6\\ N(\pf_1\pf_2\pf_3^2)\ll X^{n}}}T(\pf_1\dots\pf_4,\pf_4).\label{eq:S1Sum}
\end{equation}
Similarly, for $T_{3,1}$, the terms from $T_{3}$ with $\pf_1\pf_2^2\le \zf_6$, we find
\begin{align*}
\sum_{\substack{\zf_1<\pf_2\le\pf_1\le\zf_2 \\ \pf_2^2\pf_1\le\zf_6}}T(\pf_1\pf_2,\pf_2)&=\sum_{\substack{\zf_1<\pf_2\le\pf_1\le\zf_2 \\ \pf_2^2\pf_1\le\zf_6}}T(\pf_1\pf_2,\zf_1)-\sum_{\substack{\zf_1<\pf_3\le\pf_2\le\pf_1\le\zf_2\\ \pf_2^2\pf_1\le\zf_6}}T(\pf_1\pf_2\pf_3,\zf_1)\\
&\quad+\sum_{\substack{\zf_1<\pf_4\le\dots\le\pf_1\le\zf_2\\ \pf_2^2\pf_1\le\zf_6}}T(\pf_1\dots\pf_4,\pf_4).
\end{align*}
Since $\pf_1\pf_2,\pf_1\pf_2\pf_3\le\zf_6$, by Proposition \ref{prpstn:SieveAsymptotic} the first two terms are negligible. If $\zf_2<\pf_1\pf_2\le\zf_3$ then the contribution is also negligible. Thus the final term splits as $S_2+S_3+o(\#\mathcal{A}/\log{X})$, where
\begin{align}
S_2&=\sum_{\substack{\zf_1<\pf_4\le\dots\le\pf_1\le\zf_2\\ \pf_2^2\pf_1\le\zf_5 \\ \pf_1\pf_2\le\zf_2}}T(\pf_1\dots\pf_4,\pf_4)=\sum_{\substack{\zf_1<\pf_4\le\dots\le\pf_1\le\zf_{1,1} \\ \pf_2\pf_1\le\zf_2}}T(\pf_1\dots\pf_4,\pf_4),\label{eq:S2Sum}\\
S_3&=\sum_{\substack{\zf_1<\pf_4\le\dots\le\pf_1\le\zf_2\\ \pf_2^2\pf_1\le\zf_5 \\ \pf_1\pf_2>\zf_3}}T(\pf_1\dots\pf_4,\pf_4)=\sum_{\substack{\zf_{1,2}<\pf_1\le\zf_2 \\ \zf_1<\pf_4\le\dots\le\pf_1\\ \pf_2\pf_1>\zf_3 }}T(\pf_1\dots\pf_4,\pf_4).\label{eq:S3Sum}
\end{align}
Here $\zf_{1,1}$ and $\zf_{1,2}$ are chosen maximally such that $N(\zf_{1,1})\le X^{n(4\theta-1)}$ and $N(\zf_{1,2})\le X^{n(1/2-\theta)}$. We are left to consider $S_1,S_2,S_3,T_{3,2}$ and $T_{6,2}$.

We now consider $T_{6,2}$, and split it into $T_{6,2,1}$ and $T_{6,2,2}$ depending on whether $\pf_1\pf_2\le \zf_6$ or not. We first consider $T_{6,2,1}$, where we are dealing with terms with $\pf_1\pf_2^2>\zf_6$ and $\pf_1\pf_2\le\zf_6$ and $\pf_1>\zf_3$. Here we apply a reversal of roles. Over the collection of such $\pf_1,\pf_2$ we note that $T(\pf_1\pf_2,\pf_2)$ is counting products $\pf_1\pf_2\qf$ with $\pf|\qf\Rightarrow \pf>\pf_2$, with the size constraints on $(\pf_1,\pf_2)$
\begin{equation}
\pf_1\pf_2^2>\zf_6,\quad \pf_1\pf_2\le\zf_6,\quad\zf_3<\pf_1\le\zf_4,\quad\zf_1<\pf_2\le\pf_1,\quad N(\pf_2^2\pf_1)\ll X^{n}.\label{eq:PrimeConstraints}
\end{equation}
Since the contribution with a factor $\mathfrak{a}$ satisfying $N(\mathfrak{a})\in [Y,Y^{1+o(1)}]$ is negligible and $N(\pf_1\pf_2\qf)\asymp X^n$, we see that we can translate these size constraints into constraints on the size of $\qf$ and $\pf_2$ at the cost of a negligible error. Therefore, letting $\zf(\qf,\pf_2)$ be maximal with norm at most $(X^{n+\epsilon}/N(\qf\pf_2))^{1/2+\epsilon}$, we find
\begin{align*}
\sum_{\pf_1,\pf_2} T(\pf_1\pf_2,\pf_2)&=\sum_{\qf,\pf_2} T(\qf\pf_2,\zf(\qf,\pf_2) )\\
&=\sum_{\qf,\pf_2} T(\qf\pf_2,\zf_1)-\sum_{\qf,\pf_2}\sum_{\zf_1<\pf_3\le\zf(\qf,\pf_2)}T(\qf\pf_2\pf_3,\pf_3).
\end{align*}
Here the summation over $\pf_1,\pf_2$ is constrained by \eqref{eq:PrimeConstraints}, and the summation over $\qf,\pf_2$ is constrained by $( (X)/ \qf\pf_2,\pf_2)$ satisfying \eqref{eq:PrimeConstraints} in place of $(\pf_1,\pf_2)$, as well as $\pf|\qf\Rightarrow\pf>\pf_2$. The first term is negligible since $\pf_1>\zf_2$ and so $\qf\pf_2\le\zf_6$. The second term is counting products $\qf\pf_2\pf_3\qf_2$ with $\pf|\qf\Rightarrow \pf>\pf_2$ and $\pf|\qf_2\Rightarrow\pf>\pf_3$. Thus we may rewrite this term as
\[
-\sum_{\qf_2,\pf_2,\pf_3}T(\qf_2\pf_2\pf_3,\pf_2),
\]
which is constrained by the conditions that $(\qf_2\pf_3,\pf_2)$ satisfies \eqref{eq:PrimeConstraints}, $\mathfrak{z}_1<\pf_3$ and $\pf|\qf_2\Rightarrow\pf>\pf_3$. Applying another Buchstab iteration gives
\[
-\sum_{\qf_2,\pf_2,\pf_3}T(\qf_2\pf_2\pf_3,\pf_2)=-\sum_{\qf_2,\pf_2,\pf_3}T(\qf_2\pf_2\pf_3,\zf_1)+\sum_{\qf_2,\pf_2,\pf_3}\sum_{\zf_1<\pf_4\le\pf_2}T(\qf_2\pf_2\pf_3\pf_4,\pf_4).
\]
The first term is negligible since $\pf_1\pf_2\le \zf_6$ implies $\qf_2\pf_2\pf_3\le\zf_6$. Thus, apart from an error term $O(\epsilon\#\mathcal{A}/\log{X})$, we are left with
\begin{equation}
S_4=\sum_{\substack{\qf,\pf_2,\pf_3,\pf_4\\ \zf_1<\pf_4\le\pf_2 \\ \pf|\qf\Rightarrow \pf>\pf_3\\ (\qf\pf_3,\pf_2)\text{ satisfy \eqref{eq:PrimeConstraints}}}}
T(\qf\pf_2\pf_3\pf_4,\pf_4).\label{eq:S4Sum}
\end{equation}
Finally, we wish to consider $T_{3,2}$ and $T_{6,2,2}$, which is the terms with $\pf_1\pf_2^2>\zf_6$ and either $\pf_1\pf_2>\zf_6$ or $\pf_1\le\zf_2$ (note that these cannot simultaneously occur as $3\theta<1$). These contribute $S_5$ and $S_6$ respectively, where
\begin{align}
S_5&=\sum_{\substack{\zf_3<\pf_1\le\zf_4 \\ \zf_6<\pf_1\pf_2 \\ N(\pf_2^2\pf_1)\ll X^n}}T(\pf_1\pf_2,\pf_2),\label{eq:S5Sum}\\
S_6&=\sum_{\substack{\pf_2\le\pf_1\le\zf_2 \\ \pf_2^2\pf_1>\zf_6}}T(\pf_1\pf_2,\pf_2).\label{eq:S6Sum}
\end{align}
Thus, to get a lower bound for $T((1),\zf_4)$, it suffices to get lower bounds for the sums $S_1,\dots,S_6$. Recalling the definition of $T(\mathfrak{d},\mathfrak{z})$, we see that we have the lower bound (valid for $N(\mathfrak{d}\mathfrak{z})\le X^{n(1-\epsilon)}$)
\begin{align*}
T(\mathfrak{d},\mathfrak{z})&=S(\mathcal{A}_\mathfrak{d},\mathfrak{z})-\tilde{\mathfrak{S}}\frac{\#\mathcal{A}}{\#\mathcal{B}}S(\mathcal{B}_{\mathfrak{d}},\mathfrak{z})\\
&\ge -\tilde{\mathfrak{S}}\frac{\#\mathcal{A}}{\#\mathcal{B}}S(\mathcal{B}_{\mathfrak{d}},\mathfrak{z})\\
&=-(1+o(1))\frac{\mathfrak{S}\#\mathcal{A}}{N(\mathfrak{d})\log{N(\mathfrak{z})}}\omega\Bigl(\frac{\log(X^n/N(\mathfrak{d})}{\log{N(\mathfrak{z})}}\Bigr),
\end{align*}
where $\omega(\cdot)$ is the Buchstab function defined by
\begin{align*}
\omega(u)&=\frac{1}{u},\qquad &1\le u\le 2,\\
u\frac{\partial \omega}{\partial u}(u)&=\omega(u-1)-\omega(u),&2\le u.
\end{align*}
This lower bound allows us to obtain an explicit integral expression as a lower bound for $T((1),\mathfrak{z}_4)$. Moreover, we can restrict the summation in each of the $S_i$ so that no sub-product of $\mathfrak{p}_1,\dots,\mathfrak{p}_4$ lies between $\mathfrak{z}_2$ and $\mathfrak{z}_3$ or between $\mathfrak{z}_5$ and $\mathfrak{z}_6$, since these parts are negligible by our Type II estimate. For example
\begin{align*}
S_1&=\sum_{\substack{\zf_3<\pf_1\le \zf_4 \\ \zf_1<\pf_4\le \pf_3\le \pf_2\le\pf_1\\ \pf_2^2\pf_1\le \zf_6}}T(\pf_1\dots\pf_4,\pf_4)\\
&\ge \sum'_{\substack{\zf_3<\pf_1\le \zf_4 \\ \zf_1<\pf_4\le \pf_3\le \pf_2\le\pf_1\\ \pf_2^2\pf_1\le \zf_6}}T(\pf_1\dots\pf_4,\pf_4)+o(\#\mathcal{A}/\log{X})\\
&\ge (-1+o(1))\mathfrak{S}\#\mathcal{A}\sum'_{\substack{\zf_3<\pf_1<\zf_4 \\ \zf_1<\pf_4\le \pf_3\le \pf_2\le\pf_1\\ \pf_2^2\pf_1\le \zf_6}}\frac{\omega\Bigl(\frac{\log(X^n/N(\pf_1\pf_2\pf_3\pf_4))}{\log{N(\pf_4)}}\Bigr)}{N(\pf_1\pf_2\pf_3\pf_4)\log{N(\pf_4)}}\\
&=(-1+o(1))\frac{\mathfrak{S}\#\mathcal{A}}{n\log{X}}\idotsint'\omega\Bigl(\frac{1-\alpha_1-\alpha_2-\alpha_3-\alpha_4}{\alpha_4}\Bigr)\frac{d\alpha_1d\alpha_2d\alpha_3d\alpha_4}{\alpha_1\alpha_2\alpha_3\alpha_4^2}.
\end{align*}
Here $\sum'$ indicates that we have restricted the summation so that no sub-product of $\mathfrak{p}_1,\dots,\mathfrak{p}_4$ lies between $\mathfrak{z}_2$ and $\mathfrak{z}_3$ or between $\mathfrak{z}_5$ and $\mathfrak{z}_6$, and in the final line we used partial summation with the change of variables $N(\pf_i)=X^{n\alpha_i}$, and the integration is over the region defined by
\begin{align*}
1-2\theta\le\alpha_1\le 1/2+\epsilon,\quad 1-3\theta\le\alpha_4\le\alpha_3\le\alpha_2\le\alpha_1,\\
\sum_{i\in J}\alpha_i\notin [\theta,1-2\theta]\cup[2\theta,1-\theta]\,\forall J\subseteq\{1,2,3,4\},\\
\alpha_1+\dots+\alpha_{j-1}+2\alpha_j\le1\,\forall j\in\{2,3,4\}.
\end{align*}
In principle this should already give us a reasonable lower bound for $T((1),\zf_4)$. Unfortunately it appears difficult to get a good numerical approximation to integrals over regions similar to the above one, presenting a practical difficulty. To get around this difficulty, we split the sums $S_1,\dots,S_6$ further into various subsums, and on these subsums we relax some of the constraints (corresponding to obtaining an upper bound for the integrals appearing) so that we have explicit integrals which are amenable to numerical integration. The remainder of the appendix is spent performing such a decomposition explicitly, and obtaining the corresponding numerical estimates.

From now on we use the notation $\sum^*$ and $\idotsint^*$ to denote the fact that we are summing or integrating over variables with various size constraints, which we only explicitly write down later. The constraints implied by the asterisk will remain the same within each display, but may be different in different displays.
\subsection{The sum \texorpdfstring{$S_1$}{S1}}
We first split the summation according to whether $\pf_1\pf_2\pf_3\pf_4\le\zf_5$ or $\pf_1\pf_2\pf_3\pf_4>\zf_6$ or $\zf_5<\pf_1\pf_2\pf_3\pf_4\le\zf_6$. The final range makes a negligible contribution by our Type II estimate. This gives
\[
S_1=S_{1,1}+S_{1,2}+o(\#\mathcal{A}/\log{X}).
 \] 
We first concentrate on $S_{1,1}$ where $\pf_1\pf_2\pf_3\pf_4\le\zf_5$. We split this into two further sums $S_{1,1,1}$ and $S_{1,1,2}$ depending on whether $\pf_1\pf_2\pf_3\pf_4^2\le\zf_6$ or not. If $\pf_1\pf_2\pf_3\pf_4^2\le\zf_6$ then we can perform two further Buchstab decompositions. Thus we find
\begin{align*}
S_{1,1,1}&=\sum^*_{\pf_1,\pf_2,\pf_3,\pf_4}T(\pf_1\dots\pf_4,\pf_4)\\
&=\sum^*_{\pf_1,\dots,\pf_4}T(\pf_1\dots\pf_4,\zf_1)-\sum^*_{\pf_1,\dots,\pf_4}\sum_{\zf_1< \pf_5\le\pf_4}T(\pf_1,\dots,\pf_5,\zf_1)\\
&\qquad +\sum^*_{\pf_1,\dots,\pf_4}\sum_{\zf_1< \pf_6\le\pf_5\le\pf_4}T(\pf_1\dots\pf_6,\pf_6)
\end{align*}
By Proposition \ref{prpstn:SieveAsymptotic}, the first two terms make a negligible contribution, and lower bounding the final term, we find that $S_{1,1,1}$ is
\begin{align*}
&\ge -(1+o(1))\frac{\mathfrak{S}\#\mathcal{A}}{n\log{X}}\sum^*_{\pf_1,\dots,\pf_4}
\frac{1}{N(\pf_1\dots\pf_4)}\int_{1-3\theta}^{\alpha_5}\int_{1-3\theta}^{\alpha_4}
\omega\Bigl(\frac{1-\alpha_1-\dots-\alpha_6}{\alpha_6}\Bigr)\frac{d\alpha_5 d\alpha_6}{\alpha_5\alpha_6^2}\\
&\qquad+O(\epsilon\#\mathcal{A}/\log{X})\\
&\ge -(1+O(\epsilon))\frac{\mathfrak{S}\#\mathcal{A}}{n\log{X}}\idotsint^* \frac{d\alpha_1\dots d\alpha_4}{\alpha_1\dots\alpha_4}\Bigl(\frac{1}{1-3\theta}\Bigl(\log\Bigl(\frac{\alpha_4}{1-3\theta}\Bigr)-1\Bigr)+\frac{1}{\alpha_4}\Bigr)\\
&=: -(1+O(\epsilon))\frac{\mathfrak{S}\#\mathcal{A}}{n\log{X}} I_{1,1,1}.
\end{align*}
Here we trivially bounded the Buchstab function by 1, and the integral $I_{1,1,1}$ is over the region defined  by
\begin{align*}
&1-2\theta\le \alpha_1\le 1/2+\epsilon,\quad &&1-3\theta\le \alpha_4\le \alpha_3\le\alpha_2\le\alpha_1,\quad &&\alpha_1+\alpha_2+\alpha_3+2\alpha_4\le1-\theta,\\
&\alpha_1+2\alpha_2\le1-\theta,&&\alpha_1+\alpha_2+2\alpha_3\le1,
\end{align*}
and this can be evaluated numerically in a feasible manner.

Will will not perform further decompositions for the remaining parts of $S_1$, simply splitting the summation according to size conditions. The remaining part $S_{1,1,2}$ of $S_{1,1}$ with $\pf_1\pf_2\pf_3\pf_4^2>\zf_6$ we lower bound directly, giving
\begin{align*}
S_{1,1,2}&\ge -(1+o(1))\frac{\mathfrak{S}\#\mathcal{A}}{n\log{X}} \idotsint^* \omega\Bigl(\frac{1-\alpha_1-\dots-\alpha_4}{\alpha_4}\Bigr)\frac{d\alpha_1\dots d\alpha_4}{\alpha_1\dots \alpha^2_4},\\
&\ge -(1+o(1))\frac{\mathfrak{S}\#\mathcal{A}}{n\log{X}} \idotsint^* \frac{4}{7}\frac{d\alpha_1\dots d\alpha_4}{\alpha_1\dots \alpha^2_4},\\
&=: -(1+o(1))\frac{\mathfrak{S}\#\mathcal{A}}{n\log{X}} I_{1,1,2}.
\end{align*}
Here we used the fact that $\alpha_1+\alpha_2+\alpha_3+\alpha_4<2\theta$ so $1-\alpha_1-\alpha_2-\alpha_3-\alpha_4>2\alpha_4$ to note that the value of the Buchstab function is always at most $4/7$ since $\sup_{u>7/4}\omega(u)=4/7$. The integration in $I_{1,1,2}$ is over the region defined  by the conditions
\begin{align*}
&1-2\theta\le \alpha_1\le 1/2+\epsilon,&& 1-3\theta\le\alpha_4\le\alpha_3\le\alpha_2\le\alpha_1,\quad &&\alpha_1+\alpha_2+\alpha_3+2\alpha_4\ge1-\theta,\\
&\alpha_1+2\alpha_2\le1-\theta,&&\alpha_1+\alpha_2+\alpha_3+\alpha_4\le2\theta,\quad && \alpha_1+\alpha_2+\alpha_3+2\alpha_4\le1,\\
&\alpha_1+\alpha_2+2\alpha_3\le1.
\end{align*}
This gives our lower bound for $S_{1,1}$. We now consider $S_{1,2}$. We note that we have the constraints $\pf_1\pf_2^2\le\zf_6$ and $\pf_3\le \pf_2$ so $\pf_1\pf_2\pf_3\le\zf_6$. Thus, by our Type II estimate we can restrict to $\pf_1\pf_2\pf_3\le\zf_5$ at the cost of a negligible error term. We now split the summation depending on whether $\pf_2\pf_3\pf_4\le\zf_2$ or $\pf_2\pf_3\pf_4>\zf_3$ (the intermediate range being negligible by our Type II estimate). This gives
\begin{align*}
S_{1,2}&=\sum^*_{\pf_1,\dots,\pf_4}T(\pf_1\dots\pf_4,\pf_4)\\
&=\sum^*_{\substack{\pf_1,\dots,\pf_4 \\ \pf_2\pf_3\pf_4\le\zf_2}}T(\pf_1\dots\pf_4,\pf_4)+\sum^*_{\substack{\pf_1,\dots,\pf_4 \\ \pf_2\pf_3\pf_4>\zf_3}}T(\pf_1\dots\pf_4,\pf_4)+o(\#\mathcal{A}/\log{X})\\
&=:S_{1,2,1}+S_{1,2,2}+o(\#\mathcal{A}/\log{X}).
\end{align*}
We first consider $S_{1,2,1}$. Here we have the constraints $\pf_1\pf_2\pf_3\le\zf_5$ and $\pf_2\pf_3\pf_4\le\zf_2$, so we see that $N(\pf_1\pf_2\pf_3\pf_4^3)\le N(\zf_5\zf_2)<X^n$ for all terms in consideration. We then obtain the lower bound for $S_{1,2,1}$
\begin{align*}
S_{1,2,1}&\ge -(1+o(1))\frac{\#\mathcal{A}}{n\log{X}}\idotsint^*\omega\Bigl(\frac{1-\alpha_1-\dots-\alpha_4}{\alpha_4}\Bigr)\frac{d\alpha_1\dots d\alpha_4}{\alpha_1\dots \alpha^2_4}\\
&\ge -(1+o(1))\frac{\mathfrak{S}\#\mathcal{A}}{n\log{X}} \idotsint^* \frac{4}{7}\frac{d\alpha_1\dots d\alpha_4}{\alpha_1\dots \alpha^2_4},\\
&=: -(1+o(1))\frac{\mathfrak{S}\#\mathcal{A}}{n\log{X}} I_{1,2,1}.
\end{align*}
We bounded the Buchstab function above by $4/7$ since $N(\pf_1\pf_2\pf_3\pf_4^3)\le X^n$ and so $1-\alpha_1-\dots-\alpha_4\ge 2\alpha_4$. The integration in $I_{1,2,1}$ is over the region defined  by
\begin{align*}
&1-2\theta\le \alpha_1\le1/2+\epsilon,&& 1-3\theta\le\alpha_4\le\alpha_3\le\alpha_2\le\alpha_1,\quad &&\alpha_2+\alpha_3+\alpha_4\le\theta,\\
&\alpha_1+2\alpha_2\le1-\theta,&&\alpha_1+\alpha_2+\alpha_3+\alpha_4\ge1-\theta,\quad && \alpha_1+\alpha_2+\alpha_3+2\alpha_4\le1,\\
&\alpha_1+\alpha_2+2\alpha_3\le1,&&\alpha_1+\alpha_2+\alpha_3\le2\theta.
\end{align*}
For $S_{1,2,2}$ we obtain the lower bound
\begin{align*}
S_{1,2,2}&\ge -(1+o(1))\frac{\#\mathcal{A}}{n\log{X}}\idotsint^*\omega\Bigl(\frac{1-\alpha_1-\dots-\alpha_4}{\alpha_4}\Bigr)\frac{d\alpha_1\dots d\alpha_4}{\alpha_1\dots \alpha^2_4}\\
&\ge -(1+o(1))\frac{\mathfrak{S}\#\mathcal{A}}{n\log{X}} \idotsint^*\frac{d\alpha_1\dots d\alpha_4}{\alpha_1\dots \alpha^2_4},\\
&=: -(1+o(1))\frac{\mathfrak{S}\#\mathcal{A}}{n\log{X}} I_{1,2,2}.
\end{align*}
Here we bounded the Buchstab function above by 1 and the integration in $I_{1,2,2}$ is over the region defined  by
\begin{align*}
&1-2\theta\le\alpha_1\le 1/2+\epsilon,&& 1-3\theta\le\alpha_4\le\alpha_3\le\alpha_2\le\alpha_1,\quad &&\alpha_2+\alpha_3+\alpha_4\ge1-2\theta,\\
&\alpha_1+2\alpha_2\le1-\theta,&&\alpha_1+\alpha_2+\alpha_3+\alpha_4\ge1-\theta,\quad && \alpha_1+\alpha_2+\alpha_3+2\alpha_4\le1,\\
&\alpha_1+\alpha_2+2\alpha_3\le1,&&\alpha_1+\alpha_2+\alpha_3\le2\theta.
\end{align*}
This completes our lower bound for $S_1$.
\subsection{The sum \texorpdfstring{$S_2$}{S2}}
We now consider the sum $S_2$. There is a negligible contribution whenever any product of three of $\pf_1,\pf_2,\pf_3,\pf_4$ lies between $\zf_2$ and $\zf_3$. We therefore split the summation according to the range of each of these triple products.
\begin{align*}
S_2&=\sum^*_{\pf_1,\dots,\pf_4}T(\pf_1\dots\pf_4,\pf_4)\\
&=\sum^*_{\substack{\pf_1,\dots,\pf_4 \\ \pf_1\pf_2\pf_3\le \zf_2}}T(\pf_1\dots\pf_4,\pf_4)+\sum^*_{\substack{\pf_1,\dots,\pf_4 \\ \pf_1\pf_2\pf_3>\zf_3 \\ \pf_1\pf_2\pf_4\le \zf_2}}T(\pf_1\dots\pf_4,\pf_4)+\sum^*_{\substack{\pf_1,\dots,\pf_4 \\ \pf_1\pf_2\pf_4>\zf_3 \\ \pf_1\pf_3\pf_4\le \zf_2}}T(\pf_1\dots\pf_4,\pf_4)\\
&\qquad+\sum^*_{\substack{\pf_1,\dots,\pf_4 \\ \pf_1\pf_3\pf_4>\zf_3 \\ \pf_2\pf_3\pf_4\le \zf_2}}T(\pf_1\dots\pf_4,\pf_4)+\sum^*_{\substack{\pf_1,\dots,\pf_4 \\ \pf_2\pf_3\pf_4>\zf_3}}T(\pf_1\dots\pf_4,\pf_4)+o(\#\mathcal{A}/\log{X})\\
&=:S_{2,1}+S_{2,2}+S_{2,3}+S_{2,4}+S_{2,5}+o(\#\mathcal{A}/\log{X}).
\end{align*}
We decompose $S_{2,1}$ once more, depending on the size of $\pf_1\pf_2\pf_3\pf_4$, giving
\begin{align*}
S_{2,1}&=\sum^*_{\substack{\pf_1,\dots,\pf_4 }}T(\pf_1\dots\pf_4,\pf_4)\\
&=\sum^*_{\substack{\pf_1,\dots,\pf_4 \\ \pf_1\pf_2\pf_3\pf_4\le\zf_2}}T(\pf_1\dots\pf_4,\pf_4)+\sum^*_{\substack{\pf_1,\dots,\pf_4 \\ \pf_1\pf_2\pf_3\pf_4>\zf_3}}T(\pf_1\dots\pf_4,\pf_4)+o(\#\mathcal{A}/\log{X})\\
&=:S_{2,1,1}+S_{2,1,2}+o(\#\mathcal{A}/\log{X}).
\end{align*}
We also split $S_{2,5}$ according to the size of $\pf_1\pf_2\pf_3\pf_4^2$ and $N(\pf_1\pf_2\pf_3)N(\pf_4)^{19/4}$. 
\begin{align*}
S_{2,5}&=\sum^*_{\substack{\pf_1,\dots,\pf_4}}T(\pf_1\dots\pf_4,\pf_4)\\
&=\sum^*_{\substack{\pf_1,\dots,\pf_4 \\ \pf_1\pf_2\pf_3\pf_4^2\le \zf_6\\ N(\pf_1\pf_2\pf_3)N(\pf_4)^{19/4}\le X^n}}T(\pf_1\dots\pf_4,\pf_4)+\sum^*_{\substack{\pf_1,\dots,\pf_4 \\ \pf_1\pf_2\pf_3\pf_4^2\le \zf_6\\ N(\pf_1\pf_2\pf_3)N(\pf_4)^{19/4}>X^n}}T(\pf_1\dots\pf_4,\pf_4)\\
&\qquad+\sum^*_{\substack{\pf_1,\dots,\pf_4 \\ \pf_1\pf_2\pf_3\pf_4^2>\zf_6}}T(\pf_1\dots\pf_4,\pf_4)\\
&=:S_{2,5,1}+S_{2,5,2}+S_{2,5,3}.
\end{align*}
For each of $S_{2,1,1},S_{2,1,2},S_{2,2},S_{2,3},S_{2,4},S_{2,5,1}$ and $S_{2,5,2}$ we obtain lower bounds in an analogous manner to $S_{1,1,1}$. We have $\pf_1\pf_2\pf_3\pf_4^2\le \zf_6$, and so we can perform two further Buchstab iterations. We have
\begin{align*}
S_{2,1,1}&=\sum^*_{\pf_1,\dots,\pf_4}T(\pf_1\dots\pf_4,\zf_1)-\sum^*_{\pf_1,\dots,\pf_4}\sum_{\zf_1< \pf_5\le\pf_4}T(\pf_1,\dots,\pf_5,\zf_1)\\
&\qquad +\sum^*_{\pf_1,\dots,\pf_4}\sum_{\zf_1< \pf_6\le\pf_5\le\pf_4}T(\pf_1\dots\pf_6,\pf_6)\\
&\ge -(1+O(\epsilon))\frac{\mathfrak{S}\#\mathcal{A}}{n\log{X}}\idotsint^* \frac{d\alpha_1\dots d\alpha_4}{\alpha_1\dots\alpha_4}\frac{4}{7}\Bigl(\frac{1}{1-3\theta}\Bigl(\log\Bigl(\frac{\alpha_4}{1-3\theta}\Bigr)-1\Bigr)+\frac{1}{\alpha_4}\Bigr)\\
&=: -(1+o(1))\frac{\mathfrak{S}\#\mathcal{A}}{n\log{X}} I_{2,1,1}.
\end{align*}
Here we used the fact that $N(\pf_1\pf_2\pf_3)N(\pf_4)^{19/4}<X^n$ to bound the Buchstab function by $4/7$ since it is only ever evaluated at arguments larger than $7/4$. The integral $I_{2,1,1}$ is over the region defined  by
\begin{align*}
&\alpha_1+\alpha_2+\alpha_3+\alpha_4<\theta,\quad && 1-3\theta<\alpha_4<\alpha_3<\alpha_2<\alpha_1<4\theta-1,\quad && \alpha_2+\alpha_1<\theta.
\end{align*}
In an entirely analogous manner, we obtain
\begin{align*}
S_{2,1,2}&\ge -(1+O(\epsilon))\frac{\mathfrak{S}\#\mathcal{A}}{n\log{X}} I_{2,1,2},\\
S_{2,2}&\ge -(1+O(\epsilon))\frac{\mathfrak{S}\#\mathcal{A}}{n\log{X}} I_{2,2},\\
S_{2,3}&\ge -(1+O(\epsilon))\frac{\mathfrak{S}\#\mathcal{A}}{n\log{X}} I_{2,3},\\
S_{2,4}&\ge -(1+O(\epsilon))\frac{\mathfrak{S}\#\mathcal{A}}{n\log{X}} I_{2,4},\\
S_{2,5,1}&\ge -(1+O(\epsilon))\frac{\mathfrak{S}\#\mathcal{A}}{n\log{X}} I_{2,5,1},
\end{align*}
where the integrals $I_{2,1,2},I_{2,2},I_{2,3},I_{2,4},I_{2,5,1}$ are all of the form 
\[
\idotsint^* \frac{d\alpha_1\dots d\alpha_4}{\alpha_1\dots\alpha_4}\frac{4}{7}\Bigl(\frac{1}{1-3\theta}\Bigl(\log\Bigl(\frac{\alpha_4}{1-3\theta}\Bigr)-1\Bigr)+\frac{1}{\alpha_4}\Bigr)
\]
for some constrained region in $\mathbb{R}^4$. Explicitly, $I_{2,1,2}$ is over the region defined  by
\begin{align*}
&\alpha_1+\alpha_2+\alpha_3+\alpha_4\ge1-2\theta,\quad && 1-3\theta\le\alpha_4\le\alpha_3\le\alpha_2\le\alpha_1\le4\theta-1,\quad && \alpha_2+\alpha_1\le\theta,\\
&\alpha_1+\alpha_2+\alpha_3\le\theta.
\end{align*}
The integral $I_{2,2}$ is over the region defined  by
\begin{align*}
&\alpha_1+\alpha_2+\alpha_3\ge1-2\theta,\quad && 1-3\theta\le\alpha_4\le\alpha_3\le\alpha_2\le\alpha_1\le4\theta-1,\quad && \alpha_2+\alpha_1\le\theta,\\
&\alpha_1+\alpha_2+\alpha_4\le\theta.
\end{align*}
The integral $I_{2,3}$ is over the region defined  by
\begin{align*}
&\alpha_1+\alpha_2+\alpha_4\ge1-2\theta,\quad && 1-3\theta\le\alpha_4\le\alpha_3\le\alpha_2\le\alpha_1\le4\theta-1,\quad && \alpha_2+\alpha_1\le\theta,\\
&\alpha_1+\alpha_3+\alpha_4\le\theta.
\end{align*}
The integral $I_{2,4}$ is over the region defined  by
\begin{align*}
&\alpha_1+\alpha_3+\alpha_4\ge1-2\theta,\quad && 1-3\theta\le\alpha_4\le\alpha_3\le\alpha_2\le\alpha_1\le4\theta-1,\quad && \alpha_2+\alpha_1\le\theta,\\
&\alpha_2+\alpha_3+\alpha_4\le\theta.
\end{align*}
The integral $I_{2,5,1}$ is over the region defined  by
\begin{align*}
&\alpha_1+\alpha_2+\alpha_3+2\alpha_4\le1-\theta,\quad && 1-3\theta\le\alpha_4\le\alpha_3\le\alpha_2\le\alpha_1\le4\theta-1,\quad && \alpha_2+\alpha_1\le\theta,\\
&\alpha_2+\alpha_3+\alpha_4\ge1-2\theta,\quad && \alpha_1+\alpha_2+\alpha_3+19\alpha_4/4\le1.
\end{align*}
When dealing with the sum $S_{2,5,2}$ we cannot bound the Buchstab function by $4/7$, so instead we bound it by 1. In this way we obtain
\begin{align*}
S_{2,5,2}&\ge -(1+O(\epsilon))\frac{\mathfrak{S}\#\mathcal{A}}{n\log{X}} I_{2,5,2},\\
I_{2,5,2}&=\idotsint^* \frac{d\alpha_1\dots d\alpha_4}{\alpha_1\dots\alpha_4}\Bigl(\frac{1}{1-3\theta}\Bigl(\log\Bigl(\frac{\alpha_4}{1-3\theta}\Bigr)-1\Bigr)+\frac{1}{\alpha_4}\Bigr),
\end{align*}
with the integral $I_{2,5,2}$ over the region defined  by
\begin{align*}
&\alpha_1+\alpha_2+\alpha_3+2\alpha_4\le1-\theta,\quad && 1-3\theta\le\alpha_4\le\alpha_3\le\alpha_2\le\alpha_1\le4\theta-1,\quad && \alpha_2+\alpha_1\le\theta,\\
&\alpha_2+\alpha_3+\alpha_4\ge1-2\theta,\quad && \alpha_1+\alpha_2+\alpha_3+19\alpha_4/4\ge1.
\end{align*}
Finally, for the sum $S_{2,5,3}$ we cannot perform further Buchstab iterations, so we just bound it directly. This gives
\begin{align*}
S_{2,5,3}&\ge -(1+o(1))\frac{\mathfrak{S}\#\mathcal{A}}{n\log{X}} I_{2,5,3},\\
I_{2,5,3}&=\idotsint^* \frac{d\alpha_1\dots d\alpha_4}{\alpha_1\dots\alpha_4^2},
\end{align*}
with the integral $I_{2,5,3}$ over the region defined  by
\begin{align*}
&\alpha_1+\alpha_2+\alpha_3+2\alpha_4\ge1-\theta,\quad && 1-3\theta\le\alpha_4\le\alpha_3\le\alpha_2\le\alpha_1\le4\theta-1,\quad && \alpha_2+\alpha_1\le\theta,\\
&\alpha_2+\alpha_3+\alpha_4\ge1-2\theta.
\end{align*}
This completes our decomposition of $S_2$.
\subsection{The sum \texorpdfstring{$S_3$}{S3}}
We now consider the sum $S_3$. By our Type II estimate there is a negligible contribution when any product of two of $\pf_1,\pf_2,\pf_3,\pf_4$ lies between $\zf_2$ and $\zf_3$. We now split the summation according to the size of the pairwise products, noting that in all cases we have $\pf_1\pf_2>\zf_3$. This gives
\begin{align*}
S_{3}&= \sum^*_{\pf_1,\pf_2,\pf_3,\pf_4}T(\pf_1\dots\pf_4,\pf_4)\\
&= \sum^*_{\substack{\pf_1,\pf_2,\pf_3,\pf_4 \\ \pf_1\pf_3\le \zf_2}}T(\pf_1\dots\pf_4,\pf_4)+\sum^*_{\substack{\pf_1,\pf_2,\pf_3,\pf_4 \\ \pf_1\pf_3>\zf_3 \\ \pf_1\pf_4,\pf_2\pf_3\le \zf_2}}T(\pf_1\dots\pf_4,\pf_4)+\sum^*_{\substack{\pf_1,\pf_2,\pf_3,\pf_4 \\ \pf_1\pf_4>\zf_3\\ \pf_2\pf_3\le \zf_2}}T(\pf_1\dots\pf_4,\pf_4)\\
&+\sum^*_{\substack{\pf_1,\pf_2,\pf_3,\pf_4 \\ \pf_2\pf_3>\zf_3\\ \pf_1\pf_4\le \zf_2}}T(\pf_1\dots\pf_4,\pf_4)+\sum^*_{\substack{\pf_1,\pf_2,\pf_3,\pf_4 \\ \pf_1\pf_4,\pf_2\pf_3>\zf_3\\ \pf_2\pf_4\le \zf_2}}T(\pf_1\dots\pf_4,\pf_4)+\sum^*_{\substack{\pf_1,\pf_2,\pf_3,\pf_4 \\ \pf_2\pf_4>\zf_3\\ \pf_3\pf_4\le \zf_2}}T(\pf_1\dots\pf_4,\pf_4)\\
&+\sum^*_{\substack{\pf_1,\pf_2,\pf_3,\pf_4 \\ \pf_3\pf_4>\zf_3}}T(\pf_1\dots\pf_4,\pf_4)+o(\#\mathcal{A}/\log{X})\\
&=S_{3,1}+S_{3,2}+S_{3,3}+S_{3,4}+S_{3,5}+S_{3,6}+S_{3,7}+o(\#\mathcal{A}/\log{X}).
\end{align*}
The final three sums we obtain lower bounds for without further decompositions, giving
\begin{align*}
S_{3,5}&\ge-(1+o(1))\frac{\mathfrak{S}\#\mathcal{A}}{n\log{X}} I_{3,5},\\
S_{3,6}&\ge-(1+o(1))\frac{\mathfrak{S}\#\mathcal{A}}{n\log{X}} I_{3,6},\\
S_{3,7}&\ge-(1+o(1))\frac{\mathfrak{S}\#\mathcal{A}}{n\log{X}} I_{3,7},\\
\end{align*}
where $I_{3,5},I_{3,6},I_{3,7}$ are all integrals of the form
\[
\idotsint^* \frac{d\alpha_1\dots d\alpha_4}{\alpha_1\dots\alpha_4^2}
\]
over some region in $\mathbb{R}^4$. Explicitly, $I_{3,5}$ is over the region defined  by
\begin{align*}
&1/2-\theta\le\alpha_1\le\theta,\quad && 1-2\theta-\alpha_1\le\alpha_2\le(1-\theta-\alpha_1)/2,\quad && 1-3\theta\le\alpha_4\le\alpha_3\le\alpha_2\le\alpha_1,\\
&\alpha_1+\alpha_2+2\alpha_3\le1,&&\alpha_1+\alpha_2+\alpha_3+2\alpha_4\le1,&&\alpha_1+\alpha_4\ge1-2\theta,\\
& \alpha_2+\alpha_3\ge1-2\theta,&&\alpha_2+\alpha_4\le\theta.
\end{align*}
The integral $I_{3,6}$ is over the region defined  by
\begin{align*}
&1/2-\theta\le\alpha_1\le\theta,\quad && 1-2\theta-\alpha_1\le\alpha_2\le(1-\theta-\alpha_1)/2,\quad && 1-3\theta\le\alpha_4\le\alpha_3\le\alpha_2\le\alpha_1,\\
&\alpha_1+\alpha_2+2\alpha_3\le1,&&\alpha_1+\alpha_2+\alpha_3+2\alpha_4\le1,&&\alpha_2+\alpha_4\ge1-2\theta,\\
& \alpha_3+\alpha_4\le\theta.
\end{align*}
The integral $I_{3,7}$ is over the region defined  by
\begin{align*}
&1/2-\theta\le\alpha_1\le\theta,\quad && 1-2\theta-\alpha_1\le\alpha_2\le(1-\theta-\alpha_1)/2,\quad && 1-3\theta\le\alpha_4\le\alpha_3\le\alpha_2\le\alpha_1,\\
&\alpha_1+\alpha_2+2\alpha_3\le1,&&\alpha_1+\alpha_2+\alpha_3+2\alpha_4\le1,&&\alpha_3+\alpha_4\ge1-2\theta.
\end{align*}
We now consider $S_{3,4}$. Since $\pf_1\pf_2^2\le \zf_6$ we have $\pf_1\pf_2\pf_3\le \zf_6$, and so we can restrict to $\pf_1\pf_2\pf_3\le \zf_5$ at the cost of a negligible error term. We split the summation according to the size of $\pf_1\pf_2\pf_3\pf_4$.
\begin{align*}
S_{3,4}&= \sum^*_{\pf_1,\pf_2,\pf_3,\pf_4}T(\pf_1\dots\pf_4,\pf_4)\\
&=\sum^*_{\substack{\pf_1,\pf_2,\pf_3,\pf_4 \\ \pf_1\pf_2\pf_3\pf_4\le \zf_5}}T(\pf_1\dots\pf_4,\pf_4)+\sum^*_{\substack{\pf_1,\pf_2,\pf_3,\pf_4 \\ \pf_1\pf_2\pf_3\pf_4>\zf_6}}T(\pf_1\dots\pf_4,\pf_4)+o(\#\mathcal{A}/\log{X})\\
&=S_{3,4,1}+S_{3,4,2}+o(\#\mathcal{A}/\log{X}).
\end{align*}
Since $\pf_2\pf_3\le\pf_1^2$ and $\pf_1\pf_4\le \zf_2$ we have $N(\pf_1\pf_2\pf_3\pf_4^3)\le N(\zf_2)^3\le X^n$, and so $N(\log(X^n/N(\pf_1\pf_2\pf_3\pf_4))/\log(N(\pf_4)))>3>7/4$. Thus we obtain lower bounds
\begin{align*}
S_{3,4,1}&\ge-(1+o(1))\frac{\mathfrak{S}\#\mathcal{A}}{n\log{X}} I_{3,4,1},\\
S_{3,4,2}&\ge-(1+o(1))\frac{\mathfrak{S}\#\mathcal{A}}{n\log{X}} I_{3,4,2},
\end{align*}
where $I_{3,4,1},I_{3,4,2}$ are integrals of the form
\[
\idotsint^* \frac{4d\alpha_1\dots d\alpha_4}{7\alpha_1\dots\alpha_4^2}
\]
over some region in $\mathbb{R}^4$. Explicitly, $I_{3,4,1}$ is over the region defined  by
\begin{align*}
&1/2-\theta\le\alpha_1\le\theta,\quad && 1-2\theta-\alpha_1\le\alpha_2\le(1-\theta-\alpha_1)/2,\quad && 1-3\theta\le\alpha_4\le\alpha_3\le\alpha_2\le\alpha_1,\\
&\alpha_1+\alpha_2+\alpha_3\le2\theta,&&\alpha_1+\alpha_2+\alpha_3+2\alpha_4\le1,&&\alpha_1+\alpha_4\le\theta,\\
& \alpha_2+\alpha_3\ge1-2\theta,&&\alpha_1+\alpha_2+\alpha_3+\alpha_4\le2\theta.
\end{align*}
The integral $I_{3,4,2}$ is over the region defined  by
\begin{align*}
&1/2-\theta\le\alpha_1\le\theta,\quad && 1-2\theta-\alpha_1\le\alpha_2\le(1-\theta-\alpha_1)/2,\quad && 1-3\theta\le\alpha_4\le\alpha_3\le\alpha_2\le\alpha_1,\\
&\alpha_1+\alpha_2+\alpha_3\le2\theta,&&\alpha_1+\alpha_2+\alpha_3+2\alpha_4\le1,&&\alpha_1+\alpha_4\le\theta,\\
& \alpha_2+\alpha_3\ge1-2\theta,&&\alpha_1+\alpha_2+\alpha_3+\alpha_4\ge1-\theta.
\end{align*}
We now consider $S_{3,3}$. We split the summation according to whether we can perform further Buchstab iterations and according to the size $\pf_1\pf_2\pf_3\pf_4$, noting that terms with $\pf_1\pf_2\pf_3\pf_4$ between $\zf_5$ and $\zf_6$ make a negligible contribution.
\begin{align*}
S_{3,3}&= \sum^*_{\pf_1,\pf_2,\pf_3,\pf_4}T(\pf_1\dots\pf_4,\pf_4)\\
&=\sum^*_{\substack{\pf_1,\pf_2,\pf_3,\pf_4 \\ \pf_1\pf_2\pf_3\pf_4\le \zf_5\\ \pf_1\pf_2\pf_3\pf_4^2\le \zf_6}}T(\pf_1\dots\pf_4,\pf_4)+\sum^*_{\substack{\pf_1,\pf_2,\pf_3,\pf_4 \\ \pf_1\pf_2\pf_3\pf_4\le \zf_5 \\ \pf_1\pf_2\pf_3\pf_4^2>\zf_6}}T(\pf_1\dots\pf_4,\pf_4)\\
&\quad+\sum^*_{\substack{\pf_1,\pf_2,\pf_3,\pf_4 \\  \pf_1\pf_2\pf_3\pf_4>\zf_6}}T(\pf_1\dots\pf_4,\pf_4)+o(\#\mathcal{A}/\log{X})\\
&=:S_{3,3,1}+S_{3,3,2}+S_{3,3,3}+o(\#\mathcal{A}/\log{X}).
\end{align*}
With $S_{3,3,1}$ we can decompose using two further Buchstab iterations, as we did with $S_{1,1,1}$. This results in the lower bound
\begin{align*}
S_{3,3,1}&\ge-(1+O(\epsilon))\frac{\mathfrak{S}\#\mathcal{A}}{n\log{X}} I_{3,3,1},\\
I_{3,3,1}&=\idotsint^* \frac{d\alpha_1\dots d\alpha_4}{\alpha_1\dots\alpha_4}\Bigl(\frac{1}{1-3\theta}\Bigl(\log\Bigl(\frac{\alpha_4}{1-3\theta}\Bigr)-1\Bigr)+\frac{1}{\alpha_4}\Bigr),
\end{align*}
with the integral $I_{3,3,1}$ over the region defined  by
\begin{align*}
&1/2-\theta\le\alpha_1\le\theta,\quad && 1-2\theta-\alpha_1\le\alpha_2\le(1-\theta-\alpha_1)/2,\quad && 1-3\theta\le\alpha_4\le\alpha_3\le\alpha_2\le\alpha_1,\\
&\alpha_1+\alpha_2+2\alpha_3\le1,&&\alpha_1+\alpha_2+\alpha_3+2\alpha_4\le1-\theta,&&\alpha_1+\alpha_4\ge1-2\theta,\\
& \alpha_2+\alpha_3\le\theta,&&\alpha_1+\alpha_2+\alpha_3+\alpha_4\le2\theta.
\end{align*}
With $S_{3,3,2}$ we split further depending on the size of $\pf_2\pf_3\pf_4$, giving
\begin{align*}
S_{3,3,2}&= \sum^*_{\pf_1,\pf_2,\pf_3,\pf_4}T(\pf_1\dots\pf_4,\pf_4)\\
&=\sum^*_{\substack{\pf_1,\pf_2,\pf_3,\pf_4 \\ \pf_2\pf_3\pf_4\le \zf_2}}T(\pf_1\dots\pf_4,\pf_4)+\sum^*_{\substack{\pf_1,\pf_2,\pf_3,\pf_4 \\ \pf_2\pf_3\pf_4>\zf_3}}T(\pf_1\dots\pf_4,\pf_4)+o(\#\mathcal{A}/\log{X})\\
&=:S_{3,3,2,1}+S_{3,3,2,2}+o(\#\mathcal{A}/\log{X}).
\end{align*}
We directly lower bound $S_{3,3,2,1}$ and $S_{3,3,2,2}$, noting that $N(\pf_1\pf_2\pf_3\pf_4^3)<X^{3\theta n}$ (since $\pf_1\pf_2\pf_3\pf_4\le \zf_5$) so occurrences of the Buchstab function can be bounded by $4/7$. This gives
\begin{align*}
S_{3,3,2,1}&\ge-(1+o(1))\frac{\mathfrak{S}\#\mathcal{A}}{n\log{X}} I_{3,3,2,1},\\
S_{3,3,2,2}&\ge-(1+o(1))\frac{\mathfrak{S}\#\mathcal{A}}{n\log{X}} I_{3,3,2,2},
\end{align*}
where both $I_{3,3,2,1}$ and $I_{3,3,2,2}$ are of the form
\[
\idotsint^*\frac{4}{7}\frac{d\alpha_1d\alpha_2d\alpha_3d\alpha_4}{\alpha_1\alpha_2\alpha_3\alpha_4^2}.
\]
The integral $I_{3,3,2,1}$ is over the region defined  by
\begin{align*}
&1/2-\theta\le\alpha_1\le\theta,\quad && 1-2\theta-\alpha_1\le\alpha_2\le(1-\theta-\alpha_1)/2,\quad && 1-3\theta\le\alpha_4\le\alpha_3\le\alpha_2\le\alpha_1,\\
&\alpha_1+\alpha_2+2\alpha_3\le1,&&1-\theta\le\alpha_1+\alpha_2+\alpha_3+2\alpha_4\le1,&&\alpha_1+\alpha_4\ge1-2\theta,\\
& \alpha_2+\alpha_3\le\theta,&&\alpha_2+\alpha_3+\alpha_4\le\theta.
\end{align*}
The integral $I_{3,3,2,2}$ is over the region defined  by
\begin{align*}
&1/2-\theta\le\alpha_1\le\theta,\quad && 1-2\theta-\alpha_1\le\alpha_2\le(1-\theta-\alpha_1)/2,\quad && 1-3\theta\le\alpha_4\le\alpha_3\le\alpha_2\le\alpha_1,\\
&\alpha_1+\alpha_2+2\alpha_3\le1,&&1-\theta\le\alpha_1+\alpha_2+\alpha_3+2\alpha_4\le1,&&\alpha_1+\alpha_4\ge1-2\theta,\\
& \alpha_2+\alpha_3\le\theta,&&\alpha_1+\alpha_2+\alpha_3+\alpha_4\le2\theta,&&\alpha_2+\alpha_3+\alpha_4\ge1-2\theta.
\end{align*}
The sum $S_{3,3,3}$ we lower bound directly, after splitting according to whether we can bound the Buchstab function by $4/7$ or not. We obtain
\begin{align*}
S_{3,3,3}&\ge-(1+o(1))\frac{\mathfrak{S}\#\mathcal{A}}{n\log{X}} (I_{3,3,3,1}+I_{3,3,3,2}),\\
 I_{3,3,3,1}&=\idotsint^*\frac{4}{7}\frac{d\alpha_1d\alpha_2d\alpha_3d\alpha_4}{\alpha_1\alpha_2\alpha_3\alpha_4^2},\\
 I_{3,3,3,2}&=\idotsint^*\frac{d\alpha_1d\alpha_2d\alpha_3d\alpha_4}{\alpha_1\alpha_2\alpha_3\alpha_4^2},
\end{align*}
where the integral $I_{3,3,3,1}$ is over the region defined  by
\begin{align*}
&1/2-\theta\le\alpha_1\le\theta,\quad && 1-2\theta-\alpha_1\le\alpha_2\le(1-\theta-\alpha_1)/2,\quad && 1-3\theta\le\alpha_4\le\alpha_3\le\alpha_2\le\alpha_1,\\
&\alpha_1+\alpha_2+2\alpha_3\le1,&&\alpha_1+\alpha_2+\alpha_3+2\alpha_4\le1,&&\alpha_1+\alpha_4\ge1-2\theta,\\
& \alpha_2+\alpha_3\le\theta,&&\alpha_1+\alpha_2+\alpha_3+\alpha_4\ge1-\theta,&&\alpha_1+\alpha_2+\alpha_3+11\alpha_4/4\le1.
\end{align*}
The integral $I_{3,3,3,2}$ is over the region defined  by
\begin{align*}
&1/2-\theta\le\alpha_1\le\theta,\quad && 1-2\theta-\alpha_1\le\alpha_2\le(1-\theta-\alpha_1)/2,\quad && 1-3\theta\le\alpha_4\le\alpha_3\le\alpha_2\le\alpha_1,\\
&\alpha_1+\alpha_2+2\alpha_3\le1,&&\alpha_1+\alpha_2+\alpha_3+2\alpha_4\le1,&&\alpha_1+\alpha_4\ge1-2\theta,\\
& \alpha_2+\alpha_3\le\theta,&&\alpha_1+\alpha_2+\alpha_3+\alpha_4\ge1-\theta,&&\alpha_1+\alpha_2+\alpha_3+11\alpha_4/4\ge1.
\end{align*}
We now consider the sum $S_{3,2}$. we split the sum according to whether we can do further Buchstab iterations or not. This gives
\begin{align*}
S_{3,2}&= \sum^*_{\pf_1,\pf_2,\pf_3,\pf_4}T(\pf_1\dots\pf_4,\pf_4)=\sum^*_{\substack{\pf_1,\pf_2,\pf_3,\pf_4 \\ \pf_1\pf_2\pf_3\pf_4^2\le \zf_6}}T(\pf_1\dots\pf_4,\pf_4)+\sum^*_{\substack{\pf_1,\pf_2,\pf_3,\pf_4 \\ \pf_1\pf_2\pf_3\pf_4^2>\zf_6}}T(\pf_1\dots\pf_4,\pf_4)\\
&=:S_{3,2,1}+S_{3,2,2}.
\end{align*}
The terms in $S_{3,2,1}$ can undergo two more Buchstab iterations. As with $S_{1,1,1}$, we obtain
\begin{align*}
S_{3,2,1}&\ge-(1+O(\epsilon))\frac{\mathfrak{S}\#\mathcal{A}}{n\log{X}} I_{3,2,1},\\
I_{3,2,1}&=\idotsint^* \frac{d\alpha_1\dots d\alpha_4}{\alpha_1\dots\alpha_4}\Bigl(\frac{1}{1-3\theta}\Bigl(\log\Bigl(\frac{\alpha_4}{1-3\theta}\Bigr)-1\Bigr)+\frac{1}{\alpha_4}\Bigr),
\end{align*}
with the integral $I_{3,2,1}$ over the region defined  by
\begin{align*}
&1/2-\theta\le\alpha_1\le\theta,\quad && 1-2\theta-\alpha_1\le\alpha_2\le(1-\theta-\alpha_1)/2,\quad && 1-3\theta\le\alpha_4\le\alpha_3\le\alpha_2\le\alpha_1,\\
&\alpha_1+\alpha_2+2\alpha_3\le1,&&\alpha_1+\alpha_2+\alpha_3+2\alpha_4\le1,&&\alpha_1+\alpha_3\ge1-2\theta,\\
& \alpha_2+\alpha_3\le\theta,&&\alpha_1+\alpha_4\le\theta,&&\alpha_1+\alpha_2+\alpha_3+2\alpha_4\le1-\theta.
\end{align*}
We apply a direct bound to $S_{3,2,2}$, and note that since $\pf_1\pf_4,\pf_2\pf_3\le \zf_2$ we can bound occurrences of the Buchstab function by $4/7$. This gives 
\begin{align*}
S_{3,2,2}&\ge-(1+o(1))\frac{\mathfrak{S}\#\mathcal{A}}{n\log{X}} I_{3,2,2},\\
I_{3,2,2}&=\idotsint^* \frac{4d\alpha_1\dots d\alpha_4}{7\alpha_1\dots\alpha_4^2},
\end{align*}
with the integral $I_{3,2,1}$ over the region defined  by
\begin{align*}
&1/2-\theta\le\alpha_1\le\theta,\quad && 1-2\theta-\alpha_1\le\alpha_2\le(1-\theta-\alpha_1)/2,\quad && 1-3\theta\le\alpha_4\le\alpha_3\le\alpha_2\le\alpha_1,\\
&\alpha_1+\alpha_2+2\alpha_3\le1,&&\alpha_1+\alpha_2+\alpha_3+2\alpha_4\le1,&&\alpha_1+\alpha_3\ge1-2\theta,\\
& \alpha_2+\alpha_3\le\theta,&&\alpha_1+\alpha_4\le\theta,&&\alpha_1+\alpha_2+\alpha_3+2\alpha_4\ge1-\theta.
\end{align*}
Finally, we consider $S_{3,1}$. We split the summation according to whether we can perform further Buchstab iterations
\begin{align*}
S_{3,1}&= \sum^*_{\pf_1,\pf_2,\pf_3,\pf_4}T(\pf_1\dots\pf_4,\pf_4)=\sum^*_{\substack{\pf_1,\pf_2,\pf_3,\pf_4 \\ \pf_1\pf_2\pf_3\pf_4^2\le \zf_6}}T(\pf_1\dots\pf_4,\pf_4)+\sum^*_{\substack{\pf_1,\pf_2,\pf_3,\pf_4 \\ \pf_1\pf_2\pf_3\pf_4^2>\zf_6}}T(\pf_1\dots\pf_4,\pf_4)\\
&=:S_{3,1,1}+S_{3,1,2}.
\end{align*}
We split $S_{3,1,1}$ further depending on the size of $\pf_2\pf_3\pf_4$.
\begin{align*}
S_{3,1,1}&= \sum^*_{\pf_1,\pf_2,\pf_3,\pf_4}T(\pf_2\dots\pf_4,\pf_4)\\
&=\sum^*_{\substack{\pf_1,\pf_2,\pf_3,\pf_4 \\ \pf_2\pf_3\pf_4\le \zf_2}}T(\pf_1\dots\pf_4,\pf_4)+\sum^*_{\substack{\pf_1,\pf_2,\pf_3,\pf_4 \\ \pf_2\pf_3\pf_4>\zf_3}}T(\pf_1\dots\pf_4,\pf_4)+o(\#\mathcal{A}/\log{X})\\
&=:S_{3,1,1,1}+S_{3,1,1,2}+o(\#\mathcal{A}/\log{X}).
\end{align*}
In both $S_{3,1,1,1}$ and $S_{3,1,1,2}$ we can perform two further Buchstab iterations. In $S_{3,1,1,1}$ we have $\pf_1\pf_2\pf_3\pf_4^2\le \zf_6$ and $\pf_2\pf_3\pf_4\le \zf_2$, so $N(\pf_1\pf_2\pf_3\pf_4^5)\le X^n$, and it follows that we can bound occurrences of the Buchstab function by $4/7$. In $S_{3,1,1,2}$ we just bound the Buchstab function by 1. This gives
\begin{align*}
S_{3,1,1,1}&\ge-(1+O(\epsilon))\frac{\mathfrak{S}\#\mathcal{A}}{n\log{X}} I_{3,1,1,1},\\
S_{3,1,1,2}&\ge-(1+O(\epsilon))\frac{\mathfrak{S}\#\mathcal{A}}{n\log{X}} I_{3,1,1,2},\\
I_{3,1,1,1}&=\idotsint^* \frac{d\alpha_1\dots d\alpha_4}{\alpha_1\dots\alpha_4}\frac{4}{7}\Bigl(\frac{1}{1-3\theta}\Bigl(\log\Bigl(\frac{\alpha_4}{1-3\theta}\Bigr)-1\Bigr)+\frac{1}{\alpha_4}\Bigr),\\
I_{3,1,1,2}&=\idotsint^* \frac{d\alpha_1\dots d\alpha_4}{\alpha_1\dots\alpha_4}\Bigl(\frac{1}{1-3\theta}\Bigl(\log\Bigl(\frac{\alpha_4}{1-3\theta}\Bigr)-1\Bigr)+\frac{1}{\alpha_4}\Bigr).
\end{align*}
Here the integral $I_{3,1,1,1}$ is over the region defined  by
\begin{align*}
&1/2-\theta\le\alpha_1\le\theta,\quad && 1-2\theta-\alpha_1\le\alpha_2\le(1-\theta-\alpha_1)/2,\quad && 1-3\theta\le\alpha_4\le\alpha_3\le\alpha_2\le\alpha_1,\\
&\alpha_1+\alpha_2+2\alpha_3\le1,&&\alpha_1+\alpha_2+\alpha_3+2\alpha_4\le1-\theta,&&\alpha_1+\alpha_3\le\theta,\\
& \alpha_2+\alpha_3+\alpha_4\le\theta.
\end{align*}
The integral $I_{3,1,1,2}$ is over the region defined  by
\begin{align*}
&1/2-\theta\le\alpha_1\le\theta,\quad && 1-2\theta-\alpha_1\le\alpha_2\le(1-\theta-\alpha_1)/2,\quad && 1-3\theta\le\alpha_4\le\alpha_3\le\alpha_2\le\alpha_1,\\
&\alpha_1+\alpha_2+2\alpha_3\le1,&&\alpha_1+\alpha_2+\alpha_3+2\alpha_4\le1-\theta,&&\alpha_1+\alpha_3\le\theta,\\
& \alpha_2+\alpha_3+\alpha_4\ge1-2\theta.
\end{align*}
The sum $S_{3,1,2}$ we lower bound directly, noting that $\pf_2\pf_4\le\pf_1\pf_3\le \zf_2$, so $N(\pf_1\pf_2\pf_3\pf_4^3)<X^{3\theta n}$ and we can bound occurrences of the Buchstab function by $4/7$. This gives
\begin{align*}
S_{3,1,2}&\ge-(1+o(1))\frac{\mathfrak{S}\#\mathcal{A}}{n\log{X}} I_{3,1,2},\\
I_{3,1,2}&=\idotsint^* \frac{4d\alpha_1\dots d\alpha_4}{7\alpha_1\dots\alpha_4^2},
\end{align*}
with the integral $I_{3,1,2}$ over the region defined  by
\begin{align*}
&1/2-\theta\le\alpha_1\le\theta,\quad && 1-2\theta-\alpha_1\le\alpha_2\le(1-\theta-\alpha_1)/2,\quad && 1-3\theta\le\alpha_4\le\alpha_3\le\alpha_2\le\alpha_1,\\
&\alpha_1+\alpha_2+2\alpha_3\le1,&&1-\theta\le\alpha_1+\alpha_2+\alpha_3+2\alpha_4\le1,&&\alpha_1+\alpha_3\le\theta.
\end{align*}

This completes our lower bound for $S_3$.
\subsection{The sum \texorpdfstring{$S_4$}{S4}}
We split the sum $S_4$ first according to the size of $\pf_2\pf_3\pf_4$, then according to the size of $\qf\pf_2\pf_4$ or $\pf_2\pf_4$. This gives
\begin{align*}
S_4&=\sum_{\qf,\pf_2,\pf_3,\pf_4}^*T(\qf\pf_2\pf_3\pf_4,\pf_4)\\
&=\sum_{\substack{\qf,\pf_2,\pf_3,\pf_4\\ \pf_2\pf_3\pf_4\le \zf_2}}^*T(\qf\pf_2\pf_3\pf_4,\pf_4)+\sum_{\substack{\qf,\pf_2,\pf_3,\pf_4\\ \pf_2\pf_3\pf_4>\zf_3}}^*T(\qf\pf_2\pf_3\pf_4,\pf_4)+o(\#\mathcal{A}/\log{X})\\
&=\sum_{\substack{\qf,\pf_2,\pf_3,\pf_4\\ \pf_2\pf_3\pf_4\le \zf_2 \\ \qf\pf_2\pf_4\le \zf_5}}^*T(\qf\pf_2\pf_3\pf_4,\pf_4)+\sum_{\substack{\qf,\pf_2,\pf_3,\pf_4\\ \pf_2\pf_3\pf_4\le \zf_2 \\ \qf\pf_2\pf_4>\zf_6}}^*T(\qf\pf_2\pf_3\pf_4,\pf_4)\\
&\qquad +\sum_{\substack{\qf,\pf_2,\pf_3,\pf_4\\ \pf_2\pf_3\pf_4>\zf_3 \\ \pf_2\pf_4\le \zf_2}}^*T(\qf\pf_2\pf_3\pf_4,\pf_4)+\sum_{\substack{\qf,\pf_2,\pf_3,\pf_4\\ \pf_2\pf_3\pf_4>\zf_3 \\ \pf_2\pf_4>\zf_3}}^*T(\qf\pf_2\pf_3\pf_4,\pf_4)+o(\#\mathcal{A}/\log{X})\\
&=:S_{4,1}+S_{4,2}+S_{4,3}+S_{4,4}+o(\#\mathcal{A}/\log{X}).
\end{align*}
We perform no further decompositions and directly obtain a lower bound for the sums $S_{4,1}$ and $S_{4,2}$. This gives
\begin{align*}
S_{4,1}&\ge-(1+o(1))\frac{\mathfrak{S}\#\mathcal{A}}{n\log{X}} I_{4,1}\\
S_{4,2}&\ge-(1+o(1))\frac{\mathfrak{S}\#\mathcal{A}}{n\log{X}} I_{4,2}
\end{align*}
where $I_{4,1}$ and $I_{4,2}$ are integrals of the form
\begin{equation}
\idotsint^* \omega\Bigl(\frac{\beta-\alpha_3}{\alpha_3}\Bigr)\omega\Bigl(\frac{1-\beta-\alpha_2-\alpha_4}{\alpha_4}\Bigr)\frac{d\beta d\alpha_2d\alpha_3d\alpha_4}{\alpha_2\alpha_3^2\alpha_4^2}.\label{eq:RoleForm}
\end{equation}
(This arises from putting $N(\qf)=X^{n\beta-n\alpha_3}$, $N(\pf_i)=X^{n\alpha_i}$.) The integral $I_{4,1}$ is over the region defined  by
\begin{align*}
&1-2\theta\le\beta\le1/2+\epsilon,\quad&&1-3\theta\le\alpha_3\le\beta/2,\qquad &&\beta-\alpha_3+\alpha_2+\alpha_4\le2\theta,\\
&1-3\theta\le\alpha_4\le(1-\beta-\alpha_2)/2,&&\alpha_2+\alpha_3+\alpha_4\le\theta,&& 1-\theta-\beta\le\alpha_2\le(1-\beta)/2.
\end{align*}
The integral $I_{4,2}$ is over the region defined  by
\begin{align*}
&1-2\theta\le\beta\le1/2+\epsilon,\quad&&1-3\theta\le\alpha_3\le\beta/2,\qquad &&\beta-\alpha_3+\alpha_2+\alpha_4\ge1-\theta,\\
&1-3\theta\le\alpha_4\le(1-\beta-\alpha_2)/2,&&\alpha_2+\alpha_3+\alpha_4\le\theta,&& 1-\theta-\beta\le\alpha_2\le(1-\beta)/2.
\end{align*}
We split $S_{4,3}$ up further depending on the size of $\pf_2\pf_3$ and $\qf\pf_2\pf_4$. This gives
\begin{align*}
S_{4,3}&=\sum_{\qf,\pf_2,\pf_3,\pf_4}^*T(\qf\pf_2\pf_3\pf_4,\pf_4)\\
&=\sum_{\substack{\qf,\pf_2,\pf_3,\pf_4\\ \pf_2\pf_3\le \zf_2}}^*T(\qf\pf_2\pf_3\pf_4,\pf_4)+\sum_{\substack{\qf,\pf_2,\pf_3,\pf_4\\ \pf_2\pf_3>\zf_3}}^*T(\qf\pf_2\pf_3\pf_4,\pf_4)+o(\#\mathcal{A}/\log{X})\\
&=\sum_{\substack{\qf,\pf_2,\pf_3,\pf_4\\ \pf_2\pf_3\le \zf_2\\ \qf\pf_2\pf_4\le \zf_5}}^*T(\qf\pf_2\pf_3\pf_4,\pf_4)+\sum_{\substack{\qf,\pf_2,\pf_3,\pf_4\\ \pf_2\pf_3\le \zf_2\\ \qf\pf_2\pf_4>\zf_6}}^*T(\qf\pf_2\pf_3\pf_4,\pf_4)\\
&\qquad+\sum_{\substack{\qf,\pf_2,\pf_3,\pf_4\\ \pf_2\pf_3>\zf_3\\ \qf\pf_2\pf_4\le \zf_5}}^*T(\qf\pf_2\pf_3\pf_4,\pf_4)+\sum_{\substack{\qf,\pf_2,\pf_3,\pf_4\\ \pf_2\pf_3>\zf_3\\ \qf\pf_2\pf_4>\zf_6}}^*T(\qf\pf_2\pf_3\pf_4,\pf_4)+o(\#\mathcal{A}/\log{X})\\
&=:S_{4,3,1}+S_{4,3,2}+S_{4,3,3}+S_{4,3,4}+o(\#\mathcal{A}/\log{X}).
\end{align*}
We now obtain lower bounds for $S_{4,3,1},\dots,S_{4,3,4}$ exactly as before. This gives
\begin{align*}
S_{4,3,1}&\ge-(1+o(1))\frac{\mathfrak{S}\#\mathcal{A}}{n\log{X}} I_{4,3,1},\\
S_{4,3,2}&\ge-(1+o(1))\frac{\mathfrak{S}\#\mathcal{A}}{n\log{X}} I_{4,3,2},\\
S_{4,3,3}&\ge-(1+o(1))\frac{\mathfrak{S}\#\mathcal{A}}{n\log{X}} I_{4,3,3},\\
S_{4,3,4}&\ge-(1+o(1))\frac{\mathfrak{S}\#\mathcal{A}}{n\log{X}} I_{4,3,4}.
\end{align*}
Here the integrals $I_{4,3,1},\dots,I_{4,3,4}$ are of the form \eqref{eq:RoleForm}. The integral $I_{4,3,1}$ is over the region defined  by
\begin{align*}
&1-2\theta\le\beta\le1/2+\epsilon,\quad&&1-3\theta\le\alpha_3\le\beta/2,\qquad \beta-\alpha_3+\alpha_2+\alpha_4\le2\theta,\\
&1-3\theta\le\alpha_4\le(1-\beta-\alpha_2)/2,&& 1-\theta-\beta\le\alpha_2\le(1-\beta)/2,\quad \alpha_2+\alpha_3\le\theta,\\
&\alpha_2+\alpha_3+\alpha_4\ge1-2\theta.
\end{align*}
The integral $I_{4,3,2}$ is over the region defined  by
\begin{align*}
&1-2\theta\le\beta\le1/2+\epsilon,\quad&&1-3\theta\le\alpha_3\le\beta/2,\qquad \beta-\alpha_3+\alpha_2+\alpha_4\ge1-\theta,\\
&1-3\theta\le\alpha_4\le(1-\beta-\alpha_2)/2,&& 1-\theta-\beta\le\alpha_2\le(1-\beta)/2,\quad \alpha_2+\alpha_3\le\theta,\\
&\alpha_2+\alpha_3+\alpha_4\ge1-2\theta.
\end{align*}
The integral $I_{4,3,3}$ is over the region defined  by
\begin{align*}
&1-2\theta\le\beta\le1/2+\epsilon,\quad&&1-3\theta\le\alpha_3\le\beta/2,\qquad \beta-\alpha_3+\alpha_2+\alpha_4\le2\theta,\\
&1-3\theta\le\alpha_4\le(1-\beta-\alpha_2)/2,&& 1-\theta-\beta\le\alpha_2\le(1-\beta)/2,\quad \alpha_2+\alpha_3\ge1-2\theta,\\
&\alpha_2+\alpha_4\le\theta.
\end{align*}
The integral $I_{4,3,4}$ is over the region defined  by
\begin{align*}
&1-2\theta\le\beta\le1/2+\epsilon,\quad&&1-3\theta\le\alpha_3\le\beta/2,\qquad \beta-\alpha_3+\alpha_2+\alpha_4\ge1-\theta,\\
&1-3\theta\le\alpha_4\le(1-\beta-\alpha_2)/2,&& 1-\theta-\beta\le\alpha_2\le(1-\beta)/2,\quad \alpha_2+\alpha_3\ge1-2\theta,\\
&\alpha_2+\alpha_4\le\theta.
\end{align*}
Finally, we consider $S_{4,4}$. We split $S_{4,4}$ according to the size of $\pf_2\pf_3$, then $\qf\pf_4$, then $\qf\pf_2\pf_4$. This gives
\begin{align*}
S_{4,4}&=\sum_{\qf,\pf_2,\pf_3,\pf_4}^*T(\qf\pf_2\pf_3\pf_4,\pf_4)\\
&=\sum_{\substack{\qf,\pf_2,\pf_3,\pf_4\\ \pf_2\pf_3\le \zf_2}}^*T(\qf\pf_2\pf_3\pf_4,\pf_4)+\sum_{\substack{\qf,\pf_2,\pf_3,\pf_4\\ \pf_2\pf_3>\zf_3}}^*T(\qf\pf_2\pf_3\pf_4,\pf_4)+o(\#\mathcal{A}/\log{X})\\
&=\sum_{\substack{\qf,\pf_2,\pf_3,\pf_4\\ \pf_2\pf_3\le \zf_2}}^*T(\qf\pf_2\pf_3\pf_4,\pf_4)+\sum_{\substack{\qf,\pf_2,\pf_3,\pf_4\\ \pf_2\pf_3>\zf_3 \\ \qf\pf_4\le \zf_2}}^*T(\qf\pf_2\pf_3\pf_4,\pf_4)\\
&\qquad+\sum_{\substack{\qf,\pf_2,\pf_3,\pf_4\\ \pf_2\pf_3>\zf_3 \\ \qf\pf_4>\zf_3 \\ \qf\pf_2\pf_4\le \zf_5}}^*T(\qf\pf_2\pf_3\pf_4,\pf_4)+\sum_{\substack{\qf,\pf_2,\pf_3,\pf_4\\ \pf_2\pf_3>\zf_3 \\ \qf\pf_4>\zf_3 \\ \qf\pf_2\pf_4>\zf_6}}^*T(\qf\pf_2\pf_3\pf_4,\pf_4)+o(\#\mathcal{A}/\log{X})\\
&=:S_{4,4,1}+S_{4,4,2}+S_{4,4,3}+S_{4,4,4}+o(\#\mathcal{A}/\log{X}).
\end{align*}
We then obtain lower bounds of $S_{4,4,i}$ exactly as before. This gives for each $i\in \{1,2,3,4\}$
\[
S_{4,4,i}\ge-(1+o(1))\frac{\mathfrak{S}\#\mathcal{A}}{n\log{X}} I_{4,4,i},
\]
where $I_{4,4,i}$ is an integral of the form \eqref{eq:RoleForm}. Explicitly, $I_{4,4,1}$ is over the region
\begin{align*}
&1-2\theta\le\beta\le1/2+\epsilon,\quad&&1-3\theta\le\alpha_3\le\beta/2,\quad &&\alpha_2+\alpha_4\ge1-2\theta,\\
&1-3\theta\le\alpha_4\le(1-\beta-\alpha_2)/2,&& 1-\theta-\beta\le\alpha_2\le(1-\beta)/2,&& \alpha_2+\alpha_3\le\theta.
\end{align*}
The integral $I_{4,4,2}$ is over the region
\begin{align*}
&1-2\theta\le\beta\le1/2+\epsilon,\quad&&1-3\theta\le\alpha_3\le\beta/2,\quad &&\alpha_2+\alpha_4\ge1-2\theta,\\
&1-3\theta\le\alpha_4\le(1-\beta-\alpha_2)/2,&& 1-\theta-\beta\le\alpha_2\le(1-\beta)/2,&& \alpha_2+\alpha_3\ge1-2\theta,\\
&\beta-\alpha_3+\alpha_4\le\theta.
\end{align*}
The integral $I_{4,4,3}$ is over the region
\begin{align*}
&1-2\theta\le\beta\le1/2+\epsilon,\quad&&1-3\theta\le\alpha_3\le\beta/2,\quad&& \alpha_2+\alpha_4\ge1-2\theta,\\
&1-3\theta\le\alpha_4\le(1-\beta-\alpha_2)/2,&& 1-\theta-\beta\le\alpha_2\le(1-\beta)/2,&& \alpha_2+\alpha_3\ge1-2\theta,\\
&\beta-\alpha_3+\alpha_4\ge1-2\theta,&& \beta-\alpha_3+\alpha_2+\alpha_4\le2\theta.
\end{align*}
The integral $I_{4,4,4}$ is over the region
\begin{align*}
&1-2\theta\le\beta\le1/2+\epsilon,\quad&&1-3\theta\le\alpha_3\le\beta/2,\quad &&\alpha_2+\alpha_4\ge1-2\theta,\\
&1-3\theta\le\alpha_4\le(1-\beta-\alpha_2)/2,&& 1-\theta-\beta\le\alpha_2\le(1-\beta)/2,&& \alpha_2+\alpha_3\ge1-2\theta,\\
&\beta-\alpha_3+\alpha_4\ge1-2\theta,&& \beta-\alpha_3+\alpha_2+\alpha_4\ge1-\theta.
\end{align*}
This completes our decomposition of the sum $S_4$.
\subsection{The sums \texorpdfstring{$S_5$ and $S_6$}{S5 and S6}}
The sums $S_5$ and $S_6$ require no further decompositions, and we obtain the lower bounds
\begin{align*}
S_{5}\ge-(1+o(1))\frac{\mathfrak{S}\#\mathcal{A}}{n\log{X}} I_{5},\\
S_{6}\ge-(1+o(1))\frac{\mathfrak{S}\#\mathcal{A}}{n\log{X}} I_{6},
\end{align*}
where
\begin{align}
I_5&=\int_{1-2\theta}^{1/2+\epsilon}\int_{1-\theta-\alpha_1}^{(1-\alpha_1)/2}\frac{d\alpha_1d\alpha_2}{\alpha_1\alpha_2(1-\alpha_1-\alpha_2)},\label{eq:I5Def}\\
I_6&=\int_{(1-\theta)/3}^{\theta}\int_{(1-\theta-\alpha_1)/2}^{\alpha_1}\omega\Bigl(\frac{1-\alpha_1-\alpha_2}{\alpha_2}\Bigr)\frac{d\alpha_1d\alpha_2}{\alpha_1\alpha_2^2}.\label{eq:I6Def}
\end{align}
\subsection{Numerical conclusion}
Putting everything together, we find that the above manipulations give a decomposition of the form of Proposition \ref{prpstn:Decomp2}, namely
\begin{align*}
S(\Ac,\zf_4)&= \sum_{\Rc\in \mathcal{S}_1}\sum_{\df}\mathbf{1}_{\Rc}(\df)S(\Ac_{\df},\zf_1)-\sum_{\Rc\in\mathcal{S}_2}\sum_{\df}\1_{\Rc}(\df)S(\Ac_\df,\zf_1)+\sum_{\Rc\in\mathcal{S}_3}\sum_{\af\in\Ac}\1_{\Rc}(\af)\nonumber \\
&\qquad-\sum_{\Rc\in\mathcal{S}_4}\sum_{\af\in\Ac}\1_{\Rc}(\af)+\sum_{\Rc\in\mathcal{S}_5}\sum_{\af\in\Ac}\1_{\Rc}(\af),
\end{align*}
for certain sets of polytopes $\mathcal{S}_1,\dots,\mathcal{S}_5$ satisfying the properties claimed in the proposition. Specifically, all terms coming from  $\mathcal{S}_1$ and $\mathcal{S}_2$ can be evaluated using Proposition \ref{prpstn:SieveAsymptotic}, and all terms coming from $\mathcal{S}_3$ and $\mathcal{S}_4$ can be evaluated using Proposition \ref{prpstn:TypeII}. All the terms corresponding to $\mathcal{S}_5$ are terms which we discard for a lower bound by positivity, corresponding to the lower bounds we obtained for the subsums of $S_1,\dots,S_5$. All the terms we have considered throughout the appendix (including those we discard or deal with using Propositions \ref{prpstn:TypeII} and \ref{prpstn:SieveAsymptotic}) can be viewed as sums of the form $\mathbf{1}_{\Rc}(\mathfrak{a})$ (potentially summing over $O(1)$ polytopes) since all terms are sums of integers with at most $1/(3\theta-1)$ prime factors, with the only restrictions being on the size of these prime factors.

We are left to check the final estimate, namely that
\begin{align*}
\sum_{\Rc\in\mathcal{S}_5}I_\Rc<0.99.
\end{align*}
From our previous work, we see that
\[
\sum_{\Rc\in\mathcal{S}_5}I_\Rc=I_1+I_2+I_3+I_4+I_5
\]
where
\begin{align*}
I_1&=I_{1,1,1}+I_{1,1,2}+I_{1,2,1}+I_{1,2,2},\\
I_2&=I_{2,1,1}+I_{2,1,2}+I_{2,2}+I_{2,3}+I_{2,4}+I_{2,5,1}+I_{2,5,2}+I_{2,5,3},\\
I_3&=I_{3,1,1,1}+I_{3,1,1,2}+I_{3,1,2}+I_{3,2,1}+I_{3,2,2}+I_{3,3,1}+I_{3,3,2,1}+I_{3,3,2,2}\\
&\qquad+I_{3,3,3,1}+I_{3,3,3,2}+I_{3,4,1}+I_{3,4,2}+I_{3,5}+I_{3,6}+I_{3,7},\\
I_4&=I_{4,1}+I_{4,2}+I_{4,3,1}+I_{4,3,2}+I_{4,3,3}+I_{4,3,4}+I_{4,4,1}+I_{4,4,2}+I_{4,4,3}+I_{4,4,4},
\end{align*}
 and $I_5$, $I_6$ are given by \eqref{eq:I5Def} and \eqref{eq:I6Def}. In particular, we obtain the required result provided $I_1+I_2+I_3+I_4+I_5+I_6<1$. All the integrals appearing are in a suitably explicit form that they can be calculated numerically. The following table gives the result of these numerical estimates. A Mathematica \copyright\, file performing these computations is available along with this article at \url{https://arxiv.org/abs/1507.05080}.\\
 \hfill\\
\begin{tabular}{c c}
\begin{tabular}{c|c}
Integral & Numerical upper bound\\
\hline
$I_{1,1,1}$ & 0.00393 \\
$I_{1,1,2}$ & 0.03341 \\
$I_{1,2,1}$ & 0.05488 \\
$I_{1,2,2}$ & 0.00098 \\
$I_{2,1,1}$ & 0.00370 \\
$I_{2,1,2}$ & 0.00769 \\
$I_{2,2}$ & 0.00011 \\
$I_{2,3}$ & 0.00147 \\
$I_{2,4}$ & 0.00623 \\
$I_{2,5,1}$ & 0.00614 \\
$I_{2,5,2}$ & 0.00118 \\
$I_{2,5,3}$ &   0.00289 \\
$I_{3,1,1,1}$ & 0.00388 \\
$I_{3,1,1,2}$ & 0.00546 \\
$I_{3,1,2}$ & 0.00437 \\
$I_{3,2,1}$ & 0.00277 \\
$I_{3,2,2}$ & 0.00578 \\
$I_{3,3,1}$ & 0.01363 \\
$I_{3,3,2,1}$ & 0.01524 \\
$I_{3,3,2,2}$ & 0.00085 
\end{tabular}
\qquad & \qquad
\begin{tabular}{c|c}
Integral & Numerical upper bound\\
\hline
$I_{3,3,3,1}$ & 0.02824 \\
$I_{3,3,3,2}$ & 0.00045 \\
$I_{3,4,1}$ & 0.00350 \\
$I_{3,4,2}$ & 0.01194 \\
$I_{3,5}$ & 0.00615 \\
$I_{3,6}$ & 0.00038 \\
$I_{3,7}$ & 0.00158\\
$I_{4,1}$ & 0.00001 \\
$I_{4,2}$ & 0.02744 \\
$I_{4,3,1}$ & 0.00161 \\
$I_{4,3,2}$ & 0.09657 \\
$I_{4,3,3}$ & 0.14092 \\
$I_{4,3,4}$ & 0.00054 \\
$I_{4,4,1}$ & 0.05416 \\
$I_{4,4,2}$ & 0.00736 \\
$I_{4,4,3}$ & 0.00499 \\
$I_{4,4,4}$ & 0.06736 \\
$I_5$ & 0.14018 \\
$I_6$ & 0.22180 \\
\multicolumn{1}{c}{\hfill} & \multicolumn{1}{c}{\hfill}
\end{tabular}
\end{tabular}\\
\hfill\\
This gives a total bound of 0.98977 for $I_1+\dots+I_6$ which is less than 0.99, as desired.

\bibliographystyle{plain}
\bibliography{TypeII}
\end{document}